\newtheorem{thm}{Theorem}[section]
\newtheorem{cor}[thm]{Corollary}
\newtheorem{lem}[thm]{Lemma}
\newtheorem{prop}[thm]{Proposition}
\newtheorem{ex}[thm]{Example}
\newtheorem{rem}[thm]{Remark}
\numberwithin{equation}{section}
\newcommand{\R}{\mathbb{R}}
\begin{document}

\title[Control and Stabilization of the Benjamin Equation]
 {On the controllability and Stabilization of the Benjamin Equation}

\author{M. Panthee  and  F. Vielma Leal  }

\address{Department of Mathematics, Statistics \& Computing
 Science, Campinas University, Sao Paulo, Brasil}

\email{mpanthee@ime.unicamp.br,\; fvielmaleal7@gmail.com}

\thanks{This work was partially supported
 by FAPESP, Brazil, with grants 2016/25864-6 and 2015/06131-5 and by CNPq, Brazil with grant 308131/2017-7.}

\thanks{}

\subjclass{93B05, 93D15, 35Q53}

\keywords{Dispersive Equations, Benjamin equation, Well-posedness,
 Controllability, Stabilization}


\dedicatory{}

\commby{Mahendra Panthee}


\begin{abstract}
The aim of this paper is to study the controllability and stabilization for the Benjamin equation on a periodic domain $\mathbb{T}$.
We show that the Benjamin equation is globally exactly controllable and globally exponentially stabilizable in $H_{p}^{s}(\mathbb{T}),$ with $s\geq 0.$
First we prove propagation of compactness, propagation of regularity of solution in Bourgain's spaces and unique continuation property, and use them to obtain the global exponential stabilizability corresponding to a natural feedback law. Combining the global exponential stability  and the local controllability result we  prove the global controllability as well.
Also, we prove that the closed-loop system with a different feedback control law 
is locally exponentially stable with an arbitrary decay rate.
Finally, a time-varying feedback law is  designed to guarantee
a global exponential stability with an arbitrary  decay rate.
The results obtained here extend the ones we proved for the linearized Benjamin equation in \cite{Vielma and Panthee}.
\end{abstract}

\maketitle

\section{Introduction}

We consider the  Benjamin equation posed on a periodic domain $\mathbb{T}:=\mathbb{R}/ (2\pi \mathbb{Z}),$
\begin{equation}\label{2D-BO}
  \partial_{t}u-\alpha \mathcal{H}\partial^{2}_{x}u-\partial^{3}_{x}u
  +\partial_{x}(u^{2})=0, \quad u(x, 0) = u_0\;\;\;\;x\in\mathbb{T},\;\;t\in\mathbb{R},
\end{equation}
where $u=u(x,t)\in \R$,
$\alpha>0$  is a constant and  $\mathcal{H}$ denotes the Hilbert transform defined by
$\widehat{\mathcal{H}(f)}(k)= -i\;\text{sgn}(k)\widehat{f}(k),$ $\,\forall\, k\in\mathbb{Z},$
with $\widehat{f}(k)$ the Fourier transform of $f$ given by
$$\widehat{f}(k):=\frac{1}{2\pi}\int_{0}^{2\pi}f(x)e^{-ikx}dx.$$

The equation \eqref{2D-BO} posed on spatial domain $\mathbb{R}$ was derived by Benjamin
\cite{5} to study the gravity-capillarity surface waves of solitary type in deep water and serves as a generic model for unidirectional propagations of long waves in a two-fluid system where the lower fluid with
greater density is infinitely deep and the interface is subject to capillarity.
The author in \cite{5} also showed that solutions of the Benjamin equation satisfy the conserved quantities,
\begin{equation}\label{cons-1}
I_{1}(u):=\frac{1}{2}\int_{\mathbb{R}} u^{2}(x,t)\;dx, = I_1(u_0)
\end{equation}
and
\begin{equation}\label{cons-2}
\displaystyle{I_{2}(u):=\int_{\mathbb{R}} \left[\frac{1}{2}(\partial_{x}u)^{2}(x,t)-
\frac{\alpha}{2}u(x,t)\mathcal{H}\partial_{x}u(x,t)-\frac{1}{3}u^{3}(x,t)\right]\;dx} = I_2(u_0).
\end{equation}
We note that the relations \eqref{cons-1} and \eqref{cons-2} hold in the periodic case as well.

The well-posedness
of the Cauchy problem \eqref{2D-BO} for given data in $H^{s}(\mathbb{R})$ and $H_p^{s}(\mathbb{T})$ has been extensively studied for many years, see ~\cite{19,20,22,23,Linares Scialom}.
The best known global well-posedness result in $L^{2}(\mathbb{R})$ is due to Linares
\cite{23}. The local well-posedness below $L^{2}(\mathbb{R}),$ is studied
by Kozono, Ogawa and Tanisaka \cite{19} and
 Chen,  Guo, and Xiao \cite{20} for $s\geq -\frac{3}{4}.$
In the periodic case, the best global well-posedness in $L^{2}(\mathbb{T})$ is due to 
 Linares \cite{23} and local well-posedness in $H_{p}^{s}(\mathbb{T})$ for $s\geq -\frac{1}{2}$ is due to  Shi and Junfeng \cite{22}.The Benjamin equation also admits solitary waves solutions. Several works have been devoted to study the existence, stability and asymptotic properties of such solutions, see for instance ~\cite{15,16,5,18}.

Our aim here is to study the equation \eqref{2D-BO} in the context of control theory with a forcing term
$f=f(x,t)$ 
\begin{equation}\label{2D-BO1}
  \partial_{t}u-\alpha \mathcal{H}\partial^{2}_{x}u-\partial^{3}_{x}u
  +\partial_{x}(u^{2})=f(x,t), \;\;\;\;x\in \mathbb{T},\;\;t\in \mathbb{R}.
\end{equation}
In order to keep  the mass $I_{1}(u)$ conserved in the control system \eqref{2D-BO1}, we demand
\begin{equation}\label{mediaf}
\int_{0}^{2\pi}f(x,t)\;dx= 0.
\end{equation}

The control $f$ is allowed to act on only a small subset $\omega$ of the domain $\mathbb{T}.$ This situation includes more cases of practical interest and is therefore more relevant in general. For this reason, we consider $g(x)$ as a real non-negative smooth function defined on $\mathbb{T},$
such that,
\begin{equation}\label{gcondition}
2\pi[g]:=\int_{0}^{2\pi}g(x)\;dx=1,
\end{equation}
where $[g]$ denotes the mean value $g$ over the interval $(0,2\pi)$. We assume
$\text{supp} \;g=\omega \subset \mathbb{T},$ where
$\omega=\{x\in \mathbb{T}: g(x)>0 \}$ is an open interval.
We restrict our attention to controls of the form
\begin{equation}\label{EQ1}
f=G(h):=g(x)\left[h(x,t)-\int_{0}^{2\pi}g(y)
h(y,t)\;dy\right],\;\forall x\in \mathbb{T},\;t\in [0,T],
\end{equation}
where $h$ is a function defined in $\mathbb{T}\times [0,T].$
 Thus, $h\equiv h(x,t)$ can be considered as a new control function. Moreover,
for each $t\in [0,T] $ we have
that \eqref{mediaf} is satisfied.
Observe that
if $s\in \mathbb{R},$ then the operator $G:L^{2}\left([0,T]; H^{s}_{p}(\mathbb{T})\right)
\rightarrow L^{2}\left([0,T]; H^{s}_{p}(\mathbb{T})\right)$
is linear and bounded. Furthermore,
The operator $G:L^{2}(\mathbb{T})\rightarrow L^{2}(\mathbb{T})$
is linear, bounded and self-adjoint (see \cite{Vielma and Panthee}).

In this work, we study the following two important problems in control theory.

\noindent
\textbf{\emph{Exact control problem:}}
Given an initial state $u_{0}$ and a terminal state $u_{1}$
in a certain space with $[u_{0}]=[u_{1}],$ can one find a control input
$f$ such that the solution $u$ of equation \eqref{2D-BO1} satisfies  $u(x,0)=u_{0}(x)$ and $u(x,T)=u_{1}(x) \;\;\forall x\in \mathbb{T},$ for some final time $T>0$?

\noindent
\textbf{\emph{Stabilization Problem:}}
Given an initial state $u_{0}$
in a certain space. Can one find a feedback control law: $f=Ku$ so that the resulting closed
loop system
\begin{equation}\label{2D-BO2}
  \partial_{t}u-\alpha \mathcal{H}\partial^{2}_{x}u-\partial^{3}_{x}u
  +\partial_{x}u^{2}=Ku, \;\;u(x,0)=u_{0}\;\;x\in \mathbb{T},\;\;t\in \mathbb{R}^{+},
\end{equation}
is asymptotically stable  as $t\rightarrow \infty$?

Control and stabilization of dispersive equations has been widely studied in the literature, see \cite{Coron, 1, Linares Rosier, Lions, Rosier 1, Rosier Zhang,  Rosier and Zhang 2, Rosier Zhang 2, Russell} and references therein. In particular, for the KdV equation, we refer to \cite{14,10,
Zhang 1, Russell and Zhang, Rosier 1, Coron Crepau, Menzala Vasconcellos Zuazua, Rosier and Zhang 2} and for the BO equation  we refer to \cite{1,Laurent Linares and Rosier, Linares Rosier} and the references therein.
 The Benjamin equation \eqref{2D-BO} displays both the third order local term $-\partial_{x}^{3}u$, as in the KdV equation, and the second order nonlocal term $-\alpha \mathcal{H} \partial_{x}^{2}u$, as in the BO equation.
So, it is natural to analyze the Benjamin equation
from the control and stabilization point of view and check
whether it behaves in the similar way as the KdV and BO
equations.  In this regard, our work is inspired by the works  of Laurent, Linares and Rosier  \cite{Laurent Linares and Rosier}, Linares and  Ortega \cite{1}, Rosier and Zhang \cite{14} and Russell and Zhang  \cite{10},  where the authors studied the  controllability and stabilization of the individual BO and the  KdV equations posed on periodic domains. 

We recall that  the authors in \cite{Vielma and Panthee}  considered the controllability and stabilization issues for the linearized Benjamin equation on a periodic domain
$\mathbb{T},$  and proved the following results.

\begin{thm}[\cite{Vielma and Panthee}]\label{ControlLa}
	Let $s\geq 0,$
	 $\alpha>0,$ and  $T>0$ be given.
	Then for each $u_{0},\; u_{1}\in H_{p}^{s}(\mathbb{T})$ with $[u_{0}]=[u_{1}],$ there exists a function $h\in L^{2}([0,T];H_{p}^{s}(\mathbb{T}))$
	such that the unique solution $u\in C([0,T];H^{s}_{p}(\mathbb{T}))$ of the non homogeneous linear  IVP associated to equation \eqref{2D-BO1} (with $f(x,t)=Gh(x,t)$)
	satisfies $u(x,T)=u_{1}(x), \;x\in\mathbb{T}.$ Moreover, there exists a positive constant $\nu \equiv \nu(s,g,T)> 0$
	such that
$$	\|h\|_{L^{2}([0,T];H_{p}^{s}(0,2\pi))} \leq \nu\; (\|u_{0}\|_{H_{p}^{s}(0,2\pi)}
	+\|u_{1}\|_{H_{p}^{s}(0,2\pi)}).$$
\end{thm}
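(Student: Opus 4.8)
The plan is to recast the exact-control problem for the linear IVP
$$\partial_{t}u = \mathcal{A}u + Gh, \qquad u(\cdot,0)=u_{0},$$
where $\mathcal{A}:=\alpha\mathcal{H}\partial_{x}^{2}+\partial_{x}^{3}$ generates a unitary group $\{W(t)\}_{t\in\mathbb{R}}$ on each $H_{p}^{s}(\mathbb{T})$ — its Fourier symbol $i(\alpha\,\mathrm{sgn}(k)k^{2}-k^{3})$ being purely imaginary — as a surjectivity statement for the input-to-state map. By Duhamel's formula the terminal condition $u(\cdot,T)=u_{1}$ is equivalent to $Lh = u_{1}-W(T)u_{0}$, where $Lh:=\int_{0}^{T}W(T-t)\,Gh(t)\,dt$. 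Since $G$ preserves the mean and $W(t)$ leaves the mean invariant, the hypothesis $[u_{0}]=[u_{1}]$ lets me reduce everything to the mean-zero subspace $L^{2}_{0}(\mathbb{T})$, on which the problem is genuinely controllable.

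First I would reduce surjectivity of $L$ to an observability inequality via the Hilbert Uniqueness Method. Computing the adjoint (using that $G$ is self-adjoint on $L^{2}$ and $W(t)^{*}=W(-t)$) gives $(L^{*}\psi)(t)=G\,W(t-T)\psi$, so, after a change of variable and the group property, coercivity of $LL^{*}$ is equivalent to
$$\int_{0}^{T}\big\| G\,W(t)\,\phi \big\|_{L^{2}(\mathbb{T})}^{2}\,dt \;\geq\; C\,\|\phi\|_{L^{2}(\mathbb{T})}^{2}\qquad\text{for all } \phi\in L^{2}_{0}(\mathbb{T}).$$
Once this holds, $h:=L^{*}(LL^{*})^{-1}(u_{1}-W(T)u_{0})$ solves the control problem, and boundedness of $(LL^{*})^{-1}$ yields at once the linear estimate $\|h\|_{L^{2}([0,T];L^{2})}\leq \nu\,(\|u_{0}\|+\|u_{1}\|)$.

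The main work — and the main obstacle — is this observability inequality. On the mean-zero modes the eigenvalues $\phi(k)=\alpha\,\mathrm{sgn}(k)k^{2}-k^{3}$ are real and separate superlinearly, $|\phi(k+1)-\phi(k)|\sim 3k^{2}\to\infty$, so the family $\{e^{i\phi(k)t}\}$ is well conditioned and an Ingham-type/biorthogonal estimate is in principle available. The more robust route, which also adapts to the later nonlinear analysis, is a compactness–uniqueness argument: assuming the inequality fails, take $\phi_{n}$ with $\|\phi_{n}\|_{L^{2}}=1$ and $\int_{0}^{T}\|GW(t)\phi_{n}\|^{2}\,dt\to0$; a propagation-of-compactness argument upgrades weak to strong convergence, and the limit $\phi$ satisfies $G\,W(t)\phi=0$ on $(0,T)$. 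The delicate point is then the unique continuation property: a solution of the linear Benjamin equation vanishing on $\omega\times(0,T)$ must vanish identically on $L^{2}_{0}(\mathbb{T})$, forcing $\phi=0$ and contradicting $\|\phi\|=1$. Establishing this UCP, together with the compactness that keeps the limit nontrivial, is the technical heart of the argument.

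Finally, to pass from $s=0$ to general $s\geq0$, I would run the same HUM construction in $H_{p}^{s}(\mathbb{T})$, obtaining the observability inequality at regularity $s$ by a propagation-of-regularity argument: since $W(t)$ commutes with $\partial_{x}$ and the commutators of $\partial_{x}^{s}$ with $G$ are of lower order and hence controllable, the $L^{2}$ observability bootstraps to $H_{p}^{s}$. This produces a control operator bounded from $H_{p}^{s}(\mathbb{T})$ into $L^{2}([0,T];H_{p}^{s}(\mathbb{T}))$, giving the stated estimate with $\nu=\nu(s,g,T)$.
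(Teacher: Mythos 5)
Your argument is viable and essentially correct in outline, but it is not the route of the paper's own proof: this theorem is quoted from \cite{Vielma and Panthee}, where the control is constructed by the spectral/moment method in the tradition of Russell--Zhang \cite{10} and Linares--Ortega \cite{1}. There one expands in the eigenbasis $e^{ikx}$, whose eigenvalues $i\phi(k)$ (with $\phi(k)=-k^{3}-2\mu k+\alpha k|k|$ after the mean-reduction) have gaps growing like $k^{2}$, obtains via an Ingham-type theorem a biorthogonal family to $\{e^{i\phi(k)t}\}$ in $L^{2}(0,T)$, and solves the resulting moment problem; this directly produces the bounded linear control operator $\Phi_{\mu}$ of Remark \ref{sol4} at every regularity $s\geq 0$ with the explicit constant $\nu(s,g,T)$, and the observability inequality is then recorded as a \emph{consequence} of controllability --- note the logical direction in Remark \ref{sol4} (``the system is exactly controllable \ldots Therefore \ldots the following observability inequality holds''), which is the reverse of your HUM scheme. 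Your compactness--uniqueness alternative does work: with $b=\tfrac{1}{2}$, $b'=0$, the linear case of Proposition \ref{compact2} (with $f_{n}=0$, applied to $w_{n}=W(t)(\phi_{n}-\phi)$, which converges weakly to zero so that the compact embeddings of Proposition \ref{prop1} furnish the vanishing hypotheses) upgrades weak to strong convergence, and a linear analogue of Proposition \ref{UCP2} (the same positive-frequency/Hilbert-transform argument through Lemma \ref{UCP}) kills the limit; this is precisely how the present paper later proves the \emph{nonlinear} observability (Proposition \ref{Ob1}). What your route buys is robustness: you never need to track the finitely many possible coincidences $\phi(k)=\phi(k')$, $k\neq k'$, which the moment method must handle separately with generalized biorthogonal elements; the price is a non-constructive $\nu$ and the deployment of the Bourgain-space commutator machinery already at the linear stage. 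Two steps you should make explicit. First, $GW(t)\phi=0$ only gives $W(t)\phi=c(t):=\int_{\mathbb{T}}g(y)\,W(t)\phi(y)\,dy$ on $\omega\times(0,T)$, so before localizing you need the mean term to vanish along the sequence --- the linear analogue of Lemma \ref{Ob20} --- which your sketch skips. Second, your $H^{s}$ bootstrap is stated too loosely to yield the $\nu(s,g,T)$ bound: it should be run on the Gramian $\Lambda=LL^{\ast}=\int_{0}^{T}W(T-t)\,G^{2}\,W(t-T)\,dt$, using that $W(t)$ commutes with $D^{\sigma}$ and that $[D^{\sigma},\Lambda]$ is of order $\sigma-1$ by smoothness of $g$, so that the identity $D^{\sigma+1}\psi=\Lambda^{-1}\bigl(D^{\sigma+1}w-[D^{\sigma+1},\Lambda]\psi\bigr)$ gains one derivative at a time and shows $\Lambda^{-1}$ preserves $H_{0}^{s}(\mathbb{T})$; the phrase ``the commutators are of lower order and hence controllable'' does not by itself establish this.
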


Also, employing the feedback control law
$K=-GG^{\ast},$ the following result regarding  stabilization of the linearized Benjamin equation posed on $\mathbb{T}$ is proved in \cite{Vielma and Panthee}.
\begin{thm}[\cite{Vielma and Panthee}]\label{stabilization1}
	Let $\alpha>0,$  $g$ as in \eqref{gcondition},  and  $s\geq 0$ be given. Then there exist positive constants $M=M(\alpha, g, s)$ and $\gamma=\gamma(g),$ such that for any
	$u_{0}\in H_{p}^{s}(\mathbb{T}),$  the unique solution $u\in C([0,+\infty), H_{p}^{s}(\mathbb{T}))$
	of the closed-loop system
	\begin{equation}\label{atabilizationL23}
	\partial_{t}u-\alpha \mathcal{H}\partial^{2}_{x}u-\partial^{3}_{x}u=-GG^{\ast}u,
\;\;u(x,0)=u_{0}(x),\;\;x\in \mathbb{T},\;\;t>0.
	\end{equation}
 satisfies
$$\|u(\cdot,t)-[u_{0}]\|_{H_{p}^{s}(\mathbb{T})}\leq M
	e^{-\gamma t}\|u_{0}-[u_{0}]\|_{H_{p}^{s}(\mathbb{T})},\;\;\;\text{for all}\;\;
	t\geq 0.$$
\end{thm}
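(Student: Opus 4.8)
The plan is to follow the classical scheme for linear damped dispersive equations on the torus (as in Russell--Zhang and Linares--Ortega): reduce to mean-zero data, establish $L^2$ exponential decay from the interplay of an energy dissipation law with an observability inequality, and then bootstrap the decay to $H_p^s(\mathbb{T})$ for $s>0$. First I would note that the feedback preserves the mean. Since $\int_0^{2\pi}G(h)\,dx=0$ for every $h$ by \eqref{EQ1}--\eqref{mediaf}, integrating \eqref{atabilizationL23} over $\mathbb{T}$ and using that $\mathcal{H}\partial_x^2 u$ and $\partial_x^3 u$ have zero mean gives $\frac{d}{dt}[u(\cdot,t)]=0$, so $[u(\cdot,t)]=[u_0]$ for all $t$. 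Because $G$ annihilates constants (by \eqref{gcondition} one has $G(c)=g(x)(c-c\int_0^{2\pi}g(y)\,dy)=0$), the function $v:=u-[u_0]$ solves the same system \eqref{atabilizationL23} with mean-zero datum $v_0=u_0-[u_0]$. Hence it suffices to prove $\|v(\cdot,t)\|_{H_p^s}\le Me^{-\gamma t}\|v_0\|_{H_p^s}$ for mean-zero solutions, which is what I treat below.

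For $s=0$, pairing \eqref{atabilizationL23} with $u$ in $L^2$ and using that $A:=\alpha\mathcal{H}\partial_x^2+\partial_x^3$ is skew-adjoint (its Fourier symbol $i(\alpha k|k|-k^3)$ is purely imaginary, whence $\langle Au,u\rangle=0$) yields the dissipation identity $\frac{d}{dt}\|u\|_{L^2}^2=-2\|G^{\ast}u\|_{L^2}^2\le0$; in particular $\|u(\cdot,t)\|_{L^2}$ is non-increasing. Exponential decay then reduces to the observability inequality: there exist $T>0$ and $c\in(0,\tfrac12)$ such that every mean-zero solution obeys $\int_0^T\|G^{\ast}u(\cdot,t)\|_{L^2}^2\,dt\ge c\,\|u(\cdot,0)\|_{L^2}^2$. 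Granting this, the dissipation identity gives $\|u(\cdot,T)\|_{L^2}^2\le(1-2c)\|u(\cdot,0)\|_{L^2}^2$ with $1-2c\in[0,1)$, and iterating on the intervals $[nT,(n+1)T]$ (the problem being autonomous) produces the $L^2$ decay with explicit $M$ and $\gamma=\gamma(g)$.

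The observability inequality is the heart of the argument and the step I expect to be the main obstacle, and I would obtain it in two moves. First I establish observability for the free group, $\|\varphi_0\|_{L^2}^2\le C\int_0^T\|G^{\ast}e^{tA}\varphi_0\|_{L^2}^2\,dt$ for mean-zero $\varphi_0$; since $A^{\ast}=-A$, the adjoint of the control system is just the free equation, so this is exactly the observability inequality dual (through the Hilbert Uniqueness Method) to the exact controllability stated in Theorem \ref{ControlLa}, and therefore comes essentially for free from that result. Alternatively it follows from a compactness--uniqueness argument resting on the unique continuation property for the constant-coefficient equation, which on $\mathbb{T}$ is elementary: $G^{\ast}u\equiv0$ on $[0,T]$ forces every Fourier mode to vanish, so $u\equiv0$ among mean-zero solutions. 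Second I transfer observability from the free group to the damped semigroup $S(t)$ by Duhamel: writing $e^{tA}u_0=u(t)+\int_0^t e^{(t-\tau)A}GG^{\ast}u(\tau)\,d\tau$, applying $G^{\ast}$, and using boundedness of $G$ together with $\|G^{\ast}u(\tau)\|_{L^2}\le\|u(\tau)\|_{L^2}\le\|u_0\|_{L^2}$, the perturbation term is bounded by $C\,T^2\int_0^T\|G^{\ast}u\|_{L^2}^2\,dt$, which absorbs into the free observability and delivers the damped inequality above with a possibly smaller constant $c$.

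Finally, to lift the decay to $H_p^s(\mathbb{T})$ with $s>0$, I would apply $\Lambda^s:=(1-\partial_x^2)^{s/2}$ to \eqref{atabilizationL23}. Since $A$ is a Fourier multiplier it commutes with $\Lambda^s$, so $w:=\Lambda^s u$ solves $\partial_t w=Aw-GG^{\ast}w-[\Lambda^s,GG^{\ast}]u$, where the commutator $[\Lambda^s,GG^{\ast}]$ has order $s-1$ because $g$ is smooth, hence maps $H_p^{s-1}(\mathbb{T})$ boundedly into $L^2(\mathbb{T})$. Repeating the energy-plus-observability argument for $w$ and treating the commutator as a lower-order term controlled by $\|u\|_{H_p^{s-1}}$, which decays exponentially by interpolation between the $L^2$ decay and the a priori $H_p^s$ bound (or by induction on $s$), one closes the estimate and recovers the $H_p^s$ decay with the same rate $\gamma$ and a constant $M=M(\alpha,g,s)$. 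I expect the only delicate points to be the uniformity of the observability constant and the bookkeeping in this bootstrap; the skew-adjointness of $A$ and the smoothing role of $g$ make every remaining step routine.
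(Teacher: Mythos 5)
Your argument is correct, and at the $L^{2}$ level it is essentially the classical scheme that the cited source \cite{Vielma and Panthee} uses (note this theorem is quoted, not proved, in the present paper): the mean-reduction via $G(c)=0$ for constants, the dissipation identity $\frac{d}{dt}\|u\|^{2}_{L^{2}}=-2\|G^{\ast}u\|^{2}_{L^{2}}$ from skew-adjointness of $\alpha\mathcal{H}\partial_{x}^{2}+\partial_{x}^{3}$, and the observability inequality for the free group obtained by HUM duality from Theorem \ref{ControlLa} — which is precisely what Remark \ref{sol4} records — transferred to the damped semigroup by your Duhamel perturbation and iterated over $[nT,(n+1)T]$. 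The genuinely different step is the lift to $H^{s}_{p}(\mathbb{T})$. You commute $\Lambda^{s}=(1-\partial_{x}^{2})^{s/2}$ through the equation and treat $[\Lambda^{s},GG^{\ast}]$ as an operator of order $s-1$; the source instead differentiates in time: since the system is linear and autonomous, $v=\partial_{t}u$ solves the \emph{identical} closed-loop system, so the $L^{2}$ decay applies verbatim to $v$, and $\|\partial_{x}^{3}u\|_{L^{2}}$ is then recovered from the equation by absorbing $\alpha\|\mathcal{H}\partial_{x}^{2}u\|_{L^{2}}$ and lower-order terms, with $0<s<3$ handled by interpolation — exactly the device this paper reuses in its nonlinear analogues (Theorems \ref{GWP} and \ref{GEF6}, and the estimates imported from (5.27), (5.30) of \cite{Vielma and Panthee}). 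The $\partial_{t}u$ route buys the rate $\gamma$ exactly and uniformly in $s$; your commutator route treats all real $s\geq 0$ in one uniform scheme, but the Duhamel convolution of two $e^{-\gamma t}$ decays produces a factor $t\,e^{-\gamma t}$, so strictly you obtain every rate $\gamma'<\gamma$ rather than $\gamma$ itself — harmless for the statement, since a fixed $\gamma'=\gamma/2$ still depends only on $g$, but your claim of recovering ``the same rate $\gamma$'' should be softened. One further caution: of your two suggested ways to control the forcing $[\Lambda^{s},GG^{\ast}]u$, only the induction on $s$ works as stated; the alternative ``interpolation between the $L^{2}$ decay and the a priori $H^{s}_{p}$ bound'' is circular, because without the theorem the best a priori $H^{s}_{p}$ bound for the damped semigroup is exponentially \emph{growing} (the $H^{s}$ pairing $\langle\Lambda^{s}GG^{\ast}u,\Lambda^{s}u\rangle$ is not sign-definite), and interpolating decay against growth need not yield decay.
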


Furthermore, the authors in \cite{Vielma and Panthee} also showed that it is possible to  find a linear feedback law such that  the resulting closed-loop system \eqref{atabilizationL23} is stable with arbitrary decay rate.
\begin{thm}[\cite{Vielma and Panthee}]\label{estabilization2}
	Let $s\geq 0,$ $\alpha>0,$ $\lambda>0,$ and $u_{0}\in H_{p}^{s}(\mathbb{T})$ be given. Then there exists a linear bounded operator $K_{\lambda}$  from $H_{p}^{s}(\mathbb{T})$ to $H_{p}^{s}(\mathbb{T})$ such that
	the unique solution $u\in C([0,+\infty), H_{p}^{s}(\mathbb{T}))$ of the closed-loop system
	\begin{equation}\label{estabiliza1}
	\partial_{t}u-\alpha \mathcal{H}\partial^{2}_{x}u-\partial^{3}_{x}u=-K_{\lambda}u,
\;\;u(x,0)=u_{0}(x), \;\; x\in \mathbb{T},\;\; t>0.
	\end{equation}
	satisfies
	$$\|u(\cdot,t)-[u_{0}]\|_{H_{p}^{s}(\mathbb{T})}\leq
	M\;e^{-\lambda\;t}\|u_{0}-[u_{0}]\|_{H_{p}^{s}(\mathbb{T})},$$
	for all $t\geq0,$ and some positive constant $M=M(g,\lambda, \alpha, s).$
\end{thm}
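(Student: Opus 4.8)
The plan is to build the feedback operator $K_{\lambda}$ from a $\lambda$-damped controllability Gramian, using the exact controllability of Theorem \ref{ControlLa} as the only essential input, and then to transfer the resulting $L^{2}$ decay to the scale $H_{p}^{s}(\mathbb{T})$. Since the mean $[u]$ is conserved and the constants lie in the kernel of the skew-adjoint operator $A:=\alpha\mathcal{H}\partial_{x}^{2}+\partial_{x}^{3}$, it suffices to stabilize $w:=u-[u_{0}]$ on the mean-zero space $H_{p,0}^{s}(\mathbb{T}):=\{v\in H_{p}^{s}(\mathbb{T}):[v]=0\}$, where $w$ solves the same homogeneous linear equation. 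Let $W(t)=e^{tA}$ be the unitary group generated by $A$ on $L_{0}^{2}(\mathbb{T})$; recall from the excerpt that $G=G^{*}$ is bounded and self-adjoint on $L^{2}(\mathbb{T})$ and bounded on every $H_{p}^{s}(\mathbb{T})$. Heuristically, prescribing the decay rate $\lambda$ amounts to stabilizing the shifted generator $A+\lambda$ to boundedness; since $e^{t(A+\lambda)}=e^{\lambda t}W(t)$, the shifted control system inherits exact controllability from Theorem \ref{ControlLa}, and a time-invariant feedback for $w$ produces exactly the factor $e^{-\lambda t}$.

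Concretely, I would introduce the damped Gramian
\[
\Lambda_{\lambda}:=\int_{0}^{\infty}e^{-2\lambda t}\,W(t)\,GG^{*}\,W(t)^{*}\,dt,
\]
which converges in operator norm because $\|W(t)GG^{*}W(t)^{*}\|\leq\|G\|^{2}$ by unitarity while $e^{-2\lambda t}$ is integrable. The crucial point is coercivity: Theorem \ref{ControlLa} is equivalent to the finite-horizon bound $\int_{0}^{T}W(t)GG^{*}W(t)^{*}\,dt\geq c\,I$ on $L_{0}^{2}(\mathbb{T})$, and since the integrand is nonnegative and $e^{-2\lambda t}\geq e^{-2\lambda T}$ on $[0,T]$, one gets $\Lambda_{\lambda}\geq e^{-2\lambda T}c\,I$. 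Hence $\Lambda_{\lambda}$ is a self-adjoint topological isomorphism of $L_{0}^{2}(\mathbb{T})$ with bounded inverse, and I would define the feedback operator $K_{\lambda}:=GG^{*}\Lambda_{\lambda}^{-1}$.

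The next step is to show that the closed-loop generator $A-K_{\lambda}$ generates a semigroup with $\|e^{t(A-K_{\lambda})}\|\leq M e^{-\lambda t}$ on $L_{0}^{2}(\mathbb{T})$. Differentiating the integrand of $\Lambda_{\lambda}$ and integrating yields the Lyapunov identity $A\Lambda_{\lambda}+\Lambda_{\lambda}A^{*}-2\lambda\Lambda_{\lambda}=-GG^{*}$; from here an Urquiza-type rapid-stabilization argument (using the functional $\mathcal{V}(w)=\langle\Lambda_{\lambda}^{-1}w,w\rangle$ together with the observability inequality encoded in the coercivity of $\Lambda_{\lambda}$) gives decay at the prescribed rate $\lambda$, which is the assertion for $s=0$. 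Because the underlying dynamics are purely conservative, I expect this estimate, rather than a one-line differential inequality, to require a careful group-theoretic argument; this is one technical core of the proof.

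Finally, to reach $s>0$ I would commute the Fourier multiplier defining the $H_{p}^{s}$-norm on mean-zero functions through the equation. Since $A$ is itself a Fourier multiplier it commutes with this operator, and since $g$ is smooth the commutators with $G$ and $G^{*}$ are bounded on $L^{2}$; consequently $K_{\lambda}$ is bounded on $H_{p}^{s}(\mathbb{T})$ and $\Lambda_{\lambda}$ restricts to an isomorphism of $H_{p,0}^{s}(\mathbb{T})$. The $H_{p}^{s}$ decay then follows either by interpolating between the $L^{2}$ estimate and a high-regularity bound, or by running the same Lyapunov analysis on the differentiated equation and absorbing the lower-order commutator terms. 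I expect the genuine obstacle to be exactly this last point: the coercivity furnished by Theorem \ref{ControlLa} is intrinsically an $L^{2}$ statement, so establishing that $\Lambda_{\lambda}^{-1}$, and hence $K_{\lambda}$, acts boundedly on the whole scale $H_{p}^{s}(\mathbb{T})$ for $s\geq 0$ — with a constant $M=M(g,\lambda,\alpha,s)$ and without degrading the rate $\lambda$ — is where the multiplier and commutator analysis must be carried out with care.
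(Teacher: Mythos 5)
There is a genuine gap, and it sits exactly at the step you delegated to ``an Urquiza-type rapid-stabilization argument'': your Gramian is built from the \emph{forward} group, while the construction only works with the \emph{backward} group. Compare with the paper's operator \eqref{st1}, $L_{\lambda}\phi=\int_{0}^{a}e^{-2\lambda\tau}U_{\mu}(-\tau)GG^{*}U_{\mu}(-\tau)^{*}\phi\,d\tau$, where the group is evaluated at $-\tau$. Your Lyapunov identity $A\Lambda_{\lambda}+\Lambda_{\lambda}A^{*}-2\lambda\Lambda_{\lambda}=-GG^{*}$ is indeed correct for the forward Gramian, but feed it into your functional $\mathcal{V}(w)=\langle\Lambda_{\lambda}^{-1}w,w\rangle$ along the closed loop $\partial_{t}w=(A-GG^{*}\Lambda_{\lambda}^{-1})w$: multiplying the identity on both sides by $\Lambda_{\lambda}^{-1}$ gives $\Lambda_{\lambda}^{-1}A+A^{*}\Lambda_{\lambda}^{-1}=2\lambda\Lambda_{\lambda}^{-1}-\Lambda_{\lambda}^{-1}GG^{*}\Lambda_{\lambda}^{-1}$, whence
\[
\frac{d}{dt}\mathcal{V}(w)=\bigl\langle(\Lambda_{\lambda}^{-1}A+A^{*}\Lambda_{\lambda}^{-1})w,w\bigr\rangle-2\bigl\|G^{*}\Lambda_{\lambda}^{-1}w\bigr\|^{2}=2\lambda\,\mathcal{V}(w)-3\bigl\|G^{*}\Lambda_{\lambda}^{-1}w\bigr\|^{2},
\]
with the $2\lambda$ term carrying the \emph{wrong sign}. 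Nothing forces the damping term to dominate $2\lambda\mathcal{V}$ pointwise in time (the coercivity of $\Lambda_{\lambda}$ coming from Theorem \ref{ControlLa} is an integrated-in-time statement), so no decay --- let alone the prescribed rate $\lambda$ --- follows; note also that the plain energy identity $\frac{d}{dt}\|w\|^{2}=-2\langle G^{*}\Lambda_{\lambda}^{-1}w,\,G^{*}w\rangle$ is not even sign-definite, so the closed loop is not obviously dissipative. Skew-adjointness of $A$ does not rescue the computation: it turns $A\Lambda_{\lambda}+\Lambda_{\lambda}A^{*}$ into the commutator $[A,\Lambda_{\lambda}]$, and the sign defect persists.

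The repair is small but essential: set $Q_{\lambda}=\int_{0}^{\infty}e^{-2\lambda t}\,W(-t)GG^{*}W(-t)^{*}\,dt$ (or truncate at $t=a$, which is the paper's choice in \eqref{st1}; finiteness of the horizon only adds one extra nonpositive boundary term below). The same integration by parts now yields $AQ_{\lambda}+Q_{\lambda}A^{*}+2\lambda Q_{\lambda}=GG^{*}$, and with $K_{\lambda}=GG^{*}Q_{\lambda}^{-1}$ one gets $\frac{d}{dt}\langle Q_{\lambda}^{-1}w,w\rangle=-2\lambda\langle Q_{\lambda}^{-1}w,w\rangle-\|G^{*}Q_{\lambda}^{-1}w\|^{2}\leq-2\lambda\langle Q_{\lambda}^{-1}w,w\rangle$, which together with coercivity and boundedness of $Q_{\lambda}$ gives $\|w(t)\|\leq Me^{-\lambda t}\|w_{0}\|$ exactly as claimed; coercivity of this backward Gramian follows from the observability inequality of Remark \ref{sol4} because $W(-t)^{*}=W(t)$ and the Benjamin group is time-reversible. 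With this sign fixed, the rest of your outline --- conservation of the mean and reduction to the mean-zero space, convergence of the integral by unitarity, coercivity from finite-horizon observability, and $H_{p}^{s}$-boundedness of $K_{\lambda}$ and invertibility of the Gramian on the $H^{s}$ scale via smoothness of $g$ --- is exactly the content of Lemma \ref{st9}, Remark \ref{st36} and Theorem \ref{st37}, i.e.\ the route of \cite{Vielma and Panthee} that the paper recalls; the only structural difference would then be your infinite horizon versus the paper's finite horizon $[0,a]$, which is immaterial.
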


Now, a natural question is, whether one can get similar results for the nonlinear Benjamin equation \eqref{2D-BO}.
Extending the linear results to the corresponding nonlinear systems
 is difficult in general.  Nevertheless,
motivated by the work in Laurent et al. \cite{Laurent,Laurent C1, 14}  (see also
\cite{Dehman Lebeau and Zuazua,Dehman Gerard and Lebeau}),
we use the Bourgain's spaces and the techniques motivated from the microlocal analysis to
get certain  propagation of compactness and regularity properties to the solutions of the Benjamin equation posed on a periodic domain $\mathbb{T}$.  We use these properties together with
the unique continuation property (see Proposition \ref{UCP2} below)
to establish the global stabilization and exact controllability
for the nonlinear system \eqref{2D-BO1}.

In what follows, we describe 
the main results obtained in this work. We start with the following local control result.

\begin{thm}[Local control]\label{Smalldatacontrol}
	Let $T>0,$ $s\geq0,$ $\alpha>0,$
  and  $\mu \in \mathbb{R}$ be given.  Then there exists  $\delta>0$ such that
	for any $u_{0},u_{1}\in H_{p}^{s}(\mathbb{T})$ with $[u_{0}]=[u_{1}]=\mu$  and
$$\|u_{0}-\mu\|_{H_{p}^{s}(\mathbb{T})}\leq \delta,\;\;\;\;\;\|u_{1}-\mu\|_{H_{p}^{s}(\mathbb{T})}\leq \delta,$$
	one can find a control $h\in L^{2}([0,T];H_{p}^{s}(\mathbb{T}))$ such that the IVP associated to 
	\eqref{2D-BO1} with $f=Gh$
	has a unique solution $u\in C([0,T];H_{p}^{s}(\mathbb{T}))$ satisfying
$$ u(x,0)=u_{0}(x), \;\;\;\;u_{1}(x,T)=u_{1}(x),\;\;\;\text{for all}\;x\in \mathbb{T}.$$
\end{thm}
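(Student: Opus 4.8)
The plan is to combine the exact controllability of the linearized equation (Theorem \ref{ControlLa}) with a contraction-mapping argument carried out in the Bourgain space adapted to the Benjamin dispersion, using the smallness of the data to treat the quadratic term as a perturbation. First I would reduce to the mean-zero setting. Since $\int_0^{2\pi}Gh\,dx=0$ and the remaining terms of \eqref{2D-BO1} preserve the mean, any solution issued from $u_0$ with $[u_0]=\mu$ keeps $[u(\cdot,t)]=\mu$; writing $u=\mu+v$ the profile $v$ solves
\begin{equation*}
\partial_t v-\alpha\mathcal{H}\partial_x^2 v-\partial_x^3 v+2\mu\,\partial_x v+\partial_x(v^2)=Gh,\qquad v(\cdot,0)=v_0:=u_0-\mu,
\end{equation*}
and the target becomes $v(\cdot,T)=v_1:=u_1-\mu$, where $v_0,v_1$ now have zero mean and $\|v_0\|_{H_p^s(\mathbb{T})},\|v_1\|_{H_p^s(\mathbb{T})}\le\delta$. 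Thus the genuinely nonlinear forcing is the quadratic term $\partial_x(v^2)$, which is the object the smallness of $\delta$ will control, while the extra transport term $2\mu\,\partial_x v$ is incorporated into the linear part.

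Next I would record the linear ingredient. Let $W_\mu(t)$ denote the unitary group on the mean-zero space generated by $\alpha\mathcal{H}\partial_x^2+\partial_x^3-2\mu\,\partial_x$; its Fourier symbol is $e^{it\phi_\mu(k)}$ with $\phi_\mu(k)=\alpha k|k|-k^3-2\mu k$. Since the additional term only shifts the dispersion relation by the linear factor $-2\mu k$, the eigenvalues retain the asymptotic separation needed for the moment/observability analysis, so the exact controllability furnished by Theorem \ref{ControlLa} persists for $W_\mu$: the linear reachability map $\mathcal{L}_\mu h:=\int_0^T W_\mu(T-\tau)Gh(\tau)\,d\tau$ admits a bounded linear right inverse $S_\mu$ on the mean-zero subspace of $H_p^s(\mathbb{T})$, with $\|S_\mu\|\le C(s,g,T,\mu)$.

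I would then set up the fixed point. Working on a ball $B_R=\{v:\ \|v\|_{X^{s,b}_T}\le R\}$ of the Bourgain space with $1/2<b<1$ tuned to the Benjamin equation, for each $v\in B_R$ define the control
\begin{equation*}
h_v:=S_\mu\Big(v_1-W_\mu(T)v_0+\int_0^T W_\mu(T-\tau)\,\partial_x(v^2)(\tau)\,d\tau\Big)
\end{equation*}
(the argument has zero mean, so $S_\mu$ applies) and the map
\begin{equation*}
\Psi(v)(t):=W_\mu(t)v_0+\int_0^t W_\mu(t-\tau)\big[Gh_v(\tau)-\partial_x(v^2)(\tau)\big]\,d\tau.
\end{equation*}
The choice of $h_v$ is made precisely so that $\mathcal{L}_\mu h_v$ cancels the obstruction, whence $\Psi(v)(0)=v_0$ and $\Psi(v)(T)=v_1$ for every $v\in B_R$; therefore any fixed point $v=\Psi(v)$ is a solution of the controlled equation meeting both endpoint conditions, and $u=\mu+v$, $h=h_v$ solve the problem.

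It remains to prove that $\Psi$ is a contraction of $B_R$ for $\delta$ and $R$ small. This rests on the standard linear and Duhamel estimates in $X^{s,b}_T$, the boundedness of $G$ and $S_\mu$, and above all on the bilinear estimate $\|\partial_x(v^2)\|_{X^{s,b-1}_T}\le C\|v\|_{X^{s,b}_T}^2$ together with its difference version $\|\partial_x(v^2-w^2)\|_{X^{s,b-1}_T}\le C(\|v\|_{X^{s,b}_T}+\|w\|_{X^{s,b}_T})\|v-w\|_{X^{s,b}_T}$. Feeding these in yields $\|\Psi(v)\|_{X^{s,b}_T}\le C\delta+C R^2$ and $\|\Psi(v)-\Psi(w)\|_{X^{s,b}_T}\le C(R+\delta)\|v-w\|_{X^{s,b}_T}$, so choosing first $R$ of order $\delta$ and then $\delta$ small enough makes $\Psi:B_R\to B_R$ a contraction. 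The unique fixed point gives $v\in X^{s,b}_T\hookrightarrow C([0,T];H_p^s(\mathbb{T}))$, hence the asserted regularity, and uniqueness of the trajectory follows from the well-posedness theory. The main obstacle is exactly this bilinear estimate: because the Benjamin symbol superposes the third-order term $-k^3$ and the second-order term $\alpha k|k|$, the resonance function governing the $X^{s,b}$ multiplier is more involved than for either model alone, and establishing the quadratic bound down to $s\ge 0$ with the correct $b$ is the delicate technical point; verifying that the shift $-2\mu k$ does not destroy the linear observability is a secondary, but necessary, check.
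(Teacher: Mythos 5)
Your overall architecture coincides with the paper's proof of Theorem \ref{Smalldatacontrol2}: the reduction $u=\mu+v$ to the mean-zero system, the bounded right inverse of the linear reachability map (your $S_{\mu}$ is the operator $\Phi_{\mu}$ of Remark \ref{sol4}), and the fixed-point map $\Psi$ that builds the control into the Duhamel formula is exactly the map $\Gamma$ in \eqref{gaop1}, with the same endpoint cancellation and the same smallness scheme for $\delta$ and $R$. However, there is one genuine gap, and it is precisely at the point you yourself flag as the ``delicate technical point'': the bilinear estimate $\|\partial_{x}(v^{2})\|_{X^{s,b-1}_{T}}\leq C\|v\|_{X^{s,b}_{T}}^{2}$ with $\tfrac12<b<1$ is \emph{false} on the torus, for every $s\geq 0$. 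Take $\widehat{u}(k,\tau)=\delta_{k,N}\,\mathbf{1}_{[0,1]}(\tau-\phi(N))$ and $\widehat{v}(k,\tau)=\delta_{k,1}\,\mathbf{1}_{[0,1]}(\tau-\phi(1))$, where $\phi(k)=-k^{3}-2\mu k+\alpha k|k|$. The product lives at frequency $k=N+1$ with modulation
\begin{equation*}
\tau-\phi(N+1)=\phi(N)+\phi(1)-\phi(N+1)+O(1)=3N^{2}+O(N),
\end{equation*}
consistent with the non-resonance bound $|E(k,k_1)|\sim |kk_{1}(k-k_{1})|$ of Remark \ref{resonance property}. Hence $\|\partial_{x}(uv)\|_{X_{s,b-1}}\sim N^{1+s}\,N^{2(b-1)}=N^{s+2b-1}$, while $\|u\|_{X_{s,b}}\|v\|_{X_{s,b}}\sim N^{s}$, so the estimate forces $b\leq\tfrac12$: the derivative loss $N$ can only be absorbed by the output-modulation weight, and at $b>\tfrac12$ there is a net loss $N^{2b-1}$. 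There is no periodic smoothing to rescue this, which is exactly why the paper states, after \eqref{cont-embd}, that in the periodic case one must take $b=\tfrac12$.

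The repair is the route the paper actually takes, and it changes more than one line of your argument. At $b=\tfrac12$ the embedding $X_{s,1/2}\not\hookrightarrow C(\mathbb{R};H^{s}_{p}(\mathbb{T}))$ also fails, so both of your uses of $b>\tfrac12$ (the contraction estimate and the continuity-in-time conclusion) break simultaneously. One must work in $Z_{s,\frac12}=X_{s,\frac12}\cap Y_{s,0}$: the bilinear estimate then holds in the asymmetric form of Theorem \ref{BilinearEstimate},
\begin{equation*}
\|\partial_{x}(uv)\|_{Z_{s,-\frac12}}\leq C_{\alpha,s}\bigl(\|u\|_{X_{s,\frac12}}\|v\|_{X_{s,\frac13}}+\|u\|_{X_{s,\frac13}}\|v\|_{X_{s,\frac12}}\bigr),
\end{equation*}
whose proof (via the $L^{4}$ Strichartz-type estimate of Theorem \ref{fundaimme} and the case analysis of Remark \ref{resonance property1}) is the real content of the local theory; the $Y_{s,-1}$ component of the $Z$-norm of the Duhamel term must be estimated separately (Step 2 of that proof, and Corollary \ref{Yestimate1} for the control term); and Proposition \ref{contimbedded} replaces the embedding you invoked to get $u\in C([0,T];H^{s}_{p}(\mathbb{T}))$. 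A minor further difference: the paper extracts a factor $T^{\epsilon}$ from the bilinear bound (Corollary \ref{Biestimate3} via Proposition \ref{Xestimate1}), though for this particular theorem your reliance on smallness of $\delta$ and $R$ alone would also suffice once the correct $b=\tfrac12$ estimates are in hand. Your secondary check — that the shift $-2\mu k$ preserves the linear controllability — is fine and is exactly Remark \ref{sol4}.
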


We show that the same feedback control law $K=-GG^{\ast}$ stabilizing the linearized Benjamin equation, stabilizes the nonlinear Benjamin equation as well. More precisely, we prove the following result.

\begin{thm}\label{GEF61}
	 Let $s\geq 0,$
 $\alpha>0,$  and	$\mu\in \mathbb{R}$ be given.
    There exists a constant $k'>0$
	such that for any $u_{0}\in H_{p}^{s}(\mathbb{T})$ with $[u_{0}]=\mu,$
	the corresponding solution $u\in C([0,+\infty), H_{p}^{s}(\mathbb{T}))$ of the
    closed-loop system
	\eqref{2D-BO2} with $Ku=-GG^{\ast}u,$ satisfies
$$\|u(\cdot,t)-[u_{0}]
	\|_{H_{p}^{s}(\mathbb{T})}\leq
	\alpha_{s,k'}(\|u_{0}-[u_{0}]\|_{H_{p}^{s}(\mathbb{T})})\;
	e^{-k't}\|u_{0}-[u_{0}]
	\|_{H_{p}^{s}(\mathbb{T})},\;\;\;\text{for all}\;t\geq 0,$$
	 where
	$\alpha_{s,k'}:\mathbb{R}^{+}\longrightarrow
	\mathbb{R}^{+}$ is a nondecreasing
	continuous function depending on $s$ and $k'.$
\end{thm}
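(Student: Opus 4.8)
\textbf{Step 1 (dissipation).} The plan is to deduce the global statement from an $L^2$ Lyapunov functional, a semi-global observability inequality for the nonlinear closed-loop system, and the unique continuation property, treating first $s=0$ and then bootstrapping to $s>0$. Since $GG^{\ast}u$ has zero mean the mean $[u(t)]=\mu$ is preserved, so it suffices to estimate $u-\mu$. Pairing \eqref{2D-BO2} (with $Ku=-GG^{\ast}u$) with $u$ in $L^2(\mathbb{T})$ and using that $\partial_x^3$ and $\mathcal{H}\partial_x^2$ are skew-adjoint while $\int_{\mathbb{T}}u\,\partial_x(u^2)\,dx=0$, I obtain
\[
\frac{d}{dt}\,\tfrac12\,\|u(\cdot,t)-\mu\|_{L^2(\mathbb{T})}^2=-\|G^{\ast}u(\cdot,t)\|_{L^2(\mathbb{T})}^2\le0 .
\]
Hence $\|u(\cdot,t)-\mu\|_{L^2}$ is nonincreasing, and $L^2$ exponential decay will follow once $\|u_0-\mu\|_{L^2}^2$ is controlled by the dissipated energy $\int_0^T\|G^{\ast}u\|_{L^2}^2\,dt$.

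\textbf{Step 2 (semi-global observability, the crux).} I would show that for every $R>0$ there is a nondecreasing $\beta(R)>0$ with $\|u_0-\mu\|_{L^2}^2\le\beta(R)\int_0^T\|G^{\ast}u\|_{L^2}^2\,dt$ for every closed-loop solution with $\|u_0-\mu\|_{L^2}\le R$, arguing by contradiction. Otherwise there is a bounded sequence of solutions $u_n$ with $\alpha_n:=\|u_n(0)-\mu\|_{L^2}$ bounded below and $\int_0^T\|G^{\ast}u_n\|_{L^2}^2\,dt\to0$. The propagation of compactness in Bourgain's spaces upgrades weak to strong convergence in $L^2(0,T;L^2(\mathbb{T}))$, which is what allows passage to the limit in the quadratic term $\partial_x(u_n^2)$. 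If $\alpha_n\to0$, rescaling $w_n:=(u_n-\mu)/\alpha_n$ produces in the limit a solution of the Benjamin equation linearized about $\mu$ with $G^{\ast}w\equiv0$ on $\mathbb{T}\times(0,T)$, which the linear observability behind Theorem \ref{stabilization1} forces to vanish, contradicting $\|w_n(0)\|_{L^2}=1$; if $\alpha_n\to\alpha>0$, the limit solves the uncontrolled equation \eqref{2D-BO} with $G^{\ast}u\equiv0$, and the unique continuation property (Proposition \ref{UCP2}) gives $u\equiv\mu$, again a contradiction.

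\textbf{Step 3 (uniform rate, data-dependent constant).} Steps 1--2 give $\|u(T)-\mu\|_{L^2}^2\le\bigl(1-2/\beta(R)\bigr)\|u_0-\mu\|_{L^2}^2$ with factor in $[0,1)$; since the norm never increases, the same $R$ works on every interval $[nT,(n+1)T]$ and iteration yields exponential decay, albeit at a rate depending on $R$. To make the rate uniform I fix a small radius $\delta_0$ and keep the constant $\beta(\delta_0)$: inside the ball of radius $\delta_0$ this gives decay $e^{-k't}$ with a fixed $k'=k'(\delta_0)$, while larger data are driven into that ball in a finite time $T^{*}(R)$ by the (possibly slower) decay, after which the uniform rate applies; absorbing the transient into a nondecreasing factor $\alpha_{0,k'}(R)\sim M\,e^{k'T^{*}(R)}$ closes the case $s=0$.

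\textbf{Step 4 (bootstrap to $s>0$, and the main obstacle).} For $s>0$ I would establish the analogue of the observability inequality in $H^s_p(\mathbb{T})$, now invoking the propagation of regularity in Bourgain's spaces to gain the extra derivatives on the weak limits, and combine it with the small-data $H^s$ exponential stability coming from the linear decay of Theorems \ref{stabilization1}--\ref{estabilization2} and the bilinear estimates in Bourgain's spaces; the iteration of Step 3 then yields the stated $\alpha_{s,k'}$. The main difficulty throughout is Step 2: extracting \emph{strong} convergence of the solution sequence so that the quadratic nonlinearity passes to the limit---exactly where propagation of compactness and regularity in Bourgain's spaces are indispensable---and then applying the unique continuation property to annihilate the limit.
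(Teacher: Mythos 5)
Your Steps 1--3 reproduce the paper's argument for $s=0$ faithfully: the $L^2$ dissipation identity is \eqref{GWP3} with $\lambda=0$, the semi-global observability inequality by contradiction is Proposition \ref{Ob1} (including your two-case split on $\alpha_n=\|u_n(0)\|_{L^2_0}$ --- nonlinear UCP via Proposition \ref{UCP2} when $\alpha>0$, linearized UCP after rescaling when $\alpha=0$ --- and the use of propagation of compactness to pass to the limit in $\partial_x(u_n^2)$), and the iteration with a rate independent of $R_0$ is exactly Theorem \ref{GEF1}. You do implicitly assume global well-posedness in $H_0^s$ with the $Z^T_{s,\frac12}$ bound \eqref{GWP1}; the paper needs Theorem \ref{GWP} for this, but that is a presentational omission rather than a gap.

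The genuine gap is Step 4. Your plan --- an ``$H^s$ analogue of the observability inequality'' combined with dissipation --- cannot work as stated, because there is no $H^s$ Lyapunov identity for the nonlinear closed-loop system: pairing the equation with $D^{2s}u$ leaves the term $\int_{\mathbb{T}} D^{s}u\, D^{s}\partial_x(u^2)\,dx$, which does not vanish and is not sign-definite, so the dissipated quantity no longer controls $\|u_0\|_{H^s}$ via an energy balance; the cancellation $\int u\,\partial_x(u^2)\,dx=0$ is special to $s=0$. Moreover, propagation of regularity (Proposition \ref{rglarty}) is used in the paper only to prove smoothness for the unique continuation step (Corollary \ref{rglarty51}), not to produce higher-order observability. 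The paper's actual device (Theorem \ref{GEF6}, following Laurent--Rosier--Zhang) is to differentiate in time: $v=\partial_t u$ solves the \emph{linear} equation \eqref{GEF8} with potential $a=u$, and the already-proved $s=0$ decay makes $\|u\|_{Z^{[t,t+T]}_{0,\frac12}}$ small for $t\geq t^{\ast}$ (see \eqref{GEF12}), so the perturbative exponential stability of the linearized system (Lemma \ref{GEF18}, proved by a discrete Gronwall iteration against the semigroup $T_0(t)$) yields exponential decay of $v$ in $L^2_0$; the full $H^3$ norm of $u$ is then recovered algebraically from the equation \eqref{GEF44}, and $0<s<3$ follows by nonlinear interpolation using a Lipschitz stability estimate for differences of solutions, with $s\in 3\mathbb{N}^{\ast}$ handled similarly. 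This also explains a point your sketch leaves open: the admissible rate $k'$ at the $H^s$ level is capped by both the nonlinear $L^2$ rate $k$ and the linear semigroup rate $\gamma$ (the infimum $k_0$ in Theorem \ref{GEF6}), a constraint that emerges from Lemma \ref{GEF18} and is invisible in a pure observability scheme.
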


The global controllability result is derived by
a combination of the exponential stabilization result
presented in Theorem \ref{GEF61} and the local control result presented in Theorem \ref{Smalldatacontrol},
as is usual in  control theory (see for instance
\cite{Dehman Lebeau and Zuazua, Dehman Gerard and Lebeau, Laurent, 14, Laurent Linares and Rosier}).
Indeed, given the initial data $u_{0}$  to be controlled, by means of the damping
term $Ku=-GG^{\ast}u$ supported in $\omega,$ i.e by solving the IVP
\eqref{2D-BO2} (with $Ku=-GG^{\ast}u$), we
drive it to a small state in a sufficiently large time.
We do the same with the final state $u_{1}$ by solving the
system backwards in time, due to the time reversibility of the  Benjamin equation. This produces two states which are small enough so that the local controllability result for small data applies.
Using this procedure, we obtain the following large data control result as a direct consequence of
Theorems \ref{Smalldatacontrol} and \ref{GEF61}.
\begin{thm}[Large data control]\label{Largedatacontrol3}
	Let  $s\geq0,$ $\alpha>0,$ 
	$\mu \in \mathbb{R},$  and $R>0$ be given.
	Then there exists a time $T>0,$ such that
	for any $u_{0},u_{1}\in H_{p}^{s}(\mathbb{T})$ with $[u_{0}]=[u_{1}]=\mu$  and
	$$\|u_{0}\|_{H_{p}^{s}(\mathbb{T})}\leq R,\;\;\;\;\;\|u_{1}\|_{H_{p}^{s}(\mathbb{T})}\leq R,$$
	 one can find a control input $h\in L^{2}([0,T];H_{p}^{s}(\mathbb{T}))$ such that the IVP associated to 
	 \eqref{2D-BO1} with $f=Gh$
	admits  a unique solution $u\in C([0,T];H_{p}^{s}(\mathbb{T}))$ satisfying
	$$u(x,0)=u_{0}(x), \;\;\;\;u(x,T)=u_{1}(x),\;\;\;\text{for all}\;x\in \mathbb{T}.$$
	
	Thus, the equation \eqref{2D-BO1} is globally exactly controllable.
\end{thm}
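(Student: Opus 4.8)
The plan is to assemble the result from the two deep ingredients already at hand—the global exponential stabilization of Theorem \ref{GEF61} and the local exact controllability of Theorem \ref{Smalldatacontrol}—by the classical \emph{stabilize, control locally, stabilize backward} scheme. Fix $s\ge 0$, $\alpha>0$, $\mu\in\mathbb{R}$, $R>0$, and let $\delta=\delta(s,\alpha,\mu)>0$ be the radius furnished by Theorem \ref{Smalldatacontrol} for an arbitrary fixed control time $T_3>0$ (say $T_3=1$). Since every admissible forcing $f=Gh$ has zero spatial mean by \eqref{mediaf}, the quantity $[u(\cdot,t)]$ is conserved along any controlled trajectory; hence all intermediate states inherit the common mean $\mu$, which is precisely the compatibility needed to invoke Theorem \ref{Smalldatacontrol}.

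First I would drive $u_{0}$ toward its mean. Solving the closed-loop system \eqref{2D-BO2} with $Ku=-GG^{\ast}u$ produces a controlled trajectory $u^{(1)}\in C([0,\infty);H_{p}^{s}(\mathbb{T}))$ starting at $u_{0}$, whose associated control $h_{1}=-G^{\ast}u^{(1)}$ lies in $L^{2}([0,T_{1}];H_{p}^{s}(\mathbb{T}))$ because $G^{\ast}$ is bounded on $H_{p}^{s}(\mathbb{T})$. The key quantitative point is uniformity over the ball: for $[u_{0}]=\mu$ and $\|u_{0}\|_{H_{p}^{s}}\le R$ one has $\|u_{0}-\mu\|_{H_{p}^{s}}\le R+\sqrt{2\pi}\,|\mu|=:\rho$, and since the gain $\alpha_{s,k'}$ in Theorem \ref{GEF61} is nondecreasing,
$$\|u^{(1)}(\cdot,t)-\mu\|_{H_{p}^{s}(\mathbb{T})}\le \alpha_{s,k'}(\rho)\,\rho\,e^{-k't}.$$
Choosing $T_{1}\ge \tfrac{1}{k'}\log\!\big(\alpha_{s,k'}(\rho)\,\rho/\delta\big)$—a number depending only on $s,\alpha,\mu,R,g$—guarantees that the reached state $a:=u^{(1)}(\cdot,T_{1})$ satisfies $\|a-\mu\|_{H_{p}^{s}}\le\delta$ and $[a]=\mu$.

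Next I would produce a backward analogue for $u_{1}$. Using the reversibility of \eqref{2D-BO} under $(x,t)\mapsto(-x,-t)$—one checks directly that if $w$ solves the equation then so does $w(-x,-t)$, the Hilbert-transform term being compatible because $\mathcal{H}$ anticommutes with spatial reflection while $\partial^{3}_{x}$ is odd—the stabilization estimate applied to $u_{1}$ yields, after reversing time, an admissible controlled trajectory on an interval of length $T_{2}=T_{2}(R)$ (taken equal to $T_{1}$) that starts at a state $b$ with $\|b-\mu\|_{H_{p}^{s}}\le\delta$, $[b]=\mu$, and ends exactly at $u_{1}$. Finally, Theorem \ref{Smalldatacontrol} supplies a control $h_{2}\in L^{2}([0,T_{3}];H_{p}^{s}(\mathbb{T}))$ steering $a$ to $b$ in time $T_{3}$, since both lie within $\delta$ of $\mu$ and share the mean $\mu$. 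Concatenating the three controls on $[0,T_{1}]$, $[T_{1},T_{1}+T_{3}]$ and $[T_{1}+T_{3},T]$ with $T:=2T_{1}+T_{3}$ gives $h\in L^{2}([0,T];H_{p}^{s}(\mathbb{T}))$, and uniqueness for the forced IVP \eqref{2D-BO1} glues the three pieces into a single $u\in C([0,T];H_{p}^{s}(\mathbb{T}))$ with $u(\cdot,0)=u_{0}$ and $u(\cdot,T)=u_{1}$.

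I expect the only genuinely delicate point to be the backward step: one must verify that the time-reversed, reflected trajectory is again a solution of \eqref{2D-BO1} driven by an admissible control of the form $Gh$ in \eqref{EQ1}, and that it terminates precisely at the prescribed $u_{1}$ rather than a reflected copy, keeping careful track of how the support of the forcing transforms under the symmetry. Everything else—boundedness of $G$ and $G^{\ast}$ on $H_{p}^{s}(\mathbb{T})$, conservation of the mean, the uniform choice of $T_{1}$ via monotonicity of $\alpha_{s,k'}$, and the continuity of the glued solution—is routine bookkeeping resting on the well-posedness of the controlled Benjamin equation.
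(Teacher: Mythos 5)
Your proposal follows essentially the paper's own route: the paper obtains Theorem \ref{Largedatacontrol3} exactly by combining the global stabilization of Theorem \ref{GEF61} (applied forward to $u_{0}$ and, via the time reversibility $(x,t)\mapsto(-x,-t)$, backward to $u_{1}$) with the local control of Theorem \ref{Smalldatacontrol}, and your uniform choice of $T_{1}$ through the monotonicity of $\alpha_{s,k'}$ is precisely the intended quantitative step. The delicate point you flag resolves as expected: run the damped flow from the reflected state $u_{1}(-\cdot)$ using the reflected localization $\tilde g(x):=g(-x)$, which satisfies the same hypotheses \eqref{gcondition} with $\omega$ replaced by $-\omega$, and then reverse; since $(G_{\tilde g}w)(-x)=\bigl(G(w(-\cdot))\bigr)(x)$, the reversed trajectory is driven by an admissible control of the form $Gh$ supported in $\omega$ and terminates exactly at $u_{1}$ rather than at a reflected copy.
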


To get the decay rate $k'$ in Theorem \ref{GEF61} arbitrarily large,
a different control law is needed. First, the same feedback control law
stabilizing the linearized Benjamin equation
given in Theorem \ref{estabilization2} allows us to get
the following local stabilization result for
the Benjamin equation.

\begin{thm}\label{LER1}
	Let $0<\lambda'<\lambda$ and $s\geq0$ be given. Assume
	 $\alpha>0.$  There exist $\delta>0$
and a linear bounded operator $K_{\lambda}:H_{p}^{s}(\mathbb{T})\rightarrow H_{p}^{s}(\mathbb{T})$ such that
 for any
	$u_{0}\in H_{p}^{s}(\mathbb{T})$ with
	$\|u_{0}-[u_{0}]\|_{H_{p}^{s}(\mathbb{T})}\leq \delta,$ the corresponding solution
$u\in C([0,+\infty), H_{p}^{s}(\mathbb{T}))$ of the closed-loop system
	\eqref{2D-BO2} with $Ku=K_{\lambda}u,$ satisfies
$$\|u(\cdot,t)-[u_{0}]
	\|_{H_{p}^{s}(\mathbb{T})}\leq
	C\;e^{-\lambda't}\|u_{0}-[u_{0}]
	\|_{H_{p}^{s}(\mathbb{T})},\;\;\;\text{for all}\;\;t\geq 0,$$
	where $C>0$ is a constant independent of $u_{0}.$
\end{thm}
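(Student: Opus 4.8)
The plan is to treat Theorem~\ref{LER1} as a \emph{perturbative} result: combine the arbitrary-rate exponential decay of the \emph{linearized} closed-loop flow supplied by Theorem~\ref{estabilization2} with the bilinear smoothing estimate underlying the local well-posedness (and the local control Theorem~\ref{Smalldatacontrol}), and then run a bootstrap over successive time windows so that the quadratic nonlinearity is absorbed by the rate gap $\lambda-\lambda'>0$. Unlike the global stabilization in Theorem~\ref{GEF61}, the local statement should \emph{not} require the propagation of compactness/regularity nor the unique continuation property; the linear gap alone drives the argument. First I would reduce to the mean-zero situation. Since the mean is conserved and the feedback operator vanishes on constants (as does $G$, by the construction of $K_\lambda$ in \cite{Vielma and Panthee}), setting $\mu:=[u_{0}]$ and $w:=u-\mu$ gives
\begin{equation*}
\partial_{t}w-\alpha\mathcal{H}\partial_{x}^{2}w-\partial_{x}^{3}w+2\mu\,\partial_{x}w+\partial_{x}(w^{2})=K_{\lambda}w,\qquad [w]=0 .
\end{equation*}
The transport term $2\mu\,\partial_{x}w$ is linear but \emph{not} quadratically small, so it must stay in the linear part. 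I would let $W(t)$ be the semigroup generated by $\alpha\mathcal{H}\partial_{x}^{2}+\partial_{x}^{3}-2\mu\,\partial_{x}-K_{\lambda}$ on the mean-zero subspace of $H^{s}_{p}(\mathbb{T})$ and claim $\|W(t)\|_{\mathcal{L}(H^{s}_{p}(\mathbb{T}))}\le M e^{-\lambda t}$. This is Theorem~\ref{estabilization2} up to that extra term; because $-2\mu\,\partial_{x}$ is a skew-adjoint Fourier multiplier that commutes with $(1-\partial_{x}^{2})^{s/2}$ and contributes nothing to the energy identity for $\tfrac{d}{dt}\|w\|_{L^{2}}^{2}$, it shifts the spectrum only along the imaginary axis and leaves both the rate $\lambda$ and the constant $M$ unchanged, uniformly in $\mu$.

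Next, using the Duhamel representation
\begin{equation*}
w(t)=W(t)w_{0}-\int_{0}^{t}W(t-\tau)\,\partial_{x}\big(w^{2}\big)(\tau)\,d\tau,
\end{equation*}
I would prove a one-step contraction. Fix a window length $T_{0}>0$. For $\|w_{0}\|_{H^{s}_{p}(\mathbb{T})}\le\delta$ the contraction behind local well-posedness, now carrying the bounded term $K_{\lambda}w$ (harmless since $K_\lambda$ is bounded), yields a solution on $[0,T_{0}]$ with $\|w\|_{X^{s,b}_{T_{0}}}\le C_{0}\|w_{0}\|_{H^{s}_{p}(\mathbb{T})}$ for some $b>\tfrac12$. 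Combining the linear decay above with the bilinear estimate $\|\partial_{x}(w^{2})\|_{X^{s,b-1}_{T_{0}}}\le C\,\|w\|_{X^{s,b}_{T_{0}}}^{2}$ (the same one used for well-posedness and local control) then gives
\begin{equation*}
\|w(T_{0})\|_{H^{s}_{p}(\mathbb{T})}\le Me^{-\lambda T_{0}}\|w_{0}\|_{H^{s}_{p}(\mathbb{T})}+C_{1}\|w_{0}\|_{H^{s}_{p}(\mathbb{T})}^{2}\le\big(Me^{-\lambda T_{0}}+C_{1}\delta\big)\|w_{0}\|_{H^{s}_{p}(\mathbb{T})}.
\end{equation*}
Since $\lambda>\lambda'$, I would first choose $T_{0}$ so large that $Me^{-\lambda T_{0}}\le\tfrac12 e^{-\lambda' T_{0}}$, and then $\delta$ so small that $C_{1}\delta\le\tfrac12 e^{-\lambda' T_{0}}$, which forces $\|w(T_{0})\|_{H^{s}_{p}(\mathbb{T})}\le e^{-\lambda' T_{0}}\|w_{0}\|_{H^{s}_{p}(\mathbb{T})}$.

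Finally I would iterate. The contraction factor $e^{-\lambda' T_{0}}<1$ keeps the state below $\delta$, so the one-step estimate reapplies on each $[nT_{0},(n+1)T_{0}]$ and yields $\|w(nT_{0})\|_{H^{s}_{p}(\mathbb{T})}\le e^{-n\lambda' T_{0}}\|w_{0}\|_{H^{s}_{p}(\mathbb{T})}$; combining this with the local-in-time bound $\sup_{[nT_{0},(n+1)T_{0}]}\|w\|_{H^{s}_{p}(\mathbb{T})}\le C_{0}\|w(nT_{0})\|_{H^{s}_{p}(\mathbb{T})}$ covers the intermediate times and produces $\|w(t)\|_{H^{s}_{p}(\mathbb{T})}\le Ce^{-\lambda' t}\|w_{0}\|_{H^{s}_{p}(\mathbb{T})}$ with $C=C_{0}e^{\lambda' T_{0}}$, which is the assertion after undoing the shift $w=u-\mu$.

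The main obstacle is the one-step estimate of the second paragraph. One must run the Bourgain-space bilinear estimate, which is tailored to the free Benjamin propagator, simultaneously with the $H^{s}$ decay of the \emph{damped} semigroup $W$, transferring estimates between the two via variation of parameters in the bounded perturbation $K_{\lambda}$. Equally delicate is verifying that the transport term $2\mu\,\partial_{x}w$ neither spoils the rate $\lambda$ nor makes the constants $M$, $C_{0}$, $C_{1}$ depend on $\mu$ (so that a single $\delta$ works for all means, as the statement demands); this is precisely what guarantees that the nonlinear remainder is genuinely $O(\|w_{0}\|_{H^{s}_{p}(\mathbb{T})}^{2})$ and lets the gap $\lambda-\lambda'$ close the argument.
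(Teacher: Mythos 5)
Your proposal follows essentially the same route as the paper: Theorem \ref{LER1} is proved there as Theorem \ref{LEF1} (after the mean-zero reduction you describe), by exactly your scheme --- use the damped semigroup $T_{\lambda}(t)=e^{(A_{\mu}-K_{\lambda})t}$ (your $W$), whose decay at rate $\lambda$ is Theorem \ref{st37}; transfer the Bourgain-space linear and bilinear estimates to $T_{\lambda}$ by variation of parameters in the bounded perturbation $K_{\lambda}$, which is precisely Lemma \ref{LEF5}, proved via the identity \eqref{LEF12} together with Lemma \ref{LWP7}; then close with the one-step bound $\|w(T_{0})\|\leq\bigl(Me^{-\lambda T_{0}}+C_{1}\delta\bigr)\|w_{0}\|$ and iteration over windows of length $T_{0}$, following \cite[Theorem 4.3]{14}. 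You also correctly observe that, unlike the global result, no propagation of compactness or unique continuation is needed here.

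Two details need correction, though neither changes the architecture. First, you state the one-step contraction in $X^{T_{0}}_{s,b}$ with $b>\tfrac12$ and the bilinear estimate $\|\partial_{x}(w^{2})\|_{X_{s,b-1}}\lesssim\|w\|_{X_{s,b}}^{2}$; in the periodic setting this is not available. As the paper notes right after \eqref{cont-embd}, the bilinear estimate forces $b=\tfrac12$, at which the embedding into $C(\mathbb{R};H^{s}_{p}(\mathbb{T}))$ fails, so one must work in $Z^{T}_{s,\frac12}=X^{T}_{s,\frac12}\cap Y^{T}_{s,0}$ with the estimates of Theorem \ref{BilinearEstimate} and Corollary \ref{Biestimate3} in the $Z_{s,-\frac12}$ norm; since you invoke ``the same estimate used for well-posedness and local control,'' this is a misstatement rather than a missing idea, but as written that step would fail. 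Second, your claim that the rate $\lambda$ and constant $M$ are uniform in $\mu$ because $-2\mu\partial_{x}$ is skew-adjoint does not stand: the feedback $K_{\lambda}=GG^{\ast}L_{\lambda}^{-1}$ itself depends on $\mu$, because $L_{\lambda}$ in \eqref{st1} is built from the group $U_{\mu}$, and indeed the constant in Theorem \ref{st37} is $M=M(g,\lambda,\delta,\alpha,\mu,s)$. Fortunately this uniformity is not needed: $\mu=[u_{0}]$ is conserved by the closed-loop flow, the operator $K_{\lambda}$ is chosen after $\mu$ (as in the linear Theorem \ref{estabilization2}), and per-$\mu$ constants suffice for the statement.
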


Next, the feedback laws involved in Theorems \ref{GEF61} and \ref{LER1}
can be combined into a time-varying feedback law, as in \cite{Coron and Rosier, 14}, ensuring
a global stabilization result with an arbitrary  large decay
rate for the Benjamin equation.

\begin{thm}\label{LER2}
Let	$s\geq 0,$ $\lambda>0,$  and $\alpha>0,$ 
be given.  For any
$u_{0}\in H_{p}^{s}(\mathbb{T})$ with $\mu=[u_{0}],$ there exists
a continuous map $Q_{\lambda}:H_{p}^{s}(\mathbb{T})\times \mathbb{R}\rightarrow H_{p}^{s}(\mathbb{T})$
which is periodic in the second variable, and
such that the unique solution $u\in C([0,+\infty), H_{p}^{s}(\mathbb{T}))$ of the closed-loop
system \eqref{2D-BO2} with $Ku=-GQ_{\lambda}(u,t),$ satisfies
$$\|u(\cdot,t)-[u_{0}]
	\|_{H_{p}^{s}(\mathbb{T})}\leq
	\gamma_{s,\lambda, \mu}(\|u_{0}-[u_{0}]\|_{H_{p}^{s}(\mathbb{T})})\;
	e^{-\lambda'' t}\|u_{0}-[u_{0}]
	\|_{H_{p}^{s}(\mathbb{T})},\;\;\;\text{for all}\;\;t\geq 0,$$
where $\gamma_{s,\lambda,\mu}:\mathbb{R}^{+}\rightarrow \mathbb{R}^{+}$ is a nondecreasing continuous function
 depending on $s,\;\lambda$ and $\mu.$
\end{thm}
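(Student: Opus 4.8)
The plan is to derive Theorem \ref{LER2} by combining the global exponential stabilization at the fixed rate $k'$ furnished by Theorem \ref{GEF61} with the local stabilization at an arbitrary rate furnished by Theorem \ref{LER1}, following the time-periodic feedback construction of Coron and Rosier \cite{Coron and Rosier} (see also \cite{14}). First, given $\lambda>0$, I fix rates $0<\lambda''<\lambda'<\lambda$ and apply Theorem \ref{LER1} with the pair $\lambda',\lambda$ to obtain a bounded linear operator $K_{\lambda}$, a radius $\delta>0$, and a constant $C>0$ such that every solution issuing from $u_{0}$ with $\norm{u_{0}-[u_{0}]}_{H_{p}^{s}(\mathbb{T})}\le\delta$ under the feedback $Ku=K_{\lambda}u$ satisfies $\norm{u(\cdot,t)-[u_{0}]}_{H_{p}^{s}(\mathbb{T})}\le Ce^{-\lambda' t}\norm{u_{0}-[u_{0}]}_{H_{p}^{s}(\mathbb{T})}$; in particular such a trajectory never leaves the ball $B_{\delta}:=\{v:\norm{v-[v]}_{H_{p}^{s}(\mathbb{T})}\le\delta\}$. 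Because $\lambda$ is arbitrary, the achievable rate $\lambda''$ is arbitrary as well, which is exactly the content we want.

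Next, I would choose a period $T>0$ and build the $T$-periodic map $Q_{\lambda}(u,t)$ so that the closed-loop feedback $Ku=-GQ_{\lambda}(u,t)$ switches continuously between two regimes according to the size of $\norm{u-[u]}_{H_{p}^{s}(\mathbb{T})}$: when the state lies inside $B_{\delta}$ the feedback reduces to the local law $K_{\lambda}$ (realized in the form $-G(\cdot)$, like $-GG^{\ast}$), while when the state is large it reduces to the global damping $-GG^{\ast}u$, i.e. $Q_{\lambda}=G^{\ast}u$. The delicate point is to interpolate between these two state-dependent prescriptions by means of a smooth cut-off in the state variable together with a smooth $T$-periodic modulation in $t$, so that $Q_{\lambda}:H_{p}^{s}(\mathbb{T})\times\mathbb{R}\to H_{p}^{s}(\mathbb{T})$ is continuous and periodic in its second argument; this is precisely the gluing of Coron and Rosier \cite{Coron and Rosier}. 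Well-posedness of the resulting nonlinear closed-loop system in $C([0,+\infty);H_{p}^{s}(\mathbb{T}))$ is then obtained along the lines of the fixed-point and a priori estimates already used for Theorems \ref{GEF61} and \ref{LER1}, since $Q_{\lambda}$ is a bounded continuous map built from $G$, $G^{\ast}$ and $K_{\lambda}$.

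To obtain the decay estimate I would analyze the trajectory period by period. For data with $\norm{u_{0}-[u_{0}]}_{H_{p}^{s}(\mathbb{T})}\le\delta$, the feedback is purely local on $[0,T]$, so by Theorem \ref{LER1} the state remains in $B_{\delta}$ and $\norm{u(\cdot,T)-[u_{0}]}_{H_{p}^{s}(\mathbb{T})}\le Ce^{-\lambda' T}\norm{u_{0}-[u_{0}]}_{H_{p}^{s}(\mathbb{T})}$. Here choosing $T$ \emph{large} is crucial: since $\lambda''<\lambda'$, we may fix $T$ so large that $Ce^{-\lambda' T}\le e^{-\lambda'' T}$, whence the period map contracts by at least $e^{-\lambda'' T}$; iterating over consecutive periods gives the bound $\norm{u(\cdot,t)-[u_{0}]}_{H_{p}^{s}(\mathbb{T})}\le C'e^{-\lambda'' t}\norm{u_{0}-[u_{0}]}_{H_{p}^{s}(\mathbb{T})}$ for all $t\ge0$ in the small-data case. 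For large data, Theorem \ref{GEF61} guarantees that under the global phase the state enters $B_{\delta}$ after a finite number $N$ of periods, with $N$ controlled by $\norm{u_{0}-[u_{0}]}_{H_{p}^{s}(\mathbb{T})}$ through the nondecreasing function $\alpha_{s,k'}$; from the period $NT$ onwards the previous small-data analysis applies. Patching the two regimes and absorbing the bounded growth accumulated over the first $N$ periods into a nondecreasing continuous prefactor yields the stated bound with $\gamma_{s,\lambda,\mu}(\norm{u_{0}-[u_{0}]}_{H_{p}^{s}(\mathbb{T})})$.

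The main obstacle I anticipate is twofold. First, producing a single \emph{continuous} and \emph{time-periodic} feedback $Q_{\lambda}$ out of two genuinely different state-dependent laws, while preserving the dichotomy ``stay local inside $B_{\delta}$, contract globally outside'', requires the careful construction of \cite{Coron and Rosier}; one must verify that the interpolation does not destroy the net per-period contraction and that trajectories cannot oscillate across the interface $\partial B_{\delta}$ in a way that defeats the estimate. Second, all the per-period estimates must be carried out for the \emph{nonlinear} equation \eqref{2D-BO2}: the quadratic term $\partial_{x}(u^{2})$ has to be controlled uniformly over each period by means of the Bourgain-space bilinear estimates and the propagation properties established earlier in the paper, so that the rates $k'$ and $\lambda'$ survive the nonlinear perturbation on the relevant time scales. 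Once these two points are secured, the period-by-period combination argument above delivers the global exponential stability with the arbitrarily large rate $\lambda''$.
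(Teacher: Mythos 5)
Your high-level plan coincides with the paper's: Theorem \ref{LER2} is proved in Section \ref{section 6} via Theorem \ref{TVFL24}, by combining the global stabilization of Theorem \ref{GEF61} with the local arbitrary-rate stabilization of Theorem \ref{LER1} through a time-periodic feedback in the style of Coron--Rosier, with the detailed period-map estimates deferred to \cite[Theorem 5.1]{14}. However, the concrete law you describe --- $K_{\lambda}$ when the state lies in $B_{\delta}$, $GG^{\ast}$ when it is large, glued by a cutoff in the \emph{state} variable --- is not the construction of \cite{Coron and Rosier}/\cite{14}/the paper, and your period-map analysis relies on properties your design does not have. The paper's feedback \eqref{TVFL7} uses the state cutoff $\rho$ only to force the pure damping $GG^{\ast}$ for large states; for small states it alternates \emph{in time} between $K_{\lambda}$ and $GG^{\ast}$ within each period of length $2T$ via the $2$-periodic function $\theta$ of \eqref{TVFL6}. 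The crucial structural point you miss is that when $\theta(\frac{t}{T}-1)=1$ the law collapses to exactly $GG^{\ast}u$ for \emph{every} state, since $\rho\, GG^{\ast}u+(1-\rho)GG^{\ast}u=GG^{\ast}u$; hence on fixed time windows of every period the globally valid Theorem \ref{GEF61} applies no matter where the trajectory sits, and no decay estimate is ever required in the annulus where $\rho$ interpolates.

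The genuine gap in your version is precisely at the interface you flag but do not resolve. You assert that for $\|u_{0}-[u_{0}]\|_{H_{p}^{s}}\leq\delta$ ``the feedback is purely local on $[0,T]$'' and invoke Theorem \ref{LER1}; but that theorem is proved for the pure closed loop $Ku=K_{\lambda}u$, and under your glued law the trajectory can transiently leave $B_{\delta}$ (the constant $C$ in Theorem \ref{LER1} may exceed $1$), after which the feedback is the convex combination $\rho K_{\lambda}u+(1-\rho)GG^{\ast}u$, for which neither Theorem \ref{LER1} nor Theorem \ref{GEF61} gives any decay --- note $(K_{\lambda}u,u)_{L^{2}}$ has no sign, since $K_{\lambda}=GG^{\ast}L_{\lambda}^{-1}$. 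The same defect undermines your large-data step: once the trajectory reaches the interpolation region, Theorem \ref{GEF61} no longer applies to the glued system, and nothing prevents stalling there. This is exactly what the time-alternating structure repairs, and it also changes the quantitative outcome: the actual construction yields a per-period contraction that is a \emph{mean value} of the two rates, $\lambda''\in\bigl(\frac{\lambda'}{2},\frac{\lambda'+\kappa}{2}\bigr)$ as in Theorem \ref{TVFL24}, not an arbitrary $\lambda''<\lambda'$ as your analysis claims (this still suffices for the statement, since $\lambda$, hence $\lambda'$, is arbitrary). To complete the proof you must replace your state-based switch by the law \eqref{TVFL7} and run the period-by-period estimates of \cite[Theorem 5.1]{14}, in which the $GG^{\ast}$-window contracts globally, the $K_{\lambda}$-window contracts once the state has been made small, and the transition windows of length $O(\delta T)$ contribute only a bounded multiplicative loss absorbed by taking $T$ large.
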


Note that, any solution $u$ of the IVP associated to 
\eqref{2D-BO1} with $f=Gh$ possesses a constant mean value, say $\mu:=[u(\cdot,t)]=[u_{0}].$ In order to work on  Bourgain's spaces in the periodic setting we need $[u(\cdot,t)]=0.$ To achieve this, it is convenient to defie	 $\widetilde{u}(x,t)=u(x,t)-\mu,$ so that $[\widetilde{u}(\cdot,t)]=0,$ for all $t\in [0,T].$ Moreover, $\widetilde{u}$ solves
\begin{equation}\label{nonlinearintro3}
\left \{
\begin{array}{l l}
\partial_{t}\widetilde{u}-\alpha H\partial^{2}_{x}\widetilde{u}-\partial^{3}_{x}\widetilde{u}+2\mu\partial_{x}\widetilde{u}+2\widetilde{u}\partial_{x} \widetilde{u}=Gh(x,t),&  t\in(0,T),\;\;x\in \mathbb{T}\\
\widetilde{u}(x,0)=\widetilde{u}_{0}(x):=u_{0}(x)-\mu, & x\in \mathbb{T},
\end{array}
\right.
\end{equation}
where  $\widetilde{u}_{0}\in H_{p}^{s}(\mathbb{T})$ with $s\geq0.$ Observe that $\widetilde{u}$ is a solution of the IVP \eqref{nonlinearintro3}, if and only if,
 $u(x,t)=\widetilde{u}(x,t)+\mu$ solves the IVP associated to 
\eqref{2D-BO1} with $f=Gh$.

From now on, for simplicity, we suppress the notation $\widetilde{u}$ by $u,$  $\mu$ will denote a given real constant, and we work on system  \eqref{nonlinearintro3}. We define
$$H_{0}^{s}(\mathbb{T}):=\left\{u\in H_{p}^{s}(\mathbb{T}):
[u(\cdot,t)]=0,\;\;\text{for all}\;\;t\geq0\right\}.$$

If $s=0,$ then we denote $H_{0}^{0}(\mathbb{T})$ by $L_{0}^{2}(\mathbb{T}).$ 
It is known that
$H_{0}^{s}(\mathbb{T})$ is a closed subspace of $H_{p}^{s}(\mathbb{T})$ for all
$s\geq 0.$ In particular, $L_{0}^{2}(\mathbb{T})$ is a closed subspace of $L^{2}(\mathbb{T}).$
Furthermore,
$(H_{0}^{s}(\mathbb{T}), \|\cdot\|_{H_{p}^{s}(\mathbb{T})})$ is a Hilbert space for all $s\geq 0$ and it is easy to show that if $s\geq r \geq 0$ then
$H_{0}^{s}(\mathbb{T}) \hookrightarrow H_{0}^{r}(\mathbb{T}),$ with dense  embedding (see Proposition 6.1 and Remark 6.2 in \cite{Vielma and Panthee}).
We establish a local control result in $H_{0}^{s}(\mathbb{T})$
for the system \eqref{nonlinearintro3} and exponential stability
results in $H_{0}^{s}(\mathbb{T})$ for the system
\begin{equation}\label{nonlinearintro4}
\left \{
\begin{array}{l l}
\partial_{t}u-\alpha H\partial^{2}_{x}u-\partial^{3}_{x}u+2\mu\partial_{x}u +2u\partial_{x} u=Ku(x,t),&  t\geq 0,\;\;x\in \mathbb{T},\\
u(x,0)=u_{0}(x), & x\in \mathbb{T},
\end{array}
\right.
\end{equation}
that will imply all the results stated in Theorems \ref{Smalldatacontrol}-\ref{LER2}.

This paper is organized as follows: In Section \ref{section 2}, we summarize some results
on the controllability of the linear system associated to \eqref{nonlinearintro3} and the stabilization of the linear system associated to \eqref{nonlinearintro4}. In Section \ref{section 3}, we  analyze
Bourgain's spaces properties and derive the propagation of compactness, the propagation
of regularity  and the unique continuation property for the Benjamin equation.
In Section \ref{section 4}, the local controllability 
is obtained. Section \ref{section 5} is devoted to study the stabilization
of the Benjamin equation by a time-invariant feedback control law.
In Section  \ref{section 6}, we investigate the stabilization 
by constructing a time-varying feedback control law.
Finally, in the Appendix  we include some results
used in this work.

\section{Preliminary Results}\label{section 2}

In this section we recall some results related to the controllability of the linear system associated to \eqref{nonlinearintro3} and the exponential stabilization of the linear system associated to \eqref{nonlinearintro4}
(see sections 4 and 5 in \cite{Vielma and Panthee}).
\subsection{Control of the Linear System}
\label{section3}
We begin by considering the IVP associated to the linear part of \eqref{nonlinearintro3}
\begin{equation}\label{linearintro3}
	\left \{
	\begin{array}{l l}
		\partial_{t}u-\alpha \mathcal{H}\partial^{2}_{x}u-\partial^{3}_{x}u+2\mu\partial_{x}u =Gh(x,t),&  t\in(0,T),\;\;x\in \mathbb{T}\\
		u(x,0)=u_{0}(x), & x\in \mathbb{T}.
	\end{array}
	\right.
\end{equation}

For $u_{0}\in H_{0}^{s}(\mathbb{T})$, $s\geq0$, the IVP \eqref{linearintro3} possesses a unique global solution and is described by the unitary group $H^{s}_{0}(\mathbb{T})$
\begin{equation}\label{semgru2}
		 U_{\mu}(t)u_0:=e^{(\alpha H\partial_{x}^{2}+\partial_{x}^{3}-2\mu\partial_{x})t}u_0
		=\left(e^{i(-k^{3}-2\mu k+\alpha k|k|)t}\widehat{u_0}(k)\right)^{\vee}.
\end{equation}
For details see Remark 4.8  in \cite{Vielma and Panthee}.

As shown in \cite{Vielma and Panthee} (see Remark  4.9 there), the system \eqref{linearintro3} is exactly controllable in any positive time $T$ and holds the following property.
\begin{rem}\label{sol4}
For $s\geq 0$ and any $T>0$ given, there exists  a
	bounded linear  operator
	\begin{equation*}
	\Phi_{\mu}:H_{0}^{s}(\mathbb{T})\times H_{0}^{s}(\mathbb{T})\rightarrow L^{2}([0,T];H_{0}^{s}(\mathbb{T}))
	\end{equation*}
	defined by
	$h=\Phi_{\mu}(u_{0},u_{1}),$ for all $u_{0},\;u_{1}\in H_{0}^{s}(\mathbb{T})$
	such that
	\begin{equation*}
	u_{1}=U_{\mu}(T)u_{0}+\int_{0}^{T}U_{\mu}(T-s) (G(\Phi_{\mu}(u_{0},u_{1})))(\cdot,s)\;ds,
	\end{equation*}
	for $(u_{0},u_{1})\in H_{0}^{s}(\mathbb{T})\times H_{0}^{s}(\mathbb{T})$ and
	\begin{equation*}\|\Phi_{\mu}(u_{0},u_{1})\|_{L^{2}([0,T];H_{0}^{s} (\mathbb{T}))} \leq \nu\;
		 (\|u_{0}\|_{H_{0}^{s}(\mathbb{T})}
		+\|u_{1}\|_{H_{0}^{s}(\mathbb{T})}),
		\end{equation*}
	where $\nu$ depends only on $s,\;T,$ and $g.$ Therefore, for any $T>0$ the following observability inequality holds
\begin{equation*}
	\int_{0}^{T}\|G^{\ast}U_{\mu}(\tau)^{\ast}(\phi)(x)
	\|^{2}_{L^{2}_{0}(\mathbb{T})}\;d\tau
	\geq \delta^{2}\; \|\phi\|^{2}_{L_{0}^{2}(\mathbb{T})},\;\;\;\text{ for any}\;\;
	\phi\in L_{0}^{2}(\mathbb{T}),\;\;\text{ some}\;\;\delta>0.
	\end{equation*}
\end{rem}

\subsection{Stabilization of the Linear Benjamin Equation}

In this sub-section we  recall the stabilization results (see section 5.1 in \cite{Vielma and Panthee}) for the linear system 
\begin{equation}\label{atabilizationL2}
\left \{
\begin{array}{l l}
\partial_{t}u-\alpha \mathcal{H}\partial^{2}_{x}u-\partial^{3}_{x}u+2\mu\partial_{x} u=Ku,&  t>0,\;\;x\in \mathbb{T}\\
u(x,0)=u_{0}(x), & x\in \mathbb{T},
\end{array}
\right.
\end{equation}
associated to  \eqref{nonlinearintro4}.  The authors in \cite{Vielma and Panthee} proved that, for  $u_0\in H_{0}^{s}(\mathbb{T})$, $s\geq0$ the IVP \eqref{atabilizationL2} posseses a unique global solution and is exponentialy asymptotically stable when $t$ goes to infinity with simple  feedback control law, $Ku=-GG^{\ast}u.$

\begin{thm}\label{st35}
	Let $\alpha>0,$ $\mu\in\mathbb{R},$ $g$ as in \eqref{gcondition},  and  $s\geq 0$ be given. There exist positive constans $M=M(\alpha, \mu, g, s)$ and $\gamma=\gamma(g),$ such that for any
	$u_{0}\in H_{0}^{s}(\mathbb{T}),$  the unique solution $u$
	of \eqref{atabilizationL2} with $K=-GG^{\ast}$ satisfies
$$\|u(\cdot,t)\|_{H_{0}^{s}(\mathbb{T})}\leq M
	e^{-\gamma t}\|u_{0}\|_{H_{0}^{s}(\mathbb{T})},\;\;\;\text{for all}\;\;
	t\geq 0.$$
\end{thm}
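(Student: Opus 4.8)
The plan is to establish the $L^2$ case ($s=0$) first and then bootstrap to arbitrary $s\ge 0$. Write the generator of the free flow as $A_\mu:=\alpha\mathcal{H}\partial_x^2+\partial_x^3-2\mu\partial_x$, so that \eqref{atabilizationL2} with $K=-GG^\ast$ reads $\partial_t u=A_\mu u-GG^\ast u$. Since the Fourier symbol of $A_\mu$ is purely imaginary, $A_\mu$ is skew-adjoint on $L_0^2(\mathbb{T})$ and generates the unitary group $U_\mu(t)$. Testing the equation against $u$ and taking real parts, the skew-adjoint term drops out and one obtains the energy identity $\frac{d}{dt}\|u(t)\|_{L^2}^2=-2\|G^\ast u(t)\|_{L^2}^2\le 0$; in particular the $L^2$ norm is nonincreasing and $\|u(T)\|_{L^2}^2=\|u_0\|_{L^2}^2-2\int_0^T\|G^\ast u(t)\|_{L^2}^2\,dt$.

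To convert this into decay I would upgrade the observability inequality of Remark \ref{sol4}, stated for the free group $U_\mu$, into an observability inequality for the damped flow. Writing $v(t):=U_\mu(t)u_0$ and using Duhamel, $u(t)-v(t)=-\int_0^t U_\mu(t-\tau)GG^\ast u(\tau)\,d\tau$, so that $\sup_{[0,T]}\|u(t)-v(t)\|_{L^2}\le C_T\big(\int_0^T\|G^\ast u\|_{L^2}^2\,dt\big)^{1/2}$ since $G,G^\ast$ are bounded. Combining this with the free observability $\delta\|u_0\|_{L^2}\le\|G^\ast v\|_{L^2([0,T];L^2)}$ and the triangle inequality yields $\|G^\ast u\|_{L^2([0,T];L^2)}\ge\beta\|u_0\|_{L^2}$ for some $\beta=\beta(\delta,T,g)>0$. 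Feeding this into the energy identity gives $\|u(T)\|_{L^2}^2\le(1-2\beta^2)\|u_0\|_{L^2}^2$ with $1-2\beta^2\in[0,1)$; since the system is autonomous, the semigroup property and iteration over the intervals $[nT,(n+1)T]$ produce $\|u(t)\|_{L^2}\le Me^{-\gamma t}\|u_0\|_{L^2}$ with $\gamma=-\frac{1}{2T}\log(1-2\beta^2)=\gamma(g)$.

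For $s>0$ I would run a higher-order energy estimate by an induction on $s$ in unit steps, the base case $s\in[0,1)$ being reduced to the $L^2$ estimate through the embedding $L^2\hookrightarrow H^{s-1}$. Applying $J^s:=(1-\partial_x^2)^{s/2}$ and using that $A_\mu$ is a Fourier multiplier commuting with $J^s$, the function $w:=J^s u$ satisfies $\partial_t w=A_\mu w-GG^\ast w-[J^s,GG^\ast]u$, whence $\frac{d}{dt}\|u\|_{H^s}^2=-2\|G^\ast J^s u\|_{L^2}^2-2\,\mathrm{Re}\langle J^s u,[J^s,GG^\ast]u\rangle$. Since $g$ is smooth, $GG^\ast$ is of order zero and $[J^s,GG^\ast]$ is of order $s-1$, so $\|[J^s,GG^\ast]u\|_{L^2}\le C\|u\|_{H^{s-1}}$. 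Repeating the Duhamel comparison for $w$ with the extra source $-[J^s,GG^\ast]u$ yields the $H^s$ observability $\|G^\ast J^s u\|_{L^2([0,T];L^2)}\ge\beta\|u_0\|_{H^s}-C\|u\|_{L^2([0,T];H^{s-1})}$; since the inductive hypothesis gives $\|u(t)\|_{H^{s-1}}\le M'e^{-\gamma t}\|u_0\|_{H^{s-1}}$, the commutator remainders are controlled and, after absorbing them, the same iteration propagates the decay to $H^s$ with the identical rate $\gamma$ and a constant $M=M(\alpha,\mu,g,s)$.

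The main obstacle is precisely this $H^s$ step: unlike the dispersive part, the damping $GG^\ast$ is localized in $\omega$ and does not commute with $J^s$, so one must show that the localized observation still controls the full $H^s$ norm. Quantitatively this comes down to (i) proving that $[J^s,GG^\ast]$ genuinely gains one derivative, a commutator estimate exploiting the smoothness of $g$, and (ii) arranging the induction so that the lower-order remainders, which decay at the rate $\gamma$ already established, are absorbed without degrading that rate. This is what forces $\gamma$ to depend only on $g$ while allowing $M$ to depend on $s$, $\alpha$ and $\mu$.
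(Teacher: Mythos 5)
Your $s=0$ argument is correct and is essentially the standard route: the energy identity $\frac{d}{dt}\|u\|_{L^2}^2=-2\|G^{\ast}u\|_{L^2}^2$, transfer of the free observability inequality to the damped flow by a Duhamel perturbation, a one-step contraction $\|u(T)\|^2\leq(1-2\beta^2)\|u_0\|^2$, and iteration using that the system is autonomous. One small repair: Remark \ref{sol4} states observability for the \emph{adjoint} group $U_{\mu}(\tau)^{\ast}=U_{\mu}(-\tau)$, while your comparison solution is $v(t)=U_{\mu}(t)u_0$; you should add the one-line time-reversal remark (or note that the same proof gives observability for both time directions) before invoking it.

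For $s>0$ your commutator induction is a genuinely different route from the one the paper cites, and it contains a real soft spot at exactly the sentence ``after absorbing them \dots without degrading that rate.'' Your scheme produces the recursion $y_{n+1}\leq a\,y_{n}+C e^{-2\gamma nT}\|u_0\|_{H^{s-1}}^2$ with $y_n=\|u(nT)\|_{H^s}^2$; since $\|u_0\|_{H^{s-1}}$ is in general comparable to $\|u_0\|_{H^s}$, the remainder cannot be absorbed into $\beta\|u_0\|_{H^s}$ at a fixed $T$, and the convolution of the two exponentials yields at best $y_n\lesssim n\max(a,e^{-2\gamma T})^{n}$, i.e.\ decay at any rate strictly below $\min$ of the two — with the loss compounding at each unit step of the induction. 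What you obtain is therefore exponential decay with some $\gamma_s>0$ depending on $s$, which is weaker than the stated $\gamma=\gamma(g)$ uniform in $s$. The argument behind the theorem (in \cite{Vielma and Panthee}, and visible in this paper's nonlinear analogues, Theorems \ref{GWP} and \ref{GEF6}) avoids this by a different device: for $s=3$ set $v=\partial_t u$, which solves the \emph{same} autonomous damped linear equation with data $v_0=u_0'''+\alpha\mathcal{H}u_0''-2\mu u_0'-GG^{\ast}u_0\in L^2_0(\mathbb{T})$, so $v$ inherits the exact $L^2$ rate $\gamma$; then $\|u(t)\|_{H^3}$ is recovered \emph{pointwise in time} from the identity $\partial_x^3u=v-\alpha\mathcal{H}\partial_x^2u+2\mu\partial_x u+GG^{\ast}u$ together with the $\epsilon$-interpolation estimates of Gagliardo--Nirenberg type (compare \eqref{GWP20}--\eqref{GWP24}), absorbing the intermediate derivatives into $\|\partial_x^3u\|_{L^2}$; finally $0<s<3$ follows by interpolation and $s\in 3\mathbb{N}$ by repeating the differentiation, both of which preserve the rate exactly. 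This is precisely why $M=M(\alpha,\mu,g,s)$ carries all the $s$-dependence while $\gamma=\gamma(g)$ does not; to rescue your version you would either have to settle for $\gamma_s$, or replace the induction's final step by this time-derivative and elliptic-recovery argument.
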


\subsection{Stabilization of the Linear Benjamin Equation
	in with Arbitrary Decay Rate}

In this subsection, we recall the stabilization result from section 5.2 in \cite{Vielma and Panthee} where  an appropriate linear feedback law is chosen so that the decay rate of the resulting closed-loop system is arbitrarily large.

Let $a>0$ be any fixed number. For given $\lambda >0,$ and $s\geq 0,$  define the operator
\begin{equation}\label{st1}
L_{\lambda}\phi=\int_{0}^{a}e^{-2\lambda \tau}\;U_{\mu}(-\tau)GG^{\ast}
U_{\mu}(-\tau)^{\ast}\phi\;d\tau,
\;\;\;\text{for all}\;\;\phi\in H_{p}^{s}(\mathbb{T}).
\end{equation}
The operator $L_{\lambda}$ satisfies  the following properties.

\begin{lem}\label{st9}
The operator	$L_{\lambda}:H_{p}^{s}(\mathbb{T})\longrightarrow
H_{p}^{s}(\mathbb{T})$ is linear and bounded.   Moreover,
	$L_{\lambda}$ is an isomorphism from $H_{0}^{s}(\mathbb{T})$
	onto $H_{0}^{s}(\mathbb{T})$, for all $s\geq 0.$
\end{lem}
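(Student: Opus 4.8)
The plan is to prove the three assertions in turn: linearity and boundedness of $L_{\lambda}$ on $H_p^s(\mathbb{T})$, the fact that $L_\lambda$ maps $H_0^s(\mathbb{T})$ into itself, and finally the isomorphism property on $H_0^s(\mathbb{T})$, treating first the case $s=0$ and then bootstrapping to $s>0$. Linearity is immediate from the linearity of $U_\mu(\pm\tau)$, of $GG^\ast$, and of the Bochner integral. For boundedness on $H_p^s(\mathbb{T})$, I note that $U_\mu(t)$ is a Fourier multiplier with unimodular symbol $e^{i(-k^3-2\mu k+\alpha k|k|)t}$, hence unitary on every $H_p^s(\mathbb{T})$; moreover, since $G$ is self-adjoint on $L^2$ we have $GG^\ast=G^2$, and $G$ is bounded on $H_p^s(\mathbb{T})$ because $g$ is smooth (being multiplication by $g$ up to a smooth rank-one correction, by \eqref{EQ1}). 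Therefore, for $\phi\in H_p^s(\mathbb{T})$,
\[
\|L_\lambda\phi\|_{H_p^s}\le \int_0^a e^{-2\lambda\tau}\,\|U_\mu(-\tau)\|\,\|GG^\ast\|\,\|U_\mu(-\tau)^\ast\|\,\|\phi\|_{H_p^s}\,d\tau\le C\,\|\phi\|_{H_p^s},
\]
the integrand being controlled on the finite interval $[0,a]$. Moreover, $L_\lambda$ maps $H_0^s$ into $H_0^s$: by \eqref{gcondition} one has $\int_0^{2\pi}G(h)\,dx=\int_0^{2\pi}gh\,dx-\big(\int g\big)\big(\int gh\big)=0$, so the range of $G$, hence of $GG^\ast$, consists of mean-zero functions, and $U_\mu(-\tau)$ preserves the zero-mean condition; thus $[L_\lambda\phi]=0$.

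For the isomorphism when $s=0$, I would show that $L_\lambda$ is self-adjoint and coercive on $L_0^2(\mathbb{T})$ and invoke Lax--Milgram. Self-adjointness follows because $GG^\ast=G^2$ is self-adjoint and, using $U_\mu(-\tau)^\ast=U_\mu(\tau)$, the integrand $U_\mu(-\tau)GG^\ast U_\mu(-\tau)^\ast$ is self-adjoint, a property preserved by integration against the real weight $e^{-2\lambda\tau}$. For coercivity,
\[
\langle L_\lambda\phi,\phi\rangle_{L^2}=\int_0^a e^{-2\lambda\tau}\,\|G^\ast U_\mu(-\tau)^\ast\phi\|_{L^2}^2\,d\tau\ge e^{-2\lambda a}\int_0^a\|G^\ast U_\mu(-\tau)^\ast\phi\|_{L^2}^2\,d\tau\ge e^{-2\lambda a}\delta^2\|\phi\|_{L^2}^2,
\]
where the last inequality is the observability inequality of Remark \ref{sol4} with $T=a$ (the passage from $U_\mu(\tau)^\ast$ to $U_\mu(-\tau)^\ast$ being harmless by the time-reversibility of the group). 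Hence $L_\lambda$ is a bounded, self-adjoint, coercive operator on $L_0^2(\mathbb{T})$, so by Lax--Milgram it is an isomorphism of $L_0^2(\mathbb{T})$ onto itself.

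The main obstacle is to upgrade this to an isomorphism of $H_0^s(\mathbb{T})$ for $s>0$. Injectivity on $H_0^s\subset L_0^2$ is inherited from the $L^2$ case, so the issue is surjectivity together with boundedness of the inverse, i.e. the regularity statement that $L_\lambda\phi\in H_0^s$ with $\phi\in L_0^2$ forces $\phi\in H_0^s$. I would establish this by a commutator bootstrap. Writing $J^s$ for the Bessel potential with symbol $\langle k\rangle^s$ (which commutes with the Fourier multiplier $U_\mu$), and using $GG^\ast J^s=J^s GG^\ast-[J^s,GG^\ast]$, one obtains, with $\psi:=L_\lambda\phi$, the identity
\[
L_\lambda(J^s\phi)=J^s\psi-R_s\phi,\qquad R_s\phi:=\int_0^a e^{-2\lambda\tau}\,U_\mu(-\tau)\,[J^s,GG^\ast]\,U_\mu(\tau)\phi\,d\tau.
\]
Since $g$ is smooth, $GG^\ast$ is a zero-order operator with smooth symbol, so $[J^s,GG^\ast]$ has order $s-1$ and $R_s$ maps $H_0^{s-1}$ boundedly into $L_0^2$.

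Pairing this identity with $J^s\phi$ and invoking the $L^2$-coercivity established above yields the a priori estimate $\|\phi\|_{H^s}\le C\big(\|\psi\|_{H^s}+\|\phi\|_{H^{s-1}}\big)$, which by induction on $s$ (non-integer values filled in by interpolation) gives $\|\phi\|_{H^s}\le C\|\psi\|_{H^s}$. This shows simultaneously that $\phi=L_\lambda^{-1}\psi\in H_0^s$ and that $L_\lambda^{-1}$ is bounded on $H_0^s$, completing the proof that $L_\lambda$ is an isomorphism of $H_0^s(\mathbb{T})$. The delicate points I expect to require the most care are the order count for the commutator $[J^s,GG^\ast]$ and a Friedrichs mollification argument ensuring that the quantities in the a priori estimate are finite before passing to the limit, so that the estimate is genuine rather than merely formal.
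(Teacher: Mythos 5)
Your proposal is correct and follows essentially the route of the source: the present paper states Lemma \ref{st9} without proof (it is recalled from \cite{Vielma and Panthee}), and the argument there is the same standard one you give --- boundedness from the unitarity of $U_{\mu}(t)$ on every $H_{p}^{s}(\mathbb{T})$ together with boundedness of $G$ and mean-zero preservation, coercivity on $L_{0}^{2}(\mathbb{T})$ via the observability inequality of Remark \ref{sol4} followed by Lax--Milgram (the dependence of the constant in Remark \ref{st36} on the observability constant $\delta$ corroborates this mechanism), and a commutator bootstrap for $s>0$ whose order count $[J^{s},GG^{\ast}]$ of order $s-1$ is exactly the content of Lemma \ref{prop3} in the appendix, since $GG^{\ast}=G^{2}$ is multiplication by a smooth function modulo smoothing rank-one terms. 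The only step you should spell out is the one you dismiss as ``time-reversibility'': since $U_{\mu}(-\tau)^{\ast}=U_{\mu}(\tau)$ while Remark \ref{sol4} involves $U_{\mu}(\tau)^{\ast}=U_{\mu}(-\tau)$, substitute $\sigma=a-\tau$ and use the group property and unitarity of $U_{\mu}(a)$ to get $\int_{0}^{a}\|G^{\ast}U_{\mu}(\tau)\phi\|_{L^{2}}^{2}\,d\tau=\int_{0}^{a}\|G^{\ast}U_{\mu}(\sigma)^{\ast}\bigl(U_{\mu}(a)\phi\bigr)\|_{L^{2}}^{2}\,d\sigma\geq \delta^{2}\|U_{\mu}(a)\phi\|_{L^{2}}^{2}=\delta^{2}\|\phi\|_{L^{2}}^{2}$, which is precisely Remark \ref{sol4} with $T=a$ applied to $U_{\mu}(a)\phi$; with that, and the Friedrichs mollification you already flag to legitimize the a priori estimate, the proof is complete.
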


\begin{rem}\label{st36}
	From Lemma  \ref{st9} we infer that there exists a positive constant $C=C(\delta,s,\lambda,a,g)$ such that\;
	$\|L_{\lambda}^{-1}\psi\|_{H_{0}^{s}(\mathbb{T})}
	\leq C
	\|\psi\|_{H_{0}^{s}(\mathbb{T})},\;\;
	\text{for all}\;\;\psi\in H_{0}^{s}(\mathbb{T}).$
\end{rem}

Choosing the feedback control law as
\begin{equation}\label{feedback}
Ku= \left\{
\begin{array}{lcl}
-K_{\lambda}u:=-G G^{\ast} L_{\lambda}^{-1}u, & \mbox{if} &  \lambda>0
\\
&           &          \\
-K_{0}u:=-G G^{\ast}u,
& \mbox{if} &\lambda=0,
\end{array}
\right.
\end{equation}
we can rewrite  the resulting closed-loop system \eqref{atabilizationL2} in the following form
\begin{equation}\label{st21}
\left \{
\begin{array}{l l}
\partial_{t}u-\alpha \mathcal{H}\partial^{2}_{x}u-\partial^{3}_{x}u+2\mu\partial_{x} u=-K_{\lambda}u,&  t>0,\;\;x\in \mathbb{T}\\
u(x,0)=u_{0}(x), & x\in \mathbb{T},
\end{array}
\right.
\end{equation}
where $K_{\lambda}$ is a bounded linear operator on
$H^{s}_{p}(\mathbb{T}),$ with $s\geq 0.$
With these considerations, we have the following result.

\begin{thm}\label{st37}
	Let $\alpha>0,$ $\mu \in \mathbb{R},$ $s\geq 0,$ and $\lambda>0$ be given.
	For any $u_{0}\in H_{0}^{s}(\mathbb{T}),$ the system \eqref{st21} admits a unique solution $u\in C(\mathbb{R}^{+}, H_{0}^{s}(\mathbb{T})).$ Moreover,
	there exists $M=M(g,\lambda,\delta, \alpha, \mu, s)>0$ such that
	\begin{equation*}
	    \|u(\cdot,t)\|_{H_{0}^{s}(\mathbb{T})}\leq
	M\;e^{-\lambda\;t}\|u_{0}\|_{H_{0}^{s}(\mathbb{T})},
	\;\;\;\text{for all}\;\;t\geq0.
	\end{equation*}
\end{thm}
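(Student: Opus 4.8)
The plan is to regard the closed-loop system as a bounded perturbation of the unitary group $U_{\mu}$ and to use the operator $L_{\lambda}$ itself as a Lyapunov functional, following the Gramian strategy (Slemrod--Komornik type) adapted to this periodic setting.

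First I would set $A:=\alpha\mathcal{H}\partial_{x}^{2}+\partial_{x}^{3}-2\mu\partial_{x}$, the infinitesimal generator of $U_{\mu}$, so that \eqref{st21} reads $\partial_{t}u=Au-K_{\lambda}u$ with $K_{\lambda}=GG^{\ast}L_{\lambda}^{-1}$. By Lemma \ref{st9} and Remark \ref{st36} the operator $K_{\lambda}$ is linear and bounded on $H_{0}^{s}(\mathbb{T})$; hence $A-K_{\lambda}$ generates a $C_{0}$-semigroup $W_{\lambda}(t)$ on $H_{0}^{s}(\mathbb{T})$ by the bounded perturbation theorem, and $u(t)=W_{\lambda}(t)u_{0}$ is the unique mild solution, satisfying the Duhamel representation $u(t)=U_{\mu}(t)u_{0}-\int_{0}^{t}U_{\mu}(t-\tau)K_{\lambda}u(\tau)\,d\tau$. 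Since every Fourier symbol of $A$ is purely imaginary (cf. \eqref{semgru2}), $A$ is skew-adjoint and $U_{\mu}$ is unitary on $H_{0}^{s}(\mathbb{T})$ for every $s\ge 0$, a fact I will use repeatedly.

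Next I would establish the Lyapunov operator identity. Differentiating $\tau\mapsto e^{-2\lambda\tau}U_{\mu}(-\tau)GG^{\ast}U_{\mu}(-\tau)^{\ast}$, integrating over $[0,a]$ and using $A^{\ast}=-A$ gives
\[
A L_{\lambda}-L_{\lambda}A+2\lambda L_{\lambda}=GG^{\ast}-e^{-2\lambda a}\,U_{\mu}(-a)GG^{\ast}U_{\mu}(-a)^{\ast}.
\]
I would then prove the decay in $L_{0}^{2}(\mathbb{T})$ (the case $s=0$). Setting $v(t):=e^{\lambda t}u(t)$, which solves $\dot v=(A+\lambda-GG^{\ast}L_{\lambda}^{-1})v$, I introduce $E(t):=\langle L_{\lambda}^{-1}v(t),v(t)\rangle_{L_{0}^{2}(\mathbb{T})}$. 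Since $L_{\lambda}$ is a positive self-adjoint isomorphism of $L_{0}^{2}(\mathbb{T})$ (Lemma \ref{st9}), $E(t)$ is equivalent to $\|v(t)\|_{L_{0}^{2}(\mathbb{T})}^{2}$. A direct computation using the self-adjointness of $L_{\lambda}$ and $GG^{\ast}$ together with the identity above yields, with $w:=L_{\lambda}^{-1}v$,
\[
\dot E(t)=-\|G^{\ast}w\|_{L_{0}^{2}(\mathbb{T})}^{2}-e^{-2\lambda a}\|G^{\ast}U_{\mu}(-a)^{\ast}w\|_{L_{0}^{2}(\mathbb{T})}^{2}\le 0.
\]
Thus $E$ is non-increasing, so $\|v(t)\|_{L_{0}^{2}(\mathbb{T})}\le M\|u_{0}\|_{L_{0}^{2}(\mathbb{T})}$ and therefore $\|u(t)\|_{L_{0}^{2}(\mathbb{T})}\le M e^{-\lambda t}\|u_{0}\|_{L_{0}^{2}(\mathbb{T})}$.

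Finally I would upgrade the estimate to $H_{0}^{s}(\mathbb{T})$. Writing $D^{s}:=(1-\partial_{x}^{2})^{s/2}$, which commutes with $A$ and $U_{\mu}$, the function $u_{s}:=D^{s}u$ solves $\dot u_{s}=Au_{s}-GG^{\ast}L_{\lambda}^{-1}u_{s}-R u_{s}$, where $R:=[D^{s},GG^{\ast}L_{\lambda}^{-1}]D^{-s}$ is bounded on $L_{0}^{2}(\mathbb{T})$ and, by the commutator structure $[D^{s},g]$, effectively of lower order. I would rerun the Lyapunov argument for the principal part and absorb $R$ using the already-established $L_{0}^{2}$ decay, which controls the lower-order contribution at the same exponential rate. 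I expect this last step to be the main obstacle: the Lyapunov functional is intrinsically an $L^{2}$ object (it relies on the self-adjointness of both $L_{\lambda}$ and $GG^{\ast}$ in $L_{0}^{2}(\mathbb{T})$, which fails for the $H^{s}$ inner product), while the naive route through the Duhamel formula and Gronwall's inequality only produces exponential growth rather than the sharp decay rate $\lambda$. Closing the $H^{s}$ estimate at exactly the rate $\lambda$ therefore requires a careful treatment of the commutator term together with the $L_{0}^{2}$ bound, or, alternatively, the corresponding argument from \cite{Vielma and Panthee}.
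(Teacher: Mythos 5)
Your $s=0$ argument is essentially correct and is the standard Gramian (Slemrod--Komornik) argument: the operator identity $AL_{\lambda}-L_{\lambda}A+2\lambda L_{\lambda}=GG^{\ast}-e^{-2\lambda a}U_{\mu}(-a)GG^{\ast}U_{\mu}(a)$ checks out, and with $v=e^{\lambda t}u$, $w=L_{\lambda}^{-1}v$, your computation of $\dot E$ is right; it uses exactly the self-adjointness of $L_{\lambda}$ and $GG^{\ast}$ on $L_{0}^{2}(\mathbb{T})$, the skew-adjointness of $A$, and the coercivity of $L_{\lambda}$ (Lemma \ref{st9}, which encodes the observability inequality of Remark \ref{sol4} and is where the constant $\delta$ enters $M$). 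Modulo a routine density remark to justify differentiating $E$ along mild solutions, this proves the theorem in $L_{0}^{2}(\mathbb{T})$. Note that the present paper states Theorem \ref{st37} without proof, citing \cite{Vielma and Panthee}, so the comparison is with the companion paper's argument.

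The genuine gap is the case $s>0$, which you diagnose correctly but do not close. Your commutator scheme cannot, as written, produce the rate exactly $\lambda$: with $u_{s}=D^{s}u$ one has $Ru_{s}=[D^{s},GG^{\ast}L_{\lambda}^{-1}]u$, which (inductively) decays like $e^{-\lambda t}\|u_{0}\|_{H^{s-1}}$ — resonant with the decay rate of the $L_{0}^{2}$ semigroup $W_{\lambda}(t)$ — so the Duhamel bootstrap gives $\|u_{s}(t)\|\lesssim (1+t)e^{-\lambda t}\|u_{0}\|_{H^{s}}$, i.e.\ only rate $\lambda-\epsilon$, short of the statement; and replacing $\lambda$ by a larger Gramian parameter changes the feedback $K_{\lambda}$, hence the system, so it does not rescue the claim either. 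Deferring to ``the corresponding argument from \cite{Vielma and Panthee}'' is a citation, not a proof. The device that closes the estimate at the exact rate — and which is the route taken in \cite{Vielma and Panthee} and reprised in this paper for the nonlinear analogues (Theorems \ref{GWP} and \ref{GEF6}) — is the time-derivative trick: since \eqref{st21} is linear and autonomous, $v=\partial_{t}u$ solves the \emph{same} closed-loop system with $v(0)=(A-K_{\lambda})u_{0}$, so the $s=0$ result gives $\|\partial_{t}u(t)\|_{L_{0}^{2}}\leq Me^{-\lambda t}\|(A-K_{\lambda})u_{0}\|_{L_{0}^{2}}\leq CMe^{-\lambda t}\|u_{0}\|_{H_{0}^{3}}$; one then solves the equation for $\partial_{x}^{3}u$ and absorbs $\alpha\mathcal{H}\partial_{x}^{2}u$, $2\mu\partial_{x}u$ and $K_{\lambda}u$ by interpolation inequalities with small $\epsilon$ (the estimates quoted as (5.27) and (5.30) of \cite{Vielma and Panthee} in the proof of Theorem \ref{GWP}), obtaining $\|u(t)\|_{H_{0}^{3}}\leq Ce^{-\lambda t}\|u_{0}\|_{H_{0}^{3}}$ with no Gronwall loss; interpolation then covers $0<s<3$ and iteration covers $s\in 3\mathbb{N}$. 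Your proposal needs this (or an equivalent non-resonant mechanism) to be a complete proof for $s>0$.
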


\section{Bourgain's Spaces Associated to Benjamin Equation}\label{section 3}

In this section we introduce the Fourier transform norm spaces, the so called Bourgain's spaces and derive some preliminary estimates to get a control result for the Benjamin equation.

\subsection{Bourgain's spaces and their properties}
In order to simplify the notation, in this subsection,  we denote $U_{\mu}(t)$ by $V(t),$ i.e. $V(t)\varphi:=e^{(\alpha H\partial_{x}^{2}+\partial_{x}^{3}-2\mu\partial_{x})t}\varphi$.
Given $r\in\mathbb{R},$ we define an operator
$D^{r}:\mathcal{D}'(\mathbb{T})\rightarrow \mathbb{C}$
by
\begin{equation}\label{difop}
\widehat{D^{r}v}(k)=\left\{
                      \begin{array}{ll}
                        |k|^{r}\widehat{v}(k), & if \;k\neq 0; \\
                        \widehat{v}(0), & if \; k=0,
                      \end{array}
                  \right.
\end{equation}
and
$\partial_{x}^{r}:\mathcal{D}'(\mathbb{T}) \rightarrow\mathbb{C},$
 by
$\widehat{\partial_{x}^{r}v}(k)=(ik)^{r}\;\widehat{v}(k).$

For given $b,s\in \mathbb{R}$ we define the Bourgain's space $X_{s,b}$ associated to the Benjamin equation on $\mathbb{T}$
as the closure of the space of Schwartz functions $\mathcal{S}(\mathbb{T}\times \mathbb{R})$ under the norm 
\begin{equation}\label{x-sb-norm}
\displaystyle{\|v\|_{X_{s,b}}}:=\displaystyle{
\left(\sum_{k=-\infty}^{\infty}\int_{\mathbb{R}}\langle k\rangle^{2s}\langle \tau -\phi(k) \rangle^{2b}|\widehat{v}(k,\tau)|^{2}
\;d\tau\right)^{\frac{1}{2}}},
\end{equation}
where $\phi(k)=-k^{3}-2\mu k +\alpha k |k|$ is the phase function, $\langle \cdot \rangle:=\big({1+|\cdot|^{2}}\big)^{\frac12},$ and
$\widehat{v}(k,\tau)$ denotes the Fourier transform of $v$ with respect to the both  space and time variables given by
$$\displaystyle{\widehat{v}(k,\tau):=
\frac{1}{2\pi}\int\limits_{\mathbb{R}} \int\limits_{\mathbb{T}}v(x,t)e^{-i(t\tau+kx)}\;dx\;dt.}$$
 Sometimes, we use
$\widehat{v}(k,t)$ (respectively $\widehat{v}(x,\tau)$) to denote the Fourier transform in space variable
$x$ (respectively in time variable $t$). In particular $\displaystyle{\|v\|_{X_{s,0}}}=\displaystyle{\|v\|_{L^{2}(\mathbb{R}_{t};H_{p}^{s}(\mathbb{T}))}}.$  Note that, $X_{s,b}$ is a Hilbert space and for $b>\frac12$ 
\begin{equation}\label{cont-embd}
X_{s,b}\subset C(\mathbb{R};H^{s}_{p}(\mathbb{T})),
\end{equation}
the imbedding being continuous.  As noted in \cite{13, Kenig Ponce and Vega, 14, 23, 11}, while dealing with the bilinear estimates  in the periodic case one needs to consider $b=\frac12$ for which  the imbedding  \eqref{cont-embd} fails. To overcome this situation, we introduce the space $Y_{s,b}$ as completion of the space of Schwartz functions $\mathcal{S}(\mathbb{T}\times \mathbb{R})$ under the norm 
\begin{equation}\label{Y-sb}
\displaystyle{\|v\|_{Y_{s,b}}}:=\displaystyle{
\left(\sum_{k=-\infty}^{\infty}\left(\int_{\mathbb{R}}\langle k\rangle^{s}\langle \tau - \phi(k) \rangle^{b}|\widehat{v}(k,\tau)|
\;d\tau\right)^{2}\right)^{\frac{1}{2}}},
\end{equation}
and define the space  
\begin{equation}\label{Z-sb}
Z_{s,b}:=X_{s,b} \cap Y_{s,b-\frac{1}{2}}
\end{equation}
 endowed with the norm
$\|\cdot\|_{Z_{s,b}}:=\|\cdot\|_{X_{s,b}}+\|\cdot\|_{Y_{s,b-\frac{1}{2}}}.$
For a given interval $I,$ we define  $X_{s,b}(I)$  as the restriction space
of $X_{s,b}$ to the interval $I$ with the norm
\begin{equation}\label{X-sbI}
\|f\|_{X_{s,b}(I)}:=\inf\left\{\|\tilde{f}\|_{X_{s,b}}| \tilde{f}=f\;\text{on}\;\mathbb{T}\times I\right\}.
\end{equation}
If  $I=[0,T]$, for simplicity, we denote $X_{s,b}(I)$ by  $X_{s,b}^{T}$. In and analogous manner we define $Y_{s,b}(I), $  $Z_{s,b}(I)$, $Y_{s,b}^{T}$ and  $Z_{s,b}^{T}$.

In what follows, we record some properties of the spaces $X_{s,b}$ and $Z_{s,b}$.
\begin{prop}\label{prop1}
	 The space $X_{s,b}$  have the
	 following properties:
	\begin{itemize}
	\item [ii)] If $s_{1}\leq s_{2}$ and $b_{1}\leq b_{2},$ then $ X_{s_{2},b_{2}}$
	 is continuously imbedded in  $ X_{s_{1},b_{1}}$. The same holds for $X_{s,b}(I)$ too.
	
	   \item [iii)]For a given finite interval $I,$ if $s_{1}< s_{2}$ and $b_{1}< b_{2},$ then  $X_{s_{2},b_{2}}(I)$
	 is compactly imbedded in  $ X_{s_{1},b_{1}}(I).$
	\end{itemize}
\end{prop}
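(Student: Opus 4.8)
The plan is to read off part (ii) directly from the definition \eqref{x-sb-norm} of the norm, and to prove the compactness in part (iii) by transporting the problem, via the free group $V(t)$, to a classical Rellich--Kondrachov statement for functions with compact support in time. For part (ii), note that since $\langle k\rangle\geq 1$ and $\langle\tau-\phi(k)\rangle\geq 1$, the hypotheses $s_1\leq s_2$ and $b_1\leq b_2$ give the pointwise bound $\langle k\rangle^{2s_1}\langle\tau-\phi(k)\rangle^{2b_1}\leq\langle k\rangle^{2s_2}\langle\tau-\phi(k)\rangle^{2b_2}$ for every $(k,\tau)$. Inserting this into \eqref{x-sb-norm} yields $\|v\|_{X_{s_1,b_1}}\leq\|v\|_{X_{s_2,b_2}}$, the asserted continuous embedding (with constant one); passing to the infimum in the defining formula \eqref{X-sbI} of the restriction norm, the same inequality holds for $\|\cdot\|_{X_{s,b}(I)}$.

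For part (iii), let $(v_n)$ be bounded in $X_{s_2,b_2}(I)$; I must extract a subsequence converging in $X_{s_1,b_1}(I)$. First I would choose extensions $u_n\in X_{s_2,b_2}$ with $\|u_n\|_{X_{s_2,b_2}}\leq 2\|v_n\|_{X_{s_2,b_2}(I)}$ and multiply by a fixed cutoff $\psi\in C_c^{\infty}(\mathbb{R})$ with $\psi\equiv 1$ on $I$, so that $w_n:=\psi u_n$ agrees with $v_n$ on $\mathbb{T}\times I$, is supported in a fixed compact time interval $J$, and stays bounded in $X_{s_2,b_2}$ (boundedness of multiplication by a smooth, compactly supported time factor on $X_{s,b}$). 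The crucial reduction is the isometry $w\mapsto V(-t)w$ from $X_{s,b}$ onto $H^{b}(\mathbb{R}_t;H^{s}_p(\mathbb{T}))$: a direct computation gives $\widehat{V(-t)w}(k,\tau)=\widehat{w}(k,\tau+\phi(k))$, whence $\|w\|_{X_{s,b}}=\|V(-t)w\|_{H^b_tH^s_x}$, and since $V(-t)$ acts pointwise in $t$ it preserves the time support. Setting $f_n:=V(-t)w_n$, the sequence is bounded in $H^{b_2}_tH^{s_2}_x$ and supported in $\mathbb{T}\times J$, and it suffices to extract a subsequence converging in $H^{b_1}_tH^{s_1}_x$.

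This last step is where the finiteness of $I$ enters. I would split the frequencies: for $|k|>N$ the bound $\langle k\rangle^{2s_1}\leq\langle N\rangle^{2(s_1-s_2)}\langle k\rangle^{2s_2}$ (valid as $s_1-s_2<0$) shows that the high-spatial-frequency tail of the $H^{b_1}_tH^{s_1}_x$-norm is at most $\langle N\rangle^{2(s_1-s_2)}$ times the bounded $H^{b_2}_tH^{s_2}_x$-norm, hence uniformly small as $N\to\infty$. For each of the finitely many modes $|k|\leq N$, the Fourier coefficient $\widehat{f_n}(k,\cdot)$ is a function of $t$ supported in the fixed compact set $J$ and bounded in $H^{b_2}(\mathbb{R}_t)$; splitting off the high-modulation tail via $\langle\tau\rangle^{2b_1}\leq\langle M\rangle^{2(b_1-b_2)}\langle\tau\rangle^{2b_2}$ and applying Arzel\`a--Ascoli to the equicontinuous, uniformly bounded family $\{\widehat{f_n}(k,\tau):|\tau|\leq M\}$ (equicontinuity from the uniform $L^2(J)$-bound on $\widehat{f_n}(k,\cdot)$) produces, along a subsequence, convergence in $H^{b_1}(\mathbb{R}_t)$. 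A diagonal extraction over $|k|\leq N$ combined with the uniform tail bound yields a subsequence that is Cauchy in $H^{b_1}_tH^{s_1}_x$; transporting back through the isometry gives convergence of $(v_n)$ in $X_{s_1,b_1}(I)$.

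The routine parts are the monotonicity in (ii) and the two frequency-tail estimates, which are exactly where the strict inequalities $s_1<s_2$ and $b_1<b_2$ are used to force vanishing tails. The main obstacle is the genuine compactness in the time direction, where the dual variable $\tau$ ranges over all of $\mathbb{R}$; this non-compactness is defeated precisely by the compact time support enforced by the cutoff $\psi$ (i.e.\ by restricting to the finite interval $I$), which is what makes the Arzel\`a--Ascoli argument on the finitely many low spatial modes available.
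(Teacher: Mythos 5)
The paper states Proposition \ref{prop1} without proof, treating these as standard Bourgain-space facts, so there is no in-paper argument to compare against; your proposal supplies the standard proof, and it is essentially correct. Part (ii) is exactly the pointwise monotonicity of the weight $\langle k\rangle^{2s}\langle\tau-\phi(k)\rangle^{2b}$ (both factors being $\geq 1$), and you correctly handle the restriction spaces: every extension in $X_{s_2,b_2}$ is in particular an admissible competitor in the infimum \eqref{X-sbI} defining $\|\cdot\|_{X_{s_1,b_1}(I)}$, which gives the inequality in the needed direction. Part (iii) follows the classical route: extend with norm at most twice the restriction norm, cut off in time (the boundedness of multiplication by $\psi(t)\in C_c^{\infty}$ on $X_{s,b}$ is the same Young-inequality estimate the paper later proves as Lemma \ref{multi1}, whose proof does not rely on Proposition \ref{prop1}, so there is no circularity), conjugate by the free group via the isometry $\|w\|_{X_{s,b}}=\|V(-t)w\|_{H^{b}_{t}H^{s}_{p}}$ — your computation $\widehat{V(-t)w}(k,\tau)=\widehat{w}(k,\tau+\phi(k))$ is correct, and $V(-t)$ acts pointwise in $t$ so it preserves the time support — and then run a Rellich-type extraction, with the spatial tail controlled by $\langle N\rangle^{2(s_1-s_2)}$ and the modulation tail by $\langle M\rangle^{2(b_1-b_2)}$, both vanishing precisely because the inclusions in (iii) are strict.

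One small caveat: your equicontinuity step invokes a uniform $L^{2}(J)$ bound on $\widehat{f_n}(k,\cdot)$ as a function of $t$, which is only available when $b_2\geq 0$; the statement allows arbitrary real $b_1<b_2$. For $b_2<0$ the fix is minor: since $\widehat{f_n}(k,\cdot)$ is supported in the fixed compact set $J$, write $\widehat{f_n}(k,\tau)$ as the $H^{b_2}$--$H^{-b_2}$ duality pairing of $\widehat{f_n}(k,\cdot)$ against $\chi(t)e^{-it\tau}$, with $\chi\in C_c^{\infty}$ equal to $1$ near $J$; since $\|\chi(\cdot)e^{-i\cdot\tau}\|_{H^{-b_2}}$ and $\|t\chi(t)e^{-it\tau}\|_{H^{-b_2}}$ are bounded uniformly for $|\tau|\leq M$, you recover the uniform bounds on $\widehat{f_n}(k,\tau)$ and $\partial_{\tau}\widehat{f_n}(k,\tau)$ needed for Arzel\`a--Ascoli. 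With that patch the argument is complete and is the expected proof of this proposition.
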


\begin{prop}[\cite{Colliander Keel Staffilani Takaoka and Tao, 11, 13}]\label{contimbedded}
Let $I$ be an interval and $s\in \mathbb{R}$.
The space $Z_{s,\frac{1}{2}}$ \;(resp. $Z_{s,\frac{1}{2}}(I)$) is continuously imbedded in the space $C(\mathbb{R};H^{s}_{p}(\mathbb{T}))$
\;(resp. $C(I;H^{s}_{p}(\mathbb{T}))$).
\end{prop}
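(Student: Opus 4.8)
The plan is to reduce everything to the $\ell^1$-in-time structure encoded in the $Y$-norm, since the $X_{s,\frac12}$ component alone sits exactly at the endpoint where the embedding into $C_tH^s_p$ breaks down. Because $\|v\|_{Y_{s,0}}\le\|v\|_{Z_{s,\frac12}}$ by the very definition of the $Z_{s,\frac12}$-norm in \eqref{Z-sb}, it suffices to prove that $Y_{s,0}$ embeds continuously into $C(\mathbb{R};H^s_p(\mathbb{T}))$; the claim for $Z_{s,\frac12}$ is then immediate.

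First I would work with a Schwartz function $v\in\mathcal{S}(\mathbb{T}\times\mathbb{R})$, where all Fourier manipulations are legitimate, and pass to general $v$ at the end by density (recall $Y_{s,0}$ is a completion of $\mathcal{S}(\mathbb{T}\times\mathbb{R})$). The starting point is Fourier inversion in time: from the paper's convention one has $\widehat{v}(k,\tau)=\int_\mathbb{R}\widehat{v}(k,t)e^{-it\tau}\,dt$, hence $\widehat{v}(k,t)=\frac{1}{2\pi}\int_\mathbb{R}\widehat{v}(k,\tau)e^{it\tau}\,d\tau$, which gives the frequency-by-frequency bound
\begin{equation*}
\sup_{t\in\mathbb{R}}|\widehat{v}(k,t)|\le\frac{1}{2\pi}\int_\mathbb{R}|\widehat{v}(k,\tau)|\,d\tau,\qquad k\in\mathbb{Z}.
\end{equation*}
Squaring, multiplying by $\langle k\rangle^{2s}$ and summing in $k$ then yields, for every $t$,
\begin{equation*}
\|v(\cdot,t)\|_{H^s_p(\mathbb{T})}^2=\sum_{k}\langle k\rangle^{2s}|\widehat{v}(k,t)|^2\le\frac{1}{4\pi^2}\sum_k\langle k\rangle^{2s}\Big(\int_\mathbb{R}|\widehat{v}(k,\tau)|\,d\tau\Big)^2=\frac{1}{4\pi^2}\|v\|_{Y_{s,0}}^2,
\end{equation*}
so that $\sup_t\|v(\cdot,t)\|_{H^s_p}\le\frac{1}{2\pi}\|v\|_{Y_{s,0}}$, i.e. the continuous embedding into $L^\infty(\mathbb{R};H^s_p)$.

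Next I would upgrade boundedness to continuity. For each fixed $k$, $\widehat{v}(k,\cdot)\in L^1_\tau$, so its inverse transform $t\mapsto\widehat{v}(k,t)$ is continuous and bounded (dominated convergence); hence each truncation $S_N(t):=\sum_{|k|\le N}\widehat{v}(k,t)e^{ikx}$ is a continuous $H^s_p$-valued function of $t$. The tail is controlled uniformly in $t$:
\begin{equation*}
\sup_{t\in\mathbb{R}}\|v(\cdot,t)-S_N(t)\|_{H^s_p}^2\le\frac{1}{4\pi^2}\sum_{|k|>N}\langle k\rangle^{2s}\Big(\int_\mathbb{R}|\widehat{v}(k,\tau)|\,d\tau\Big)^2,
\end{equation*}
and the right-hand side is the tail of the convergent series defining $\|v\|_{Y_{s,0}}^2$, hence tends to $0$ as $N\to\infty$ independently of $t$. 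A uniform limit of continuous functions being continuous, this shows $v\in C(\mathbb{R};H^s_p)$ with the stated norm control; extending to arbitrary $v\in Y_{s,0}$ (hence to $Z_{s,\frac12}$) follows by approximating in $Y_{s,0}$ by Schwartz functions and invoking completeness of $C(\mathbb{R};H^s_p)$. The restriction statement for $Z_{s,\frac12}(I)$ and $C(I;H^s_p)$ is then obtained by applying the global result to an extension $\tilde v$ of $v$ and passing to the infimum in the restriction norm \eqref{X-sbI}.

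The main obstacle is conceptual rather than computational: one must recognise that at $b=\frac12$ the $X_{s,b}$-norm does \emph{not} control $\sup_t\|v(\cdot,t)\|_{H^s_p}$, because the weight $\langle\tau-\phi(k)\rangle^{1/2}$ is too weak for a Cauchy–Schwarz argument in $\tau$ to close, and this is precisely why the auxiliary $\ell^1_\tau$-based space $Y_{s,0}$ was introduced. The genuine content is simply that the $Y_{s,0}$-norm majorises the $L^1_\tau$-norm of $\widehat v(k,\cdot)$ at each frequency, and the only real care needed is the uniform-in-$t$ tail estimate that delivers continuity rather than mere boundedness, together with the routine density and extension bookkeeping.
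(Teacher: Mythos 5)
Your proof is correct and is precisely the standard argument behind this embedding: the paper itself gives no proof, citing \cite{Colliander Keel Staffilani Takaoka and Tao, 11, 13}, and those references argue exactly as you do, using that the $Y_{s,0}$ component of the $Z_{s,\frac12}$-norm controls $\langle k\rangle^{s}\|\widehat v(k,\cdot)\|_{L^1_\tau}$ in $\ell^2_k$, so Fourier inversion in time gives the uniform bound and the uniform-tail argument upgrades it to continuity, with density and the extension/infimum step handling general elements and the restriction spaces. Nothing to add.
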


\begin{lem}\label{multi38}
	Let $s,b \in \mathbb{R}.$ The space $X_{s,b}$ is reflexive and its dual is given by $X_{-s,-b}.$ 
\end{lem}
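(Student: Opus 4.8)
The plan is to realize $X_{s,b}$ as a weighted $L^{2}$ space via the space--time Fourier transform, from which both reflexivity and the identification of the dual follow by soft functional analysis. Write $\nu$ for the product of counting measure on $\mathbb{Z}$ and Lebesgue measure on $\mathbb{R}$, and set the weight $w_{s,b}(k,\tau):=\langle k\rangle^{2s}\langle\tau-\phi(k)\rangle^{2b}$, which is everywhere positive and finite. By the very definition \eqref{x-sb-norm} of the norm, the map
$$\Psi_{s,b}:v\longmapsto \langle k\rangle^{s}\langle\tau-\phi(k)\rangle^{b}\,\widehat{v}(k,\tau)$$
is an isometry from $X_{s,b}$ into $L^{2}(d\nu)$; since $X_{s,b}$ is defined as a completion under this norm and the weight is strictly positive and finite, $\Psi_{s,b}$ is in fact an isometric isomorphism \emph{onto} $L^{2}(d\nu)$ (any $G\in L^{2}(d\nu)$ is the image of the $v$ with $\widehat{v}=\langle k\rangle^{-s}\langle\tau-\phi(k)\rangle^{-b}G$). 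In particular $X_{s,b}$ is isometrically a Hilbert space, hence reflexive.

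Next I would set up the pairing that realizes $X_{-s,-b}$ as the dual. For $u\in X_{s,b}$ and $v\in X_{-s,-b}$ define the $L^{2}(\mathbb{T}\times\mathbb{R})$ pairing on the Fourier side, via Parseval,
$$\langle u,v\rangle := \sum_{k=-\infty}^{\infty}\int_{\mathbb{R}}\widehat{u}(k,\tau)\,\overline{\widehat{v}(k,\tau)}\;d\tau.$$
Inserting the factor $\langle k\rangle^{s}\langle\tau-\phi(k)\rangle^{b}\cdot\langle k\rangle^{-s}\langle\tau-\phi(k)\rangle^{-b}=1$ and applying Cauchy--Schwarz gives
$$|\langle u,v\rangle|\le \|u\|_{X_{s,b}}\,\|v\|_{X_{-s,-b}},$$
so each $v\in X_{-s,-b}$ defines a bounded functional on $X_{s,b}$ of norm at most $\|v\|_{X_{-s,-b}}$; testing against the obvious extremizer $\widehat{u}=\langle k\rangle^{-2s}\langle\tau-\phi(k)\rangle^{-2b}\widehat{v}$ shows the operator norm equals $\|v\|_{X_{-s,-b}}$. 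This produces an isometric embedding $X_{-s,-b}\hookrightarrow (X_{s,b})^{*}$.

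To prove surjectivity of this embedding, I would transport a given $F\in (X_{s,b})^{*}$ through the isomorphism $\Psi_{s,b}$: the functional $F\circ\Psi_{s,b}^{-1}$ lies in $(L^{2}(d\nu))^{*}$, so by the Riesz representation theorem there is $G\in L^{2}(d\nu)$ with $F\circ\Psi_{s,b}^{-1}(\psi)=\sum_{k}\int_{\mathbb{R}}\psi\,\overline{G}\;d\tau$. Defining $v$ through $\widehat{v}(k,\tau):=\langle k\rangle^{s}\langle\tau-\phi(k)\rangle^{b}\,G(k,\tau)$, one checks directly that $v\in X_{-s,-b}$ with $\|v\|_{X_{-s,-b}}=\|G\|_{L^{2}(d\nu)}$ and that $F(u)=\langle u,v\rangle$ for every $u\in X_{s,b}$. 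Hence $(X_{s,b})^{*}=X_{-s,-b}$ isometrically. Reflexivity then also follows a posteriori by applying this identity with $(s,b)$ replaced by $(-s,-b)$, which gives $(X_{s,b})^{**}=(X_{-s,-b})^{*}=X_{s,b}$.

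I do not anticipate a genuine analytic obstacle: the statement is a structural fact about weighted $L^{2}$ spaces and the whole argument is bookkeeping. The one point requiring care is the duality \emph{pairing}: one must use the unweighted $L^{2}(\mathbb{T}\times\mathbb{R})$ pairing (not the weighted Hilbert inner product coming from $\Psi_{s,b}$) so that the representing element genuinely lands in $X_{-s,-b}$ rather than in a conjugate or weighted copy of $X_{s,b}$. Keeping track of the weights $\langle k\rangle^{\pm s}\langle\tau-\phi(k)\rangle^{\pm b}$ and of the complex conjugation throughout is the only thing that can go wrong.
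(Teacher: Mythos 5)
Your proof is correct, but it takes a genuinely different route from the paper's. The paper disposes of this lemma in one line: it cites Tao's general duality result (the dual of $X^{s,b}$ with phase $h(k)$, under the \emph{bilinear} distributional pairing $\int uv\,dx\,dt$, is $X^{-s,-b}$ with the reflected phase $-h(-k)$) and then observes that the Benjamin phase $\phi(k)=-k^{3}-2\mu k+\alpha k|k|$ is odd, so $-\phi(-k)=\phi(k)$ and the reflected-phase space is $X_{-s,-b}$ itself. You instead work directly in the weighted-$L^{2}$ model (the same identification the paper records in \eqref{measure2}) and run Riesz representation; this is self-contained, needs no external citation, and buys reflexivity for free. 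One caveat deserves mention: because your pairing carries $\overline{\widehat{v}}$, the map $v\mapsto\langle\cdot,v\rangle$ is conjugate-linear, so what you literally identify is the antidual of $X_{s,b}$. To recover the statement in the form the paper actually uses it — a linear identification under the unconjugated pairing $\sum_{k}\int\widehat{u}(k,\tau)\,\widehat{v}(k,\tau)\,d\tau$, which is how duality is invoked at the start of the bilinear estimate \eqref{bilestim1} — replace $v$ by $\bar v$ and note that $\widehat{\bar v}(k,\tau)=\overline{\widehat{v}(-k,-\tau)}$, so that $\|\bar v\|_{X_{-s,-b}}=\|v\|_{X_{-s,-b}}$ precisely when $\langle \tau+\phi(-k)\rangle=\langle \tau-\phi(k)\rangle$, i.e., precisely when $\phi$ is odd. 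So the oddness of $\phi$ — the one fact the paper's proof names — is not avoided by your argument; it is hidden in this final conjugation step, and you should make it explicit. Everything else (the isometry of $X_{s,b}$ onto the weighted $L^{2}$ space, the Cauchy--Schwarz bound, the extremizer computation, and the Riesz step for surjectivity) is sound as written.
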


\begin{proof}
	It follows by the fact that $\phi(k)$ is an odd function (see Tao \cite[page 97]{11}).
\end{proof}

\begin{lem}\label{prop2}
	Let $s,r\in \mathbb{R}.$ Then,  for any $v\in X_{s,b}$ (resp. $X_{s,b}(I)$) $D^{r}v\in X_{s-r,b}$ (resp. $X_{s-r,b}(I)$). The same is valid for the operator
	$\partial_{x}^{r}.$ Moreover, there exists a positive constant $C,$ independent on $v,$ such that
	$$\displaystyle{\|D^{r}v\|_{X_{s-r,b}}}
	\leq C\; \displaystyle{\|v\|_{X_{s,b}}}.$$
\end{lem}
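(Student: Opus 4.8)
The plan is to exploit that $D^{r}$ acts as a Fourier multiplier in the spatial variable alone, so that it commutes with the time-frequency weight $\langle\tau-\phi(k)\rangle^{b}$ appearing in the definition \eqref{x-sb-norm} of the $X_{s,b}$-norm and modifies only the spatial weight $\langle k\rangle^{s}$. First I would record, from \eqref{difop}, that $\widehat{D^{r}v}(k,\tau)=|k|^{r}\widehat{v}(k,\tau)$ for $k\neq 0$ and $\widehat{D^{r}v}(0,\tau)=\widehat{v}(0,\tau)$, and insert this into \eqref{x-sb-norm} to obtain
\begin{equation*}
\|D^{r}v\|_{X_{s-r,b}}^{2}=\int_{\mathbb{R}}\langle\tau-\phi(0)\rangle^{2b}|\widehat{v}(0,\tau)|^{2}\,d\tau+\sum_{k\neq 0}\int_{\mathbb{R}}\langle k\rangle^{2(s-r)}|k|^{2r}\langle\tau-\phi(k)\rangle^{2b}|\widehat{v}(k,\tau)|^{2}\,d\tau.
\end{equation*}
This reduces the whole lemma to a pointwise comparison of the modified spatial weight $\langle k\rangle^{2(s-r)}|k|^{2r}$ with the original weight $\langle k\rangle^{2s}$.

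The key step is this weight comparison. Since $k$ ranges over the nonzero integers, $|k|\geq 1$ and hence $\tfrac{1}{\sqrt{2}}\leq |k|/\langle k\rangle<1$; raising to the power $2r$ I would get a two-sided bound $\big(|k|/\langle k\rangle\big)^{2r}\leq C_{r}$ with $C_{r}=\max\{1,2^{-r}\}$ depending only on $r$. Consequently $\langle k\rangle^{2(s-r)}|k|^{2r}=\langle k\rangle^{2s}\big(|k|/\langle k\rangle\big)^{2r}\leq C_{r}\langle k\rangle^{2s}$ for every $k\neq 0$, while the $k=0$ term is preserved exactly because $\langle 0\rangle^{2(s-r)}=1=\langle 0\rangle^{2s}$ and $\phi(0)=0$. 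Substituting these bounds into the displayed identity and comparing again with \eqref{x-sb-norm} yields $\|D^{r}v\|_{X_{s-r,b}}^{2}\leq C_{r}\|v\|_{X_{s,b}}^{2}$, which is precisely the asserted estimate with $C=C_{r}^{1/2}$.

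For $\partial_{x}^{r}$ the argument is identical: the multiplier is now $(ik)^{r}$, but $|(ik)^{r}|=|k|^{r}$, so the same weight comparison applies verbatim (for $r>0$ the zero mode simply vanishes, and in the mean-zero setting relevant to our application the zero mode is absent altogether). Finally, the statement for the restriction spaces $X_{s,b}(I)$ I would deduce from the global one using the definition \eqref{X-sbI}: because $D^{r}$ acts only in space it commutes with restriction to $\mathbb{T}\times I$, so if $\tilde{v}$ is any extension of $v$ then $D^{r}\tilde{v}$ extends $D^{r}v$, giving $\|D^{r}v\|_{X_{s-r,b}(I)}\leq\|D^{r}\tilde{v}\|_{X_{s-r,b}}\leq C\|\tilde{v}\|_{X_{s,b}}$; taking the infimum over all such extensions $\tilde{v}$ then yields $\|D^{r}v\|_{X_{s-r,b}(I)}\leq C\|v\|_{X_{s,b}(I)}$.

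I do not expect a genuine obstacle here, since the operator is diagonal in Fourier variables. The only point requiring care is the two-sided comparison $|k|\sim\langle k\rangle$ valid precisely because $|k|\geq 1$ for nonzero frequencies, together with the separate bookkeeping of the zero mode; the definition \eqref{difop} of $D^{r}$, which sets $\widehat{D^{r}v}(0)=\widehat{v}(0)$, is tailored exactly so that the estimate holds on this mode without loss.
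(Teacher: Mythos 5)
Your proof is correct, and it is exactly the standard Fourier-multiplier argument the authors have in mind: the paper states Lemma \ref{prop2} without proof precisely because the weight comparison $|k|\sim\langle k\rangle$ on nonzero frequencies (with the zero mode handled separately via the convention $\widehat{D^{r}v}(0)=\widehat{v}(0)$ and $\phi(0)=0$) is routine. Your treatment of the restriction spaces by commuting $D^{r}$ with extensions and taking the infimum, and your remark on the zero mode of $\partial_{x}^{r}$, are both sound and complete the statement as claimed.
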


The ideas to prove the majority of the  results in the following two subsections
are similar to those derived in the KdV case (see \cite{13, Kenig Ponce and Vega, Colliander Keel Staffilani Takaoka and Tao, Colliander Keel Staffilani Takaoka and Tao 2, Kappeler and Topalov, 12}).

\subsection{Linear and integral estimates}

To derive some  estimates localized in time variable, we introduce a cut-off function
  $\eta\in C^{\infty}_{c}(\mathbb{R})$  such that $\eta\equiv1,$\;if $t\in[-1,1]$ and $\eta\equiv0$,\;if $t \notin (-2,2).$
For $T>0$ given, we define $$\displaystyle{\eta_T \in C^{\infty}_{c}(\mathbb{R})\;\;\text{ by}\;\; \eta_T(t):=\eta\left(\frac{t}{T}\right)}.$$

\begin{prop}\label{intervalstimate}
Let $s,b\in \mathbb{R}$ and $T>0$  be given. Then for all $v_{0}\in H^{s}_{p}(\mathbb{T})$, we have
\begin{equation}\|\eta_{T}(t)V(t)v_{0}\|_{X_{s,b}}\leq C_{\eta,b}\;T^{\frac{1}{2}}\;\|v_{0}\|_{H^{s}_{p}(\mathbb{T})},\qquad\|\eta_{T}(t)V(t)v_{0}\|_{Y_{s,b}}\leq C_{\eta,b}\;T^{\frac{1}{2}}\;\|v_{0}\|_{H^{s}_{p}(\mathbb{T})},
 \end{equation}
  \begin{equation}\|V(t)v_{0}\|_{X_{s,b}^{T}}\leq C_{\eta,b}\;T^{\frac{1}{2}}\;\|v_{0}\|_{H^{s}_{p}(\mathbb{T})},\qquad\|V(t)v_{0}\|_{Y_{s,b}^{T}}\leq C_{\eta,b}\;T^{\frac{1}{2}}\;\|v_{0}\|_{H^{s}_{p}(\mathbb{T})},
\end{equation}
where $C_{\eta,b}$ is a positive constant that depends only on $\eta$ and $b.$
\end{prop}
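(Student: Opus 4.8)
The plan is to reduce both inequalities to one-dimensional statements about the cut-off $\eta_T$ by computing the space–time Fourier transform of $\eta_T(t)V(t)v_0$ explicitly. Writing $V(t)v_0=\sum_k e^{i\phi(k)t}\widehat{v_0}(k)e^{ikx}$ and taking the transform in both variables (with the convention used in the definition of $\widehat{v}(k,\tau)$), the $x$-integration collapses the sum to the single frequency $k$, giving the factorization
$$\widehat{w}(k,\tau)=\widehat{v_0}(k)\,\widetilde{\eta_T}(\tau-\phi(k)),\qquad w:=\eta_T(t)V(t)v_0,$$
where $\widetilde{\eta_T}$ denotes the Fourier transform of $\eta_T$ in the time variable alone. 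The crucial feature is that the phase $\phi(k)$ enters only through the shift $\tau-\phi(k)$, which is precisely the quantity weighted in the definitions \eqref{x-sb-norm} and \eqref{Y-sb}.

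First I would treat the $X_{s,b}$ norm. Inserting the factorization into \eqref{x-sb-norm} and performing, for each fixed $k$, the change of variables $\sigma=\tau-\phi(k)$ decouples the $k$-sum from the $\tau$-integral, so that $\|w\|_{X_{s,b}}=\|v_0\|_{H^s_p(\mathbb{T})}\,\|\eta_T\|_{H^b(\mathbb{R})}$ up to a fixed normalization constant. The analogous computation for \eqref{Y-sb} gives $\|w\|_{Y_{s,b}}=\|v_0\|_{H^s_p(\mathbb{T})}\int_{\mathbb{R}}\langle\sigma\rangle^{b}|\widetilde{\eta_T}(\sigma)|\,d\sigma$, since the square of the $\ell^2_k$ sum factors in the same way. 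Thus both estimates are reduced to bounding a weighted Sobolev norm, respectively a weighted $L^1$ norm, of the single function $\eta_T$.

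The only step requiring care is then the scaling estimate for the cut-off. Using $\eta_T(t)=\eta(t/T)$ one has $\widetilde{\eta_T}(\sigma)=T\,\widetilde{\eta}(T\sigma)$, and the change of variables $\rho=T\sigma$ reduces $\|\eta_T\|_{H^b(\mathbb{R})}$ and $\int_{\mathbb{R}}\langle\sigma\rangle^b|\widetilde{\eta_T}(\sigma)|\,d\sigma$ to integrals of $\langle\rho/T\rangle$ against the rapidly decaying (Schwartz) function $\widetilde{\eta}$. Exploiting this decay and comparing $\langle\rho/T\rangle$ with $\langle\rho\rangle$ in the relevant range of $T$ produces the factor $T^{1/2}$ together with a constant $C_{\eta,b}$ depending only on $\eta$ and $b$; intuitively, the gain of $T^{1/2}$ reflects the localization of $\eta_T$ to a time window of length of order $T$. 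This scaling computation carries all the quantitative content, and it is where the dependence on $\eta$ and $b$ enters.

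Finally, the restriction-norm bounds follow with no extra work. Since $\eta_T\equiv1$ on $[-T,T]\supset[0,T]=I$, the function $\eta_T(t)V(t)v_0$ is an admissible extension of $V(t)v_0$ from $\mathbb{T}\times I$ to $\mathbb{T}\times\mathbb{R}$, so by the definition \eqref{X-sbI} of the restriction norm one has $\|V(t)v_0\|_{X_{s,b}^{T}}\le\|\eta_T(t)V(t)v_0\|_{X_{s,b}}$, and likewise for $Y_{s,b}^{T}$. The two estimates already established then yield the last two inequalities with the same constant $C_{\eta,b}$.
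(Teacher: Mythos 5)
Your overall route is the standard one, and since the paper states this proposition without any proof there is nothing to compare against line by line; the structural part of your argument is correct. The factorization $\widehat{w}(k,\tau)=\widehat{v_0}(k)\,\widetilde{\eta_T}(\tau-\phi(k))$ is right for the stated Fourier conventions, it yields the exact identities $\|w\|_{X_{s,b}}^2=\|v_0\|_{H^{s}_{p}(\mathbb{T})}^2\int_{\mathbb{R}}\langle\sigma\rangle^{2b}|\widetilde{\eta_T}(\sigma)|^{2}\,d\sigma$ and $\|w\|_{Y_{s,b}}=\|v_0\|_{H^{s}_{p}(\mathbb{T})}\int_{\mathbb{R}}\langle\sigma\rangle^{b}|\widetilde{\eta_T}(\sigma)|\,d\sigma$, and the passage to the restriction norms via the extension $\eta_T(t)V(t)v_0$ is legitimate because $\eta_T\equiv 1$ on $[-T,T]\supset[0,T]$.

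The gap sits exactly at the step you yourself call ``the only step requiring care'' and then do not carry out. After the substitution $\rho=T\sigma$ one has $\|\eta_T\|_{H^{b}_{t}}^{2}=T\int_{\mathbb{R}}\langle\rho/T\rangle^{2b}|\widetilde{\eta}(\rho)|^{2}\,d\rho$ and $\int_{\mathbb{R}}\langle\sigma\rangle^{b}|\widetilde{\eta_T}(\sigma)|\,d\sigma=\int_{\mathbb{R}}\langle\rho/T\rangle^{b}|\widetilde{\eta}(\rho)|\,d\rho$. For $T\geq 1$ the comparison you invoke does work for every $b$ (use $\langle\rho/T\rangle\leq\langle\rho\rangle$ when $b\geq 0$ and $\langle\rho/T\rangle^{b}\leq 1$ when $b<0$), giving the $C_{\eta,b}T^{1/2}$ bound. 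But for $T\leq 1$ no comparison of $\langle\rho/T\rangle$ with $\langle\rho\rangle$ can produce $T^{1/2}$: the $X$-computation gives $T^{1/2-b}$ when $b>0$, and for the $Y$-norm with $b\geq 0$ one has $\langle\rho/T\rangle^{b}\geq 1$, so the integral is bounded below by $\|\widetilde{\eta}\|_{L^{1}}>0$ while the claimed bound $C_{\eta,b}T^{1/2}$ tends to $0$ as $T\to 0$. So the estimate as printed --- all $T>0$, all $b\in\mathbb{R}$, constant depending only on $\eta$ and $b$ --- is not provable, and your sketch conceals this rather than resolving it. What your computation honestly yields is the stated bound for $T\geq 1$, together with $T$-uniform (but not $T^{1/2}$) bounds on bounded ranges of $T$, with exponents such as $T^{\min\{1/2,\,1/2-b\}}$ for the $X$-norm; note that the paper itself is careful about the regime $T\leq 1$ in the neighboring Theorem \ref{intestimateZ3}, and only such two-regime versions are used downstream. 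To complete your proof you must make this case analysis explicit and either restrict the range of $T$ (or of $b$) or state the weaker exponent for small $T$.
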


\begin{cor} \label{linstimateZ}
Let $s,b\in \mathbb{R}$ and $T>0$  be given. Then for all $v_{0}\in H^{s}_{p}(\mathbb{T})$, one has
\begin{equation}
\|\eta_{T}(t)V(t)v_{0}\|_{Z_{s,b}}\leq C_{\eta,b}\;T^{\frac{1}{2}}\;\|v_{0}\|_{H^{s}_{p}(\mathbb{T})},\qquad \|V(t)v_{0}\|_{Z_{s,b}^{T}}\leq C_{\eta,b}\;T^{\frac{1}{2}}\|v_{0}\|_{H^{s}_{p}(\mathbb{T})},
\end{equation}
where $C_{\eta,b}$ is a positive constant that depends only of $\eta$ and $b.$
\end{cor}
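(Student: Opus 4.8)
The plan is to deduce both estimates directly from Proposition \ref{intervalstimate} by unwinding the definition of the $Z_{s,b}$ norm, so that no new analysis is required. Recall from \eqref{Z-sb} that $Z_{s,b}=X_{s,b}\cap Y_{s,b-\frac{1}{2}}$, with $\|w\|_{Z_{s,b}}=\|w\|_{X_{s,b}}+\|w\|_{Y_{s,b-\frac{1}{2}}}$ for any admissible $w$. Hence it suffices to control the two summands separately and add the resulting bounds.

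For the $X_{s,b}$-part, I would apply Proposition \ref{intervalstimate} at the index $b$ to obtain
$$\|\eta_{T}(t)V(t)v_{0}\|_{X_{s,b}}\leq C_{\eta,b}\;T^{\frac{1}{2}}\;\|v_{0}\|_{H^{s}_{p}(\mathbb{T})}.$$
For the $Y_{s,b-\frac{1}{2}}$-part, the key observation is that the same proposition, applied with $b$ replaced by $b-\frac{1}{2}$ in its $Y$-estimate, gives
$$\|\eta_{T}(t)V(t)v_{0}\|_{Y_{s,b-\frac{1}{2}}}\leq C_{\eta,b-\frac{1}{2}}\;T^{\frac{1}{2}}\;\|v_{0}\|_{H^{s}_{p}(\mathbb{T})}.$$
Since $b-\frac{1}{2}$ is determined by $b$, the constant $C_{\eta,b-\frac{1}{2}}$ is still a function of $\eta$ and $b$ alone. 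Adding the two inequalities and absorbing $C_{\eta,b}+C_{\eta,b-\frac{1}{2}}$ into a single constant, which we continue to denote $C_{\eta,b}$, yields the first claimed estimate.

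The second inequality, involving the restriction norm $\|\cdot\|_{Z_{s,b}^{T}}$, follows in exactly the same manner from the second pair of estimates in Proposition \ref{intervalstimate}: one applies the $X_{s,b}^{T}$-bound at index $b$ and the $Y_{s,b}^{T}$-bound at index $b-\frac{1}{2}$, then sums. I do not expect any genuine obstacle here, as the corollary is purely a matter of combining component estimates that are already established; the only point worth recording is the half-integer shift in the second Bourgain index forced by the definition \eqref{Z-sb} of $Z_{s,b}$, which is precisely what makes the $Y$-estimate enter at the level $b-\frac{1}{2}$ rather than $b$.
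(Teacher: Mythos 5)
Your proposal is correct and is essentially identical to the paper's own proof: the authors likewise deduce the corollary directly from Proposition \ref{intervalstimate} together with the definition of the $Z_{s,b}$ and $Z_{s,b}^{T}$ norms, which is exactly your componentwise argument with the $Y$-estimate invoked at the shifted index $b-\frac{1}{2}$. Since the proposition holds for all $b\in\mathbb{R}$, the half-integer shift costs nothing, and your bookkeeping of the constant $C_{\eta,b}$ is sound.
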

\begin{proof}
It follows from  Proposition \ref{intervalstimate}
and the definition of the  $Z_{s,b}$
 and $Z^{T}_{s,b}$--norms.
\end{proof}
\begin{thm}\label{intestimateZ3}Let $b=\frac{1}{2}$  and $T>0$ be given. Then, one has
\begin{equation}\label{eq-ll3}
\left\|\eta_{T}(t)\int_{0}^{t}V(t-\tau)f(\tau)\;d\tau\right\|_{Z_{s,\frac{1}{2}}}\leq C_{\eta,T}\;
\|f\|_{Z_{s,-\frac{1}{2}}},\qquad\forall\; f\in Z_{s,-\frac{1}{2}},
\end{equation}
and 
\begin{equation}\label{eq-ll4}
\left\|\int_{0}^{t}V(t-\tau)f(\tau)\;d\tau\right\|_{Z^{T}_{s,\frac{1}{2}}}\leq C_{\eta,T}\;
\|f\|_{Z^{T}_{s,-\frac{1}{2}}},\qquad\forall\; f\in Z^T_{s,-\frac{1}{2}},
\end{equation}
where $C_{\eta,T}$ is a positive constant depending on $\eta,$ and the final time $T.$

If $T\leq 1,$ then the positive constant $C$ involved in \eqref{eq-ll3}  and \eqref{eq-ll4} does not depend on $T.$
\end{thm}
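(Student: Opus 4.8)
The plan is to reduce the estimate to its $X_{s,b}$ and $Y_{s,b}$ components and to analyze the Duhamel operator on the Fourier side, where the endpoint difficulty at $b=\frac12$ becomes transparent. Since $Z_{s,\frac12}=X_{s,\frac12}\cap Y_{s,0}$ and $Z_{s,-\frac12}=X_{s,-\frac12}\cap Y_{s,-1}$, and since the Fourier multiplier $\langle k\rangle^{s}$ commutes with $V(t)$ and with the time integration, I would first peel off $\langle k\rangle^{s}$ and reduce to the case $s=0$. Writing $F(k,\tau)=\widehat{f}(k,\tau)$ and using $V(t-\tau)=V(t)V(-\tau)$, a direct computation of the space-time Fourier transform gives
\begin{equation*}
\mathcal{F}\Big(\int_{0}^{t}V(t-\tau)f(\tau)\,d\tau\Big)(k,t)=\int_{\mathbb{R}}F(k,\tau)\,\frac{e^{it\tau}-e^{it\phi(k)}}{i(\tau-\phi(k))}\,d\tau,
\end{equation*}
so everything is governed by the scalar kernel $K(t)=e^{it\phi(k)}\,\frac{e^{it\lambda}-1}{i\lambda}$, where $\lambda:=\tau-\phi(k)$ is the modulation. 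Note that the precise form of $\phi(k)=-k^{3}-2\mu k+\alpha k|k|$ is irrelevant here: only $|e^{it\phi(k)}|=1$ is used, so the argument is purely structural.

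Next I would split the kernel according to the size of $|\lambda|$. On the low-modulation region $|\lambda|\le1$ I would expand $\frac{e^{it\lambda}-1}{i\lambda}=\sum_{n\ge1}\frac{i^{n-1}t^{n}\lambda^{n-1}}{n!}$, multiply by $\eta_{T}(t)$, and sum the resulting rapidly convergent series; since $\langle\lambda\rangle\approx1$ on this region, the four relevant weights $\langle\lambda\rangle^{\pm\frac12}$, $\langle\lambda\rangle^{0}$, $\langle\lambda\rangle^{-1}$ are all comparable, and the temporal factors $\eta_{T}(t)t^{n}$ have harmless $L^{1}$ Fourier transforms. On the high-modulation region $|\lambda|\ge1$ I would split $K$ into the inhomogeneous piece $e^{it\tau}/(i\lambda)$ and the homogeneous piece $-e^{it\phi(k)}/(i\lambda)$. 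For the inhomogeneous piece, after multiplying by $\eta_{T}$ the output Fourier transform is a convolution in the time frequency of $\widehat{\eta_{T}}$ with $F(k,\cdot)/(i\lambda)$; the $1/\lambda$ gain turns the output weight $\langle\lambda\rangle^{1/2}$ into $\langle\lambda\rangle^{-1/2}$ (matching $X_{0,-\frac12}$) and the weight $\langle\lambda\rangle^{0}$ into $\langle\lambda\rangle^{-1}$ (matching $Y_{0,-1}$), while the rapid decay of $\widehat{\eta_{T}}\in L^{1}$ absorbs the mismatch between input and output modulations via Young's and Minkowski's inequalities. This controls the inhomogeneous contribution in both $X_{0,\frac12}$ and $Y_{0,0}$ by $C\|f\|_{Z_{0,-\frac12}}$.

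The homogeneous piece is of the form $\eta_{T}(t)V(t)\psi$ with $\widehat{\psi}(k)=\int_{|\lambda|\ge1}F(k,\tau)/(i\lambda)\,d\tau$, so by the linear estimates of Proposition \ref{intervalstimate} (both the $X$ and the $Y$ version) it is bounded by $C_{\eta}T^{1/2}\|\psi\|_{L^{2}}$. Here lies the crux, and the main obstacle: by Cauchy–Schwarz one \emph{cannot} control $\|\psi\|_{L^{2}}$ by $\|f\|_{X_{0,-\frac12}}$, because $\int_{|\lambda|\ge1}\langle\lambda\rangle^{-1}\,d\tau$ diverges logarithmically — this is precisely the $b=\frac12$ endpoint failure of the pure $X_{s,b}$ theory and the reason the spaces $Y_{s,b}$, and hence $Z_{s,b}$, must be introduced. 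Instead I would estimate $|\widehat{\psi}(k)|\le\int\langle\lambda\rangle^{-1}|F(k,\tau)|\,d\tau$ pointwise in $k$, so that $\|\psi\|_{L^{2}}\le C\|f\|_{Y_{0,-1}}$, which is legitimately part of the right-hand side $\|f\|_{Z_{0,-\frac12}}$. Collecting the three contributions yields \eqref{eq-ll3}.

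Finally, the restriction estimate \eqref{eq-ll4} follows routinely from \eqref{eq-ll3}: given $f\in Z^{T}_{s,-\frac12}$ I would choose an extension $\tilde f$ with $\|\tilde f\|_{Z_{s,-\frac12}}\le2\|f\|_{Z^{T}_{s,-\frac12}}$, apply \eqref{eq-ll3} to $\tilde f$, and observe that on $[0,T]$ one has $\eta_{T}\equiv1$ while $\int_{0}^{t}V(t-\tau)\tilde f(\tau)\,d\tau$ depends only on $\tilde f|_{[0,t]}=f|_{[0,t]}$; hence $\eta_{T}(t)\int_{0}^{t}V(t-\tau)\tilde f(\tau)\,d\tau$ is an admissible extension of the restricted Duhamel term and its $Z_{s,\frac12}$-norm dominates the $Z^{T}_{s,\frac12}$-norm. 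The asserted $T$-independence of the constant for $T\le1$ comes from the fact that, after the rescaling $t\mapsto t/T$, the quantities $\widehat{\eta_{T}}$ and $\eta_{T}(t)t^{n}$ can be bounded uniformly for $T\in(0,1]$.
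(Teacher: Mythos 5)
Your argument is exactly the standard proof of the result the paper invokes by citation (its entire proof is ``similar to the proof of Lemma 3.16 in \cite{12}''): the space--time Fourier computation of the Duhamel kernel, the Taylor expansion on the low-modulation region $|\tau-\phi(k)|\leq 1$, the inhomogeneous/homogeneous splitting at high modulation with the homogeneous boundary term $\eta_T(t)V(t)\psi$ estimated through $\|f\|_{Y_{s,-1}}$ precisely because the $X_{s,-\frac12}$ bound fails logarithmically, and the extension argument for the restricted norm \eqref{eq-ll4}. This is correct and essentially the same approach as the paper's source; only your closing rescaling remark on $T$-uniformity for $T\leq 1$ is stated loosely (e.g.\ $\|\langle\tau\rangle^{1/2}\widehat{\eta_T}\|_{L^1}$ is not uniformly bounded as $T\to 0$, so the weighted factors must be tracked piece by piece), but this matches the level of detail the paper itself omits.
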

\begin{proof}
	The proof is similar to the proof of Lemma 3.16  in \cite{12}.
\end{proof}

\begin{prop}\label{Xestimate1}
Let $\;T>0,$ $-\frac{1}{2}<b'<b<\frac{1}{2}$ and $s \in \mathbb{R}$ be given. Then for all 
 $v\in X_{s,b}^{T}$, there exists $C>0$, such that 
$$\|v\|_{X_{s,b'}^{T}}\leq C\; T^{b-b'}\|v\|_{X_{s,b}^{T}}.$$

\end{prop}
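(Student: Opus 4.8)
The plan is to reduce this restriction-space inequality to a scalar time-localization estimate by using the unitary group $V(t)$ to straighten out the dispersive weight. Since
$$\widehat{V(-\cdot)v}(k,\tau)=\widehat{v}(k,\tau+\phi(k)),$$
the shifted weight $\langle\tau-\phi(k)\rangle$ appearing in \eqref{x-sb-norm} turns into the plain weight $\langle\tau\rangle$ after the change of variables $\tau\mapsto\tau-\phi(k)$, so that $V(t)$ is an isometry
$$\|v\|_{X_{s,b}}=\|V(-\cdot)v\|_{H^{b}(\mathbb{R}_t;H_p^{s}(\mathbb{T}))}.$$
Because $\eta_T$ depends only on $t$ while $V(t)$ acts only in $x$, the two commute, $V(-t)(\eta_T v)=\eta_T\,V(-t)v$. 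Hence the free-space multiplier bound $\|\eta_T v\|_{X_{s,b'}}\le C\,T^{b-b'}\|v\|_{X_{s,b}}$ is equivalent to the vector-valued estimate for $w=V(-\cdot)v$, and since the coefficient space $H_p^{s}(\mathbb{T})$ rides along untouched by multiplication in $t$, everything reduces to the scalar one-dimensional estimate
$$\|\eta_T f\|_{H^{b'}(\mathbb{R})}\le C\,T^{b-b'}\|f\|_{H^{b}(\mathbb{R})},\qquad -\tfrac12<b'<b<\tfrac12.$$

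Granting this scalar estimate, I would transfer it to the restriction norms as follows. Given $v\in X_{s,b}^{T}$ and $\varepsilon>0$, choose an extension $\tilde v$ with $\tilde v=v$ on $\mathbb{T}\times[0,T]$ and $\|\tilde v\|_{X_{s,b}}\le\|v\|_{X_{s,b}^{T}}+\varepsilon$. Since $\eta\equiv1$ on $[-1,1]$ we have $\eta_T\equiv1$ on $[-T,T]\supseteq[0,T]$, so $\eta_T\tilde v$ is again an extension of $v$ to $\mathbb{T}\times[0,T]$. Applying the free-space bound to $\tilde v$,
$$\|v\|_{X_{s,b'}^{T}}\le\|\eta_T\tilde v\|_{X_{s,b'}}\le C\,T^{b-b'}\|\tilde v\|_{X_{s,b}}\le C\,T^{b-b'}\bigl(\|v\|_{X_{s,b}^{T}}+\varepsilon\bigr),$$
and letting $\varepsilon\to0$ yields the proposition. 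This step is routine and works for every $T>0$, the constant $C$ being inherited entirely from the scalar estimate.

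For the scalar estimate itself, which is the heart of the matter, I would proceed by duality and composition rather than a single brute-force computation. First establish the base gain estimate $\|\chi_T f\|_{L^{2}(\mathbb{R})}\le C\,T^{b}\|f\|_{H^{b}(\mathbb{R})}$ for $0\le b<\tfrac12$ and any fixed cutoff profile $\chi$ at scale $T$; passing to Fourier variables this is a convolution bound $\widehat{\chi_T f}=\hat\chi_T*\hat f$ with $\hat\chi_T(\tau)=T\hat\chi(T\tau)$, controlled by a Schur-type argument that exploits the frequency smearing at scale $T^{-1}$. Dualizing this inequality (the multiplication operator is self-adjoint since $\chi$ is real) gives $\|\chi_T f\|_{H^{b'}(\mathbb{R})}\le C\,T^{-b'}\|f\|_{L^{2}(\mathbb{R})}$ for $-\tfrac12<b'\le0$. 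Finally, writing $\eta_T=\eta_T\,\tilde\eta_T$ with a fattened cutoff $\tilde\eta_T\equiv1$ on $\mathrm{supp}\,\eta_T$ and composing the two one-sided estimates produces the full bound $\|\eta_T f\|_{H^{b'}}\le C\,T^{b-b'}\|f\|_{H^{b}}$ for all $-\tfrac12<b'<b<\tfrac12$. The main obstacle is precisely the base gain estimate and its endpoint sensitivity: the uniform-in-$T$ control breaks down at $b=\pm\tfrac12$ (sharp cutoffs fail to be $H^{1/2}$ multipliers), which is exactly why the hypothesis $-\tfrac12<b'<b<\tfrac12$ cannot be relaxed.
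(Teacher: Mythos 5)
Your overall strategy --- conjugating by the free group $V(t)$ to turn the modulation weight $\langle\tau-\phi(k)\rangle$ into the plain weight $\langle\tau\rangle$, reducing (frequency by frequency in $k$) to the scalar estimate $\|\eta_T f\|_{H^{b'}(\mathbb{R})}\le C\,T^{b-b'}\|f\|_{H^{b}(\mathbb{R})}$, and then transferring to the restriction norms via a near-optimal extension multiplied by $\eta_T$, which is admissible since $\eta_T\equiv 1$ on $[0,T]$ --- is exactly the route the paper intends: its proof of this proposition is a citation to Lemma 2.11 in Tao \cite{11}, which proceeds by precisely this reduction. The isometry $\|v\|_{X_{s,b}}=\|V(-\cdot)v\|_{H^{b}(\mathbb{R}_t;H^{s}_{p}(\mathbb{T}))}$, the commutation of $\eta_T$ with $V(t)$, and the extension bookkeeping in your second paragraph are all correct.

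The gap is in your final composition step. The two one-sided estimates you establish, $\|\chi_T f\|_{L^{2}}\le C\,T^{b}\|f\|_{H^{b}}$ for $0\le b<\frac12$ and its dual $\|\chi_T f\|_{H^{b'}}\le C\,T^{-b'}\|f\|_{L^{2}}$ for $-\frac12<b'\le 0$, can only be composed \emph{through} $L^{2}$: writing $\eta_T=\eta_T\tilde\eta_T$ gives $\|\eta_T f\|_{H^{b'}}\le C\,T^{-b'}\|\tilde\eta_T f\|_{L^{2}}\le C\,T^{b-b'}\|f\|_{H^{b}}$, which requires $b'\le 0\le b$. The same-sign cases $0<b'<b<\frac12$ and $-\frac12<b'<b<0$, which the proposition also asserts, are not reached this way, and there is no substitute estimate of the form $\|\chi_T f\|_{H^{b'}}\lesssim T^{-b'}\|f\|_{L^{2}}$ with $b'>0$: multiplication cannot gain derivatives (test with $f$ concentrated at temporal frequency $N\to\infty$; the operator norm grows like $N^{b'}$). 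The standard repair needs one more ingredient, namely the uniform-in-$T$ multiplier bound $\|\eta_T f\|_{H^{b}}\lesssim\|f\|_{H^{b}}$ for $|b|<\frac12$ --- this is exactly where the classical fact you allude to, that sharp cutoffs are bounded on $H^{b}$ only for $|b|<\frac12$, actually enters. Complex interpolation between $\eta_T:H^{b}\to H^{b}$ (norm $O(1)$) and $\eta_T:H^{b}\to L^{2}$ (norm $O(T^{b})$) then yields $\eta_T:H^{b}\to H^{b'}$ with norm $O(T^{b-b'})$ for $0\le b'\le b$, and duality gives the range $b'<b\le 0$; alternatively, one can argue directly in the same-sign cases by splitting $f$ at temporal frequency $|\tau|\sim T^{-1}$, as in Tao's proof. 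With that supplement your argument is complete and coincides with the paper's.
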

\begin{proof}
	The proof is similar to the proof of Lemma 2.11 in Tao \cite{11}.
\end{proof}

\begin{prop}[{\cite[page 105]{11}}] \label{Yestimate2}
	For all   $s,b \in \mathbb{R},$  $\epsilon>0,$ and
	$v\in X_{s,b},$ we have
	$$\|v\|_{Y_{s,b-\frac{1}{2}-\epsilon}}\leq C_{\epsilon}\; \|v\|_{X_{s,b}}, $$	
	where $C_{\epsilon}$ is a positive constant.
\end{prop}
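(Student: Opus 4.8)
The plan is to work entirely on the Fourier side, since both norms are defined through $\widehat{v}(k,\tau)$, and to reduce matters to a one-dimensional integral estimate in the $\tau$ variable for each fixed frequency $k\in\mathbb{Z}$. Comparing the definitions \eqref{x-sb-norm} and \eqref{Y-sb}, the only structural difference between $\|v\|_{Y_{s,b-\frac12-\epsilon}}$ and $\|v\|_{X_{s,b}}$ is that in the $Y$-norm the $\tau$-integral appears as an $L^{1}$-type quantity (then squared and summed) rather than an $L^{2}$-type quantity, and the temporal weight carries the exponent $b-\frac12-\epsilon$ instead of $b$. The spatial weight $\langle k\rangle^{s}$ enters identically once squared in each case, so it will simply factor out. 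Thus it suffices to control, for each fixed $k$, the inner integral $\int_{\mathbb{R}}\langle\tau-\phi(k)\rangle^{b-\frac12-\epsilon}|\widehat{v}(k,\tau)|\,d\tau$ by $C_{\epsilon}\big(\int_{\mathbb{R}}\langle\tau-\phi(k)\rangle^{2b}|\widehat{v}(k,\tau)|^{2}\,d\tau\big)^{1/2}$.

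The key step is Cauchy--Schwarz in $\tau$ after splitting the weight as
\[
\langle\tau-\phi(k)\rangle^{b-\frac12-\epsilon}=\langle\tau-\phi(k)\rangle^{-\frac12-\epsilon}\cdot\langle\tau-\phi(k)\rangle^{b}.
\]
Placing the first factor in one slot and $\langle\tau-\phi(k)\rangle^{b}|\widehat{v}(k,\tau)|$ in the other yields
\[
\int_{\mathbb{R}}\langle\tau-\phi(k)\rangle^{b-\frac12-\epsilon}|\widehat{v}(k,\tau)|\,d\tau
\le\Big(\int_{\mathbb{R}}\langle\tau-\phi(k)\rangle^{-1-2\epsilon}\,d\tau\Big)^{\frac12}
\Big(\int_{\mathbb{R}}\langle\tau-\phi(k)\rangle^{2b}|\widehat{v}(k,\tau)|^{2}\,d\tau\Big)^{\frac12}.
\]
By translation invariance of Lebesgue measure the first factor equals $\big(\int_{\mathbb{R}}\langle\sigma\rangle^{-1-2\epsilon}\,d\sigma\big)^{1/2}=:C_{\epsilon}$, a finite constant independent of $k$. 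This is exactly where the hypothesis $\epsilon>0$ is used, since $\int_{\mathbb{R}}\langle\sigma\rangle^{-1-2\epsilon}\,d\sigma<\infty$ if and only if $1+2\epsilon>1$.

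To finish, I would multiply this bound by $\langle k\rangle^{s}$, square it, and sum over $k\in\mathbb{Z}$; since $C_{\epsilon}$ is uniform in $k$ it comes out of the sum, giving
\[
\|v\|_{Y_{s,b-\frac12-\epsilon}}^{2}\le C_{\epsilon}^{2}\sum_{k}\langle k\rangle^{2s}\int_{\mathbb{R}}\langle\tau-\phi(k)\rangle^{2b}|\widehat{v}(k,\tau)|^{2}\,d\tau=C_{\epsilon}^{2}\,\|v\|_{X_{s,b}}^{2},
\]
and taking square roots yields the claim. There is no genuine obstacle here: the estimate is a clean application of Cauchy--Schwarz, and the only point requiring care is that one gains a full half-derivative \emph{minus} $\epsilon$ rather than exactly a half, which is precisely what makes the auxiliary $\tau$-integral $\int_{\mathbb{R}}\langle\sigma\rangle^{-1-2\epsilon}\,d\sigma$ converge. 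This is the structural reason one cannot take $\epsilon=0$, and it reflects the failure of the embedding \eqref{cont-embd} at $b=\frac12$ that motivated introducing the $Y_{s,b}$ spaces in the first place.
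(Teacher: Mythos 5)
Your proof is correct and is exactly the standard argument: the paper does not prove this proposition itself but cites Tao \cite[page 105]{11}, where the estimate is obtained by the same split $\langle\tau-\phi(k)\rangle^{b-\frac12-\epsilon}=\langle\tau-\phi(k)\rangle^{-\frac12-\epsilon}\langle\tau-\phi(k)\rangle^{b}$ and Cauchy--Schwarz in $\tau$, with the convergent integral $\int_{\mathbb{R}}\langle\sigma\rangle^{-1-2\epsilon}\,d\sigma$ supplying a constant uniform in $k$. Your identification of $\epsilon>0$ as the precise point of failure at $b=\frac12$ matches the motivation for the $Y_{s,b}$ spaces given in the paper.
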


\begin{cor}\label{Yestimate1}
For all $T>0,$  $s,b \in \mathbb{R},$  $\epsilon>0,$ and
 $v\in X_{s,b}^{T}$, one has
\begin{equation}\label{Yestimate6}
\|v\|_{Y_{s,b-\frac{1}{2}-\epsilon}^{T}}\leq C_{\epsilon}\; \|v\|_{X_{s,b}^{T}},
\end{equation}
where $C_{\epsilon}$ is a positive constant.
\end{cor}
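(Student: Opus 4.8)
The plan is to deduce the restricted estimate directly from the global one in Proposition \ref{Yestimate2} by exploiting the definition of the restriction norms as infima over extensions. The key observation is that both $\|v\|_{X_{s,b}^{T}}$ and $\|v\|_{Y_{s,b-\frac12-\epsilon}^{T}}$ are defined through functions on $\mathbb{T}\times\mathbb{R}$ that agree with $v$ on $\mathbb{T}\times[0,T]$, and that a single extension can be made to control both norms simultaneously.

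Concretely, I would let $v\in X_{s,b}^{T}$ be given and fix an arbitrary extension $\tilde v\in X_{s,b}$ with $\tilde v=v$ on $\mathbb{T}\times[0,T]$. Applying Proposition \ref{Yestimate2} to $\tilde v$ yields $\tilde v\in Y_{s,b-\frac12-\epsilon}$ together with
$$\|\tilde v\|_{Y_{s,b-\frac12-\epsilon}}\leq C_\epsilon\,\|\tilde v\|_{X_{s,b}}.$$
Since $\tilde v$ agrees with $v$ on $\mathbb{T}\times[0,T]$, it is an admissible competitor in the infimum defining $\|v\|_{Y_{s,b-\frac12-\epsilon}^{T}}$, and therefore
$$\|v\|_{Y_{s,b-\frac12-\epsilon}^{T}}\leq \|\tilde v\|_{Y_{s,b-\frac12-\epsilon}}\leq C_\epsilon\,\|\tilde v\|_{X_{s,b}}.$$

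Finally, taking the infimum over all such extensions $\tilde v$ on the right-hand side and invoking the definition \eqref{X-sbI} of the $X_{s,b}^{T}$-norm gives $\|v\|_{Y_{s,b-\frac12-\epsilon}^{T}}\leq C_\epsilon\,\|v\|_{X_{s,b}^{T}}$, which is exactly the claimed inequality with the same constant $C_\epsilon$ as in Proposition \ref{Yestimate2}. There is no genuine analytic obstacle here: the entire content is the standard transfer lemma for restriction spaces, and the only point deserving attention is that one and the same extension $\tilde v$ serves simultaneously as a competitor for the domain norm $X_{s,b}^{T}$ and as an admissible extension for the target norm $Y_{s,b-\frac12-\epsilon}^{T}$, so that passing to the infimum incurs no loss and the global constant is preserved.
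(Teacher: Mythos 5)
Your proposal is correct and follows essentially the same route as the paper: transfer the global bound of Proposition \ref{Yestimate2} to the restriction norms via an extension of $v$. The only (inessential) difference is that you take the infimum over all extensions and so keep the constant $C_\epsilon$ exactly, whereas the paper fixes one near-optimal extension with $\|\widetilde{v}\|_{X_{s,b}}\leq 2\|v\|_{X_{s,b}^{T}}$ and concludes with the constant $2C_\epsilon$.
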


\begin{proof}
Let  $v\in X_{s,b}^{T}$ and consider $\widetilde{v}$ an extension  in $X_{s,b}$ such that $\|\widetilde{v}\|_{X_{s,b}}\leq 2\;
\|v\|_{X_{s,b}^{T}}.$
From Proposition \ref{Yestimate2}, we have
\begin{equation*}
\begin{split}
\|v\|_{Y_{s,b-\frac{1}{2}-\epsilon}^{T}}&\leq
\|\widetilde{v}\|_{Y_{s,b-\frac{1}{2}-\epsilon}}
\leq C_{\epsilon}\;
\left\|\widetilde{v}\right\|_{X_{s,b}}
\leq 2C_{\epsilon}\;
\left\|v\right\|_{X_{s,b}^{T}}.
\end{split}
\end{equation*}
\end{proof}

\subsection{Some Nonlinear estimates}
 We start with the following result, which is fundamental to estimate the nonlinear term $2u\partial_{x}u,$ in $Z_{s,-\frac{1}{2}}$ norm.

\begin{thm}\label{fundaimme}
 There exists $C_{\alpha} >0$  depending only  on $\alpha$, such that for all $v\in X_{0,\frac{1}{3}}$, one has
\begin{equation}\label{L4-est}
\|v\|_{L^{4}(\mathbb{T}\times \mathbb{R})}\leq\;C_{\alpha}\|v\|_{X_{0,\frac{1}{3}}}.
\end{equation}
\end{thm}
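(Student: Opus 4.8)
The plan is to deduce \eqref{L4-est} from a counting estimate in the spirit of Bourgain's sharp $L^{4}$ bound for the periodic KdV equation, the only genuinely new feature being the nonlocal, nonsmooth term $\alpha k|k|$ in the phase $\phi$. First I would reduce the quartic estimate to a bilinear $L^{2}$ estimate: since $\|v\|_{L^{4}(\mathbb{T}\times\mathbb{R})}^{2}=\||v|^{2}\|_{L^{2}(\mathbb{T}\times\mathbb{R})}$, it suffices to prove $\|v_{1}\overline{v_{2}}\|_{L^{2}}\leq C_{\alpha}\|v_{1}\|_{X_{0,1/3}}\|v_{2}\|_{X_{0,1/3}}$ and then take $v_{1}=v_{2}=v$, using that $\|\overline{v_{2}}\|_{X_{0,1/3}}=\|v_{2}\|_{X_{0,1/3}}$ because $\phi$ is odd (Lemma \ref{multi38}). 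Passing to the space–time Fourier side and using Plancherel, I would write $\widehat{v_{j}}(k,\tau)=\langle\tau-\phi(k)\rangle^{-1/3}f_{j}(k,\tau)$, so that $\|v_{j}\|_{X_{0,1/3}}=\|f_{j}\|_{\ell^{2}_{k}L^{2}_{\tau}}$; because $s=0$ there is no frequency weight $\langle k\rangle^{s}$, and the whole estimate is governed by the modulation weight alone. Since $\widehat{v_{1}\overline{v_{2}}}(k,\tau)$ is a convolution of $\widehat{v_{1}}$ and $\widehat{\overline{v_{2}}}$, the goal becomes a weighted convolution bound for $f_{1},f_{2}\geq0$.

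Next I would decompose dyadically in the two modulation variables $\lambda_{j}\sim\langle\tau_{j}-\phi(k_{j})\rangle$ and prove, for each dyadic block, an off-diagonal gain summable against the weights $\lambda_{1}^{1/3}\lambda_{2}^{1/3}$. Applying the Cauchy–Schwarz inequality to the convolution reduces the block estimate to a uniform bound on the measure of the resonant set $N(k,\tau)=\#\{(k_{1},\tau_{1}):|\tau_{1}-\phi(k_{1})|\leq\lambda_{1},\ |\tau-\tau_{1}-\phi(k-k_{1})|\leq\lambda_{2}\}$. For fixed $(k,\tau)$ this factors: the constraint in $\tau_{1}$ is an intersection of two intervals, of measure at most $\min(\lambda_{1},\lambda_{2})$, and it is nonempty only when $|b+H(k_{1},k-k_{1})|\leq C\max(\lambda_{1},\lambda_{2})$, where $b=\tau-\phi(k)$ and $H(k_{1},k_{2}):=\phi(k_{1}+k_{2})-\phi(k_{1})-\phi(k_{2})$ is the resonance function. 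A direct computation using $\phi(k)=-k^{3}-2\mu k+\alpha k|k|$ shows that the linear part cancels and
\[
H(k_{1},k_{2})=-3k_{1}k_{2}(k_{1}+k_{2})+\alpha\big(k|k|-k_{1}|k_{1}|-k_{2}|k_{2}|\big),\qquad k=k_{1}+k_{2},
\]
so the leading term is exactly the cubic KdV resonance $-3k_{1}k_{2}k$, which for fixed $k$ is a quadratic in $k_{1}$ with leading coefficient $3k$, while the Hilbert-transform term is a lower-order, piecewise-quadratic correction.

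The main obstacle is the sharp counting estimate for $\#\{k_{1}\in\mathbb{Z}:|b+H(k_{1},k-k_{1})|\leq L\}$, which must be good enough to produce exactly the exponent $1/3$ and not merely the exponent $3/8$ that the crude quadratic level-set bound $1+(L/|k|)^{1/2}$ yields. To recover the sharp exponent I would exploit the full resonance identity $a_{1}+a_{2}=b+H$: at least one of the three modulations dominates $|H|\sim|k_{1}k_{2}k|$ away from the diagonal $k_{1}\approx k_{2}\approx k/2$, so only the near-diagonal frequencies are genuinely resonant, and treating the two regimes separately closes the sharp bound as in the periodic KdV case (cf. \cite{11, 13, Colliander Keel Staffilani Takaoka and Tao, 12}). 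To handle the nonsmooth term $\alpha k|k|$ I would split $\mathbb{Z}$ into the finitely many sign regions of $k_{1},k_{2},k$, on each of which $H$ is a true quadratic polynomial in $k_{1}$ with leading coefficient $3k-2\alpha$, $3k$, or $3k+2\alpha$; since $|k|\geq1$ for $k\neq0$, this coefficient is comparable to $|k|$ for all but finitely many $k$, and the remaining finitely many frequencies (depending on $\alpha$, where the coefficient may degenerate, in which case $H$ is merely linear and the counting is even easier) contribute only a bounded factor absorbed into $C_{\alpha}$. The output frequency $k=0$ is treated directly. Summing the resulting dyadic bounds over $\lambda_{1},\lambda_{2}$ and the output modulation then yields \eqref{L4-est}.
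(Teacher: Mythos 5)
Your outline reproduces the paper's skeleton faithfully --- squaring to a bilinear $L^{2}$ bound, dyadic decomposition in the modulation variables, Cauchy--Schwarz reducing each block to a lattice-point count on level sets of the resonance function, and splitting into sign regions so that the term $\alpha k|k|$ produces genuine quadratics in $k_{1}$ with leading coefficients $3k$, $3k\pm2\alpha$ (the paper's four cases) --- and your computation of $H$ is correct. The gap is in the single step that actually produces the exponent $\frac{1}{3}$. You assert that the crude level-set count $1+(L/|k|)^{1/2}$ yields only $b=\frac{3}{8}$ and must be replaced by a near-diagonal refinement driven by the trichotomy $\max(|a_{1}|,|a_{2}|,|b|)\geq c\,|H|$. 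In fact the crude count, kept with its $|k|$-gain, already yields $\frac{1}{3}$ once one splits the \emph{output} frequency at a modulation-dependent threshold: the paper takes $|k|$ larger or smaller than $2^{a}([\alpha]+2)$ with $a=\frac{m+n}{3}$; for large $|k|$ the count $O(2^{(m+n-a)/2})$ gives the factor $2^{(3m+n-a)/4}$, while for $|k|\leq 2^{a}([\alpha]+2)$ \emph{no counting is used at all} --- only Young's inequality in $\tau$ and Cauchy--Schwarz over the $O(2^{a})$ admissible output frequencies, costing $2^{(a+m)/2}$; balancing the two exponents at $a=\frac{m+n}{3}$ gives $2^{(4m+n)/6}$, which sums against the $X_{0,\frac{1}{3}}$ weights with margin $2^{-n/6}$. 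This small-frequency mechanism is entirely absent from your proposal: you address only $k=0$ and the finitely many $k$ at which $3k\pm2\alpha$ degenerates.

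Your proposed substitute cannot fill this hole. In the region $1\leq|k|\leq 2^{(m+n)/3}$ --- unboundedly many frequencies, growing with the dyadic block, not a finite set absorbable into $C_{\alpha}$ --- the near-diagonal points $k_{1}\approx k/2$ satisfy $|H|\leq C_{\alpha}|k|^{3}\leq C_{\alpha}2^{m+n}$, so the trichotomy forces no modulation to be large, and for suitable $(k,\tau)$ the count of admissible $k_{1}$ is genuinely of size $2^{(m+n)/2}$: pointwise counting there cannot beat $\frac{3}{8}$, and only the scarcity of such output frequencies rescues the estimate. Moreover, in the $L^{4}$ setting the output modulation $b=\tau-\phi(k)$ carries no weight, so even in the regime where $|b|$ dominates $|H|$ you have nothing to trade the gain against; the paper invokes the trichotomy (Remark \ref{resonance property1}) only later, in the bilinear estimate of Theorem \ref{BilinearEstimate}, where weights sit on all three factors --- and that proof itself uses the present $L^{4}$ bound, so leaning on it here would be circular. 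Finally, the sentence ``only the near-diagonal frequencies are genuinely resonant'' conflates two distinct phenomena: $H=-3k_{1}k_{2}k+\alpha\bigl(k|k|-k_{1}|k_{1}|-k_{2}|k_{2}|\bigr)$ is small near $k_{1}\in\{0,k\}$ and of \emph{maximal} size near $k_{1}=k/2$; what clusters at the diagonal is the solution set of the counting problem (the vertex of the parabola), not the resonance set, and this misidentification is why the appeal to ``as in the periodic KdV case'' does not yet correspond to an actual argument.
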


\begin{proof}  
 Observe that \;
 $ \displaystyle{\|v\|^{2}_{X_{0,\frac{1}{3}}}}
  \sim\displaystyle{\sum_{k=-\infty}^{\infty}\int_{\mathbb{R}} \left(1 + |\tau -\phi(k)|\right) ^{\frac{2}{3}}|\widehat{v}(k,\tau)|^{2}\;d\tau}.$

 Also note that, we can write $\displaystyle{v(x,t)=\sum_{m=0}^{\infty}v_{2^m}(x,t)},$  where
 $$\displaystyle{\widehat{v_{2^m}}(k,\tau)=\widehat{v}(k,\tau)\cdot\chi_{2^{m}\leq 1 + |\tau -\phi(k)|< 2^{m+1}}}.$$
In this way, we have
 \begin{equation}\label{priorestimt}
 \begin{split}
  \displaystyle{\|v\|^{2}_{X_{0,\frac{1}{3}}}}
  &\sim\displaystyle{\sum_{k=-\infty}^{\infty}\int_{\mathbb{R}}
  \left|\sum_{m=0}^{\infty} \widehat{v}(k,\tau)\cdot2^{\frac{m}{3}} \cdot
  \chi_{2^{m}\leq 1 + |\tau -\phi(k)|< 2^{m+1}} \right|^{2}\;d\tau}.
  \end{split}
 \end{equation}

 Now, using Plancherel's identity in \eqref{priorestimt}, we obtain
\begin{equation}\label{equiv1}
  \displaystyle{\|v\|^{2}_{X_{0,\frac{1}{3}}}}\sim
  \displaystyle{\sum_{m=0}^{\infty} 2^{\frac{2m}{3}}\|v_{2^{m}}\|^{2}_{L^{2}(\mathbb{T}\times \mathbb{R})} }.
\end{equation}

On the other hand,
{\small
\begin{equation}\label{lcuanorm}
  \displaystyle{\|v\|^{2}_{L^{4}(\mathbb{T}\times\mathbb{R})}}=
 \displaystyle{\|v^{2}\|_{L^{2}(\mathbb{T}\times\mathbb{R})}}\leq
 2\displaystyle{\sum_{m\leq m'}\left\|v_{2^m}\,
v_{2^{m'}}\right\|_{L^{2}(\mathbb{T}\times\mathbb{R})}}
 =2\displaystyle{\sum_{m,n\geq 0}\left\|v_{2^m}\,
v_{2^{m+n}}\right\|_{L^{2}(\mathbb{T}\times\mathbb{R})}}.
\end{equation}}

Once again,  using Plancherel's identity, we get
\begin{equation}\label{priorestimt3}
\begin{split}
\displaystyle{\left\|v_{2^m}\,
v_{2^{m+n}}\right\|_{L^{2}(\mathbb{T}\times\mathbb{R})}}
&= \displaystyle{\left\|\sum_{k_{1}\in \mathbb{Z}}\int_{\mathbb{R}}\widehat{v_{2^m}}(k_{1},\tau_{1})\;
\widehat{v_{2^{m+n}}}(k-k_{1},\tau-\tau_{1})\;d\tau_{1}\right\|_{l^{2}_{k}L^{2}_{\tau}}}.
\end{split}
\end{equation}

We estimate the RHS of  \eqref{priorestimt3} separately in the range $|k|\leq 2^{a}([\alpha]+2)$ and
$|k|> 2^{a}([\alpha]+2),$ where the natural number $a$ will be determined later.
\begin{equation}\label{priorestimt4}
{\scriptsize
\begin{split}
\displaystyle{\left\|v_{2^m}\,
v_{2^{m+n}}\right\|_{L^{2}(\mathbb{T}\times\mathbb{R})}}
&\leq \displaystyle{\left( \sum_{|k|\leq 2^{a}([\alpha]+2)}\left\|\sum_{k_{1}\in \mathbb{Z}}\int_{\mathbb{R}}\widehat{v_{2^m}}(k_{1},\tau_{1})\;
\widehat{v_{2^{m+n}}}(k-k_{1},\tau-\tau_{1})\;d\tau_{1}\right\|^{2}_{L^{2}_{\tau}}\right)^{\frac{1}{2}}}\\
&\displaystyle{\qquad+\left(\sum_{|k|> 2^{a}([\alpha]+2)}\left\|\sum_{k_{1}\in \mathbb{Z}}\int_{\mathbb{R}}\widehat{v_{2^m}}(k_{1},\tau_{1})\;
\widehat{v_{2^{m+n}}}(k-k_{1},\tau-\tau_{1})\;d\tau_{1}\right\|^{2}_{L^{2}_{\tau}}
\right)^{\frac{1}{2}}}\\
&=:I+II.
\end{split}}
\end{equation}

To estimate $I$, we  use the triangular and Young's inequalities and obtain
\begin{equation}\label{priorestimt5}
{\scriptsize
\begin{split}
\displaystyle{\left\|\sum_{k_{1}\in \mathbb{Z}}\int_{\mathbb{R}}\widehat{v_{2^m}}(k_{1},\tau_{1})\;
\widehat{v_{2^{m+n}}}(k-k_{1},\tau-\tau_{1})\;d\tau_{1}\right\|_{L^{2}_{\tau}}}
&\leq \displaystyle{\sum_{k_{1}\in \mathbb{Z}}\left\|\widehat{v_{2^m}}(k_{1},\cdot)\right\|_{L^{1}(\mathbb{R})}
\left\|\widehat{v_{2^{m+n}}}(k-k_{1},\cdot)\right\|_{L^{2}(\mathbb{R})}}.
\end{split}}
\end{equation}

Now applying Cauchy-Schwartz inequality, we get
\begin{equation}\label{priorestimt6}
{\scriptsize
\begin{split}
\displaystyle{\left\|\widehat{v_{2^m}}(k_{1},\cdot)\right\|_{L^{1}(\mathbb{R})}}
&\leq  \displaystyle{ \left(\mu(\{\tau_{1}:2^{m}\leq 1+|\tau_{1}-\phi(k_{1})|<2^{m+1}\})\right)^{\frac{1}{2}}
\left(\int_{\mathbb{R}}\left|\widehat{v}(k_{1},\tau_{1})\cdot \chi_{2^{m}\leq 1+|\tau_{1}-\phi(k_{1})|<2^{m+1}}\right|^{2}\;d\tau_{1}\right)^{\frac{1}{2}}}\\
&\leq \displaystyle{C_{2}\;2^{\frac{m}{2}}
\|\widehat{v_{2^{m}}}(k_{1},\cdot)\|_{L^{2}(\mathbb{R})}},
\end{split}}
\end{equation}
where $\mu$ is the Lebesgue measure. Therefore, applying Cauchy-Schwartz inequality,
 Plancherel, and the invariance of the norm under translations,  we obtain
\begin{equation}\label{priorestimt7}
{\small
\begin{split}
\displaystyle{\left\|\sum_{k_{1}\in \mathbb{Z}}\int_{\mathbb{R}}\widehat{v_{2^m}}(k_{1},\tau_{1})\;
\widehat{v_{2^{m+n}}}(k-k_{1},\tau-\tau_{1})\;d\tau_{1}\right\|_{L^{2}_{\tau}}}
&\leq \displaystyle{C_{2}\;2^{\frac{m}{2}} \|v_{2^{m}}\|_{L^{2}(\mathbb{T}\times\mathbb{R})} \cdot
\|v_{2^{m+n}}\|_{L^{2}(\mathbb{T}\times\mathbb{R})}}.
\end{split}}
\end{equation}

Therefore, from definition of $I$ in \eqref{priorestimt4} and inequalities  \eqref{priorestimt5}, \eqref{priorestimt6}, and \eqref{priorestimt7}, we get
\begin{equation}\label{Iest}
I\leq\displaystyle{C_{3}(\alpha)\;2^{\frac{a+m}{2}} \|v_{2^{m}}\|_{L^{2}(\mathbb{T}\times\mathbb{R})} \cdot
\|v_{2^{m+n}}\|_{L^{2}(\mathbb{T}\times\mathbb{R})}}.
\end{equation}

To estimate $II,$  define
$$\theta :=\displaystyle{\left\|\sum_{k_{1}\in \mathbb{Z}}\int_{\mathbb{R}}\widehat{v_{2^m}}(k_{1},\tau_{1})\;
\widehat{v_{2^{m+n}}}(k-k_{1},\tau-\tau_{1})\;d\tau_{1}\right\|_{L^{2}_{\tau}}}.$$

First, denote $\chi_{m}(k,\tau):=\chi_{2^{m}\leq 1+|\tau-\phi(k)|<2^{m+1}}(\tau). $ Applying-Cauchy Schwartz inequality in $k_{1}$, and $\tau_{1},$
we get
$${
\begin{split}
\theta
&\leq\displaystyle{ \sup\limits_{|k|> 2^{a}([\alpha]+2)}\sup_{\tau \in \mathbb{R}}\left(\;\chi_{m}\ast\chi_{m+n}(k,\tau)\;\right)^{\frac{1}{2}}
\left\|\left(\sum_{k_{1}\in \mathbb{Z}}\int_{\mathbb{R}}|\widehat{v_{^{2^m}}}(k_{1},\tau_{1})|^{2}\;
|\widehat{v_{2^{m+n}}}(k-k_{1},\tau-\tau_{1})|^{2}\;d\tau_{1}\right)^{\frac{1}{2}}\right\|_{L^{2}_{\tau}}}.
\end{split}}$$

Therefore,  using the translation invariance of the norm, and Plancherel inequality,  we obtain
\begin{equation}\label{priorestimt11}
\begin{split}
II&\leq\displaystyle{\left\|\chi_{m}\ast\chi_{m+n}(k,\tau)\right\|^{\frac{1}{2}}_{l^{\infty}_{|k|> 2^{a}([\alpha]+2)}L^{\infty}_{\tau}}\cdot
\left\|v_{2^{m+n}}\right\|_{L^{2}(\mathbb{T}\times\mathbb{R})}\cdot
\left\|v_{2^{m}}\right\|_{L^{2}(\mathbb{T}\times\mathbb{R})}}.
\end{split}
\end{equation}

To estimate the convolution term in inequality \eqref{priorestimt11},
we write for fixed $k$ with\\ $|k|> 2^{a}(2+[\alpha])$ and $\tau$
\begin{equation}\label{conv1}
\chi_{m}\ast\chi_{m+n}(k,\tau)
=\sum_{k_{1}\in \mathbb{Z}}\int_{\mathbb{R}}\chi_{m}(k_{1},\tau_{1})\cdot\chi_{m+n}(k-k_{1},\tau-\tau_{1})\;d\tau_{1}.
\end{equation}

From the support condition on $\chi_{m}$ and  $\chi_{m+n}$ we note that for each $k_{1}$ fixed
there exist $C_{4}\geq0$ and $C_{5}>0$ such that $$C_{4}2^{m}\leq |\tau_{1}-\phi(k_{1})|<C_{2}2^{m}.$$
Thus, $\tau_{1}=\phi(k_{1})+O(2^{m}).$
In a similar way, we have that $\tau-\tau_{1}=\phi(k-k_{1})+O(2^{m+n}).$ In consequence,
\begin{equation}\label{orden}
\tau=\phi(k_{1})+\phi(k-k_{1})+O(2^{m+n}),
\end{equation}
and
\begin{align*}
\int_{\mathbb{R}}\chi_{m}(k_{1},\tau_{1})\cdot\chi_{m+n}(k-k_{1},\tau-\tau_{1})\;d\tau_{1}
&\leq \mu(\{\tau_{1}\in \mathbb{R}:2^{m}\leq 1+|\tau_{1}-\phi(k_{1})|<2^{m+1}\}).
\end{align*}

Therefore, for each fixed $k_{1},$ the $\tau_{1}$ integral in (\ref{conv1}) is $O(2^{m}).$
To calculate the numbers of $k_{1}'s$ for which the integral is non-zero, note that \eqref{orden}
implies
\begin{equation}\label{orden1}
\frac{\tau}{k}=-k^{2}+3kk_{1}-3k_{1}^{2}-2\mu +\frac{\alpha k_{1}|k_{1}|}{k}+\frac{\alpha (k-k_{1})|k-k_{1}|}{k}
+O(C_{6}(\alpha)2^{m+n-a}).
\end{equation}

Thus, we must study four cases:

\noindent
{\bf{Case \textbf{1.}  $k-k_{1}\geq0$ and $k_{1}\geq0:$}}
By identity \eqref{orden1} we have
$$\frac{\alpha k_{1}^{2}}{k}+\frac{\alpha (k-k_{1})^{2}}{k}+3kk_{1}-3k_{1}^{2}-k^{2}-2\mu =\frac{\tau}{k}+O(C_{6}(\alpha)2^{m+n-a}).$$
Note that,\;
$\displaystyle{3kk_{1}-3k_{1}^{2} +\frac{\alpha k_{1}^{2}}{k}+\frac{\alpha (k-k_{1})^{2}}{k}
=\displaystyle{\left(\frac{2\alpha}{k}-3\right)\left(k_{1}^{2}-kk_{1}\right)+\alpha k}}.$
Therefore,\\
$$\left(\frac{2\alpha}{k}-3\right)\left(k_{1}-\frac{k}{2}\right)^{2}=
\frac{\tau}{k}+\left(\frac{2\alpha}{k}-3\right)\frac{k^{2}}{4}-\alpha k+k^{2}-2\mu
+O(C_{6}(\alpha)2^{m+n-a}).$$

Using that $|k|>2^{a}([\alpha]+2),$ we observe that $\displaystyle{\left|\frac{2\alpha}{k}-3\right|>1}.$  
This implies,
$$\displaystyle{\left(k_{1}-\frac{k}{2}\right)^{2}=
\frac{\tau}{2\alpha-3k}+\frac{k^{2}}{4}-\frac{\alpha k^{2}}{2\alpha-3k}+\frac{k^{3}}{2\alpha-3k}-\frac{2\mu k}{2\alpha-3k}
+O(C_{6}(\alpha)2^{m+n-a})}.$$

\noindent
{\bf{Case \textbf{2.}  $k-k_{1}\geq0$ and $k_{1}\leq0:$}}  In this case identity \eqref{orden1} implies,
$$-\frac{\alpha k_{1}^{2}}{k}+\frac{\alpha (k-k_{1})^{2}}{k}+3kk_{1}-3k_{1}^{2}-k^{2}-2\mu =\frac{\tau}{k}+O(C_{6}(\alpha)2^{m+n-a}).$$
With the similar calculations as in {\bf Case \textbf{1}}, we obtain
$$\displaystyle{\left(k_{1}-\left(-\frac{2\alpha}{3k}+1\right)\frac{k}{2}\right)^{2}=-\frac{\tau}{3k}+
\left(-\frac{2\alpha}{3k}+1\right)^{2}\frac{k^{2}}{4}+\frac{\alpha k}{3}-\frac{k^{2}}{3}-\frac{2\mu}{3}+O(C_{6}(\alpha)2^{m+n-a})}.
$$

\noindent
{\bf{Case \textbf{3.} $k-k_{1}\leq0$ and $k_{1}\leq0:$}} In this case identity \eqref{orden1} implies,\\
$$-\frac{\alpha k_{1}^{2}}{k}-\frac{\alpha (k-k_{1})^{2}}{k}+3kk_{1}-3k_{1}^{2}-k^{2}-2\mu =\frac{\tau}{k}+O(C_{6}(\alpha)2^{m+n-a}).$$
Thus,
$$\displaystyle{\left(-\frac{2\alpha}{k}-3\right)\left(k_{1}-\frac{k}{2}\right)^{2}=
\frac{\tau}{k}+\left(-\frac{-2\alpha}{k}-3\right)\frac{k^{2}}{4}+\alpha k+k^{2}+2\mu +O(C_{6}(\alpha)2^{m+n-a})}.
$$

Using  $|k|>2^{a}([\alpha]+2),$ we observe that $\displaystyle{\left|-\frac{2\alpha}{k}-3\right|>1}.$  Therefore,
\begin{align*}
&\displaystyle{\left(k_{1}-\frac{k}{2}\right)^{2}=
\frac{\tau}{-2\alpha-3k}+\frac{k^{2}}{4}+\frac{\alpha k^{2}}{-2\alpha-3k}+\frac{k^{3}}{-2\alpha-3k}+\frac{2\mu k}{-2\alpha-3k} +O(C_{6}(\alpha)2^{m+n-a})}.
\end{align*}

\noindent
{\bf{Case \textbf{4.} $k-k_{1}\leq0$ and $k_{1}\geq0:$}} In this case identity \eqref{orden1} implies,\\
$$\frac{\alpha k_{1}^{2}}{k}-\frac{\alpha (k-k_{1})^{2}}{k}+3kk_{1}-3k_{1}^{2}-k^{2}-2\mu =\frac{\tau}{k}+O(C_{6}(\alpha)2^{m+n-a}).$$
Thus,
$$\displaystyle{\left(k_{1}-\left(\frac{2\alpha}{3k}+1\right)\frac{k}{2}\right)^{2}=
-\frac{\tau}{3k}+\left(\frac{2\alpha}{3k}+1\right)^{2}\frac{k^{2}}{4}-\frac{\alpha k}{3}-\frac{k^{2}}{3}-\frac{2\mu}{3}+
O(C_{6}(\alpha)2^{m+n-a})}.
$$

Therefore, in all cases $k_{1}$ takes at most $O(C_{6}(\alpha)2^{\frac{m+n-a}{2}})$ values. Thus,
\begin{equation}\label{priorestimt12}
\begin{split}
\displaystyle{\left\|\chi_{m}\ast\chi_{m+n}(k,\tau)\right\|_{l^{\infty}_{|k|> 2^{a}([\alpha]+2)}L^{\infty}_{\tau}}}
&\leq C_{7}(\alpha)2^{m}\cdot2^{\frac{m+n-a}{2}}
=C_{7}(\alpha)2^{\frac{3m+n-a}{2}}.
\end{split}
\end{equation}
We can conclude from inequalities \eqref{priorestimt11} and \eqref{priorestimt12} that
\begin{equation}\label{IIstm}
II \leq\displaystyle{C_{7}(\alpha)2^{\frac{3m+n-a}{4}}
\left\|v_{2^{m+n}}\right\|_{L^{2}(\mathbb{T}\times\mathbb{R})}\,
\left\|v_{2^{m}}\right\|_{L^{2}(\mathbb{T}\times\mathbb{R})}}.
\end{equation}

Using estimates \eqref{Iest} and \eqref{IIstm} in \eqref{priorestimt4}, we get
\begin{align*}
\displaystyle{\left\|v_{2^m}\,
v_{2^{m+n}}\right\|_{L^{2}(\mathbb{T}\times\mathbb{R})}}
&\leq C_{7}(\alpha)\left(2^{\frac{m+a}{2}}+2^{\frac{3m+n-a}{4}}\right)
\left\|v_{2^{m+n}}\right\|_{L^{2}(\mathbb{T}\times\mathbb{R})}\,
\left\|v_{2^{m}}\right\|_{L^{2}(\mathbb{T}\times\mathbb{R})}.
\end{align*}
Taking $a=\frac{m+n}{3},$ we obtain
\begin{equation}\label{priorestimt13}
\displaystyle{\left\|v_{2^m}\,
v_{2^{m+n}}\right\|_{L^{2}(\mathbb{T}\times\mathbb{R})}}
\leq C_{8}(\alpha)2^{\frac{4m+n}{6}}
\left\|v_{2^{m+n}}\right\|_{L^{2}(\mathbb{T}\times\mathbb{R})}\,
\left\|v_{2^{m}}\right\|_{L^{2}(\mathbb{T}\times\mathbb{R})}.
\end{equation}

Therefore, inequalities \eqref{lcuanorm} and \eqref{priorestimt13} imply
\begin{equation}\label{priorestimt14}
{\scriptsize
\begin{split}
 \displaystyle{\|v\|^{2}_{L^{4}(\mathbb{T}\times\mathbb{R})}}
&\leq 2C_{8}(\alpha)\sum_{n\geq 0}2^{-\frac{n}{6}}\left(\sum_{m\geq 0}2^{\frac{2(m+n)}{3}}
\left\|v_{2^{m+n}}\right\|^{2}_{L^{2}(\mathbb{T}\times\mathbb{R})}\right)^{\frac{1}{2}}
\,\left(\sum_{m\geq 0}2^{\frac{2m}{3}}
\left\|v_{2^{m}}\right\|^{2}_{L^{2}(\mathbb{T}\times\mathbb{R})}\right)^{\frac{1}{2}}.
\end{split}}
\end{equation}

Thus from inequality \eqref{priorestimt14} and identity \eqref{equiv1}, we obtain
\begin{align*}
 \displaystyle{\|v\|^{2}_{L^{4}(\mathbb{T}\times\mathbb{R})}}
 &\leq C_{9}(\alpha)\|v\|^{2}_{X_{0,\frac{1}{3}}}\left(\sum_{n\geq 0}2^{-\frac{n}{6}}\right)
 \leq C(\alpha)\|v\|^{2}_{X_{0,\frac{1}{3}}}.
\end{align*}
\end{proof}

\begin{cor} \label{dualestim}
Let $f\in L^{\frac{4}{3}}(\mathbb{T}\times \mathbb{R})$. Then, there exists  $C_{\alpha}>0$, such that
$$\|f\|_{X_{0,-\frac{1}{3}}}\leq C_{\alpha}\|f\|_{L^{\frac{4}{3}}(\mathbb{T}\times \mathbb{R})}.$$
\end{cor}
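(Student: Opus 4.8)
The plan is to derive this dual estimate from the $L^{4}$ bound of Theorem \ref{fundaimme} by a standard duality argument. By Lemma \ref{multi38} the dual of $X_{0,\frac{1}{3}}$ is precisely $X_{0,-\frac{1}{3}}$, and the duality pairing agrees with the $L^{2}(\mathbb{T}\times\mathbb{R})$ inner product on the dense subspace of Schwartz functions. Consequently the norm in $X_{0,-\frac{1}{3}}$ admits the variational characterization
$$\|f\|_{X_{0,-\frac{1}{3}}}=\sup\left\{\left|\int_{\mathbb{T}}\int_{\mathbb{R}}f(x,t)\,\overline{v(x,t)}\;dt\,dx\right| : v\in X_{0,\frac{1}{3}},\;\|v\|_{X_{0,\frac{1}{3}}}\leq 1\right\}.$$
First I would fix such a test function $v$ and estimate the pairing directly.

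The key step is to combine H\"older's inequality in the conjugate exponents $\frac{4}{3}$ and $4$ with the embedding from Theorem \ref{fundaimme}. Indeed, for any $v$ with $\|v\|_{X_{0,\frac{1}{3}}}\leq 1$ one has
$$\left|\int_{\mathbb{T}}\int_{\mathbb{R}}f\,\overline{v}\;dt\,dx\right|\leq \|f\|_{L^{\frac{4}{3}}(\mathbb{T}\times\mathbb{R})}\;\|v\|_{L^{4}(\mathbb{T}\times\mathbb{R})}\leq C_{\alpha}\,\|f\|_{L^{\frac{4}{3}}(\mathbb{T}\times\mathbb{R})}\;\|v\|_{X_{0,\frac{1}{3}}},$$
where $C_{\alpha}$ is exactly the constant furnished by Theorem \ref{fundaimme}. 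Taking the supremum over all admissible $v$ in the characterization above then yields $\|f\|_{X_{0,-\frac{1}{3}}}\leq C_{\alpha}\,\|f\|_{L^{\frac{4}{3}}(\mathbb{T}\times\mathbb{R})}$, which is the desired conclusion.

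The only point requiring a little care, and the place I would be most cautious, is the identification of the $X_{0,-\frac{1}{3}}$ norm with the supremum of the $L^{2}$-pairing: one must verify that the sesquilinear duality bracket of $X_{0,\frac{1}{3}}$ with its dual $X_{0,-\frac{1}{3}}$ coincides with $\int\!\int f\,\overline{v}$ for $f,v$ Schwartz, and then invoke the density of $\mathcal{S}(\mathbb{T}\times\mathbb{R})$ in both spaces (built into their definitions) to pass to general $f\in L^{\frac{4}{3}}$. Once this identification is in place the argument is purely formal, so I do not expect any genuine analytic obstacle; all the harmonic-analytic content has already been absorbed into Theorem \ref{fundaimme}.
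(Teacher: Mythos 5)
Your proposal is correct and is essentially the paper's own proof: the authors simply dualize the embedding $X_{0,\frac{1}{3}}\hookrightarrow L^{4}(\mathbb{T}\times\mathbb{R})$ from Theorem \ref{fundaimme} to obtain $L^{\frac{4}{3}}(\mathbb{T}\times\mathbb{R})\hookrightarrow X_{0,-\frac{1}{3}}$, using the identification $(X_{0,\frac{1}{3}})'=X_{0,-\frac{1}{3}}$ of Lemma \ref{multi38}. Your version merely spells out the variational characterization of the dual norm and the H\"older step that the paper leaves implicit, and the point you flag for care (that the duality bracket is realized by the $L^{2}$ pairing on the dense Schwartz class, which works because the phase $\phi(k)$ is odd) is exactly what Lemma \ref{multi38} provides.
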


\begin{proof}
It follows from Theorem \ref{fundaimme} that $X_{0,\frac{1}{3}}\hookrightarrow L^{4}(\mathbb{T}\times \mathbb{R}),$ so that
$\left(L^{4}(\mathbb{T}\times \mathbb{R})\right)'\hookrightarrow \left(X_{0,\frac{1}{3}}\right)',$ i.e.,
$L^{\frac{4}{3}}(\mathbb{T}\times \mathbb{R})\hookrightarrow X_{0,-\frac{1}{3}}.$
\end{proof}

\begin{lem}\label{estimate1}
For all $k,k_{1}\in \mathbb{Z}$  with $k\neq 0,\;k_{1}\neq 0,$ and $k\neq k_{1}$, we have 
\begin{equation}\label{ineq-51}
|3kk_{1}(k-k_{1})|\geq\frac{3}{2}k^{2},
\end{equation}
\begin{equation}\label{ineq-52}
| k_{1}(k-k_{1})|\geq\frac{1}{2}|k|.
\end{equation}
\end{lem}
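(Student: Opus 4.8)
The plan is to notice first that the two estimates are equivalent, so that only one of them needs a genuine argument. Indeed, since $k\neq 0$ we have $|k|>0$, and multiplying \eqref{ineq-52} through by the positive quantity $3|k|$ yields $|3kk_{1}(k-k_{1})|=3|k|\,|k_{1}|\,|k-k_{1}|\geq 3|k|\cdot\tfrac12|k|=\tfrac32 k^{2}$, which is exactly \eqref{ineq-51}. Hence I would reduce everything to proving \eqref{ineq-52}, namely $|k_{1}(k-k_{1})|\geq\tfrac12|k|$.

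The key point is that the hypotheses force both factors to be nonzero integers. The assumption $k_{1}\neq 0$ gives $|k_{1}|\geq 1$, while $k\neq k_{1}$ gives $k-k_{1}\neq 0$ and hence $|k-k_{1}|\geq 1$. Writing $p:=|k_{1}|\geq 1$ and $q:=|k-k_{1}|\geq 1$, the triangle inequality gives $|k|=|k_{1}+(k-k_{1})|\leq p+q$, so it suffices to establish the elementary bound $pq\geq\tfrac12(p+q)$ for integers $p,q\geq 1$. This in turn follows from $pq\geq p$ (because $q\geq 1$) together with $pq\geq q$ (because $p\geq 1$): adding these two inequalities gives $2pq\geq p+q$, i.e. $pq\geq\tfrac12(p+q)$.

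Combining the two steps, $|k_{1}(k-k_{1})|=pq\geq\tfrac12(p+q)\geq\tfrac12|k|$, which is \eqref{ineq-52}; then \eqref{ineq-51} follows from the equivalence noted above. There is no real obstacle in this lemma: the only substantive ingredient is the integrality of $k_{1}$ and $k-k_{1}$, which converts the statement into the trivial arithmetic inequality $2pq\geq p+q$ valid for $p,q\geq 1$. One could alternatively argue without invoking the triangle inequality by distinguishing whether $k_{1}$ and $k-k_{1}$ have the same or opposite signs, but passing to $p,q$ keeps the argument uniform and short.
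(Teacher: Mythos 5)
Your proof is correct, but it runs in the opposite direction from the paper's and is organized quite differently. The paper establishes \eqref{ineq-51} first, by splitting into the six sign regions $R^{+++}, R^{++-},\dots$ and verifying in each an equivalent inequality (e.g.\ in $R^{+++}$ it reduces to $k\geq k_{1}^{2}/(k_{1}-\tfrac12)$, which holds because $k\geq k_{1}+1$), and only then deduces \eqref{ineq-52} by dividing by $3|k|$. You instead prove \eqref{ineq-52} directly and uniformly: integrality of the hypotheses gives $p:=|k_{1}|\geq 1$ and $q:=|k-k_{1}|\geq 1$, the triangle inequality gives $|k|\leq p+q$, and the elementary bound $2pq\geq p+q$ (obtained by adding $pq\geq p$ and $pq\geq q$) closes the argument, after which \eqref{ineq-51} follows by multiplying through by $3|k|$. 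Your route is shorter and avoids the case analysis entirely, isolating the one substantive ingredient — that both factors are nonzero integers — which each of the paper's six cases exploits implicitly; the paper's computation has the minor side benefit of displaying the sign of $kk_{1}(k-k_{1})$ region by region, a bookkeeping that reappears in the proof of Lemma \ref{estimate2}. Both arguments are complete, and yours is arguably the cleaner write-up of this lemma.
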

\begin{proof}
The proof of \eqref{ineq-51} follows by simple calculations considering six possible cases:
\begin{align*}
&R^{+++}:=\{k-k_{1}>0, k>0, k_{1}>0\}\\
&R^{++-}:=\{k-k_{1}>0, k>0, k_{1}<0\}\\
&R^{+--}:=\{k-k_{1}>0, k<0, k_{1}<0\}\\
&R^{---}:=\{k-k_{1}<0, k<0, k_{1}<0\}\\
&R^{--+}:=\{k-k_{1}<0, k<0, k_{1}>0\}\\
&R^{-++}:=\{k-k_{1}<0, k>0, k_{1}>0\}.
\end{align*}

In the first case
 $R^{+++},$ observe that
 $\displaystyle{3kk_{1}(k-k_{1})\geq\frac{3}{2}k^{2}\Longleftrightarrow k\geq\frac{k_{1}^{2}}{k_{1}-\frac{1}{2}}}.$
  The fact $k>k_{1}$ implies that the right side of the last expression is always true. The other cases are similar.
Finally, note that \eqref{ineq-52} is consequence of \eqref{ineq-51} just dividing it by $|3k|.$
\end{proof}

\begin{lem}\label{estimate2}
For any $k\in \mathbb{Z},$ $\alpha> 0$ and $\mu \in \mathbb{R},$ let $\phi(k)=-k^{3}-2\mu k+\alpha k|k|.$ 
For all $k,k_{1}\in \mathbb{Z}$  with $k\neq 0,\;k_{1}\neq 0,$  $k\neq k_{1},$ and $\max\{|k|, |k_{1}|, |k-k_{1}|\}\geq
\max\Bigl\{1, \frac{4\alpha}{3}\Bigr\},$
  there exists a constant $C_{\alpha}>0$
depending only on $\alpha$ such that,\;
$\displaystyle{|E(k,k_{1})|\geq 3C_{\alpha}|kk_{1}(k-k_{1})|},$\;
where $$E(k,k_{1}):=(\tau-\phi(k))-(\tau_{1}-\phi(k_{1}))-(\tau-\tau_{1}-\phi(k-k_{1})).$$
\end{lem}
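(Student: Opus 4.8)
The plan is to reduce everything to a purely arithmetic estimate on the phase function. First I would observe that all $\tau$-dependence in $E(k,k_1)$ cancels: writing out the definition, the two copies of $\tau$ and the two of $\tau_1$ cancel, leaving
\[
E(k,k_1) = \phi(k_1) + \phi(k-k_1) - \phi(k).
\]
Substituting $\phi(k) = -k^{3} - 2\mu k + \alpha k|k|$, the cubic part contributes $k^{3} - k_1^{3} - (k-k_1)^{3} = 3kk_1(k-k_1)$, the linear term $-2\mu k$ cancels identically, and the nonlocal term leaves $\alpha\,\Psi(k,k_1)$, where $\Psi(k,k_1) := k_1|k_1| + (k-k_1)|k-k_1| - k|k|$. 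Hence $E(k,k_1) = 3kk_1(k-k_1) + \alpha\,\Psi(k,k_1)$, and the lemma reduces to controlling the correction term $\alpha\Psi$ against the dominant cubic term $3kk_1(k-k_1)$.

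Next I would evaluate $\Psi$ explicitly. Setting $a = k_1$ and $b = k-k_1$ so that $k = a+b$, the quantity $\Psi = a|a| + b|b| - (a+b)|a+b|$ is symmetric in $a,b$ and can be computed by a short case analysis on the signs of $a$, $b$ and $a+b$. When $a,b$ have the same sign one finds $|\Psi| = 2|ab| = 2|k_1||k-k_1|$; when they have opposite signs one finds $|\Psi| = 2|k||k_1|$ or $2|k||k-k_1|$ according to the sign of $a+b$. The observation I expect to be the heart of the argument is that in every case the two factors appearing in $|\Psi|$ are precisely the two of $\{|k|,|k_1|,|k-k_1|\}$ that are \emph{not} the largest; equivalently,
\[
|\Psi(k,k_1)| = \frac{2\,|k\,k_1(k-k_1)|}{\max\{|k|,|k_1|,|k-k_1|\}}.
\]
This identity is what makes the resulting estimate uniform.

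Finally I would combine these. Using the hypothesis $\max\{|k|,|k_1|,|k-k_1|\} \geq \tfrac{4\alpha}{3}$, the displayed identity yields
\[
\alpha|\Psi(k,k_1)| = \frac{2\alpha}{\max\{|k|,|k_1|,|k-k_1|\}}\,|kk_1(k-k_1)| \leq \tfrac{3}{2}\,|kk_1(k-k_1)|,
\]
so that by the reverse triangle inequality
\[
|E(k,k_1)| \geq 3|kk_1(k-k_1)| - \alpha|\Psi(k,k_1)| \geq \tfrac{3}{2}\,|kk_1(k-k_1)|,
\]
which is the claim with $C_\alpha = \tfrac12$. The nonvanishing of all three factors, needed both to divide by the maximum and to have a nontrivial bound, is guaranteed by the standing assumptions $k\neq 0$, $k_1\neq 0$, $k\neq k_1$. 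The only genuinely delicate point is the case analysis establishing the identity for $\Psi$; everything else is bookkeeping, and the role of the threshold $\max\{1,\tfrac{4\alpha}{3}\}$ is precisely to push the ratio $2\alpha/\max$ below $\tfrac{3}{2}$ so that the cubic term dominates.
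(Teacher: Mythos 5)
Your proof is correct, and it is essentially the paper's argument in a more unified wrapper: the paper's six sign-region case analysis (computing, e.g., $E=3k_{1}(k-k_{1})\bigl(k-\tfrac{2\alpha}{3}\bigr)$ in $R^{+++}$) amounts exactly to your identity $|\Psi(k,k_{1})|=2\,|kk_{1}(k-k_{1})|/\max\{|k|,|k_{1}|,|k-k_{1}|\}$, after which both arguments use $\max\{|k|,|k_{1}|,|k-k_{1}|\}\geq \tfrac{4\alpha}{3}$ to make the cubic term dominate. Your packaging via the triangle inequality has the minor merit of yielding the explicit uniform constant $C_{\alpha}=\tfrac12$ in one stroke rather than region by region.
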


\begin{proof}

Observe that
$E(k,k_{1})=-\alpha k|k|+\alpha k_{1}|k_{1}|+\alpha(k- k_{1})|k-k_{1}|+3kk_{1}(k-k_{1}).$
Again, the proof follows by estraight forward calculations considering the same six cases of Lemma \ref{estimate1}. We verify three cases, others are similar.
\begin{itemize}
  \item [Case  \textbf{1)}] In the region $R^{+++},$ one has  $k\geq k_{1}+1\geq2$ and  $\max\{|k|, |k_{1}|, |k-k_{1}|\}=k.$ Thus
 $ E(k,k_{1})=-\alpha kk+\alpha k_{1}k_{1}+\alpha(k- k_{1})^{2}+3kk_{1}(k-k_{1})
  =3k_{1}(k-k_{1})\left(k-\frac{2\alpha}{3}\right),$
  and $|E(k,k_{1})|=3k_{1}(k-k_{1})\left|k-\frac{2\alpha}{3}\right|\geq3k_{1}(k-k_{1})C_{\alpha}k=3|kk_{1}(k-k_{1})|C_{\alpha}.$

  \item [Case \textbf{2)}] In $R^{++-},$ note that $k-k_{1}\geq2.$ In this case, $\max\{|k|, |k_{1}|, |k-k_{1}|\}=k-k_{1}.$
 $E(k,k_{1})=-\alpha kk+\alpha k_{1}(-k_{1})+\alpha(k- k_{1})^{2}+3kk_{1}(k-k_{1})
  =3kk_{1}\left((k-k_{1})-\frac{2\alpha}{3}\right).$
  Thus, $|E(k,k_{1})|=3k(-k_{1})\left|(k-k_{1}) -\frac{2\alpha}{3}\right|\geq3k(-k_{1})(k-k_{1}) C_{\alpha}=3|kk_{1}(k-k_{1})|C_{\alpha}.$

  \item [Case  \textbf{5)}] In $R^{--+},$  $k\leq -1,$ $k_{1}\geq 1,$ $k-k_{1}\leq -2,$ and $\max\{|k|, |k_{1}|, |k-k_{1}|\}=-(k-k_{1}).$
  $E(k,k_{1})=-\alpha k(-k)+\alpha k_{1}k_{1}-\alpha(k- k_{1})^{2}+3kk_{1}(k-k_{1})
  =3kk_{1}\left((k-k_{1})+\frac{2\alpha}{3}\right).$
  Thus,\; $|E(k,k_{1})|=3(-k)k_{1} \left|-(k-k_{1})-\frac{2\alpha}{3} \right|\geq 3(-k)k_{1}|k-k_{1}|C_{\alpha}
  =3|kk_{1}(k-k_{1})|C_{\alpha}.$
\end{itemize}
\end{proof}

\begin{rem}\label{resonance property}
 Lemmas \ref{estimate1} and \ref{estimate2} imply the so-called non-resonance property for
Benjamin equation, it means, $|E(k,k_{1})|\geq \frac{3}{2}C_{\alpha}k^{2},$ provided that
$\max\{|k|, |k_{1}|, |k-k_{1}|\}\geq
\max\Bigl\{1, \frac{4\alpha}{3}\Bigr\}.$
\end{rem}
\begin{rem}\label{resonance property1}
It  follows from Lemma \ref{estimate2} that if $\max\{|k|, |k_{1}|, |k-k_{1}|\}\geq
\max\Bigl\{1, \frac{4\alpha}{3}\Bigr\},$ then one of the following cases may occur
\begin{itemize}
  \item [i)]$|\tau-\phi(k)|>\frac{3}{8}C_{\alpha}k^{2},$
  \item [ii)]$|\tau_{1}-\phi(k_{1})|>\frac{3}{8}C_{\alpha}k^{2},$
  \item [iii)]$|\tau-\tau_{1}-\phi(k-k_{1})| >\frac{3}{8}C_{\alpha}k^{2}.$
\end{itemize}
\end{rem}

Using similar arguments as in Bourgain  \cite{13} (see also \cite{Chen and Xiao, Terence Tao}), we obtain the following key bilinear estimate.
\begin{thm}\label{BilinearEstimate}(Bilinear Estimate)
Let $u,v:\mathbb{T}\times \mathbb{R}\rightarrow \mathbb{R}$ be  functions in $X_{s,\frac{1}{3}},$ and $X_{s,\frac{1}{2}}$.
Assume that the mean $[u(\cdot,t)]=[v(\cdot,t)]=0$ for each $t\in \mathbb{R},$ $s\geq0,$ $\alpha>0.$  Then
$$\|\partial_{x}(uv)\|_{Z_{s,-\frac{1}{2}}}\leq C_{\alpha,s}\left(\|u\|_{X_{s,\frac{1}{2}}}
\|v\|_{X_{s,\frac{1}{3}}}+\|u\|_{X_{s,\frac{1}{3}}}\|v\|_{X_{s,\frac{1}{2}}}\right).$$
\end{thm}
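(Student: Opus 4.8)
The plan is to reduce everything to the endpoint case $s=0$ and then use the non-resonance relation of Lemma \ref{estimate2} to pay for the derivative $\partial_x$ with modulation weights. Since $s\ge 0$ one has $\langle k\rangle^{s}\le C_s(\langle k_1\rangle^{s}+\langle k-k_1\rangle^{s})$, so distributing $\langle k\rangle^{s}$ onto one of the two factors (comparable to $D^{s}$ on mean-zero functions, by Lemma \ref{prop2}) together with $\|w\|_{X_{0,b}}\le\|w\|_{X_{s,b}}$ reduces the claim to
$$\|\partial_x(uv)\|_{Z_{0,-\frac12}}\le C_\alpha\big(\|u\|_{X_{0,\frac12}}\|v\|_{X_{0,\frac13}}+\|u\|_{X_{0,\frac13}}\|v\|_{X_{0,\frac12}}\big).$$
By \eqref{Z-sb}, $Z_{0,-\frac12}=X_{0,-\frac12}\cap Y_{0,-1}$, so I estimate the two norms separately. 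Write $\sigma=\tau-\phi(k)$, $\sigma_1=\tau_1-\phi(k_1)$, $\sigma_2=\tau-\tau_1-\phi(k-k_1)$, so $E(k,k_1)=\sigma-\sigma_1-\sigma_2$. The mean-zero hypothesis forces $k,k_1,k-k_1\ne0$; on the low-frequency set $\max\{|k|,|k_1|,|k-k_1|\}<\max\{1,\tfrac{4\alpha}{3}\}$ the multiplier $|k|$ is bounded and that piece is trivial, while elsewhere Remark \ref{resonance property1} gives $|k|\le C_\alpha\big(\max\{\langle\sigma\rangle,\langle\sigma_1\rangle,\langle\sigma_2\rangle\}\big)^{1/2}$. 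This is the crux: it lets me absorb the derivative into whichever of the three modulations is largest, so I split frequency space into regions $A$, $B$, $C$ according to whether $\langle\sigma\rangle$, $\langle\sigma_1\rangle$ or $\langle\sigma_2\rangle$ is maximal.

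For the $X_{0,-\frac12}$ norm I argue by duality, bounding
$$\Lambda=\sum_{k,k_1}\iint |k|\,|\widehat u(k_1,\tau_1)|\,|\widehat v(k-k_1,\tau-\tau_1)|\,|\widehat w(k,\tau)|\,d\tau_1\,d\tau$$
against $\|w\|_{X_{0,\frac12}}\le1$. On region $B$ I push $|k|\le C\langle\sigma_1\rangle^{1/2}$ onto $u$, making it an $L^2_{x,t}$ function of norm $\|u\|_{X_{0,\frac12}}$, and estimate the rest by H\"older $L^2\cdot L^4\cdot L^4$, applying the Strichartz bound $\|\cdot\|_{L^4}\le C_\alpha\|\cdot\|_{X_{0,\frac13}}$ of Theorem \ref{fundaimme} to $v$ and $w$; this yields $\|u\|_{X_{0,\frac12}}\|v\|_{X_{0,\frac13}}$. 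Region $C$ is symmetric and gives $\|u\|_{X_{0,\frac13}}\|v\|_{X_{0,\frac12}}$. On region $A$ I instead absorb $|k|\le C\langle\sigma\rangle^{1/2}$ into $w\in X_{0,\frac12}$ and put both $u,v$ in $L^4$, obtaining $\|u\|_{X_{0,\frac13}}\|v\|_{X_{0,\frac13}}$, which by Proposition \ref{prop1} is dominated by either term on the right. Summing the three contributions closes the $X_{0,-\frac12}$ estimate (Corollary \ref{dualestim} gives the dual $L^{4/3}\hookrightarrow X_{0,-\frac13}$ formulation whenever placing the output in $L^{4/3}$ is preferable).

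For the $Y_{0,-1}$ norm — the $\ell^2_k L^1_\tau$ norm of $\langle\sigma\rangle^{-1}|k|\,|(\widehat u\ast\widehat v)(k,\tau)|$ — the input-dominant regions $B$ and $C$ are handled using the extra half-power of modulation decay ($-1$ versus $-\tfrac12$). Splitting $\langle\sigma\rangle^{-1}=\langle\sigma\rangle^{-\frac12-\epsilon}\langle\sigma\rangle^{-\frac12+\epsilon}$ and applying Cauchy-Schwarz in $\tau$ (with $\int\langle\sigma\rangle^{-1-2\epsilon}\,d\tau<\infty$) bounds the $Y_{0,-1}$ contribution on these regions by the $X_{0,-\frac12+\epsilon}$ norm, which is controlled exactly as in the previous paragraph, since $\tfrac12-\epsilon\ge\tfrac13$ still leaves enough modulation weight on the dual function $w$.

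The main obstacle is the output-dominant region $A$ for the $Y_{0,-1}$ norm: there the derivative can be paid for only by $\langle\sigma\rangle$ itself, so the Cauchy-Schwarz reduction to $X_{0,-\frac12+\epsilon}$ is too lossy (the residual $\langle\sigma\rangle^{\epsilon}$ cannot be absorbed by a dual function in $X_{0,\frac12-\epsilon}$). I will instead decompose the output modulation dyadically, $\langle\sigma\rangle\sim 2^{j}$, and use that on $A$ the non-resonance forces $2^{j}\ge C_\alpha k^2$. On each shell, Cauchy-Schwarz in $\tau$ gives
$$\int_{\langle\sigma\rangle\sim 2^j}\langle\sigma\rangle^{-1}|k|\,|(\widehat u\ast\widehat v)|\,d\tau\le C\,|k|\,2^{-j/2}\Big(\int_{\langle\sigma\rangle\sim 2^j}|(\widehat u\ast\widehat v)(k,\tau)|^2\,d\tau\Big)^{1/2},$$
and Cauchy-Schwarz in the dyadic index $j$, together with $\sum_{2^{j}\ge C_\alpha k^2}2^{-j}\le C\,k^{-2}$, turns the prefactor $|k|\,2^{-j/2}$ into an $O(1)$ constant — this is precisely where keeping $|k|$ (rather than bounding it by $2^{j/2}$) and using $2^{j}\ge C_\alpha k^2$ rescues the argument. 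Taking $\ell^2_k$ collapses the bound to $\|(uv)_A\|_{L^2_{x,t}}\le\|u\|_{L^4}\|v\|_{L^4}\le C_\alpha\|u\|_{X_{0,\frac13}}\|v\|_{X_{0,\frac13}}$ by Theorem \ref{fundaimme}, again dominated by the right-hand side. Combining regions $A$, $B$, $C$ for both the $X_{0,-\frac12}$ and $Y_{0,-1}$ parts, and undoing the reduction to $s=0$, yields the stated estimate with constant $C_{\alpha,s}$.
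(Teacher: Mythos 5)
Your proof is correct. For the $X_{s,-\frac12}$ component it is essentially the paper's Step 1: duality against $w$ with $\|w\|_{X_{-s,\frac12}}\leq 1$, the trichotomy of Remark \ref{resonance property1} according to which of $\langle\tau-\phi(k)\rangle$, $\langle\tau_1-\phi(k_1)\rangle$, $\langle\tau-\tau_1-\phi(k-k_1)\rangle$ dominates, the derivative $|k|$ paid for by half a power of the dominant modulation, and the $L^4$ estimate of Theorem \ref{fundaimme} (with Corollary \ref{dualestim} in the input-dominant cases) — the paper's Cases 1--3 are exactly your regions $A$, $B$, $C$. The genuine differences are two. First, you reduce to $s=0$ upfront via $\langle k\rangle^{s}\leq C_s(\langle k_1\rangle^{s}+\langle k-k_1\rangle^{s})$ at the majorant level, whereas the paper keeps $s$ in play and transfers the weight using $|k_1(k-k_1)|\geq\frac12|k|$ from Lemma \ref{estimate1}; both are valid since every norm involved depends only on $|\widehat{u}|$, and your version is the more standard packaging. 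Second, and more substantively, for the $Y_{s,-1}$ component the paper omits the details (deferring to Bourgain's Lemma 7.42), while you supply a self-contained argument: regions $B$, $C$ via the reduction $\|f\|_{Y_{0,-1}}\leq C_\epsilon\|f\|_{X_{0,-\frac12+\epsilon}}$ (Proposition \ref{Yestimate2} with $b=-\frac12+\epsilon$, and $\epsilon\leq\frac16$ so the dual function still embeds into $X_{0,\frac13}$ for the $L^4$ bound), and the output-dominant region $A$ via the dyadic decomposition $\langle\tau-\phi(k)\rangle\sim 2^{j}$ with $2^{j}\geq cC_\alpha k^{2}$ and $\sum_{2^{j}\geq cC_\alpha k^{2}}2^{-j}\leq C_\alpha' k^{-2}$. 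That dyadic step is sound: the defining condition of region $A$, namely $|\tau-\phi(k)|>\frac38 C_\alpha k^{2}$, depends only on $(k,\tau)$, so the restricted output genuinely lives in $\{\langle\tau-\phi(k)\rangle\geq cC_\alpha k^{2}\}$, and your two Cauchy--Schwarz applications (in $\tau$ on each shell, then in $j$) produce the claimed $O(1)$ prefactor, after which $\ell^2_k$ collapses to $\|uv\|_{L^2}\leq\|u\|_{L^4}\|v\|_{L^4}$. One piece of housekeeping: the low-frequency piece of the $Y_{0,-1}$ norm is not quite "trivial by boundedness of $|k|$" alone, since $\langle\tau-\phi(k)\rangle^{-1}$ is not integrable in $\tau$; one should say Cauchy--Schwarz with $\langle\tau-\phi(k)\rangle^{-1}\in L^2_\tau$, which is routine and closes that case. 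Net effect: your write-up matches the paper where the paper is explicit and is more complete where the paper is not.
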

\begin{proof} We prove this in two steps.

\noindent
	{\textbf{Step 1.}} First we estimate the $X_{s,-\frac{1}{2}}$ norm. Using duality and Plancherel, we get
	\begin{equation}\label{bilestim1}
	\begin{split}
	\left\| \partial_{x}(uv)\right\|_{X_{s,-\frac{1}{2}}}\!\!
	&=\sup_{\substack{w\in X_{-s,\frac{1}{2}} \\ \|w\|_{X_{-s,\frac{1}{2}}=1}}}\left|\sum_{k \in \mathbb{Z}}\int\limits_{\mathbb{R}}
	\widehat{\partial_{x}(uv)}(k,\tau ) \widehat{w}(k,\tau)d\tau\right|\\
	&\leq\!\!\sup_{\substack{w\in X_{-s,\frac{1}{2}} \\ \|w\|_{X_{-s,\frac{1}{2}}=1}}}\!\!\!\!\left(\sum_{\substack{k \in \mathbb{Z}\\ k\neq 0}}\sum_{\substack{k_{1} \in \mathbb{Z}\\k_{1}\neq 0}}\; \int\limits_{\mathbb{R}^2} |k|
	|\widehat{u}(k_{1},\tau_{1})|| \widehat{v}(k-k_{1},\tau-\tau_{1})| |\widehat{w}(k,\tau)|d\tau_{1}d\tau\!\!\right).
	\end{split}
	\end{equation}
	
	 Since $[u(\cdot,t)]=[v(\cdot,t)]=0,$ $k=0,$ $k_{1}=0$ and $k-k_{1}=0$ do not contribute to the sum.
	Now, we move to estimate 
	\begin{equation}\label{aest}
	\begin{split}
	I&:=\sum_{\substack{k \in \mathbb{Z}\\ k\neq 0}}\sum_{\substack{k_{1} \in \mathbb{Z}\\k_{1}\neq 0}}\int\limits_{\mathbb{R}} \int\limits_{\mathbb{R}} |k|
	|\widehat{u}(k_{1},\tau_{1})|| \widehat{v}(k-k_{1},\tau-\tau_{1})| |\widehat{w}(k,\tau)|d\tau_{1}d\tau\\
	&=\sum_{\substack{k, k_{1} \in \mathbb{Z}\\ k k_{1}(k-k_{1})\neq 0}}\int\limits_{\mathbb{R}^{2}} 
	\frac{|k||k_{1}|^{s} \langle \tau_{1}-\phi(k_{1}) \rangle^{\frac{1}{2}} |\widehat{u}(k_{1},\tau_{1})| |k-k_{1}|^{s} \langle\tau-\tau_{1} -\phi(k-k_{1})\rangle^{\frac{1}{2}}}
	{ |k_{1}|^{s}\langle\tau_{1}-\phi(k_{1}) \rangle^{\frac{1}{2}}|k-k_{1}|^{s} \langle\tau-\tau_{1} -\phi(k-k_{1}) \rangle^{\frac{1}{2}}}\\
	&\qquad  \qquad \qquad  \qquad \qquad\times \frac{ |\widehat{v}(k-k_{1},\tau-\tau_{1})|\langle k\rangle^{-s} \langle\tau-\phi(k)\rangle^{\frac{1}{2}} 
	|\widehat{w}(k,\tau)| }
	{ \langle k\rangle^{-s} \langle\tau-\phi(k)\rangle^{\frac{1}{2}}}
	d\tau_{1}d\tau.
	\end{split}
	\end{equation}
	
		Let $u,v,w:\mathbb{T}\times \mathbb{R}\rightarrow  \mathbb{R}$ with $[u(\cdot,t)]=[v(\cdot,t)]=0$. We define
	\begin{equation}\label{bilestim4}
		\begin{split}
		c_{u}(k_{1},\tau_{1})&:=\left(1+|k_{1}|\right)^{s} \langle\tau_{1}-\phi(k_{1})\rangle^{\frac{1}{2}} |\widehat{u}(k_{1},\tau_{1})|,\\
		c_{v}(k-k_{1},\tau-\tau_{1})&:= \left(1+|k-k_{1}|\right)^{s} \langle\tau-\tau_{1}-\phi(k-k_{1}) \rangle^{\frac{1}{2}} |\widehat{v}(k-k_{1}, \tau-\tau_{1})|,\\
		c_{w}(k,\tau)&:=\langle k\rangle^{-s} \langle\tau-\phi(k)\rangle^{\frac{1}{2}} |\widehat{w}(k,\tau)|,
		\end{split}
	\end{equation}
	for all $k,k_{1}, k-k_{1}\in \mathbb{Z}\backslash \{0\}$ and $t, \tau, \tau_{1}\in \mathbb{R}.$
	Note that $c_{u}(0,\tau_{1})=c_{v}(0,\tau-\tau_{1})=0.$ From inequality \eqref{aest} and definition \eqref{bilestim4}, we obtain
\begin{equation*}
		\begin{split}
		I
		&\leq \sum_{\substack{k, k_{1} \in \mathbb{Z}\\ k k_{1}(k-k_{1})\neq 0}}\int\limits_{\mathbb{R}^{2}} 
		\frac{|k|c_{u}(k_{1},\tau_{1})
			c_{v}(k-k_{1},\tau-\tau_{1}) \langle k\rangle^{s}c_{w}(k,\tau)}
		{|k_{1}|^{s}|k-k_{1}|^{s} \langle\tau_{1}-\phi(k_{1})\rangle^{\frac{1}{2}}
			\langle\tau-\tau_{1}-\phi(k-k_{1}) \rangle^{\frac{1}{2}} \langle\tau-\phi(k)\rangle^{\frac{1}{2}}}
		d\tau_{1}d\tau.
		\end{split}
		\end{equation*}
	
	 From \eqref{ineq-52} there exists  $C_{s}>0$ such that
	$\frac{\langle k\rangle^{s}}{|k_{1}|^{s}|k-k_{1}|^{s}} 
	\leq C_{s}.$
	Therefore, separating the small frequencies from the large ones, we obtain
	\begin{equation}\label{bilestim7}
		\begin{split}
      I&\leq C_{s}
		\sum_{\substack{k, k_{1} \in \mathbb{Z}\\ k k_{1}(k-k_{1})\neq 0}}\int\limits_{\mathbb{R}^{2}} 
		\frac{|k|c_{u}(k_{1},\tau_{1})
			c_{v}(k-k_{1},\tau-\tau_{1}) c_{w}(k,\tau)}
		{\langle\tau_{1}-\phi(k_{1})\rangle^{\frac{1}{2}}
			\langle\tau-\tau_{1}-\phi(k-k_{1}) \rangle^{\frac{1}{2}} \langle\tau-\phi(k)\rangle^{\frac{1}{2}}} d\tau_{1}d\tau\\
		&\leq C_{s,\alpha}\sum_{\substack{k, k_{1} \in \mathbb{Z}\\ k k_{1}(k-k_{1})\neq 0\\ \max\{|k|, |k_{1}|,|k-k_{1}|\}\leq \max\Bigl\{1,\frac{4\alpha}{3}\Bigr\}}} 
		\int\limits_{\mathbb{R}^{2}} 
		\frac{c_{u}(k_{1},\tau_{1})
			c_{v}(k-k_{1},\tau-\tau_{1}) c_{w}(k,\tau)}
		{\langle\tau_{1}-\phi(k_{1})\rangle^{\frac{1}{2}}
			\langle\tau-\tau_{1}-\phi(k-k_{1}) \rangle^{\frac{1}{2}} \langle\tau-\phi(k)\rangle^{\frac{1}{2}}} d\tau_{1}d\tau\\	
		&\qquad+\sum_{\substack{k, k_{1} \in \mathbb{Z}\\ k k_{1}(k-k_{1})\neq 0\\ \max\{|k|, |k_{1}|,|k-k_{1}|\}\geq \max\Bigl\{1,\frac{4\alpha}{3}\Bigr\}}}             
		\int\limits_{\mathbb{R}^{2}} 
		\frac{|k|c_{u}(k_{1},\tau_{1})
			c_{v}(k-k_{1},\tau-\tau_{1}) c_{w}(k,\tau)}
		{\langle\tau_{1}-\phi(k_{1})\rangle^{\frac{1}{2}}
			\langle\tau-\tau_{1}-\phi(k-k_{1}) \rangle^{\frac{1}{2}} \langle\tau-\phi(k)\rangle^{\frac{1}{2}}} d\tau_{1}d\tau\\	
		\end{split}
	\end{equation}
	
	In view of Remark \ref{resonance property1} we must study three different  cases.
	
	\noindent {\bf Case 1.} {\bf  $|\tau-\phi(k)|>\frac{3}{8}C_{\alpha}k^{2}:$}\; In this case, from
		\eqref{bilestim7}, we have
				\begin{equation}\label{bilestim8}
		{\small
			\begin{split}
			I
			&\leq C_{s,\alpha}\sum_{\substack{k, k_{1} \in \mathbb{Z}\\ k k_{1}(k-k_{1})\neq 0\\ \max\{|k|, |k_{1}|,|k-k_{1}|\}\leq \max\Bigl\{1,\frac{4\alpha}{3}\Bigr\}}} 
			\int\limits_{\mathbb{R}^{2}} 
			\frac{c_{u}(k_{1},\tau_{1})
				c_{v}(k-k_{1},\tau-\tau_{1}) c_{w}(k,\tau)}
			{\langle\tau_{1}-\phi(k_{1})\rangle^{\frac{1}{2}}
				\langle\tau-\tau_{1}-\phi(k-k_{1}) \rangle^{\frac{1}{2}} } d\tau_{1}d\tau\\	
			&\qquad+\sum_{\substack{k, k_{1} \in \mathbb{Z}\\ k k_{1}(k-k_{1})\neq 0\\ \max\{|k|, |k_{1}|,|k-k_{1}|\}\geq \max\Bigl\{1,\frac{4\alpha}{3}\Bigr\}}}             
			\!\!\!\!\!\int\limits_{\mathbb{R}^{2}} 
			\frac{|k|c_{u}(k_{1},\tau_{1})
				c_{v}(k-k_{1},\tau-\tau_{1}) c_{w}(k,\tau)}
			{\langle\tau_{1}-\phi(k_{1})\rangle^{\frac{1}{2}}
				\langle\tau-\tau_{1}-\phi(k-k_{1}) \rangle^{\frac{1}{2}} (1+\frac{3C_{\alpha}}{8}k^{2})^{\frac{1}{2}}} d\tau_{1}d\tau\\
			&\leq C_{s,\alpha}\sum_{k\in \mathbb{Z}}
			\int\limits_{\mathbb{R}} \left(
			\sum_{k_{1} \in \mathbb{Z}}
			\int\limits_{\mathbb{R}} 
			\frac{c_{u}(k_{1},\tau_{1})
				c_{v}(k-k_{1},\tau-\tau_{1}) }
			{\langle\tau_{1}-\phi(k_{1})\rangle^{\frac{1}{2}}
				\langle\tau-\tau_{1}-\phi(k-k_{1}) \rangle^{\frac{1}{2}} }d\tau_{1} \right)c_{w}(k,\tau) d\tau.	
			\end{split}}
		\end{equation}
		
		We define functions	 $F,\,G:\mathbb{T}\times\mathbb{R}\rightarrow \mathbb{C}$  by
		$$\displaystyle{\widehat{F}(m,\lambda)=\frac{c_{u}(m,\lambda)}{(1+|\lambda-\phi(m)|)^{\frac{1}{2}}}},\;\;\text{and}
		\;\;\displaystyle{\widehat{G}(m,\lambda)=\frac{c_{v}(m,\lambda)}{(1+|\lambda-\phi(m)|)^{\frac{1}{2}}}}.$$
		It means
		$$F(x,t)=\sum_{m\in\mathbb{Z}}\int_{\mathbb{R}}e^{i(mx+\lambda t)}\frac{c_{u}(m,\lambda)}{(1+|\lambda-\phi(m)|)^{\frac{1}{2}}}\;d\lambda,$$
		and
		$$G(x,t)=\sum_{m\in\mathbb{Z}}\int_{\mathbb{R}}e^{i(mx+\lambda t)}\frac{c_{v}(m,\lambda)}{(1+|\lambda-\phi(m)|)^{\frac{1}{2}}}\;d\lambda.$$	

From  \eqref{bilestim1}, \eqref{bilestim8} and  Plancherel, we obtain
$${
\begin{split}
\displaystyle{\|\partial_{x}(uv)\|_{X_{s,-\frac{1}{2}}}}
&\leq C_{s,\alpha} \sup_{\substack{w\in X_{-s,\frac{1}{2}} \\ \|w\|_{X_{-s,\frac{1}{2}}=1}}}  \Big(\sum_{k_{1} \in \mathbb{Z}}\int\limits_{\mathbb{R}}
(\widehat{F}\ast\widehat{G})(k,\tau)\; c_{w}(k,\tau) \;d\tau\Big)\\
&\leq C_{s,\alpha} \sup_{\substack{w\in X_{-s,\frac{1}{2}} \\ \|w\|_{X_{-s,\frac{1}{2}}=1}}} \Big(\left\|\widehat{F.G}(k,\tau)\right\|_{l_{k}^{2}L_{\tau}^{2}(\mathbb{R})}\|w\|_{X_{-s,\frac{1}{2}}}\Big)\\
&\leq
\displaystyle{C_{s,\alpha}\left\|F\right\|_{L^{4}(\mathbb{T}\times\mathbb{R})}\left\|G\right\|_{L^{4}(\mathbb{T}\times\mathbb{R})}}\\
&\leq \displaystyle{C_{s,\alpha}\left\|F\right\|_{X_{0,\frac{1}{3}}}\left\|G\right\|_{X_{0,\frac{1}{3}}}},
\end{split}}$$
where we applied Cauchy-Schwartz, and Theorem \ref{fundaimme} in the last two inequalities. 
 Note that
$${\scriptsize
		\begin{split}
		\displaystyle{\left\|F\right\|_{X_{0,\frac{1}{3}}}}
		&=\displaystyle{\left(\sum_{k=-\infty}^{\infty} \int_{\mathbb{R}}
		\langle\tau-\phi(k)\rangle^{\frac{2}{3}} \frac{|c_{u}(k,\tau)|^{2}}{1+|\tau-\phi(k)|} \;d\tau\right)^{\frac{1}{2}}}
	\sim \displaystyle{\left(\sum_{k=-\infty}^{\infty} \int_{\mathbb{R}} \langle k\rangle^{2s}\langle\tau-\phi(k)\rangle^{\frac{2}{3}} |\widehat{u}(k,\tau)|^{2}\;d\tau\right)^{\frac{1}{2}}}
		=\displaystyle{\left\|u\right\|_{X_{s,\frac{1}{3}}}}.
		\end{split}}$$
		In a similar way, we show that $\displaystyle{\left\|G\right\|_{X_{0,\frac{1}{3}}}=\left\|v\right\|_{X_{s,\frac{1}{3}}}}.$
The immersion $X_{s,\frac{1}{2}}\hookrightarrow
 X_{s,\frac{1}{3}}$ yields,
		$$\displaystyle{\left\|\partial_{x}(u v)\right\|_{X_{s,-\frac{1}{2}}}}\leq
		C_{s,\alpha}\left\|u\right\|_{X_{s,\frac{1}{2}}}\left\|v\right\|_{X_{s,\frac{1}{3}}}$$
		where $C_{s,\alpha}$ is a positive constant depending only on $s$ and $\alpha.$
		
		\noindent{\bf Case 2.  $|\tau_{1}-\phi(k_{1})|>\frac{3}{8}C_{\alpha}k^{2}:$} In this case, 
		\eqref{bilestim7} and Cauchy-Schwartz imply
\begin{equation}\label{bilestim11}
{\small
	\begin{split}
	I
	&\leq C_{s,\alpha}\sum_{\substack{k, k_{1} \in \mathbb{Z}\\ k k_{1}(k-k_{1})\neq 0\\ \max\{|k|, |k_{1}|,|k-k_{1}|\}\leq \max\Bigl\{1,\frac{4\alpha}{3}\Bigr\}}} 
	\int\limits_{\mathbb{R}^{2}} 
	\frac{c_{u}(k_{1},\tau_{1})
		c_{v}(k-k_{1},\tau-\tau_{1}) c_{w}(k,\tau)}
	{\langle\tau-\tau_{1}-\phi(k-k_{1}) \rangle^{\frac{1}{2}} \langle\tau-\phi(k)\rangle^{\frac{1}{2}}} d\tau_{1}d\tau\\	
	&\qquad+\sum_{\substack{k, k_{1} \in \mathbb{Z}\\ k k_{1}(k-k_{1})\neq 0\\ \max\{|k|, |k_{1}|,|k-k_{1}|\}\geq \max\Bigl\{1,\frac{4\alpha}{3}\Bigr\}}}             
	\int\limits_{\mathbb{R}^{2}} 
	\frac{|k|c_{u}(k_{1},\tau_{1})
		c_{v}(k-k_{1},\tau-\tau_{1}) c_{w}(k,\tau_{2})}
	{(1+\frac{3}{8}C_{\alpha}k^{2})^{\frac{1}{2}}
		\langle\tau-\tau_{1}-\phi(k-k_{1}) \rangle^{\frac{1}{2}} \langle\tau-\phi(k)\rangle^{\frac{1}{2}}} d\tau_{1}d\tau\\	
	&\leq C_{s,\alpha}
	\sum_{k\in\mathbb{Z}}\int\limits_{\mathbb{R}}
	\frac{1}{( 1+ |\tau-\phi(k)| )^{\frac{1}{2}}}
	\left(\sum_{k_{1}\in\mathbb{Z}}\int\limits_{\mathbb{R}}c_{u}(k_{1},\tau_{1})\;
	\frac{c_{v}(k-k_{1},\tau-\tau_{1})}{ \langle\tau-\tau_{1}-\phi(k-k_{1}) \rangle^{\frac{1}{2}}}\;d\tau_{1}\right)
	c_{w}(k,\tau)d_{\tau}\\
	&\leq C_{s,\alpha}\left(\sum_{k\in\mathbb{Z}} \int\limits_{\mathbb{R}}( 1+ |\tau-\phi(k)| )^{-1} |(\widehat{H_{u}}\ast\widehat{G}) (k,\tau)|^{2}d\tau\right)^{\frac{1}{2}} \left(\sum_{k\in \mathbb{Z}} \int\limits_{\mathbb{R}}|c_{w}(k,\tau)|^{2} \right)^{\frac{1}{2}},
	\end{split}}
\end{equation}		
		where $H_{f}:\mathbb{T}\times\mathbb{R}\rightarrow \mathbb{C}$ is a function defined by
		$\widehat{H_{f}}(m,\lambda)=c_{f}(m,\lambda).$
		It means,
		$$H_{f}(x,t)=\sum_{m\in\mathbb{Z}}\int_{\mathbb{R}}
		e^{i(mx+\lambda t)}c_{f}(m,\lambda)\;d\lambda.$$
From relations \eqref{bilestim1}-\eqref{aest}, \eqref{bilestim11} and  $\left(1+|\tau-\phi(k)|\right)^{-1}
<\left(1+|\tau-\phi(k)|\right)^{-\frac{2}{3}},$
we have
$${
\begin{split}
\displaystyle{\left\|\partial_{x}(uv)\right\|_{X_{s,-\frac{1}{2}}}}
&\leq C_{s,\alpha}
\sup_{\substack{w\in X_{-s,\frac{1}{2}} \\ \|w\|_{X_{-s,\frac{1}{2}}=1}}} \left[ \left(\sum_{k=-\infty}^{\infty}\int_{\mathbb{R}}
\left(1+|\tau-\phi(k)|\right)^{-\frac{2}{3}}
\left|\widehat{H_{u} \cdot G}(k,\tau)\right|^{2} \;d\tau\right)^{\frac{1}{2}} \|w\|_{X_{-s,\frac{1}{2}}}\right]\\
&\leq\displaystyle{ C_{s,\alpha}\left(\sum_{k=-\infty}^{\infty}\int_{\mathbb{R}}
	\langle\tau-\phi(k)\rangle^{-\frac{2}{3}}
	\left|\widehat{H_{u} \cdot G}(k,\tau)\right|^{2}\;d\tau\right)^{\frac{1}{2}}}\\
&\leq C_{s,\alpha} \|H_{u}\cdot G\|_{L^{\frac{4}{3}}(\mathbb{T}\times\mathbb{R})}\\
&\leq C_{s,\alpha}\displaystyle{\|H_{u}\|_{L^{2}(\mathbb{T}\times\mathbb{R})}
	\| G\|_{L^{4}(\mathbb{T}\times\mathbb{R})}},
\end{split}}$$
where we  applied Corollary \ref{dualestim}, and Holder inequality in the last two  inequalities.
		From Theorem \ref{fundaimme}, we have  $\| G\|_{L^{4}(\mathbb{T}\times\mathbb{R})}\leq \| G\|_{X_{0,\frac{1}{3}}}.$
		On the other hand, 
$${
		\begin{split}
		\displaystyle{\|H_{u}\|_{L^{2}(\mathbb{T} \times\mathbb{R})}}
		&\sim\displaystyle{ \left(\sum_{k=-\infty}^{\infty}\int_{\mathbb{R}}
			\langle k\rangle^{2s}\langle\tau-\phi(k)\rangle^{2\cdot\frac{1}{2}}|\widehat{u}(k,\tau)|^{2}\;d\tau\right)^{\frac{1}{2}}}
		=\displaystyle{\|u\|_{X_{s,\frac{1}{2}}}}.
		\end{split}}$$
		
		Therefore, 
		$\displaystyle{\left\|\partial_{x}(u v)\right\|_{X_{s,-\frac{1}{2}}}}
			\leq C_{s,\alpha}\displaystyle{\|u\|_{X_{s,\frac{1}{2}}} \| G\|_{X_{0,\frac{1}{3}}}}
			\leq C_{s,\alpha}\displaystyle{\|u\|_{X_{s,\frac{1}{2}}} \| v\|_{X_{s,\frac{1}{3}}}}.$
		
		\noindent{\bf Case 3.  $|\tau-\tau_{1}-\phi(k-k_{1})|>\frac{3}{8}C_{\alpha}k^{2}:$}
		Observe that, this case is similar to the second one, just substituting $H_{v}$ in the place of $H_{u}$ and $F$ in the place of $G.$ Thus, we obtain
		\begin{align*}
			\displaystyle{\left\|\partial_{x}(u v) \right\|_{X_{s,-\frac{1}{2}}}}
			&\leq C_{s,\alpha}\displaystyle{\|H_{v}\|_{L^{2}(\mathbb{T} \times \mathbb{R})} \| F\|_{X_{0,\frac{1}{3}}}}
			\leq C_{s,\alpha}\displaystyle{\|v\|_{X_{s,\frac{1}{2}}}
			 \| u\|_{X_{s,\frac{1}{3}}}}.
		\end{align*}

\noindent
{\textbf{Step 2.}} 	Now we estimate the $Y_{s,-1}$ norm. Using duality we have,
	\begin{equation}\label{bilestim14}
		\begin{split}
			\displaystyle{\|\partial_{x}(uv)\|_{Y_{s,-1}}}
			&\sim
			\displaystyle{\left\|\; \|(1+| k|)^{s}(1+ |\tau - \phi(k)|)^{-1}\; \widehat{\partial_{x}(uv)}(k,\tau)
			\|_{L^{1}_{\tau}(\mathbb{R})} \right\|_{l^{2}_{k}}}\\
			&=\displaystyle{\sup_{\substack{a_{k}\in l^{2}_{k},\;a_{k}\geq0\\ \|a_{k}\|_{l^{2}_{k}}=1}}
				\sum_{k\in \mathbb{Z}}a_{k} \left(\int\limits_{\mathbb{R}}
				(1+| k|)^{s}(1+ |\tau - \phi(k)|)^{-1}|\widehat{\partial_{x}(uv)} (k,\tau)|\;d\tau\right)}.
	\end{split}
\end{equation}

We move to estimate
\begin{equation}\label{bilestim15}
	II:=\displaystyle{(1+| k|)^{s}(1+ |\tau- \phi(k)|)^{-1} |\widehat{\partial_{x}(uv)}(k,\tau)|}.
\end{equation}

Note that
\begin{equation}\label{bilestim16}
		\begin{split}
		II&\leq\displaystyle{(1+| k|)^{s}|k|(1+ |\tau - \phi(k)|)^{-1}}
		\displaystyle{\left( \sum_{  \substack{ k_{1} \in \mathbb{Z}\\ k_{1}\neq 0 }} \int\limits_{\mathbb{R}}
		|\widehat{u}(k_{1},\tau_{1})||\widehat{v}(k-k_{1},\tau-\tau_{1})|\;d\tau_{1}\right)}\\
			&\leq\displaystyle{|k|(1+ |\tau- \phi(k)|)^{-1}}
			\displaystyle{\left( \sum_{  \substack{ k_{1} \in \mathbb{Z}\\ k_{1}\neq 0 }} \int\limits_{\mathbb{R}}
				\frac{(1+| k|)^{s} c_{u}(k_{1},\tau_{1}) c_{v}(k-k_{1},\tau-\tau_{1})}
				{| k_{1}|^{s}\langle\tau_{1} - \phi(k_{1})\rangle^{\frac{1}{2}}
					| k-k_{1}|^{s}
					\langle\tau-\tau_{1} - \phi(k-k_{1})\rangle^{\frac{1}{2}} }d\tau_{1}\right)}\\
			&\leq C_{s}\left( \sum_{  \substack{ k_{1} \in \mathbb{Z}\\ k_{1}\neq 0 }} \int\limits_{\mathbb{R}}
			\frac{|k| c_{u}(k_{1},\tau_{1}) c_{v}(k-k_{1},\tau-\tau_{1})}
			{(1+ |\tau - \phi(k)|)\langle\tau_{1} - \phi(k_{1})\rangle^{\frac{1}{2}}
				\langle\tau-\tau_{1} - \phi(k-k_{1})\rangle^{\frac{1}{2}} }d\tau_{1}\right).
	\end{split}
\end{equation}

	It follows from identity \eqref{bilestim14}, definition \eqref{bilestim15} and relation \eqref{bilestim16} that
$${\small
		\begin{split}
			\|\partial_{x}(uv)\|_{Y_{s,-1}}
			&\leq C_{s}
			\sup_{\substack{a_{k}\in l^{2}_{k},\;a_{k}\geq 0\\\|a_{k}\|_{l^{2}_{k}}=1}}
			\sum_{  \substack{k, k_{1} \in \mathbb{Z}\\ kk_{1}(k-k_{1})\neq 0 }} \int\limits_{\mathbb{R}^{2}}
			\frac{a_{k} |k| c_{u}(k_{1},\tau_{1}) c_{v}(k-k_{1},\tau-\tau_{1})}
			{(1+ |\tau - \phi(k)|)\langle\tau_{1} - \phi(k_{1})\rangle^{\frac{1}{2}}
				\langle\tau-\tau_{1} - \phi(k-k_{1})\rangle^{\frac{1}{2}} }d\tau_{1}d\tau.
	\end{split}}$$	

As in {\bf Step 1.,} in view of Remark \ref{resonance property1}, here we divide the sum into small  and large frequencies to consider three different cases and obtain 
$$\|\partial_{x}(u v)\|_{Y_{s,-1}}\leq C_{s,\alpha}\displaystyle{\left\|u\right\|_{X_{s,\frac{1}{2}}} \left\|v\right\|_{X_{s,\frac{1}{3}}} },$$ in the first two cases and 
$$\displaystyle{\left\|\partial_{x}(u v)\right\|_{Y_{s,-1}}}
\leq  C_{s,\alpha}\displaystyle{\|v\|_{X_{s,\frac{1}{2}}} \| u\|_{X_{s,\frac{1}{3}}}},$$
in the third case. We omit the details (see the proof of lemma 7.42 in \cite{13}).	
\end{proof}

\begin{cor} \label{Biestimate2}
Let  $s\geq0,$ $\alpha>0,$  and  $T>0$ be given.
Assume that
 $u,v:\mathbb{T}\times \mathbb{R}\rightarrow \mathbb{R}$ are  functions in $X_{s,\frac{1}{3}}^{T}$
 and $X_{s,\frac{1}{2}}^{T}$ with
 mean $[u(\cdot,t)]=[v(\cdot,t)]=0$ for each $t\in \mathbb{R}.$ Then
$$\|\partial_{x}(uv)\|_{Z_{s,-\frac{1}{2}}^{T}}\leq C_{\alpha,s}\Big(\|u\|_{X_{s,\frac{1}{2}}^{T}}
\|v\|_{X_{s,\frac{1}{3}}^{T}}+\|u\|_{X_{s,\frac{1}{3}}^{T}} \|v\|_{X_{s,\frac{1}{2}}^{T}}\Big).$$
\end{cor}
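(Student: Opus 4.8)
The plan is to deduce the local statement directly from the global bilinear estimate in Theorem \ref{BilinearEstimate} by passing to suitable extensions of $u,v$ off $\mathbb{T}\times[0,T]$ and then taking the infimum that defines the restriction norms. Fix $u,v$ as in the hypothesis. By the definition \eqref{X-sbI} of the restriction spaces, for each $b\in\{\tfrac13,\tfrac12\}$ there exist extensions of $u$ and $v$ that nearly realize the corresponding restriction norms. The first task is to produce, for $u$, a \emph{single} extension $\widetilde u$ that is comparable to $u$ in both $X_{s,\frac12}$ and $X_{s,\frac13}$ simultaneously, and likewise a single $\widetilde v$; this is precisely what is needed to reproduce the mixed product structure appearing on the right-hand side.

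Concretely, I would invoke a bounded linear extension operator $\mathcal{E}:X_{s,b}^{T}\to X_{s,b}$ with $\mathcal{E}w=w$ on $\mathbb{T}\times[0,T]$, whose operator norm is controlled uniformly for $b$ in the relevant range $\{\tfrac13,\tfrac12\}$ (such operators are standard for Bourgain spaces; see \cite{11,12,13}). Composing with the projection $P_{0}$ that removes the zero Fourier mode in $x$, I set $\widetilde u:=P_{0}\mathcal{E}u$ and $\widetilde v:=P_{0}\mathcal{E}v$. Since $u(\cdot,t)$ and $v(\cdot,t)$ already have zero mean on $[0,T]$, the functions $\widetilde u,\widetilde v$ still coincide with $u,v$ on $\mathbb{T}\times[0,T]$ and now satisfy $[\widetilde u(\cdot,t)]=[\widetilde v(\cdot,t)]=0$ for all $t\in\mathbb{R}$. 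Moreover $P_{0}$ only deletes the nonnegative $k=0$ term from the weighted $\ell^{2}$ sum in \eqref{x-sb-norm}, so it does not increase any $X_{s,b}$ norm; hence $\|\widetilde u\|_{X_{s,b}}\le C\|u\|_{X_{s,b}^{T}}$ and $\|\widetilde v\|_{X_{s,b}}\le C\|v\|_{X_{s,b}^{T}}$ for $b\in\{\tfrac13,\tfrac12\}$ at once.

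With these extensions in hand, $\widetilde u,\widetilde v$ verify the hypotheses of Theorem \ref{BilinearEstimate}, so
\begin{equation*}
\|\partial_{x}(\widetilde u\,\widetilde v)\|_{Z_{s,-\frac{1}{2}}}\leq C_{\alpha,s}\Big(\|\widetilde u\|_{X_{s,\frac{1}{2}}}\|\widetilde v\|_{X_{s,\frac{1}{3}}}+\|\widetilde u\|_{X_{s,\frac{1}{3}}}\|\widetilde v\|_{X_{s,\frac{1}{2}}}\Big).
\end{equation*}
Because $\partial_{x}(\widetilde u\,\widetilde v)$ agrees with $\partial_{x}(uv)$ on $\mathbb{T}\times[0,T]$, it is an admissible extension in the definition \eqref{X-sbI}--\eqref{Z-sb} of the $Z_{s,-\frac12}^{T}$ norm, whence $\|\partial_{x}(uv)\|_{Z_{s,-\frac{1}{2}}^{T}}\le\|\partial_{x}(\widetilde u\,\widetilde v)\|_{Z_{s,-\frac{1}{2}}}$. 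Combining this with the displayed bound and the norm estimates for the extensions yields the assertion, the final constant being that of Theorem \ref{BilinearEstimate} times the norm of $\mathcal{E}$.

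The only delicate point is the second paragraph: one must guarantee that the \emph{same} extension of $u$ (resp. $v$) is near-optimal in both Bourgain norms simultaneously, since the right-hand side couples $X_{s,\frac12}$ with $X_{s,\frac13}$. A naive choice optimal only in $X_{s,\frac12}$ would, via the embedding $X_{s,\frac12}\hookrightarrow X_{s,\frac13}$ of Proposition \ref{prop1}(ii), merely reproduce the weaker bound $\|u\|_{X_{s,\frac12}^{T}}\|v\|_{X_{s,\frac12}^{T}}$ and lose the mixed structure, and with it the gain of a positive power of $T$ obtained later through Proposition \ref{Xestimate1}. Thus the main obstacle is the existence of a $b$-uniform bounded extension operator reaching the endpoint $b=\tfrac12$; I would isolate this as a separate lemma (or appeal to the Appendix), the remainder being the routine extension-and-infimum scheme above.
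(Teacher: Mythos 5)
Your proof is correct and follows exactly the routine extension-and-infimum scheme that the paper leaves implicit (Corollary \ref{Biestimate2} is stated without proof as the restriction-norm consequence of Theorem \ref{BilinearEstimate}, in the same way Corollary \ref{Yestimate1} is deduced from Proposition \ref{Yestimate2} by passing to a near-optimal extension). The subtlety you isolate — that a single extension of $u$ (resp.\ $v$) must be comparable in $X_{s,\frac{1}{2}}$ and $X_{s,\frac{1}{3}}$ \emph{simultaneously}, since an extension near-optimal only in $X_{s,\frac{1}{2}}$ would collapse the mixed right-hand side to $\|u\|_{X_{s,\frac12}^{T}}\|v\|_{X_{s,\frac12}^{T}}$ and destroy the $T^{\epsilon}$ gain needed in Corollary \ref{Biestimate3} — is genuine, and your resolution is sound: conjugating by the unitary group identifies $\|w\|_{X_{s,b}^{T}}$ with the restriction norm of $t\mapsto U_{\mu}(-t)w(t)$ in $H^{b}\bigl((0,T);H_{p}^{s}(\mathbb{T})\bigr)$, a Stein-type extension in the time variable is bounded for all $b\in[0,1]$ at once (yielding your $b$-uniform operator $\mathcal{E}$, which moreover preserves real-valuedness since the phase $\phi$ is odd), and composing with removal of the $k=0$ mode neither increases any $X_{s,b}$ norm nor disturbs the extension property, because $u,v$ already have zero mean on $[0,T]$.
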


\begin{cor} \label{Biestimate3}
Let $s\geq0,$ $\alpha>0,$  and $T>0$ be given.
Assume that
$v:\mathbb{T}\times \mathbb{R}\rightarrow \mathbb{R}$ is a function in $X^{T}_{s,\frac{1}{2}}$
with mean $[v(\cdot,t)]=0$ for each $t\in \mathbb{R}.$  Then there exist $0<\epsilon<\frac{1}{6}$ such that
$$\|\partial_{x}(v^{2})\|_{Z^{T}_{s,-\frac{1}{2}}}\leq C_{\alpha,s}T^{\epsilon}\;\|v\|^{2}_{Z^{T}_{s,\frac{1}{2}}}.$$
\end{cor}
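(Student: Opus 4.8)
The plan is to deduce this from the bilinear estimate of Corollary \ref{Biestimate2} by specializing to $u=v$ and then converting one of the two factors into a gain of a positive power of $T$. First I would apply Corollary \ref{Biestimate2} with $u=v$, which is legitimate since $v\in X_{s,\frac{1}{2}}^{T}\hookrightarrow X_{s,\frac{1}{3}}^{T}$ (Proposition \ref{prop1}(ii)) and $[v(\cdot,t)]=0$ for each $t$. This immediately yields
$$\|\partial_{x}(v^{2})\|_{Z_{s,-\frac{1}{2}}^{T}}\leq 2C_{\alpha,s}\,\|v\|_{X_{s,\frac{1}{2}}^{T}}\,\|v\|_{X_{s,\frac{1}{3}}^{T}},$$
so the whole problem reduces to showing that the lower-regularity factor $\|v\|_{X_{s,\frac{1}{3}}^{T}}$ can be controlled by $T^{\epsilon}\,\|v\|_{X_{s,\frac{1}{2}}^{T}}$ for some $0<\epsilon<\frac{1}{6}$.

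The second step is precisely this interpolation gain. The natural tool is Proposition \ref{Xestimate1}, which provides a factor $T^{b-b'}$ when lowering the time-regularity index from $b$ down to $b'$. The only obstacle, and it is a mild one, is that Proposition \ref{Xestimate1} is stated strictly for $-\frac{1}{2}<b'<b<\frac{1}{2}$ and therefore excludes the endpoint $b=\frac{1}{2}$ that I need. To circumvent this I would insert an intermediate exponent: fix $\delta$ with $0<\delta<\frac{1}{6}$ and set $b=\frac{1}{2}-\delta$, $b'=\frac{1}{3}$, so that $-\frac{1}{2}<\frac{1}{3}<\frac{1}{2}-\delta<\frac{1}{2}$. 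Proposition \ref{Xestimate1} then gives
$$\|v\|_{X_{s,\frac{1}{3}}^{T}}\leq C\,T^{\frac{1}{6}-\delta}\,\|v\|_{X_{s,\frac{1}{2}-\delta}^{T}},$$
and the continuous embedding $X_{s,\frac{1}{2}}^{T}\hookrightarrow X_{s,\frac{1}{2}-\delta}^{T}$ from Proposition \ref{prop1}(ii) lets me replace $\|v\|_{X_{s,\frac{1}{2}-\delta}^{T}}$ by $C\,\|v\|_{X_{s,\frac{1}{2}}^{T}}$. Setting $\epsilon:=\frac{1}{6}-\delta\in(0,\frac{1}{6})$ then produces $\|v\|_{X_{s,\frac{1}{3}}^{T}}\leq C\,T^{\epsilon}\,\|v\|_{X_{s,\frac{1}{2}}^{T}}$.

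Finally I would combine the two displays and pass to the $Z$-norm on the right. Inserting the interpolation bound into the bilinear estimate gives
$$\|\partial_{x}(v^{2})\|_{Z_{s,-\frac{1}{2}}^{T}}\leq C_{\alpha,s}\,T^{\epsilon}\,\|v\|_{X_{s,\frac{1}{2}}^{T}}^{2},$$
and since, by the definition $\|\cdot\|_{Z_{s,\frac{1}{2}}^{T}}=\|\cdot\|_{X_{s,\frac{1}{2}}^{T}}+\|\cdot\|_{Y_{s,0}^{T}}$, one trivially has $\|v\|_{X_{s,\frac{1}{2}}^{T}}\leq\|v\|_{Z_{s,\frac{1}{2}}^{T}}$, the desired inequality $\|\partial_{x}(v^{2})\|_{Z_{s,-\frac{1}{2}}^{T}}\leq C_{\alpha,s}\,T^{\epsilon}\,\|v\|_{Z_{s,\frac{1}{2}}^{T}}^{2}$ follows. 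The essential point is that the quadratic nonlinearity always leaves one factor strictly below the $b=\frac{1}{2}$ endpoint, and that slack is exactly what converts into the smalling factor $T^{\epsilon}$; the only thing requiring care is the endpoint exclusion in Proposition \ref{Xestimate1}, handled by the intermediate exponent as above.
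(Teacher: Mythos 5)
Your proposal is correct and follows essentially the same route as the paper: the paper likewise reduces to $\|v\|_{X^{T}_{s,\frac{1}{3}}}\leq C\,T^{\epsilon}\|v\|_{X^{T}_{s,\frac{1}{2}}}$ via Proposition \ref{Xestimate1} combined with the embedding from Proposition \ref{prop1}, and then applies the bilinear estimate of Corollary \ref{Biestimate2} with both factors equal to $v$, finishing with $\|v\|_{X^{T}_{s,\frac{1}{2}}}\leq\|v\|_{Z^{T}_{s,\frac{1}{2}}}$. The only cosmetic difference is the placement of the intermediate exponent (the paper takes $b'=\frac{1}{3}$, $b=\frac{1}{3}+\epsilon$, while you take $b'=\frac{1}{3}$, $b=\frac{1}{2}-\delta$), which yields the same range $\epsilon\in(0,\frac{1}{6})$.
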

\begin{proof}
Applying Proposition \ref{Xestimate1} with $b'=\frac{1}{3},$  $0<\epsilon<\frac{1}{6},$ and $b=\frac{1}{3}+\epsilon,$ we obtain
$$
\|u\|_{X^{T}_{s,\frac{1}{3}}}
\leq C\;T^{\frac{1}{3}+\epsilon-\frac{1}{3}}
\|u\|_{X^{T}_{s,\frac{1}{3}+\epsilon}}
\leq
C\;T^{\epsilon}
\|u\|_{X^{T}_{s,\frac{1}{2}}}.$$

From Corollary \ref{Biestimate2} and \ref{Biestimate3}, we have
$$
\|\partial_{x}(u^{2})\|_{Z^{T}_{s,-\frac{1}{2}}}
\leq C 	\|u\|_{X^{T}_{s,\frac{1}{2}}}
\|u\|_{X^{T}_{s,\frac{1}{3}}}
\leq
C\;T^{\epsilon}
\|u\|^{2}_{X^{T}_{s,\frac{1}{2}}}
\leq
C\;T^{\epsilon}
\|u\|^{2}_{Z^{T}_{s,\frac{1}{2}}}.$$
\end{proof}

\section{Propagation of Compactness, Regularity and Unique Continuation Property} \label{sec-UCP}

In this section we establish some results on propagation of compactness, regularity and unique continuation property satisfied by the solution of the Benjamin equation that are essential to  prove the global exponential stabilization.

\subsection{The Multiplication Property of the Bourgain's Space}

In the following Lemmas we establish the multiplication property
of the Bourgain space $X_{s,b}.$

\begin{lem}\label{multi1}
	If $\psi=\psi(t)\in C^{\infty}(\mathbb{R}),$ then
	$\psi v \in X^{T}_{s,b}$ for all $v\in X^{T}_{s,b}.$ Furthermore, there exists a positive constant $C=C_{\eta,T,b,\psi}$
	 such that
	 \begin{equation}\label{mult1}
	 \|\psi v\|_{X^{T}_{s,b}}\leq C
	 \| v\|_{X^{T}_{s,b}}.
	 \end{equation}

	 If $T\leq 1,$ then positive constant $C$ does not depend on the time $T.$
\end{lem}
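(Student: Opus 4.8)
The plan is to reduce the restricted estimate to a multiplier estimate on the whole space $X_{s,b}$, exploiting that multiplication by a function of time alone becomes convolution in the temporal frequency variable. First I would record the elementary observation that if $\chi=\chi(t)$ then, writing $\widehat{\chi}$ for the one-dimensional Fourier transform of $\chi$ in $t$, one has on the Fourier side $\widehat{\chi v}(k,\tau)=c\,(\widehat{\chi}*_{\tau}\widehat{v}(k,\cdot))(\tau)$ for some harmless constant $c$; since the spatial variable is untouched, the factor $\langle k\rangle^{2s}$ and the sum over $k$ are carried along passively throughout.

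The core is then the whole-line multiplier estimate: for $\chi\in C^{\infty}_{c}(\mathbb{R})$ one has $\|\chi v\|_{X_{s,b}}\leq C_{\chi,b}\,\|v\|_{X_{s,b}}$. The key ingredient is the Peetre-type inequality $\langle\tau-\phi(k)\rangle^{b}\leq C_{b}\,\langle\tau-\tau'\rangle^{|b|}\langle\tau'-\phi(k)\rangle^{b}$, valid for every real $b$ (both signs matter, since $b=-\tfrac12$ occurs in the applications), applied to the splitting $\tau-\phi(k)=(\tau-\tau')+(\tau'-\phi(k))$. Inserting this bound, the quantity $\langle\tau-\phi(k)\rangle^{b}|\widehat{\chi v}(k,\tau)|$ is dominated by the $\tau$-convolution of $g(\sigma):=\langle\sigma\rangle^{|b|}|\widehat{\chi}(\sigma)|$ with $h_{k}(\tau'):=\langle\tau'-\phi(k)\rangle^{b}|\widehat{v}(k,\tau')|$. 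By Young's inequality, $\|g*h_{k}\|_{L^{2}_{\tau}}\leq\|g\|_{L^{1}}\|h_{k}\|_{L^{2}}$; squaring, weighting by $\langle k\rangle^{2s}$ and summing over $k$ yields $\|\chi v\|_{X_{s,b}}\leq C_{b}\|g\|_{L^{1}}\|v\|_{X_{s,b}}$, where $\|g\|_{L^{1}}=\int_{\mathbb{R}}\langle\sigma\rangle^{|b|}|\widehat{\chi}(\sigma)|\,d\sigma<\infty$ precisely because $\chi\in C^{\infty}_{c}$ makes $\widehat{\chi}$ Schwartz. By density of $\mathcal{S}(\mathbb{T}\times\mathbb{R})$ in $X_{s,b}$, this a priori bound also gives $\chi v\in X_{s,b}$.

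To transfer this to the restriction space, given $v\in X^{T}_{s,b}$ I choose an extension $\widetilde{v}\in X_{s,b}$ with $\|\widetilde{v}\|_{X_{s,b}}\leq 2\|v\|_{X^{T}_{s,b}}$. Since $\eta_{T}\equiv1$ on $[-T,T]\supset[0,T]$, the cutoff multiplier $\chi:=\eta_{T}\psi\in C^{\infty}_{c}(\mathbb{R})$ coincides with $\psi$ on $[0,T]$, so $\chi\widetilde{v}$ is an admissible extension of $\psi v$ restricted to $\mathbb{T}\times[0,T]$. Combining with the whole-line estimate gives
$$\|\psi v\|_{X^{T}_{s,b}}\leq\|\chi\widetilde{v}\|_{X_{s,b}}\leq C_{\chi,b}\|\widetilde{v}\|_{X_{s,b}}\leq 2\,C_{\eta,T,b,\psi}\,\|v\|_{X^{T}_{s,b}},$$
which is \eqref{mult1}. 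For $T\leq1$ one has $[0,T]\subset[-1,1]$, where the fixed cutoff $\eta$ already equals $1$; replacing $\eta_{T}$ by $\eta$ in the construction makes the multiplier $\eta\psi$ independent of $T$, so the resulting constant $C_{\eta,b,\psi}$ no longer depends on $T$, as claimed.

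The only genuinely delicate points are the uniform-in-$T$ claim and tracking how the constant enters: for general $T$ the weight $\int_{\mathbb{R}}\langle\sigma\rangle^{|b|}|\widehat{\eta_{T}\psi}(\sigma)|\,d\sigma$ depends on $T$ through the derivatives of $\eta_{T}$, whereas for $T\leq1$ the $T$-free cutoff $\eta$ eliminates this dependence; the Peetre inequality (for both signs of $b$) and the Young step are otherwise routine.
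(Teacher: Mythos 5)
Your proposal is correct and follows essentially the same route as the paper: extend $v$ to $X_{s,b}$, use $\eta_{T}\psi\,\widetilde{v}$ (or $\eta\psi\,\widetilde{v}$ when $T\leq 1$) as an extension of $\psi v$, and bound the temporal-frequency convolution via the Peetre-type inequality followed by Young's inequality, the factor $\langle k\rangle^{2s}$ riding along passively. The only cosmetic difference is that you unify the sign cases through $\langle\tau-\tau'\rangle^{|b|}$, whereas the paper treats $b=0$, $b>0$, and $b<0$ separately with the corresponding inequalities $\langle\tau-\phi(k)\rangle^{b}\leq c_{b}\,\langle\tau-y-\phi(k)\rangle^{b}\langle y\rangle^{\pm b}$.
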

\begin{proof}
	Let $v=v(x,t) \in X^{T}_{s,b}$ and consider $w$
	an extention to $X_{s,b}$ of $v$ such that
	$$\|w\|_{X_{s,b}}\leq
	2 \;\|v\|_{X_{s,b}}.$$
	
	Thus, $\eta_{T}(t) \psi(t) w(x,t)$
	is an extention to $X_{s,b}$ of
	$\psi(t) v(x,t).$ Note that
	\begin{equation}\label{multi2}
		\begin{split}
			\left\| \psi(t) v
			\right\|^{2}_{X^{T}_{s,b}}
			&\leq
			\left\| \eta_{T}(t) \psi(t) w
			\right\|^{2}_{X_{s,b}}\\
			&=\displaystyle{
				\sum_{k=-\infty}^{\infty}
				\int_{\mathbb{R}}
				\langle k\rangle^{2s}\langle \tau -\phi(k) \rangle^{2b}
				\left|\left(\left(\eta_{T}(\cdot)\;\psi(\cdot)
				\right)^{\wedge}\ast
				\;\widehat{w}(k,\cdot)
				\right)(\tau)\right|^{2}\;d\tau}.
		\end{split}
	\end{equation}
	
	If $b=0$ in \eqref{multi2}, we use Young Inequality to obtain
$$\left\| \psi(t) v
			\right\|^{2}_{X^{T}_{s,0}}
			\leq \displaystyle{
				\sum_{k=-\infty}^{\infty}\langle k\rangle^{2s}
				\left\|\left(\eta_{T}(\cdot)\;\psi(\cdot)
				\right)^{\wedge}(\tau)
				\right\|_{L^{1}_{\tau}(\mathbb{R})}^{2}
				\left\|
				\;\widehat{w}(k,\tau)
				\right\|_{L^{2}_{\tau}(\mathbb{R})}^{2}}.$$
	
	Since $\eta_{T}(t)\psi(t)\in C^{\infty}_{c}(\mathbb{R})$
	we have \;$\displaystyle{
			\left\|\left(\eta_{T}(\cdot)\;\psi(\cdot)
			\right)^{\wedge}(\tau)
			\right\|_{L^{1}_{\tau}(\mathbb{R})}^{2}
			<\infty}.$
	Thus, there exists a positive constant $C_{T,\eta,\psi}$
	such that
$${
	\begin{split}
			\left\| \psi(t) v
			\right\|^{2}_{X^{T}_{s,0}}
			&\leq \displaystyle{ C_{T,\eta,\psi}\;
				\sum_{k=-\infty}^{\infty}\langle k\rangle^{2s}
				\left\|
				\;\widehat{w}(k,\tau)
				\right\|_{L^{2}_{\tau}(\mathbb{R})}^{2}}
			=	C_{T,\eta,\psi}\;\left\| w
			\right\|^{2}_{X^{T}_{s,0}}
			= 2\;	C_{T,\eta,\psi}\;\left\| v
			\right\|^{2}_{X^{T}_{s,0}}.
	\end{split}}$$
	
	On the other hand, if $b>0,$ we have the following inequality
	\begin{equation}\label{multi6}
		\begin{split}
			\langle \tau-\phi(k)\rangle^{b}
			&\leq c_{b}\; \langle \tau-y-\phi(k)\rangle^{b}\;
			\langle y \rangle^{b}.
		\end{split}
	\end{equation}
	
	From \eqref{multi2}, \eqref{multi6} and Young inequality, we have
	\begin{equation}\label{multi7}
		\begin{split}
			\left\| \psi(t)v
			\right\|^{2}_{X^{T}_{s,b}}
			&\leq \displaystyle{ c_{b}
				\sum_{k=-\infty}^{\infty}
				\langle k\rangle^{2s}
				\left\| \left(
				\langle \cdot  \rangle^{b}
				\left(\eta_{T}(t)\;\psi(t)
				\right)^{\wedge}(\cdot)\ast
				\langle \cdot-\phi(k) \rangle^{b}
				\;\widehat{w}(k,\cdot)\right)(\tau)
				\right\|^{2}_{L^{2}_{\tau}(\mathbb{R})}}\\
			&\leq \displaystyle{ c_{b}
				\sum_{k=-\infty}^{\infty}
				\langle k\rangle^{2s}
				\left\|
				\langle \tau  \rangle^{b}
				\left(\eta_{T}(t)\;\psi(t)
				\right)^{\wedge}(\tau)
				\right\|^{2}_{L^{1}_{\tau}(\mathbb{R})}
				\left\| \langle \tau-\phi(k) \rangle^{b}
				\;\widehat{w}(k,\tau)
				\right\|^{2}_{L^{2}_{\tau}(\mathbb{R})}}.
		\end{split}
	\end{equation}

	Since $\eta_{T}(t)\psi(t)\in C^{\infty}_{c}(\mathbb{R})$
	we get \;$\displaystyle{\left\|\langle \tau  \rangle^{b}
			\left(\eta_{T}(\cdot)\;\psi(\cdot)
			\right)^{\wedge}(\tau)
			\right\|_{L^{1}_{\tau}(\mathbb{R})}^{2}
			<\infty}.$
	Therefore, there exists a positive constant $C_{T,\eta,b,\psi}$
	such that
$${
		\begin{split}
			\left\| \psi(t) v
			\right\|^{2}_{X^{T}_{s,b}}
			&\leq \displaystyle{C_{T,\eta,b,\psi}
				\sum_{k=-\infty}^{\infty}\langle k\rangle^{2s}
				\left\|\langle \tau-\phi(k) \rangle^{b}
				\;\widehat{w}(k,\tau)
				\right\|_{L^{2}_{\tau}(\mathbb{R})}^{2}}
			=C_{T,\eta,\psi} \left\| w
			\right\|^{2}_{X^{T}_{s,b}}
			= 2	C_{T,\eta,\psi} \left\| v
			\right\|^{2}_{X^{T}_{s,b}}.
		\end{split}}$$	
	
	Finally, if $b<0$ we infer that
$\langle \tau-\phi(k)\rangle^{b}
			\leq c_{b}\;
			\langle \tau-y-\phi(k)\rangle^{b}\;
			\langle y\rangle^{-b},$
	and similar computations as those yielding  \eqref{multi7} give the result. Note that, if $T\leq 1$ then  $\eta(t) \psi(t) w(x,t)$
	is an extention to $X_{s,b}$ of
	$\psi(t) v(x,t).$ Consequently, the constant $C$ in the estimates will only depend on $\eta,b,$ and $\psi.$
\end{proof}

As was pointed out by
Laurent  et. al \cite{14} for the KdV equation,
if $\phi=\phi(x)\in C^{\infty}(\mathbb{T}),$
then $\phi v$ may not belong to the space $X^{T}_{s,b}$
for $v \in X^{T}_{s,b}.$ For the Benjamin equation too, the same is lost in the index of regularity $s$
because the structure in space of the harmonics is not kept by the multiplication by a (smooth) function
of $x$ 
(see Example \ref{examplemulti} in the appendix).
In fact, this is reflected in next theorem. We first  prove a necessary lemma.

\begin{lem}\label{multi44}
	Let $s\in \mathbb{R}.$\;\;
	 $v\in X_{s,1}$ if and only if
	$v\in L^{2}(\mathbb{R},H^{s}_{p}(\mathbb{T})),$ and $$\partial_{t}v-\partial^{3}_{x}v
	-\alpha \mathcal{H}\partial^{2}_{x}v
	+2\;\mu \partial_{x}v \in
	L^{2}(\mathbb{R},H^{s}_{p}(\mathbb{T})).$$
In this case we have\;\;
	$\|v\|^{2}_{X_{s,1}}=
	\|v\|^{2}_{L^{2}(\mathbb{R}_{t},H^{s}_{p}(\mathbb{T}))}
	+\|\partial_{t}v-\partial^{3}_{x}v
	-\alpha \mathcal{H}\partial^{2}_{x}v
	+2\;\mu \partial_{x}v
	\|^{2}_{L^{2}(\mathbb{R}_{t},H^{s}_{p}(\mathbb{T}))}.$
\end{lem}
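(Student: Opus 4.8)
The plan is to pass to the space--time Fourier side and identify the symbol of the operator $\mathcal{L}v:=\partial_{t}v-\partial^{3}_{x}v-\alpha\mathcal{H}\partial^{2}_{x}v+2\mu\partial_{x}v$. First I would compute the Fourier multiplier of $\mathcal{L}$. Regrouping, $\mathcal{L}v=\partial_t v-(\alpha\mathcal{H}\partial_x^2+\partial_x^3-2\mu\partial_x)v$. Since $\widehat{\partial_t v}(k,\tau)=i\tau\,\widehat v(k,\tau)$, and the spatial operator $\alpha\mathcal{H}\partial_x^2+\partial_x^3-2\mu\partial_x$ is exactly the generator of the group $V(t)=U_\mu(t)$ in \eqref{semgru2}, its symbol is $i\phi(k)$ with $\phi(k)=-k^3-2\mu k+\alpha k|k|$. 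Hence
$$\widehat{\mathcal{L}v}(k,\tau)=\bigl(i\tau-i\phi(k)\bigr)\widehat v(k,\tau)=i\,(\tau-\phi(k))\,\widehat v(k,\tau).$$

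Next, using Plancherel in both variables together with the definition of the $H^s_p(\mathbb{T})$--norm, I would rewrite the two $L^2(\mathbb{R}_t;H^s_p(\mathbb{T}))$--quantities in the statement purely in Fourier variables:
$$\|v\|^2_{L^2(\mathbb{R}_t;H^s_p)}=\sum_{k}\int_{\mathbb{R}}\langle k\rangle^{2s}|\widehat v(k,\tau)|^2\,d\tau,\qquad \|\mathcal{L}v\|^2_{L^2(\mathbb{R}_t;H^s_p)}=\sum_{k}\int_{\mathbb{R}}\langle k\rangle^{2s}|\tau-\phi(k)|^2\,|\widehat v(k,\tau)|^2\,d\tau,$$
the second identity being a consequence of the symbol computation above. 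Adding these two expressions and invoking the elementary identity $1+|\tau-\phi(k)|^2=\langle\tau-\phi(k)\rangle^2$ (by the definition $\langle\cdot\rangle=(1+|\cdot|^2)^{1/2}$) yields precisely
$$\|v\|^2_{L^2(\mathbb{R}_t;H^s_p)}+\|\mathcal{L}v\|^2_{L^2(\mathbb{R}_t;H^s_p)}=\sum_{k}\int_{\mathbb{R}}\langle k\rangle^{2s}\langle\tau-\phi(k)\rangle^{2}|\widehat v(k,\tau)|^2\,d\tau=\|v\|^2_{X_{s,1}},$$
which is exactly \eqref{x-sb-norm} evaluated at $b=1$. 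This is the claimed norm identity.

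The equivalence then follows immediately. Realizing $X_{s,1}$ as the weighted $L^2$ space on the Fourier side (the closure of $\mathcal{S}(\mathbb{T}\times\mathbb{R})$ under the $X_{s,1}$--norm coincides with this weighted $L^2$ space, and the Fourier computation above is first carried out for Schwartz functions and then extended by density through the norm identity), membership $v\in X_{s,1}$ is equivalent to finiteness of the right-hand sum. By the displayed identity, that sum is finite if and only if both $\|v\|_{L^2(\mathbb{R}_t;H^s_p)}$ and $\|\mathcal{L}v\|_{L^2(\mathbb{R}_t;H^s_p)}$ are finite, i.e. if and only if $v\in L^2(\mathbb{R};H^s_p(\mathbb{T}))$ and $\mathcal{L}v\in L^2(\mathbb{R};H^s_p(\mathbb{T}))$.

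There is essentially no deep obstacle here; the whole argument reduces to a symbol bookkeeping and one algebraic identity. The only point requiring care is verifying that the spatial symbol collapses cleanly to $i\phi(k)$: one must use $\widehat{\mathcal{H}f}(k)=-i\,\text{sgn}(k)\widehat f(k)$ and $\widehat{\partial_x^2 f}(k)=-k^2\widehat f(k)$ to get that $\alpha\mathcal{H}\partial_x^2$ contributes $i\alpha k|k|$, while $\partial_x^3$ contributes $-ik^3$ and $-2\mu\partial_x$ contributes $-2i\mu k$, so that the full spatial symbol equals $i(-k^3-2\mu k+\alpha k|k|)=i\phi(k)$. Combined with the $i\tau$ coming from $\partial_t$, this produces the exact factor $i(\tau-\phi(k))$ that matches the weight defining the $X_{s,1}$--norm, which is what makes the final identity hold with equality rather than mere equivalence of norms.
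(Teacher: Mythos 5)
Your proof is correct and follows essentially the same route as the paper's: expand the $X_{s,1}$--weight via $\langle\tau-\phi(k)\rangle^{2}=1+|\tau-\phi(k)|^{2}$ and identify, through Plancherel in time, the factor $|\tau-\phi(k)|^{2}|\widehat v(k,\tau)|^{2}$ with the Fourier transform of $\partial_{t}v-\partial_{x}^{3}v-\alpha\mathcal{H}\partial_{x}^{2}v+2\mu\partial_{x}v$. Your explicit symbol bookkeeping (checking that $\alpha\mathcal{H}\partial_{x}^{2}$ contributes $i\alpha k|k|$, etc.) and the density remark merely spell out steps the paper leaves implicit.
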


\begin{proof}
	Let $s\in \mathbb{R}$ be fixed. Then, applying Plancherel identity in time, we obtain
$${
	\begin{split}
    \|v\|^{2}_{X_{s,1}}
    &=\sum_{k=-\infty}^{\infty}\int_{\mathbb{R}}\langle k\rangle^{2s}
    \left(1+|\tau +k^{3}-\alpha k|k|+2\mu k|^{2}\right)|\widehat{v}(k,\tau)|^{2}\;d\tau\\
    &=\|v\|^{2}_{L^{2}(\mathbb{R}_{t},H^{s}_{p}(\mathbb{T}))}
    +\sum_{k=-\infty}^{\infty}\int_{\mathbb{R}}\langle k\rangle^{2s}
    |\widehat{\partial_{t}v}(k,t)
    -\widehat{\partial^{3}_{x}}v(k,t)
    -\alpha \widehat{\mathcal{H}\partial^{2}_{x}v}(k,t)
    +2\;\mu \widehat{\partial_{x}v}(k,t)|^{2}\;dt.
	\end{split}}$$
This proves the Lemma.
\end{proof}

Using the Fourier transform, $X_{s,b}$ may be viewed
as the weighted $L^{2}$ space
\begin{equation}\label{measure2}
L^{2}(\mathbb{R}_{\tau}\times\mathbb{Z}_{k},
\langle k\rangle^{2s}
\langle\tau-\phi(k)\rangle^{2b}
\lambda\otimes \delta),
\end{equation}
 where
$\lambda$  and $\delta$ are the Lebesgue measure over $\mathbb{R}$ and the discrete measure on $\mathbb{Z},$ respectively.

\begin{thm}\label{multi23}
	Let $-1\leq b\leq1,$ $s\in \mathbb{R},$ and
	$\varphi \in C^{\infty}(\mathbb{T}).$
	Then for any $v\in X_{s,b},$ $\varphi v
	\in X_{s-2|b|,b}.$ Similarly, the multiplication
	by $\varphi$ maps $X^{T}_{s,b},$ into
	$X^{T}_{s-2|b|,b}$ i.e, there exists a
	positive constant
	$C=C_{s,\alpha,\varphi,\mu}$ such that
	\begin{equation}\label{mult2}
	\|\varphi v\|_{X^{T}_{s-2|b|,b}}\leq C\;
	\|v\|_{X^{T}_{s,b}}.
	\end{equation}	

\end{thm}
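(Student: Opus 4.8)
The plan is to prove the estimate \eqref{mult2} first at the three endpoints $b\in\{-1,0,1\}$ and then recover the whole range $-1\le b\le 1$ by complex interpolation, using the description of $X_{s,b}$ as the weighted $L^2$ space \eqref{measure2}; the restriction–space statement is obtained at the very end by a routine extension argument. The case $b=0$ is immediate: since $X_{s,0}=L^2(\mathbb{R}_t;H^s_p(\mathbb{T}))$ and multiplication by $\varphi\in C^\infty(\mathbb{T})$ is bounded on $H^s_p(\mathbb{T})$ for every $s$, we get $\|\varphi v\|_{X_{s,0}}\le C_{\varphi,s}\|v\|_{X_{s,0}}$, which is \eqref{mult2} with $2|b|=0$.

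For $b=1$ I would invoke Lemma \ref{multi44}: writing $L:=\partial_t-\partial^3_x-\alpha\mathcal H\partial^2_x+2\mu\partial_x$, a function $w$ lies in $X_{\sigma,1}$ exactly when $w$ and $Lw$ belong to $L^2(\mathbb{R}_t;H^{\sigma}_p(\mathbb{T}))$. Since $\varphi$ depends only on $x$, one has $\partial_t(\varphi v)=\varphi\,\partial_t v$, and the Leibniz rule gives $L(\varphi v)=\varphi\,Lv+R$ with
\[
R=-\big(3\varphi'\partial^2_x v+3\varphi''\partial_x v+\varphi''' v\big)-\alpha\,[\mathcal H,\varphi]\partial^2_x v-\alpha\,\mathcal H\big(2\varphi'\partial_x v+\varphi'' v\big)+2\mu\,\varphi' v.
\]
If $v\in X_{s,1}$ then $v,Lv\in L^2(H^s_p)$, so $\varphi v,\varphi Lv\in L^2(H^s_p)\subset L^2(H^{s-2}_p)$. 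Every explicit term of $R$ also lies in $L^2(H^{s-2}_p)$: the costliest is $\varphi'\partial^2_x v$, where $\partial^2_x v\in L^2(H^{s-2}_p)$ and $\mathcal H$ preserves the Sobolev index, while all remaining terms cost at most one derivative. Applying Lemma \ref{multi44} with $\sigma=s-2$ then yields $\varphi v\in X_{s-2,1}$ together with the bound \eqref{mult2} for $b=1$.

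The one term in $R$ not settled by the plain Leibniz rule is the commutator $[\mathcal H,\varphi]\partial^2_x v$, and this is the main obstacle: because the Hilbert transform does not commute with multiplication in $x$, one must ensure $[\mathcal H,\varphi]$ loses no derivatives. For $\varphi\in C^\infty(\mathbb{T})$ the operator $[\mathcal H,\varphi]$ is smoothing of order $-1$, hence bounded from $H^{\sigma}_p(\mathbb{T})$ into $H^{\sigma+1}_p(\mathbb{T})$; on the Fourier side this is transparent, since $\widehat{[\mathcal H,\varphi]w}(k)=-i\sum_{j}\big(\operatorname{sgn}k-\operatorname{sgn}(k-j)\big)\widehat\varphi(j)\,\widehat w(k-j)$ and the factor $\operatorname{sgn}k-\operatorname{sgn}(k-j)$ vanishes unless $|j|\ge|k|$, so the rapid decay of $\widehat\varphi$ forces the claimed gain. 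Consequently $[\mathcal H,\varphi]\partial^2_x v\in L^2(H^{s-1}_p)\subset L^2(H^{s-2}_p)$, as needed.

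The case $b=-1$ follows by duality: by Lemma \ref{multi38} the dual of $X_{\sigma,b}$ is $X_{-\sigma,-b}$, and multiplication by the real function $\varphi$ is self-adjoint for the $L^2(\mathbb{T}\times\mathbb{R})$ pairing, so dualizing $M_\varphi:X_{s,1}\to X_{s-2,1}$ gives $M_\varphi:X_{-(s-2),-1}\to X_{-s,-1}$, i.e. after relabelling $\sigma=-s+2$ the map $M_\varphi:X_{\sigma,-1}\to X_{\sigma-2,-1}$. For the intermediate values I would interpolate: by \eqref{measure2}, $X_{s,b}$ is $L^2$ with weight $\langle k\rangle^{2s}\langle\tau-\phi(k)\rangle^{2b}$, and complex interpolation of weighted $L^2$ spaces multiplies weights as geometric means, so for $\theta\in[0,1]$ the source pair $(X_{s,0},X_{s,1})$ interpolates to $X_{s,\theta}$ and the target pair $(X_{s,0},X_{s-2,1})$ to $X_{s-2\theta,\theta}$; boundedness of $M_\varphi$ at both endpoints yields $M_\varphi:X_{s,\theta}\to X_{s-2\theta,\theta}$, which is \eqref{mult2} for $b=\theta\in[0,1]$, and the range $b\in[-1,0]$ is obtained identically by interpolating the $b=0$ and $b=-1$ estimates. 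The restriction version is then immediate: given $v\in X^T_{s,b}$, pick an extension $\widetilde v\in X_{s,b}$ with $\|\widetilde v\|_{X_{s,b}}\le 2\|v\|_{X^T_{s,b}}$; since $\varphi$ is independent of $t$, $\varphi\widetilde v$ agrees with $\varphi v$ on $\mathbb{T}\times[0,T]$, whence $\|\varphi v\|_{X^T_{s-2|b|,b}}\le\|\varphi\widetilde v\|_{X_{s-2|b|,b}}\le 2C\|v\|_{X^T_{s,b}}$.
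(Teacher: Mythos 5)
Your proof is correct and follows essentially the same route as the paper's: the endpoints $b=0$ and $b=1$ via the characterization of Lemma \ref{multi44} together with the Leibniz rule (the two-derivative loss forced by $\varphi'\partial_x^2v$ and the nonlocal term), then Stein--Weiss complex interpolation through the weighted-$L^2$ identification \eqref{measure2}, and duality via Lemma \ref{multi38} for negative $b$, followed by the standard extension argument for the restriction spaces. The only local deviation is at $b=1$: the paper adds and subtracts $\alpha\varphi\mathcal{H}\partial_x^2v$ and estimates $\mathcal{H}\partial_x^2(\varphi v)$ and $\varphi\mathcal{H}\partial_x^2v$ separately, each at the full two-derivative cost, whereas you isolate the commutator $[\mathcal{H},\varphi]$ and prove its order-one smoothing from the Fourier-support observation that $\operatorname{sgn}k-\operatorname{sgn}(k-j)\neq 0$ forces $|j|\geq|k|$ --- a correct and in fact sharper estimate than needed (the same mechanism as in Lemma \ref{prop3} and Lemma \ref{compact26}), though it buys nothing extra here since the term $3\varphi'\partial_x^2v$ already dictates the loss $s-2$.
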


\begin{proof}
We proceed as in \cite{14}. First, we show \eqref{mult2} for the cases $b=0$ and $b=1.$ Afther, using interpolation and duality we show \eqref{mult2} for the other cases of $b.$

	\noindent{\bf Case 1. $b=0$:} Let  $v\in\mathcal{S}(\mathbb{T}\times \mathbb{R}).$ From definition, we have
$$
\|\varphi v\|^{2}_{X_{s,0}}
	=
    	\sum_{k\in \mathbb{Z}}
    	\int_{\mathbb{R}}2\pi\;
    	( 1+|k|^{2} )^{s}
    	|\left(\varphi\;v\right)^{\wedge}(k,t)
    	|^{2}\;dt.
$$

If $s\geq 0,$ one has
\;$\displaystyle{
(1+|k|)^{s}\leq c_{s}\;
(1+|k-j|)^{s}(1+|j|)^{s}}.$ Therefore,
 the Cauchy-Schwartz inequality for $N>\frac{1}{2},$ yields
$${
\begin{split}
\displaystyle{\|\varphi v\|^{2}_{X_{s,0}}}
&\leq c_{s}
\displaystyle{
	\sum_{j=-\infty}^{\infty}(1+|j|)^{2s+2N}
	|\widehat{\varphi}
	(j)|^{2}	\sum_{k\in \mathbb{Z}}
	\int_{\mathbb{R}}
	\;(1+|k-j|)^{2s}\; |\widehat{v(x,t)}(k-j,t)|^{2}
	\;dt}.
\end{split}}$$

Using the
invariance of the
$L^{2}(\mathbb{R}_{t},H^{s}_{p}(\mathbb{T}))$ norm by translations, we get
$$\displaystyle{\|\varphi v\|^{2}_{X_{s,0}}}
\leq c_{s}
\displaystyle{\|\varphi\|^{2}_{H^{s+N}_{p}(\mathbb{T})}
	\|v\|^{2}_{X_{s,0}}}.$$

If $s<0,$  we apply
$(1+|k|)^{s} \leq c_{s} (1+|k-j|)^{s}(1+|j|)^{-s}$
and  proceed as above to obtain \eqref{mult2}.
In the general case, we use density and duality arguments to complete the proof.

\noindent{\bf Case 2.  $b=1:$}
From Lemma \ref{multi44}, we obtain
{\small
\begin{equation}\label{multi46}
\begin{split}
\|\varphi v\|^{2}_{X_{s-2,1}}\!\!\!
&=\|\varphi v
\|^{2}_{L^{2}(\mathbb{R}_{t},H^{s-2}_{p}(\mathbb{T}))}\\
&\quad
+\|\partial_{t}(\varphi v)-\partial^{3}_{x}(\varphi v)
-\!\alpha \mathcal{H}\partial^{2}_{x}(\varphi v)
+2\mu \partial_{x}(\varphi v)
-\!\alpha\varphi \mathcal{H}\partial^{2}_{x}v
+\!\alpha\varphi \mathcal{H}\partial^{2}_{x}v
\|^{2}_{L^{2}(\mathbb{R}_{t},H^{s-2}_{p}(\mathbb{T}))}\\
&\leq\|\varphi v
\|^{2}_{X_{s-2,0}}
+c\;\alpha\| \mathcal{H}\partial^{2}_{x}(\varphi v)
+\varphi \mathcal{H}\partial^{2}_{x}v
\|^{2}_{L^{2}(\mathbb{R}_{t},H^{s-2}_{p}(\mathbb{T}))}\\
&\quad
+c\; \|\partial_{t}(\varphi v)-\partial^{3}_{x}(\varphi v)
+2\;\mu \partial_{x}(\varphi v)
-\alpha\varphi \mathcal{H}\partial^{2}_{x}(v)
\|^{2}_{L^{2}(\mathbb{R}_{t},H^{s-2}_{p}(\mathbb{T}))}\\
&=:I+II+III.
\end{split}
\end{equation}}

From case $b=0,$ we obtain that there exists $A_{s,\varphi}>0$ such that
\begin{equation}\label{multi47}
   I=\|\varphi v
	\|^{2}_{X_{s-2,0}}
	\leq
	A_{s,\varphi}\;\|v
	\|^{2}_{X_{s-2,0}}
	\leq
	A_{s,\varphi}\;\|v
	\|^{2}_{X_{s,0}}.
\end{equation}

From properties of the operator $\partial^{r}_{x}$ on
Bourgain's spaces, and noting that $\mathcal{H}$ is an isomethry in $H_{p}^{s-2}(\mathbb{T}),$ we get
$${
\begin{split}
II
&\leq
c \alpha \left(
\| \partial^{2}_{x}(\varphi v)
\|^{2}_{L^{2}(\mathbb{R}_{t},H^{s-2}_{p}(\mathbb{T}))}
+\| \varphi \mathcal{H}\partial^{2}_{x}v
\|^{2}_{X_{s-2,0}}
\right)\\
&\leq
c \alpha \left(
\|\varphi v
\|^{2}_{X_{s,0}}
+c_{s,\varphi}\; \|\mathcal{H}\partial^{2}_{x}v
\|^{2}_{L^{2}(\mathbb{R}_{t},H^{s-2}_{p}(\mathbb{T}))}
\right)\\
&\leq
c \alpha \left(d_{s,\varphi}\;
\|v
\|^{2}_{X_{s,0}}
+ c_{s,\varphi}\;\|\partial^{2}_{x}v
\|^{2}_{X_{s-2,0}}
\right).
\end{split}}$$

Hence,  there exists another positive constant
 $B_{s,\alpha,\varphi}$ such that
\begin{equation}\label{multi53}
	\begin{split}
	II
	&\leq
     B_{s,\alpha,\varphi}\;
      \|v	\|^{2}_{X_{s,0}}.	
	\end{split}
\end{equation}

We estimate $III.$
From the Leibniz's rule for derivatives
one has
\begin{equation}\label{multi52}
	\begin{split}
	\partial_{t}(\varphi v)-\partial^{3}_{x}(\varphi v)
	+2\mu \partial_{x}(\varphi v)
	-\alpha\varphi  \mathcal{H}\partial^{2}_{x}v
	&=
	\varphi \left(
	\partial_{t}v-\partial^{3}_{x}v
	+2\mu \partial_{x}v
	-\alpha \mathcal{H}\partial^{2}_{x}v\right)\\
	&\quad
	-3\partial_{x}\varphi\partial^{2}_{x}v
	-3\partial^{2}_{x}\varphi\partial_{x}v
	-\partial^{3}_{x}\varphi v
	+2\;\mu \partial_{x}\varphi v.
	\end{split}
\end{equation}

Note that $-3\partial_{x}\varphi\;\partial^{2}_{x}v
-3\partial^{2}_{x}\varphi\;\partial_{x}v
-\partial^{3}_{x}\varphi\;v
+2\;\mu \partial_{x}\varphi\;v$ is an operator of second order for $v$.
Fron identity
\eqref{multi52},
 the case $b=0,$ and the fact that $\varphi\in C^{\infty}(\mathbb{T}),$ we get
\begin{equation*}
\begin{split}
III
&\leq
c \|\varphi(x)\left(
\partial_{t}v-\partial^{3}_{x}v
+2\mu \partial_{x}v
-\alpha \mathcal{H}\partial^{2}_{x}v\right)
\|^{2}_{L^{2}(\mathbb{R}_{t},H^{s-2}_{p}(\mathbb{T}))}\\
&\quad
+c\|-3\partial_{x}\varphi\;\partial^{2}_{x}v
-3\partial^{2}_{x}\varphi\;\partial_{x}v
-\partial^{3}_{x}\varphi\;v
+2\mu \partial_{x}\varphi\;v
\|^{2}_{L^{2}(\mathbb{R}_{t},H^{s-2}_{p}(\mathbb{T}))}\\
&\leq
c_{s,\varphi} \|
\partial_{t}v-\partial^{3}_{x}v
+2 |\mu| \partial_{x}v
-\alpha \mathcal{H}\partial^{2}_{x}v
\|^{2}_{X_{s-2,0}}\\
&\quad
+3c d_{s,\partial_{x}\varphi} \|v
\|^{2}_{X_{s,0}}
+3c d_{s,\partial^{2}_{x}\varphi} \|v
\|^{2}_{X_{s-1,0}}
+c d_{s,\partial^{3}_{x}\varphi} \|v
\|^{2}_{X_{s-2,0}}
+2 |\mu| c d_{s,\partial_{x}\varphi} \| v
\|^{2}_{X_{s-2,0}}.
\end{split}
\end{equation*}

 Using
 $X_{s,0}\hookrightarrow X_{s-2,0},$ and
 $X_{s,0}\hookrightarrow X_{s-1,0},$ we have that there exists $D_{s,\mu,\varphi}>0$ such that
 \begin{equation}\label{multi56}
 \begin{split}
 III
 &\leq
 D_{s,\mu,\varphi}\;\left(\|
 \partial_{t}v-\partial^{3}_{x}v
 +2\;\mu \partial_{x}v
 -\alpha \mathcal{H}\partial^{2}_{x}v
 \|^{2}_{X_{s,0}}+
  \|v
 \|^{2}_{X_{s,0}}\right)=
 D_{s,\mu,\varphi}\;
 \|v
 \|^{2}_{X_{s,1}},
 \end{split}
 \end{equation}
 where in the las step Lemma \ref{multi44} is used.
 From
 \eqref{multi46}-\eqref{multi53}, and
 \eqref{multi56}, we obtain that there exists a positive constant
 $C_{s,\alpha, \mu,\varphi}$ such that
$$
 \|\varphi(x) v\|^{2}_{X_{s-2,1}}
 \leq
 C_{s, \alpha, \mu,\varphi}\;
 \|v\|^{2}_{X_{s,1}}.$$
 This proves the case $b=1.$

 \noindent{\bf Case 3. $0<b<1$:} In this case we use interpolation. From identification \eqref{measure2}
and the Complex Interpolation Theorem
 of Stein-Weiss (see Bergh and Lofstrom
 \cite[page 115]{Bergh and Lofstrom}),
 we obtain
$\left(X_{s,0}, \;X_{s',1}\right)_{\theta,2}
 \approx X_{s(1-\theta)+s'\theta,\theta},$
 with $0<\theta<1.$ Furthermore, from the cases $b=0$ and $b=1$ we infer that the operator of
 multiplication by $\varphi \in C^{\infty}(\mathbb{T}),$ defined by
 $${\small
 	\begin{split}
 	T:X_{s,\theta}\approx L^{2}(\mathbb{R}_{\tau}\times \mathbb{Z}_{k},
 	\langle k\rangle^{2s}
 	\langle\tau-\phi(k)\rangle^{2\theta} \lambda \otimes \delta)&
 	\longrightarrow X_{s-2\theta,\theta}\approx
 	L^{2}(\mathbb{R}_{\tau}\times \mathbb{Z}_{k},
 	\langle k\rangle^{2(s-2\theta)} \langle\tau-\phi(k)
 	\rangle^{2\theta} \lambda \otimes \delta)\\
 	&T(v)=\varphi(x) v,
 	\end{split}}$$
 satisfies
$\|Tv\|_{X_{s-2\theta,\theta}}\leq C_{s,\varphi}^{1-\theta}
 C_{\alpha,s,\varphi,\mu}^{\theta}
 \|v\|_{s,\theta}
 \leq
 C_{\alpha,s,\varphi,\mu,\theta}\;\|v\|_{s,\theta},$
 with $0<\theta<1.$ Thus, we have a $2\theta$ loss of regularity
 in the spatial variable.

 \noindent{\bf Case 4. $-1< b< 0$:} In this case we use duality,
$$ 
 \|\varphi(x)v\|_{X_{s-2|b|,b}}
 =
 	\underset{\|u\|_{X_{-s-2b,-b}}\leq 1}
 	{\sup_{u \in X_{-s-2b,-b}}}
 	\left|\int_{\mathbb{T}}\int_{\mathbb{R}}
 	u \cdot \varphi(x)v \;dt\;dx\right|.
$$

This completes the proof of the theorem.
\end{proof}

\subsection{Propagation of Compactness and Regularity}
Here we show some properties of propagation of compactness and regularity for the linear operator
\begin{equation}\label{compact1}
L:= \partial_{t}-\alpha\;\mathcal{H}\partial_{x}^{2}
-\partial_{x}^{3}+2\mu\;\partial_{x},
\end{equation}
associated to  the Benjamin equation. These propagation properties are fundamental to study global stabilizability. We begin establishing two technical lemmas.

\begin{lem}\label{compact24}
	Let $\alpha>0$ and $\mu\in\mathbb{R}.$ The operator
	\begin{equation}\label{compact25}
	L:D(L)\subseteq
	L^{2}(\mathbb{T}\times(0,T))
	\rightarrow L^{2}(\mathbb{T}\times (0,T)),\;
	\end{equation}
	defined by \eqref{compact1}
	is skew-adjoint
	on $L^{2}(\mathbb{T}\times(0,T)),$
	where
	$$D(L)=\left\{
	v\in \mathcal{D}'(\mathbb{T}\times (0,T))
	: v(x,\cdot)\in H^{1}(0,T),
	\;\text{and}\;v(\cdot,t)\in
	 H_{p}^{3}(\mathbb{T})\right\}.$$
\end{lem}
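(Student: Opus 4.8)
The plan is to prove skew-adjointness by establishing the identity $L^{*}=-L$, which splits into two tasks: the algebraic skew-symmetry $\langle Lu,v\rangle_{L^{2}(\mathbb{T}\times(0,T))}=-\langle u,Lv\rangle_{L^{2}(\mathbb{T}\times(0,T))}$ for all $u,v\in D(L)$, and the maximality statement $D(L^{*})\subseteq D(L)$ (the reverse inclusion being automatic from skew-symmetry). Since $C^{\infty}_{c}(\mathbb{T}\times(0,T))\subseteq D(L)$, the operator is densely defined, so these two properties are exactly what is needed to conclude $L^{*}=-L$.

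First I would dispose of the spatial part $A:=-\alpha\mathcal{H}\partial_{x}^{2}-\partial_{x}^{3}+2\mu\partial_{x}$, which is the easy and purely formal piece. Passing to Fourier coefficients in $x$, the operator $A$ acts on the mode $\widehat{v}(k,t)$ as multiplication by $-i\phi(k)$, where $\phi(k)=-k^{3}-2\mu k+\alpha k|k|$ is \emph{real}; equivalently, integrating by parts on $\mathbb{T}$ produces no boundary term by periodicity, the odd-order operators $\partial_{x}$ and $\partial_{x}^{3}$ are skew-symmetric, and the Hilbert transform is skew-adjoint ($\widehat{\mathcal{H}f}(k)=-i\,\mathrm{sgn}(k)\widehat{f}(k)$, whence $\mathcal{H}^{*}=-\mathcal{H}$) while the even-order $\partial_{x}^{2}$ is self-adjoint and commutes with $\mathcal{H}$, so that $-\alpha\mathcal{H}\partial_{x}^{2}$ is skew-symmetric too. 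Thus $\langle Au,v\rangle=-\langle u,Av\rangle$ for all $u,v\in D(L)$, and the entire spatial contribution to $L^{*}$ is $A$ itself, carrying the correct sign.

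It remains to handle $\partial_{t}$ and here lies the only real difficulty. Integration by parts in $t$ gives
$$\langle\partial_{t}u,v\rangle=\int_{\mathbb{T}}\Big(u(x,T)\overline{v(x,T)}-u(x,0)\overline{v(x,0)}\Big)\,dx-\langle u,\partial_{t}v\rangle,$$
so, combining with the spatial computation,
$$\langle Lu,v\rangle+\langle u,Lv\rangle=\int_{\mathbb{T}}\big[u\overline{v}\big]_{t=0}^{t=T}\,dx.$$
The hard part will be precisely this temporal boundary term together with the identification of $D(L^{*})$: it is the control of the endpoint values $t=0,T$ that forces the correct reading of the domain and that separates genuine skew-adjointness from mere formal skew-symmetry, so I would argue this step carefully rather than treat it as routine. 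The cleanest route I would take is to exploit the Fourier-multiplier description of the underlying $L^{2}$ space (in the spirit of the weighted-$L^{2}$ identification \eqref{measure2}), under which $L$ is unitarily equivalent to multiplication by the symbol $i\big(\tau-\phi(k)\big)$. Because $\tau-\phi(k)\in\mathbb{R}$, this multiplier is purely imaginary, and multiplication by a purely imaginary measurable function is automatically skew-adjoint with natural maximal domain $\{v:(\tau-\phi(k))\widehat{v}\in L^{2}\}$; this simultaneously yields the skew-symmetry and the maximality $D(L^{*})=D(L)$, so that $L^{*}=-L$ and $L$ defined by \eqref{compact1} is skew-adjoint on $L^{2}(\mathbb{T}\times(0,T))$.
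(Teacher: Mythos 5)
Your spatial step is sound, and your integration by parts in $t$ correctly isolates the one real obstruction, the endpoint term $\int_{\mathbb{T}}\big[u\bar v\big]_{t=0}^{t=T}\,dx$. The genuine gap is your final step: on the bounded cylinder $\mathbb{T}\times(0,T)$ there is no unitary equivalence between $L$ and multiplication by the purely imaginary symbol $i(\tau-\phi(k))$. The weighted-$L^{2}$ identification \eqref{measure2} lives on $\mathbb{T}\times\mathbb{R}$; to invoke it you must extend $v$ from $(0,T)$ to $\mathbb{R}$, and extension by zero does not intertwine $\partial_{t}$ with that multiplier (it creates Dirac masses at $t=0,T$ unless $v$ has vanishing traces there), while a Fourier series in $t$ would impose the time-periodicity $v(\cdot,0)=v(\cdot,T)$, which is not part of $D(L)$. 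Indeed the multiplier conclusion contradicts your own boundary-term identity: taking $u=v=t\,f(x)$ with $0\neq f\in H^{3}_{p}(\mathbb{T})$ real gives $u\in D(L)$ and $\left(Lu,u\right)_{L^{2}}+\left(u,Lu\right)_{L^{2}}=T^{2}\|f\|^{2}_{L^{2}(\mathbb{T})}\neq 0$, so on the literal domain $L$ is not even skew-symmetric, and no maximality argument can then yield $L^{*}=-L$. More structurally, with the full $H^{1}(0,T)$ regularity in time and no endpoint conditions, $\partial_{t}$ is the \emph{maximal} realization, whose adjoint is the \emph{minimal} one (with vanishing traces at $t=0,T$); so in the time direction the domain of the adjoint is strictly smaller, which is exactly what your multiplier step papers over.

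For comparison, the paper omits the proof and refers to Proposition 3.3 of \cite{Vielma and Panthee}, which concerns the purely spatial generator $\alpha\mathcal{H}\partial_{x}^{2}+\partial_{x}^{3}-2\mu\partial_{x}$ on $L^{2}(\mathbb{T})$ with domain $H^{3}_{p}(\mathbb{T})$: there periodicity in $x$ kills all boundary terms, and that argument transfers verbatim to the $x$-part of $L$ (your first step). The time direction is precisely the part that \textquotedblleft similarity\textquotedblright\ does not cover. Note also how the lemma is actually consumed: in the proof of Proposition \ref{compact2} skew-adjointness enters only through the identity $\left(LA_{\epsilon}v_{n},v_{n}\right)_{L^{2}}=-\left(A_{\epsilon}v_{n},Lv_{n}\right)_{L^{2}}$ with $A_{\epsilon}=\psi(t)B_{\epsilon}$ and $\psi\in C^{\infty}_{c}((0,T))$, so one factor vanishes at $t=0,T$ and the endpoint term drops. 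A correct write-up should therefore either prove that restricted identity (skew-symmetry against functions with vanishing temporal traces), or restate the lemma with endpoint conditions in $t$ (time-periodic or vanishing traces), under which your Fourier heuristic does become a legitimate diagonalization argument.
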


\begin{proof}
	The proof is similar to that of Proposition 3.3 in \cite{Vielma and Panthee}, so we omit the details.
\end{proof}

\begin{lem}\label{compact26}
	Let $r\in \mathbb{R}.$ The Hilbert transform $\mathcal{H}$ commutes with the operator $D^{r}$ (see \eqref{difop}) in
	$L^{2}(\mathbb{T}).$ Furthermore,
	$\mathcal{H}=-D^{-1}\partial_{x}$ in
	$L^{2}(\mathbb{T}).$ Also, the operator
	 $\partial_{x}^{r}$ commutes with the operators $D^{r}$
	 and $\mathcal{H}$ in  $L^{2}(\mathbb{T}).$
\end{lem}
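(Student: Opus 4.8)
The plan is to observe that $\mathcal{H}$, $D^{r}$, and $\partial_{x}^{r}$ are all Fourier multipliers on $\mathbb{T}$, hence diagonal in the orthonormal basis $\{e^{ikx}\}_{k\in\mathbb{Z}}$, and that any two Fourier multipliers commute because they act by multiplication by a complex scalar on each Fourier mode. Concretely, for $v$ in the natural common domain on which the compositions are defined, one reads off from the definitions \eqref{difop} the symbols
\[
\widehat{\mathcal{H}v}(k)=-i\,\mathrm{sgn}(k)\,\widehat{v}(k),\qquad
\widehat{D^{r}v}(k)=\begin{cases}|k|^{r}\widehat{v}(k),&k\neq0,\\[1mm]\widehat{v}(0),&k=0,\end{cases}\qquad
\widehat{\partial_{x}^{r}v}(k)=(ik)^{r}\,\widehat{v}(k).
\]
It therefore suffices to verify the asserted identities mode by mode, e.g.\ on trigonometric polynomials, and then extend to $L^{2}(\mathbb{T})$ by the agreement of Fourier coefficients.

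First I would prove the commutation statements by comparing Fourier coefficients. Writing $m_{D^{r}}(k)$ for the symbol of $D^{r}$, one has for every $k\in\mathbb{Z}$
\[
\widehat{(\mathcal{H}D^{r})v}(k)=-i\,\mathrm{sgn}(k)\,m_{D^{r}}(k)\,\widehat{v}(k)=m_{D^{r}}(k)\,\big(-i\,\mathrm{sgn}(k)\big)\,\widehat{v}(k)=\widehat{(D^{r}\mathcal{H})v}(k),
\]
the middle equality being nothing but commutativity of scalar multiplication. At $k=0$ both sides vanish since $\mathrm{sgn}(0)=0$, so the discrepancy between $m_{D^{r}}(0)=1$ and $|0|^{r}$ is harmless. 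Exactly the same computation, now with the pairs of symbols $\big((ik)^{r},-i\,\mathrm{sgn}(k)\big)$ and $\big((ik)^{r},m_{D^{r}}(k)\big)$, shows that $\partial_{x}^{r}$ commutes with $\mathcal{H}$ and with $D^{r}$. Any branch ambiguity in $(ik)^{r}$ for non-integer $r$ is immaterial, since a fixed scalar commutes with every other scalar.

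Finally I would verify the identity $\mathcal{H}=-D^{-1}\partial_{x}$ on Fourier coefficients. For $k\neq0$,
\[
\widehat{(-D^{-1}\partial_{x})v}(k)=-|k|^{-1}(ik)\,\widehat{v}(k)=-i\,\frac{k}{|k|}\,\widehat{v}(k)=-i\,\mathrm{sgn}(k)\,\widehat{v}(k)=\widehat{\mathcal{H}v}(k),
\]
while for $k=0$ we have $\widehat{\partial_{x}v}(0)=0$, so by the $k=0$ clause in the definition of $D^{-1}$ we get $\widehat{(-D^{-1}\partial_{x})v}(0)=-\widehat{\partial_{x}v}(0)=0=-i\,\mathrm{sgn}(0)\,\widehat{v}(0)=\widehat{\mathcal{H}v}(0)$. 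Since the Fourier coefficients coincide for all $k$, the two operators agree. The argument requires no estimates; the only points needing a little care are the treatment of the zero mode (resolved by $\mathrm{sgn}(0)=0$, which is also why $\mathcal{H}$ and $\partial_{x}$ annihilate the mean) and the branch convention for $(ik)^{r}$, neither of which presents a genuine obstacle.
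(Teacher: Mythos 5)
Your proof is correct and follows exactly the route the paper intends: the paper's own proof is the one-line remark that the lemma ``can be easily shown using Fourier transform,'' and your mode-by-mode symbol computation, including the careful treatment of the $k=0$ mode via $\mathrm{sgn}(0)=0$ and the $k=0$ clause in the definition of $D^{r}$, is precisely the argument being alluded to.
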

\begin{proof}
	It can be easily shown using Fourier transform.
\end{proof}

\begin{prop}[Propagation of Compactness]\label{compact2}
	Let $T>0$ and $0\leq b'\leq b\leq 1$ be given (with $b>0$)
	and assume that $v_{n}\in X^{T}_{0,b}$ and
	$f_{n}\in X^{T}_{-2+2b,-b}$ satisfy
	\begin{equation}\label{compact3}
	\partial_{t}v_{n}-\alpha\;\mathcal{H}
	\partial_{x}^{2}v_{n}
	-\partial_{x}^{3}v_{n}+2\mu\;\partial_{x}v_{n}
	=f_{n},
	\end{equation}
	for $n=1,2,3,\cdots.$
	Suppose that there exists  $C>0$ such that
	\begin{equation}\label{compact4}
	\|v_{n}\|_{X^{T}_{0,b}}\leq C,
	\end{equation}
	for all $n\geq 1,$ and that
	\begin{equation}\label{compact5}
	\|v_{n}\|_{X^{T}_{-2+2b,-b}}
	+\|f_{n}\|_{X^{T}_{-2+2b,-b}}
	+\|v_{n}\|_{X^{T}_{-1+2b',-b'}}\longrightarrow 0,
	\end{equation}
	as $n\longrightarrow \infty.$
	Additionally, assume that for some nonempty open set
	$\omega \subset \mathbb{T}$
	\begin{equation}\label{compact6}
	v_{n}\longrightarrow 0,\;\;
	\text{strongly in}\;\;
	L^{2}((0,T);L^{2}(\omega)).
	\end{equation}
	
	Then, there exists a subsequence
	$\left\{v_{n_{j}}\right\}_{j\in \mathbb{N}}$ of
	$\{v_{n}\}_{n\in \mathbb{N}}$ such that
	\begin{equation}\label{compact7}
	v_{n_{j}}\longrightarrow 0,\;\;
	\text{strongly in}\;\;
	L^{2}_{loc}((0,T);L^{2}(\mathbb{T})),\;\;
	\text{as}\;j\longrightarrow \infty.
	\end{equation}
\end{prop}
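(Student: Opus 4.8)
The plan is to prove this by the commutator (propagation‑of‑compactness) method of Dehman--Gérard--Lebeau and Laurent--Rosier--Zhang, adapted to the Benjamin operator. Write $P:=\alpha\mathcal{H}\partial_x^2+\partial_x^3-2\mu\partial_x$ for the spatial part of $L$, so that $\partial_t v_n=Pv_n+f_n$ and, by Lemma \ref{compact24}, $P$ is skew-adjoint on $L^2(\mathbb{T})$. I will test the equation against $Av_n$, where $A$ is a self-adjoint (symmetrized) spatial operator to be chosen and $\langle\cdot,\cdot\rangle$ denotes the $L^2(\mathbb{T})$ inner product. For a nonnegative time cut-off $\psi\in C_c^\infty((0,T))$, computing $\frac{d}{dt}\langle Av_n,v_n\rangle$, using $P^\ast=-P$, and integrating (the boundary terms vanish since $\psi$ has compact support), one obtains, after a routine density argument,
$$\int_0^T\psi(t)\,\langle[A,P]v_n,v_n\rangle\,dt=-\int_0^T\psi'(t)\,\langle Av_n,v_n\rangle\,dt-\int_0^T\psi(t)\big(\langle Af_n,v_n\rangle+\langle Av_n,f_n\rangle\big)\,dt.$$
The whole proof reduces to choosing $A$ so that $[A,P]$ has a useful principal part and to showing the right-hand side vanishes in the limit.

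The decisive choice is to take $A$ a self-adjoint spatial operator of order $-2$ with principal symbol $a(x)|k|^{-2}$, built from the multiplier $D^{-2}$ of \eqref{difop} and multiplication by a function $a\in C^\infty(\mathbb{T})$ to be fixed. Since $D^{-2}$ and $\partial_x^3$ are both Fourier multipliers they commute, and from $[a,\partial_x^3]=-3a'\partial_x^2-3a''\partial_x-a'''$ together with the exact identity $\partial_x^2D^{-2}=-\mathrm{Id}$ (on the nonzero modes), the top-order part of $[aD^{-2},\partial_x^3]$ is exactly multiplication by $3a'(x)$, the remainder $-3a''\partial_xD^{-2}-a'''D^{-2}$ being of order $-1$. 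The contributions of the remaining pieces of $P$ are of negative order: using $\mathcal{H}=-D^{-1}\partial_x$ and the commutation relations of Lemma \ref{compact26}, $[aD^{-2},\alpha\mathcal{H}\partial_x^2]$ is of order $-1$ and $[aD^{-2},-2\mu\partial_x]$ of order $-2$. Hence $[A,P]=3a'(x)+R$ with $R$ of order $-1$, and the left-hand side above equals $3\int_0^T\!\!\int_{\mathbb{T}}\psi\,a'\,|v_n|^2\,dx\,dt+\int_0^T\psi\,\langle Rv_n,v_n\rangle\,dt$. It remains to check that the error terms vanish: the remainder $\langle Rv_n,v_n\rangle$ is of negative order and is absorbed by $\|v_n\|_{X^T_{-1+2b',-b'}}\to0$; the boundary and source terms are handled by pairing $Af_n$ and $Av_n$ in $X^T_{0,-b}$ against the bounded sequence $v_n\in X^T_{0,b}$, where, because $D^{-2}$ raises the spatial index by $2$ (Lemma \ref{prop2}) and multiplication by $a$ costs at most $2|b|$ derivatives (Theorem \ref{multi23}), $\|Af_n\|_{X^T_{0,-b}}\le C\,\|f_n\|_{X^T_{-2+2b,-b}}\to0$ and likewise $\|Av_n\|_{X^T_{0,-b}}\le C\,\|v_n\|_{X^T_{-2+2b,-b}}\to0$. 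These are exactly the quantities appearing in \eqref{compact5}, so that for every admissible $a$,
$$\lim_{n\to\infty}\int_0^T\!\!\int_{\mathbb{T}}\psi(t)\,a'(x)\,|v_n(x,t)|^2\,dx\,dt=0.$$

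Finally I will use the freedom in $a$ to propagate the vanishing out of $\omega$. Since $\omega$ is a nonempty open interval, its complement $\mathbb{T}\setminus\omega$ is a closed interval, and one can construct $a\in C^\infty(\mathbb{T})$ with $\int_{\mathbb{T}}a'=0$ (so that $a$ is genuinely periodic) and $a'\ge c>0$ on $\mathbb{T}\setminus\omega$, the compensating negative part of $a'$ being supported inside $\omega$. Then
$$c\int_0^T\!\!\int_{\mathbb{T}\setminus\omega}\psi\,|v_n|^2\;\le\;\int_0^T\!\!\int_{\mathbb{T}}\psi\,a'\,|v_n|^2\;-\;\int_0^T\!\!\int_{\omega}\psi\,a'\,|v_n|^2,$$
where the first term on the right tends to $0$ by the previous step and the second tends to $0$ because $a'$ is bounded and $v_n\to0$ strongly on $\omega$ by \eqref{compact6}. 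Thus $\int_0^T\!\int_{\mathbb{T}\setminus\omega}\psi|v_n|^2\to0$, and combined again with \eqref{compact6} this gives $\int_0^T\!\int_{\mathbb{T}}\psi|v_n|^2\to0$ for every nonnegative $\psi\in C_c^\infty((0,T))$, i.e.\ \eqref{compact7}. A subsequence is extracted when taking the weak limits used to treat the remainder terms, via the compact embedding of Proposition \ref{prop1}, which is why the conclusion is stated up to a subsequence.

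I expect the main obstacle to be the rigorous pseudodifferential/symbolic calculus in the periodic Bourgain framework: one must justify that $A$ of order $-2$ produces a commutator whose order-$0$ part is precisely $3a'(x)$ with a genuine order-$(-1)$ remainder, and then match the derivative losses from multiplication by $a$ (which by Theorem \ref{multi23} cost $2|b|$, resp.\ $2|b'|$, spatial derivatives) and from $D^{-2}$ against the precise mixed space--time indices $-2+2b,-b$ and $-1+2b',-b'$ in \eqref{compact5}. Keeping the \emph{time} exponents $-b,-b'$ consistent throughout all the dual pairings, not merely the spatial orders, is the delicate bookkeeping and is exactly where the two smallness hypotheses in \eqref{compact5} are consumed.
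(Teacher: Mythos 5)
Your proposal is, at its core, the same commutator method the paper uses: the paper also takes $B=\varphi D^{-2}$ (your $aD^{-2}$ up to symmetrization), pairs $[A,L]v_n$ against $v_n$ using the skew-adjointness of $L$ (Lemma \ref{compact24}), kills $(f_n,A^{\ast}v_n)$ and $(Av_n,f_n)$ with $\|f_n\|_{X^{T}_{-2+2b,-b}}\to0$ and $\|v_n\|_{X^{T}_{-2+2b,-b}}\to0$, and absorbs the order $-1$ remainders (the $\mathcal{H}\partial_x^2$ commutators and the lower-order Leibniz terms) with $\|v_n\|_{X^{T}_{-1+2b',-b'}}\to0$, exactly the bookkeeping you describe. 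Where you genuinely diverge is the endgame: the paper localizes via a partition of unity by translates $\chi_i(\cdot-x_0^i)$, $\chi_i\in C_c^\infty(\omega)$ (see \eqref{compact12}), and for each translate takes a primitive of the zero-mean difference $\chi(x)-\chi(x-x_0)$ via Lemma \ref{compact8}; you instead build a single multiplier $a$ with $[a']=0$, $a'\geq c>0$ off $\omega$, and the negative mass of $a'$ inside $\omega$, then absorb the $\omega$-part using \eqref{compact6}. Your variant is cleaner and equally valid (the same Lemma \ref{compact8} furnishes the periodic primitive), and it dispenses with the translation trick entirely. One technical understatement: your "routine density argument" is, in the paper, the regularization $A_\epsilon=\psi\,\varphi D^{-2}e^{\epsilon\partial_x^2}$ with bounds uniform in $0<\epsilon\leq1$, needed because $v_n$ only lies in $X^{T}_{0,b}$, $b\leq1$; this is standard but not free.

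There is, however, one concrete gap. You note parenthetically that $\partial_x^2D^{-2}=-\mathrm{Id}$ only on nonzero modes, but then proceed as if the principal part of the commutator were exactly multiplication by $3a'$. Since $\widehat{D^{r}v}(0)=\widehat{v}(0)$ while $\partial_x^2$ annihilates the zero mode, the principal part is $3a'(\mathrm{Id}-P_0)$, where $P_0$ is the projection onto the mean, so your commutator identity produces $\int_0^T\!\!\int_{\mathbb{T}}\psi\,a'\,|v_n|^2\,dx\,dt$ only up to the unaccounted term $\int_0^T\widehat{v_n}(0,t)\,\bigl(\int_{\mathbb{T}}\psi\,a'\,v_n\,dx\bigr)dt$ (the hypotheses do not include $[v_n]=0$, so this term is not trivially zero). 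The paper devotes the entire second half of its proof to it (see \eqref{compact70} and \eqref{compact81}): one checks $\|\widehat{v_n}(0,\cdot)\|_{L^2(0,T)}\leq C$ from \eqref{compact4}, while the pairing functional $g_n(t)=\int_{\mathbb{T}}\psi\,a'\,v_n\,dx$ tends to $0$ strongly in $L^2(0,T)$ because it is controlled by $\|v_n\|^{1/2}_{X^{T}_{-2+2b,-b}}$, whence the product vanishes. This is also exactly where the paper extracts the subsequence (weak compactness of $\widehat{v_n}(0,\cdot)$ in $L^2(0,T)$, \eqref{compact86}), so your attribution of the subsequence to "weak limits used to treat the remainder terms via the compact embedding of Proposition \ref{prop1}" is inaccurate: the commutator remainders are handled for the full sequence, and the subsequence enters only through the zero mode. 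The gap is fixable with the estimates you already have, but as written your key identity does not follow from the commutator computation alone.
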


\begin{proof}
	Let $K\subset (0,T)$ be compact and   $\psi \in C_{c}^{\infty}((0,T))$ be such that
	$0\leq \psi(t)\leq 1$ and $\psi(t)=1$ in $K.$
	Then,
\begin{equation*}
	\|v_{n}\|_{L^{2}(K,L^{2}(\mathbb{T}))}
	\leq \int_{0}^{T}\psi(t)
	\|v_{n}\|^{2}_{L^{2}(\mathbb{T})}\;dt
	=\int_{0}^{T}\psi(t)\;\left(v_{n},
	v_{n}\right)_{L^{2}(\mathbb{T})}\;dt.
\end{equation*}
	
Since $\mathbb{T}$ is compact there exists a finite
set of points, say $x_{0}^{i}\in \mathbb{T},$  $i=1,\cdots,N,$ such that we can construct a partition of the unity on $\mathbb{T}$
involving functions of the form
$\chi_{i}(\cdotp -x_{0}^{i})$ with $\chi_{i}(\cdotp)\in C_{c}^{\infty}(\omega).$ 
Specifically, there exists $N\in \mathbb{N}$ such that
 \begin{equation}\label{compact12}
	 \left\{
	 \begin{array}{lcl}
	 0\leq\chi_{i}(x-x_{0}^{i})\leq 1, & \mbox{for all} &  x\in \mathbb{T}\;\;\text{and}
	 \;\;i=1,2,..,N\\
	\chi_{i}(\cdotp)\in C_{c}^{\infty}(\omega) &           \mbox{for} &   i=1,\cdots,N.      \\
	 \sum\limits_{i=1}^{N}\chi_{i}(\cdot-x_{0}^{i})=1
	  & \mbox{on} & \mathbb{T}.
	 \end{array}
	 \right.
\end{equation}
	
	 Therefore,
$${\small
\begin{split}
	 \|v_{n}\|_{L^{2}(K,L^{2}(\mathbb{T}))}
	 &\leq \int_{0}^{T}\left(\psi(t)
	 \left(\sum_{i=1}^{N}\chi_{i}(x-x_{0}^{i})\right) v_{n},
	 v_{n}\right)_{L^{2}(\mathbb{T})} dt
	 =\sum_{i=1}^{N}
	 \left(\psi(t) \chi_{i}(x-x_{0}^{i})
	  v_{n},
	 v_{n}\right)_{L^{2}(\mathbb{T}\times (0,T))}.
\end{split}}$$
	
Thus, it is sufficient  to show that for any $\chi(\cdotp)\in C_{c}^{\infty}(\omega)$ and any $x_{0}\in \mathbb{T}$
there exists a subsequence
$\left\{v_{n_{j}}\right\}_{j\in \mathbb{N}}$  such that
$$\left(\psi(t)\;\chi(x-x_{0})
\;v_{n_{j}},
v_{n_{j}}\right)_{L^{2}(\mathbb{T}\times (0,T))}
\longrightarrow 0,\;\;\text{as}\;j\longrightarrow
\infty.$$

For this, consider $\phi(x)=\chi(x)-\chi(x-x_{0}),$ where
  $\chi\in C^{\infty}_{c}(\omega)$ and $x_{0}\in \mathbb{T}.$
From Lemma~\ref{compact8} there exists
 $\varphi \in C^{\infty}(\mathbb{T})$ such that
 $\partial_{x}\varphi(x)=\chi(x)-\chi(x-x_{0}),$ for all
 $x\in \mathbb{T}.$ Consequently,
$$ {\small
 \begin{split}
 \left(\psi(t) \chi(x-x_{0})
  v_{n},v_{n}\right)_{L^{2}(\mathbb{T}\times (0,T))}
  &=
 \left(\psi(t) \chi(x)v_{n},
 v_{n}\right)_{L^{2}(\mathbb{T}\times (0,T))}
 - \left(\psi(t) \partial_{x}\varphi(x)
 v_{n},v_{n}\right)_{L^{2}(\mathbb{T}\times (0,T))}.
 \end{split}}$$

 From \eqref{compact6}, we have that
$$\left|\left(\psi(t)\;\chi(x)
 \;v_{n},
 v_{n}\right)_{L^{2}(\mathbb{T}\times (0,T))}\right|
\leq \|\psi\|_{C_{c}^{\infty}((0,T))}\;
 \|\chi\|_{C_{c}^{\infty}(\omega)}\;
 \|v_{n}\|^{2}_{L^{2}((0,T);L^{2}(\omega))}
 \longrightarrow 0,$$
as $n\longrightarrow \infty$. So, we only need to show that
there exists a subsequence
$\left\{v_{n_{j}}\right\}_{j\in \mathbb{N}}$  such that
 \begin{equation}\label{compact19}
 \left|\left(\psi(t)\;\partial_{x}\varphi(x)
 \;v_{n_{j}},
 v_{n_{j}}\right)_{L^{2}(\mathbb{T}\times (0,T))}\right|
 \longrightarrow 0,\;\;\text{as}\;j\longrightarrow
 \infty.
 \end{equation}

In what follows, we show \eqref{compact19}. Taking consideration of definition of $D^r$ in \eqref{difop} and passing to the frequency space, it is easy to verify that
 \begin{equation}\label{compact70}
 \begin{split}
 \left(\psi(t)\;\partial_{x}\varphi(x)
 \;v_{n},
 v_{n}\right)_{L^{2}(\mathbb{T}\times (0,T))}
 &= \left(\psi(t)\;\partial_{x}\varphi(x)
(-\partial_{x}^{2})D^{-2} \;v_{n},
 v_{n}\right)_{L^{2}(\mathbb{T}\times (0,T))}\\
 &\quad+
  \left(\psi(t)\;\partial_{x}\varphi(x)
  \;\widehat{v_{n}}(0,t),
 v_{n}\right)_{L^{2}(\mathbb{T}\times (0,T))}.
 \end{split}
 \end{equation}

   First, we prove
  \begin{equation}\label{compact80}
 \lim_{n\longrightarrow \infty}
 \left|\left(\psi(t)\;\partial_{x}\varphi(x)\;
 (-\partial_{x}^{2})D^{-2}v_{n}
 ,\;v_{n}
 \right)_{L^{2}(\mathbb{T}\times (0,T))}\right|
 =0.
 \end{equation}

 In fact, from \eqref{compact1} and \eqref{compact3}, we have
 $Lv_{n}=f_{n},$\; for $n=1,2,3,\cdots.$
  Set $B:=\varphi(x) D^{-2}$ and
 $A:=\psi(t) B.$
 For $\epsilon>0,$ let
 \begin{equation}\label{compact22}
 A_{\epsilon}:=\psi(t)\;B_{\epsilon},
 \end{equation}
be a regularization of $A,$ where
 \begin{equation}\label{compact23}
 B_{\epsilon}:=B\;e^{\epsilon\; \partial_{x}^{2}},\;\;
 \text{with}\;\;e^{\epsilon\; \partial_{x}^{2}}\;\;\text{
 	defined by}\;\;
e^{\epsilon\; \partial_{x}^{2}}v(\cdot)=
\left(e^{-\epsilon\;k^{2}}\widehat{v}(k)
\right)^{\vee}(\cdot).
 \end{equation}

 Define $\displaystyle{ \alpha_{n,\epsilon}:=
  \left([A_{\epsilon},L]v_{n},\;v_{n}
 \right)_{L^{2}(\mathbb{T}\times (0,T))}}.$
Using Lemma \ref{compact24} we obtain that
 \begin{equation}\label{compact31}
 \begin{split}
 	\alpha_{n,\epsilon}&:=
 \left(f_{n},\;A_{\epsilon}^{\ast}v_{n}
 \right)_{L^{2}(\mathbb{T}\times (0,T))}
 +\left(A_{\epsilon}v_{n},\;f_{n}
 \right)_{L^{2}(\mathbb{T}\times (0,T))}.
\end{split}
\end{equation}

We infer from \eqref{compact22}, \eqref{compact23}, \eqref{mult1} and \eqref{mult2} that for any $r\in \mathbb{R},$ and
$0\leq b\leq1,$  there exists a positive constant $C$
(independent of T, if $T\leq 1$) such that
 \begin{equation}\label{compact321}
 	\begin{split}
 		\|A_{\epsilon}^{\ast}v\|_{X_{r-2+2|b|,b}^{T}}
 		&\leq C\;\|v\|_{X_{r,b}^{T}}.
 	\end{split}
 \end{equation}

 Using  $0<b\leq 1,$ we get the immersion
$X_{0,b}^{T}\hookrightarrow X_{4-4b,b}^{T}.$ From \eqref{compact321}, \eqref{compact4} and \eqref{compact5}, we obtain
\begin{equation}\label{compact32}
\begin{split}
 \left|\left(f_{n},\;A_{\epsilon}^{\ast}v_{n}
\right)_{L^{2}(\mathbb{T}\times (0,T))}\right|
&\leq
\|f_{n}\|_{X_{-2+2b,-b}^{T}}\;
\|A_{\epsilon}^{\ast}v_{n}\|_{X_{2-2b,b}^{T}}\\
&\leq C\;
\|f_{n}\|_{X_{-2+2b,-b}^{T}}\;
\|v_{n}\|_{X_{4-4b,b}^{T}}\\
&\leq C\;
 \|f_{n}\|_{X_{-2+2b,-b}^{T}}\;
 \|v_{n}\|_{X_{0,b}^{T}}\\
&\leq C\;
\|f_{n}\|_{X_{-2+2b,-b}^{T}}\longrightarrow 0,\;\;
\text{as}\;n\longrightarrow \infty.
\end{split}
\end{equation}

 Note that, since the positive constant $C$ in \eqref{compact32} does not depend on $\epsilon,$
 one can get
\begin{equation}\label{compact35}
 \lim_{n\longrightarrow \infty}\sup_{0<\epsilon\leq 1}
 \left|\left(f_{n},\;A_{\epsilon}^{\ast}v_{n}
 \right)_{L^{2}(\mathbb{T}\times (0,T))}\right|
=0.
\end{equation}

 Using a similar procedure, we obtain
\begin{equation}\label{compact37}
 \lim_{n\longrightarrow \infty}\sup_{0<\epsilon\leq 1}
 \left|\left(\;A_{\epsilon}v_{n},\;f_{n}
 \right)_{L^{2}(\mathbb{T}\times (0,T))}\right|
 =0.
\end{equation}

 Therefore, \eqref{compact31},  \eqref{compact35} and \eqref{compact37} imply that
 \begin{equation}\label{compact38}
 \lim_{n\longrightarrow \infty}\sup_{0<\epsilon\leq 1}
 \left|\alpha_{n,\epsilon}\right|
 =0.
 \end{equation}

 On the other hand, using that the operator $B_{\epsilon}$ commutes with derivatives in time, we obtain
$$[A_{\epsilon},L]v_{n}=
 -\psi'(t) B_{\epsilon}v_{n}+[A_{\epsilon},
 -\alpha\mathcal{ H}\partial_{x}^{2}]v_{n}+[A_{\epsilon},
 -\partial_{x}^{3}+2\mu\partial_{x}]v_{n}.$$

 Therefore,
 \begin{equation}\label{compact41}
 \begin{split}
 \alpha_{n,\epsilon}
 &= - \left(\psi'(t) B_{\epsilon}v_{n}, v_{n}
 \right)_{L^{2}(\mathbb{T}\times (0,T))}
 + \left([A_{\epsilon},
 -\alpha\mathcal{ H}\partial_{x}^{2}]v_{n}
 , v_{n}
 \right)_{L^{2}(\mathbb{T}\times (0,T))}\\
  &\quad+ \left([A_{\epsilon},
 -\partial_{x}^{3}
 +2\mu\partial_{x}]v_{n}
 , v_{n}
 \right)_{L^{2}(\mathbb{T}\times (0,T))}.
 \end{split}
 \end{equation}

  We infer from \eqref{compact23},  \eqref{mult1} and \eqref{mult2} that for
  any $s\in \mathbb{R},$ and
 $-1\leq b\leq1,$  there exists
 a positive constant $C$
 (independent of T, if $T\leq 1$) which does not depend on $\epsilon,$
 such that
 \begin{equation}\label{compact42}
 \begin{split}
 \|\psi'(t)B_{\epsilon}v\|_{X_{s+2-2|b|,b}^{T}}
 &\leq C\;\|v\|_{X_{s,b}^{T}}.
 \end{split}
 \end{equation}

 From \eqref{compact42}, \eqref{compact4}, \eqref{compact5}, and
 the fact that $0<b\leq 1,$ we obtain
\begin{align*}
 \begin{split}
 \left| \left(\psi'(t) B_{\epsilon}v_{n},\;v_{n}
 \right)_{L^{2}(\mathbb{T}\times (0,T))}\right|
 &\leq
 \|\psi'(t) B_{\epsilon}v_{n}\|_{X_{0,-b}^{T}}\;
 \|v_{n}\|_{X_{0,b}^{T}}\\
 &\leq C
 \|v_{n}\|_{X_{-2+2b,-b}^{T}}\longrightarrow 0,
 \;\;\text{as}\;n\longrightarrow \infty.
 \end{split}
\end{align*}

 Therefore,
 \begin{equation}\label{compact44}
 \lim_{n\longrightarrow \infty}\sup_{0<\epsilon\leq 1}
 \left|\left(\psi'(t)\;B_{\epsilon}v_{n},\;v_{n}
 \right)_{L^{2}(\mathbb{T}\times (0,T))}\right|
 =0.
 \end{equation}

 Also, observe that
 \begin{equation}\label{compact45}
 \begin{split}
[A_{\epsilon},\;
-\alpha\;\mathcal{ H}\partial_{x}^{2}]v_{n}
 &=-\alpha \;\psi(t)\;\varphi D^{-2}
 e^{\epsilon\partial_{x}^{2}}
 \mathcal{ H}\partial_{x}^{2}v_{n}
 +\alpha\;\psi(t)\;\mathcal{ H}\partial_{x}^{2}
 \left(\varphi D^{-2}
 e^{\epsilon\partial_{x}^{2}} v_{n}\right).
 \end{split}
 \end{equation}

 From the Leibniz's rule for derivatives, we obtain
 \begin{equation}\label{compact46}
 \begin{split}
\partial_{x}^{2}
 \left(\varphi D^{-2}
 e^{\epsilon\partial_{x}^{2}}v_{n}\right)
 &=\varphi\;\partial_{x}^{2}D^{-2}
 e^{\epsilon\partial_{x}^{2}}v_{n}
 +2\partial_{x}\varphi\;\partial_{x}D^{-2}
 e^{\epsilon\partial_{x}^{2}}v_{n}
 +\partial_{x}^{2}\varphi\;D^{-2}
 e^{\epsilon\partial_{x}^{2}}v_{n}.
 \end{split}
 \end{equation}

 Substituting \eqref{compact46} into \eqref{compact45} and using Lemma \ref{compact26}, we obtain
  \begin{equation}\label{compact47}
 \begin{split}
 [A_{\epsilon},
 -\alpha \mathcal{ H}\partial_{x}^{2}]v_{n}
  &=\alpha  \psi(t) \left\{-\varphi
 \mathcal{ H}D^{-2}\partial_{x}^{2}
 e^{\epsilon\partial_{x}^{2}}
 v_{n}
 +\mathcal{ H}
 \left(\varphi\;D^{-2}\partial_{x}^{2}
 e^{\epsilon\partial_{x}^{2}}
 v_{n}\right)\right\}\\
 &\quad
 +2\alpha \psi(t) \mathcal{ H}
 \left(\partial_{x}\varphi\;\partial_{x}D^{-2}
 e^{\epsilon\partial_{x}^{2}}v_{n}\right)
 +\alpha \psi(t) \mathcal{ H}
 \left(\partial_{x}^{2}\varphi\;D^{-2}
 e^{\epsilon\partial_{x}^{2}}v_{n}\right).
 \end{split}
 \end{equation}

  Lemma \ref{compact26} implies that,
 \begin{equation}\label{compact48}
 \begin{split}
 -\varphi \mathcal{ H}D^{-2}\partial_{x}^{2}
 e^{\epsilon\partial_{x}^{2}} v_{n}+\mathcal{ H}
 \left(\varphi D^{-2}\partial_{x}^{2}
 e^{\epsilon\partial_{x}^{2}} v_{n}\right)
 &= \varphi
 D^{-1}\partial_{x}D^{-2}\partial_{x}^{2} e^{\epsilon\partial_{x}^{2}} v_{n} -D^{-1}\partial_{x}
 \left(\varphi D^{-2}\partial_{x}^{2}
 e^{\epsilon\partial_{x}^{2}} v_{n}\right)\\
 &= \varphi
 D^{-1}\partial_{x}D^{-2}\partial_{x}^{2}
 e^{\epsilon\partial_{x}^{2}} v_{n} -D^{-1}\left(\varphi\;
 \partial_{x} D^{-2}\partial_{x}^{2}
 e^{\epsilon\partial_{x}^{2}} v_{n}\right)\\
 &\quad-D^{-1}\left(\partial_{x}\varphi
 D^{-2}\partial_{x}^{2} e^{\epsilon\partial_{x}^{2}}
 v_{n}\right)\\
  &= -[D^{-1},\varphi] \partial_{x}D^{-2}\partial_{x}^{2}
 e^{\epsilon\partial_{x}^{2}} v_{n}
 -D^{-1}\left(\partial_{x}\varphi D^{-2}\partial_{x}^{2}
 e^{\epsilon\partial_{x}^{2}} v_{n}\right).
 \end{split}
 \end{equation}

 Substituting \eqref{compact48} into \eqref{compact47}, we have
 \begin{equation}\label{compact49}
 \begin{split}
 [A_{\epsilon},-\alpha \mathcal{ H}\partial_{x}^{2}]v_{n}
 &= \alpha  \psi(t) \left\{-[D^{-1}, \varphi]
 \partial_{x}D^{-2}\partial_{x}^{2}
 e^{\epsilon\partial_{x}^{2}} v_{n}
 -D^{-1}\left(\partial_{x}\varphi D^{-2}\partial_{x}^{2}
 e^{\epsilon\partial_{x}^{2}}v_{n}\right) \right\}\\
 &\quad
 +2\alpha \psi(t) \mathcal{ H}
 \left(\partial_{x}\varphi \partial_{x}D^{-2}
 e^{\epsilon\partial_{x}^{2}}v_{n}\right)
 +\alpha \psi(t) \mathcal{ H}
 \left(\partial_{x}^{2}\varphi D^{-2}
 e^{\epsilon\partial_{x}^{2}}v_{n}\right).
 \end{split}
 \end{equation}

 Therefore,
  \begin{equation}\label{compact50}
 \begin{split}
  \left([A_{\epsilon},-\alpha\mathcal{ H}\partial_{x}^{2}]v_{n}
 ,v_{n} \right)_{L^{2}(\mathbb{T}\times (0,T))}
 &=  -\left(\alpha  \psi(t) [D^{-1},\varphi]
 \partial_{x}D^{-2}\partial_{x}^{2}
 e^{\epsilon\partial_{x}^{2}}v_{n},
 v_{n}\right)_{L^{2}(\mathbb{T}\times (0,T))}\\
 &\quad- \left(\alpha \psi(t) D^{-1}\left(\partial_{x}\varphi\;
   D^{-2}\partial_{x}^{2} e^{\epsilon\partial_{x}^{2}}
   v_{n}\right),v_{n} \right)_{L^{2}(\mathbb{T}\times (0,T))}\\
 &\quad+2\alpha \left(\psi(t) \mathcal{ H}
  \left(\partial_{x}\varphi\; \partial_{x}D^{-2}
 e^{\epsilon\partial_{x}^{2}}v_{n}\right),v_{n}
 \right)_{L^{2}(\mathbb{T}\times (0,T))}\\
&\quad + \alpha \left( \psi(t) \mathcal{ H}
   \left(\partial_{x}^{2}\varphi \;D^{-2}
   e^{\epsilon\partial_{x}^{2}}v_{n}\right), v_{n}
 \right)_{L^{2}(\mathbb{T}\times (0,T))}.
 \end{split}
 \end{equation}

  Applying Cauchy-Schwartz, \eqref{mult1}, \eqref{compact4}, Lemmas \ref{prop3}, \ref{prop2} and using that $0\leq b'\leq b\leq 1$ be given (with $b>0$),  we obtain
  that there exists a positive constant $C=C_{T}$ ($C,$ if $T\leq1$) which does not
  depend on $\epsilon$ such that
 \begin{equation}\label{compact51}
 {\small
 \begin{split}
 \left|\left(\alpha  \psi(t) [D^{-1},\;\varphi]
 \partial_{x}D^{-2}\partial_{x}^{2} e^{\epsilon\partial_{x}^{2}}
 v_{n},\; v_{n} \right)_{L^{2}(\mathbb{T}\times (0,T))}\right|
 &\leq
 C \| [D^{-1},\;\varphi] \partial_{x}D^{-2}\partial_{x}^{2}
 e^{\epsilon\partial_{x}^{2}} v_{n}
\|_{L^{2}(\mathbb{T}\times (0,T))} \| v_{n}\|_{X_{0,b}^{T}}\\
 &\leq
C \left(\sum_{k=-\infty}^{\infty}\int_{0}^{T}
\langle k\rangle^{2(-2)}
\left| \left(\partial_{x}D^{-2}\partial_{x}^{2}
e^{\epsilon\partial_{x}^{2}} v_{n}\right)^{\wedge}(k,\tau)
\right|^{2} \;d\tau\right)^{\frac{1}{2}}\\
&\leq C
\left\|\partial_{x}D^{-2}\partial_{x}^{2}
e^{\epsilon\partial_{x}^{2}}v_{n}\right\|^{\frac{1}{2}}
_{X_{-2,-b'}^{T}} \left\|\partial_{x}D^{-2}\partial_{x}^{2}
e^{\epsilon\partial_{x}^{2}}v_{n}\right\|^{\frac{1}{2}}
_{X_{-2,b'}^{T}}\\
&\leq C
\left\|v_{n}\right\|^{\frac{1}{2}}_{X_{-1,-b'}^{T}}
\left\|v_{n}\right\|^{\frac{1}{2}}_{X_{-1,b'}^{T}}\\
&\leq
C\; \left\|v_{n}\right\|^{\frac{1}{2}}_{X_{-1+2b',-b'}^{T}}
\left\|v_{n}\right\|^{\frac{1}{2}}_{X_{0,b}^{T}}\\
&\leq
C\; \left\|v_{n}\right\|^{\frac{1}{2}}_{X_{-1+2b',-b'}^{T}}
\longrightarrow 0\;\;\text{as}\;n\longrightarrow \infty,	
\end{split}}
\end{equation}
where in the last two inequalities we use  \eqref{compact4}, \eqref{compact5} and  the immersions
$X_{0,b}^{T}\hookrightarrow X_{-1,b'}^{T},$ and
$X_{-1+2b',-b'}^{T}\hookrightarrow X_{-1,-b'}^{T}.$
Note that the loss of regularity in \eqref{compact51}
is too large if one uses the estimates with the same
$b$. Therefore, we have to use the index $b'$ instead.
Consequently,
 \begin{equation}\label{compact56}
\lim_{n\longrightarrow \infty}\sup_{0<\epsilon\leq 1}
\left|\left(
\alpha \;\psi(t)\;
[D^{-1},\;\varphi]
\partial_{x}D^{-2}\partial_{x}^{2}
e^{\epsilon\partial_{x}^{2}}
v_{n},\;v_{n}
\right)_{L^{2}(\mathbb{T}\times (0,T))}\right|
=0.
\end{equation}

From \eqref{mult1}, \eqref{mult2}, \eqref{compact4}, \eqref{compact5}, and Lemma \ref{prop2}, we have that there exists a positive constant
$C=C_{T}$ ($C,$ if $T\leq 1$) which does not depend on $\epsilon$ such that
$${\small
\begin{split}
\left|\left(\alpha \psi(t) D^{-1}\left(\partial_{x}\varphi\;
D^{-2}\partial_{x}^{2} e^{\epsilon\partial_{x}^{2}}
v_{n}\right),\;v_{n}\right)_{L^{2}(\mathbb{T}\times (0,T))}\right|
&\leq
C \left\|D^{-1}\left(\partial_{x}\varphi\;D^{-2}\partial_{x}^{2}
e^{\epsilon\partial_{x}^{2}} v_{n}\right) \right\|_{X_{1-2b',b'}^{T}} \left\|v_{n}\right\|
_{X_{-1+2b',-b'}^{T}}\\
&\leq
C \left\|D^{-2}\partial_{x}^{2}e^{\epsilon\partial_{x}^{2}}
v_{n} \right\|_{X_{-2b'+2|b'|,b'}^{T}}
\left\|v_{n}\right\|_{X_{-1+2b',-b'}^{T}}\\
&\leq
C \left\|v_{n}\right\|_{X_{0,b'}^{T}}
\left\|v_{n}\right\|_{X_{-1+2b',-b'}^{T}}\\
&\leq
C \left\|v_{n}\right\|_{X_{-1+2b',-b'}^{T}}
\longrightarrow 0\;\;\text{as}\;n\longrightarrow \infty.
\end{split}}$$

Therefore,
\begin{equation}\label{compact57}
\lim_{n\longrightarrow \infty}\sup_{0<\epsilon\leq 1}
\left|\left(
\alpha \;\psi(t)\;
D^{-1}\left(\partial_{x}\varphi\;
D^{-2}\partial_{x}^{2}
e^{\epsilon\partial_{x}^{2}}
v_{n}\right),\;v_{n}
\right)_{L^{2}(\mathbb{T}\times (0,T))}\right|
=0.
\end{equation}

Similarly, using that the Hilbert transform  $\mathcal{ H}$ is an
isometry in $L^{2}_{p}(\mathbb{T})$, we have
$${\small
\begin{split}
\left|\left(2\alpha \;\psi(t)\;\mathcal{ H}
\left(\partial_{x}\varphi\;\partial_{x}D^{-2}
e^{\epsilon\partial_{x}^{2}}v_{n}\right),\;v_{n}
\right)_{L^{2}(\mathbb{T}\times (0,T))}\right|
&\leq 2\alpha \left\|\psi(t) \mathcal{ H}
\left(\partial_{x}\varphi\;\partial_{x}D^{-2}
e^{\epsilon\partial_{x}^{2}}v_{n}\right) \right\|_{X_{0,-b'}^{T}}
 \left\|v_{n}\right\|_{X_{0,b'}^{T}}\\
&\leq
C \left\|v_{n}\right\|_{X_{-1+2b',-b'}^{T}}
\longrightarrow 0\;\;\text{as}\;n\longrightarrow \infty,
\end{split}}$$
where the positive constant
$C=C_{T}$ ($C,$ if $T\leq 1$) does not depend on $\epsilon.$
Hence,
\begin{equation}\label{compact59}
\lim_{n\longrightarrow \infty}\sup_{0<\epsilon\leq 1}
\left|\left(
2\alpha \;\psi(t)\;\mathcal{ H}
\left(\partial_{x}\varphi\;\partial_{x}D^{-2}
e^{\epsilon\partial_{x}^{2}}v_{n}\right),\;v_{n}
\right)_{L^{2}(\mathbb{T}\times (0,T))}\right|
=0.
\end{equation}

With similar arguments, we get
\begin{equation}\label{compact61}
\lim_{n\longrightarrow \infty}\sup_{0<\epsilon\leq 1}
\left|\left(\alpha
\psi(t)\;\mathcal{ H}
\left(\partial_{x}^{2}\varphi\;D^{-2}
e^{\epsilon\partial_{x}^{2}}v_{n}\right)
,\;v_{n}
\right)_{L^{2}(\mathbb{T}\times (0,T))}\right|
=0.
\end{equation}

From \eqref{compact50} and \eqref{compact56}-\eqref{compact61}, we obtain that
 \begin{equation}\label{compact62}
 \lim_{n\longrightarrow \infty}\sup_{0<\epsilon\leq 1}
 \left|\left([A_{\epsilon},\;
 -\alpha\;\mathcal{ H}\partial_{x}^{2}]v_{n}
 ,\;v_{n}
 \right)_{L^{2}(\mathbb{T}\times (0,T))}\right|
 =0.
 \end{equation}

 Therefore,  \eqref{compact38}, \eqref{compact41},
 \eqref{compact44} and \eqref{compact62}, imply that
$$\lim_{n\longrightarrow \infty}\sup_{0<\epsilon\leq 1}
 \left|\left([A_{\epsilon},\;
 -\partial_{x}^{3}
 +2\mu\partial_{x}]v_{n}
 ,\;v_{n}
 \right)_{L^{2}(\mathbb{T}\times (0,T))}\right|
 =0.$$

 In particular,
 \begin{equation}\label{compact64}
 \lim_{n\longrightarrow \infty}
 \left([A,\;
 -\partial_{x}^{3}
 +2\mu\partial_{x}]v_{n}
 ,\;v_{n}
 \right)_{L^{2}(\mathbb{T}\times (0,T))}
 =0.
 \end{equation}

Using the Leibniz's rule for derivatives, we note that
\begin{equation}\label{compact66}
\begin{split}
\left([A,\;-\partial_{x}^{3}+2\mu\partial_{x}]v_{n}
,\;v_{n} \right)_{L^{2}(\mathbb{T}\times (0,T))}
&= \left(3\psi(t) \partial_{x}\varphi\;
 \partial_{x}^{2}D^{-2}v_{n}
 ,\;v_{n} \right)_{L^{2}(\mathbb{T}\times (0,T))}\\
 &\quad+ \left(3\psi(t) \partial_{x}^{2}\varphi\;
 \partial_{x}D^{-2}v_{n} ,\;v_{n}
 \right)_{L^{2}(\mathbb{T}\times (0,T))}\\
 &\quad-
  \left(\psi(t)\left(-\partial_{x}^{3}\varphi
  +2\mu \partial_{x}\varphi\right) D^{-2}v_{n}
 ,\;v_{n}\right)_{L^{2}(\mathbb{T}\times (0,T))}.
\end{split}
\end{equation}

Relations \eqref{compact64}, \eqref{compact66} and estimates similar to those in the proof of Proposition 3.5 in
\cite{14} shows \eqref{compact80}. \\

  Second, we prove that there exists a subsequence
  $\left\{v_{n_{j}}\right\}_{j\in \mathbb{N}}$ of
  $\{v_{n}\}_{n\in \mathbb{N}}$ such that
 \begin{equation}\label{compact81}
 \lim_{j\longrightarrow \infty}
 \left|\left(\psi(t)\;\partial_{x}\varphi
 \;\widehat{v_{n_{j}}}(0,t),
 v_{n_{j}}\right)_{L^{2}(\mathbb{T}\times (0,T))}\right|
 =0.
 \end{equation}

 Indeed, observe that
$$\|\widehat{v_{n}}(0,t)\|_{L^{2}(0,T)}
 \leq C \left(\int_{0}^{T}\int_{\mathbb{T}}
 |v_{n}(x,t)|^{2}\;dx\;dt
 \right)^{\frac{1}{2}}
 \leq C
 \|v_{n}\|_{X_{0,0}^{T}}
 \leq C
 \|v_{n}\|_{X_{0,b}^{T}}
 \leq C.$$

 Since $L^{2}(0,T)$ is a reflexive Banach space,
 then from weak compactness, there exists a $w\in L^{2}(0,T)$ and
 a subsequence
 $\left\{v_{n_{j}}\right\}_{j\in \mathbb{N}}$ of
 $\{v_{n}\}_{n\in \mathbb{N}}$ such that
 \begin{equation}\label{compact86}
\widehat{v_{n_{j}}}(0,t)\rightharpoonup w.
 \end{equation}

On the other hand, considering
the function
$$g_{n_{j}}(t):=\int_{\mathbb{T}}
\psi(t)\;\partial_{x}\varphi(x)v_{n_{j}}(x,t)\;dx,
\;\;\text{for}\;t\in (0,T),$$
 we note that
$${
 \begin{split}
 \|g_{n_{j}}(t)\|_{(L^{2}(0,T))'}&:=
 \underset{\|g\|_{L^{2}(0,T)}\leq 1}
 {\sup_{g \in L^{2}(0,T)}}\left|
 \int_{0}^{T}g(t)\;\int_{\mathbb{T}}
 \psi(t)\;\partial_{x}\varphi(x)v_{n_{j}}(x,t)\;dx
 \;dt\right|\\
 &\leq C\|g\|_{L^{2}(0,T)}
 \|\psi(t)\;\partial_{x}\varphi(x)\|^{\frac{1}{2}}
 _{X_{2-2b,b}^{T}}
 \;\|v_{n_{j}}(x,t)\|^{\frac{1}{2}}_{X_{-2+2b,-b}^{T}}
 \longrightarrow 0,
 \end{split}}$$
 as $j\longrightarrow \infty.$
 Thus, $g_{n_{j}}\longrightarrow 0$ (strongly) in
 $\left(L^{2}(0,T)\right)'.$
 Therefore, \eqref{compact86} implies
\begin{equation}\label{compact83}
\begin{split}
\left(\psi(t) \partial_{x}\varphi(x)
\;\widehat{v_{n_{j}}}(0,t),
 v_{n_{j}}\right)_{L^{2}(\mathbb{T}\times (0,T))}
&=
\int_{0}^{T} \widehat{v_{n_{j}}}(0,t)\;g_{n_{j}}(t)\;dt \\
&=\langle\widehat{v_{n_{j}}}(0,t),\;g_{n_{j}}(t) \rangle
\longrightarrow \langle w,\;0 \rangle=0,
\end{split}
\end{equation}
as $j\longrightarrow \infty,$ which proves \eqref{compact81}.
From \eqref{compact70}-\eqref{compact80}, and \eqref{compact81},
we obtain \eqref{compact19}. This completes the proof of the proposition.
\end{proof}

\begin{rem}
	If we assume additionally  that $[v_{n}]=0$
	for all $n\in \mathbb{N}$,  then the result of  in Proposition is \ref{compact2} valid for the original sequence $\left\{v_{n}\right\}_{n\in \mathbb{N}}.$
	\end{rem}

Now, we study  the propagation of regularity for the operator $L$ defined in
\eqref{compact1}.

\begin{prop}[Propagation of Regularity]\label{rglarty}
	Let $T>0,$ $0\leq b\leq 1,$ $r\geq 0$
	and $f\in X_{r,-b}^{T}$ be given.
	Let $v\in X_{r,b}^{T}$ solves
	\begin{equation}\label{rglarty1}
	Lv:=\partial_{t}v-
	\alpha\;\mathcal{ H}\partial_{x}^{2}v
	-\partial_{x}^{3}v
	+2\mu\;\partial_{x}v
	=f.
	\end{equation}

	If there exists a nonempty open set
	$\omega$ of $\mathbb{T}$ such that
\begin{equation}\label{rglarty2}
v \in L^{2}_{loc}((0,T);H^{r+\rho}(\omega)),
\end{equation}	
for some $\rho$ with
\begin{equation}\label{rglarty3}
0<\rho\leq \min\left\{1-b,\;\frac{1}{2}\right\},
\end{equation}
then $v \in L^{2}_{loc}((0,T);H^{r+\rho}(\mathbb{T})).$
\end{prop}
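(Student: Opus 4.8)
The plan is to run a positive-commutator (microlocal) argument adapted from the KdV propagation of regularity in Laurent, Linares and Rosier \cite{14}, exploiting the skew-adjointness of $L$ established in Lemma \ref{compact24}. First I would reduce to $r=0$: since $D^{r}$ is a Fourier multiplier that commutes with $\partial_x$, $\partial_x^3$ and, by Lemma \ref{compact26}, with $\mathcal{H}\partial_x^2$, it commutes with $L$; being a classical elliptic operator of order $r$ it is pseudolocal, so with $w:=D^{r}v$ one gets $Lw=D^{r}f\in X_{0,-b}^{T}$, $w\in X_{0,b}^{T}$ and $w\in L^{2}_{loc}((0,T);H^{\rho}(\omega'))$ for every $\omega'\subset\subset\omega$. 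It then suffices to prove $w\in L^{2}_{loc}((0,T);H^{\rho}(\mathbb{T}))$.

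The decisive choice is the spatial order of the commutant. For $\psi\in C_c^\infty((0,T))$ with $\psi\equiv 1$ on a prescribed compact $K$, and $\varphi\in C^\infty(\mathbb{T})$ to be selected, set $A:=\psi(t)\,\varphi(x)\,D^{2\rho-2}$, regularized as in \eqref{compact22}--\eqref{compact23} by $A_\epsilon:=\psi(t)\varphi(x)D^{2\rho-2}e^{\epsilon\partial_x^2}$ to make every pairing legitimate with bounds uniform in $\epsilon\in(0,1]$. Since $0<\rho\le\tfrac12$ the order $2\rho-2\le-1$ is negative, so $A_\epsilon$ acts boundedly on $X_{0,b}^{T}$. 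Arguing exactly as in the passage leading to \eqref{compact31}, skew-adjointness of $L$ gives
\[
([A_\epsilon,L]w,w)_{L^{2}(\mathbb{T}\times(0,T))}=(f,A_\epsilon^{*}w)_{L^{2}(\mathbb{T}\times(0,T))}+(A_\epsilon w,f)_{L^{2}(\mathbb{T}\times(0,T))}.
\]
By Theorem \ref{multi23} and Lemma \ref{prop2}, $A_\epsilon f\in X_{2-2\rho-2b,-b}^{T}$ uniformly in $\epsilon$, and pairing against $w\in X_{0,b}^{T}$ is bounded precisely because the hypothesis $\rho\le 1-b$ forces $2-2\rho-2b\ge 0$; likewise $A_\epsilon w\in X_{2-2\rho-2b,b}^{T}\hookrightarrow X_{0,b}^{T}$ pairs with $f\in X_{0,-b}^{T}$. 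Hence the right-hand side is $\le C(\|f\|_{X_{0,-b}^{T}}^2+\|w\|_{X_{0,b}^{T}}^2)$, uniformly in $\epsilon$.

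Expanding the left-hand commutator by Leibniz as in the compactness proof, the time part $-\psi'(t)\varphi D^{2\rho-2}e^{\epsilon\partial_x^2}w$ is harmless, while the principal part of $[A_\epsilon,-\partial_x^3]$ equals $3\psi\,\partial_x\varphi\,\partial_x^2D^{2\rho-2}e^{\epsilon\partial_x^2}w=-3\psi\,\partial_x\varphi\,D^{2\rho}e^{\epsilon\partial_x^2}w$; pairing with $w$ and distributing $D^{\rho}$ onto each factor produces the model term $3\iint\psi\,\partial_x\varphi\,|D^{\rho}w|^2$ plus lower-order remainders. The Hilbert contribution $[A_\epsilon,-\alpha\mathcal{H}\partial_x^2]$ is handled exactly as in \eqref{compact45}--\eqref{compact62}: writing $\mathcal{H}=-D^{-1}\partial_x$ and extracting the smoothing commutator $[D^{-1},\varphi]$ lowers its order by one, so together with the subprincipal pieces of $[A_\epsilon,-\partial_x^3+2\mu\partial_x]$ all these terms are bounded by $C\|w\|_{X_{0,b}^{T}}^2$ uniformly in $\epsilon$ (this is where $\rho\le\tfrac12$ keeps the losses within budget). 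Letting $\epsilon\to0$ yields
\[
3\iint \psi(t)\,\partial_x\varphi(x)\,|D^{\rho}w|^2\,dx\,dt\le C\big(\|f\|_{X_{0,-b}^{T}}^2+\|w\|_{X_{0,b}^{T}}^2\big).
\]
For the propagation step, given an open set $\mathcal{O}$ on which $w\in L^{2}_{loc}(H^{\rho})$ is already known, I choose $\varphi$ with $\partial_x\varphi\ge 0$ off a small subinterval contained in $\mathcal{O}$ (possible since $\int_{\mathbb{T}}\partial_x\varphi=0$ forces a sign change, which is hidden inside $\mathcal{O}$). Splitting $\partial_x\varphi=(\partial_x\varphi)_+-(\partial_x\varphi)_-$ with $\mathrm{supp}(\partial_x\varphi)_-\subset\mathcal{O}$ and moving the negative part to the right (controlled by the known regularity on $\mathcal{O}$) gives $\iint\psi(\partial_x\varphi)_+|D^{\rho}w|^2\le C$, so $w\in L^{2}_{loc}(H^{\rho})$ on the strictly larger open set $\{(\partial_x\varphi)_+>0\}\cup\mathcal{O}$. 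Starting from $\mathcal{O}=\omega$ and iterating finitely many times covers the compact torus, giving $v\in L^{2}_{loc}((0,T);H^{r+\rho}(\mathbb{T}))$.

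The main obstacle is the uniform-in-$\epsilon$ estimation of the Hilbert-transform commutator and the subprincipal parts of the dispersive commutator: one must check that the regularity losses coming from multiplication by $\varphi$ (Theorem \ref{multi23}, costing $2|b|$ derivatives) and from the operators $D^{2\rho-2}$ and $[D^{-1},\varphi]$ never exceed the $\rho$-gain, which is exactly where both constraints $\rho\le 1-b$ and $\rho\le\tfrac12$ are used. The remaining manipulations are the same bookkeeping already carried out in the proof of Proposition \ref{compact2}.
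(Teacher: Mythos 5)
Your proposal is correct and runs on the same engine as the paper's proof: a positive-commutator argument with $A_\epsilon=\psi(t)\varphi(x)D^{2\rho-2}e^{\epsilon\partial_x^2}$, the skew-adjointness identity of Lemma \ref{compact24}, the multiplication loss $2|b|$ from Theorem \ref{multi23} (which is exactly where $\rho\le 1-b$ enters the pairings with $f$), and the smoothing commutator $[D^{-1},\varphi]$ via $\mathcal{H}=-D^{-1}\partial_x$ for the Hilbert term (where $\rho\le\tfrac12$ enters), matching the paper's estimates \eqref{rglarty17}--\eqref{rglarty30}. You differ in three structural choices. First, you reduce to $r=0$ by conjugating with $D^r$, whereas the paper keeps $r$ and works with $D^{2s-2}$, $s=r+\rho$; your reduction is legitimate but the pseudolocality claim deserves a word of care, since $|k|^r$ is not a smooth symbol at the origin — on $\mathbb{T}$ one splits off the $k=0$ mode (as in \eqref{difop}) so the rest is a genuine elliptic order-$r$ operator, and one must shrink $\omega$ to $\omega'\Subset\omega$, as you note. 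Second, you regularize the operator ($e^{\epsilon\partial_x^2}$ on $A$), while the paper regularizes the solution ($v_n=e^{\frac1n\partial_x^2}v$, which commutes with $L$); these are interchangeable. Third, and most substantively, your covering step is a monotone-multiplier deformation: choose $\partial_x\varphi\ge0$ off a small interval in the known-regularity region, split $(\partial_x\varphi)_\pm$, absorb the negative part, and exhaust $\mathbb{T}$. The paper instead uses the translation trick of Lemma \ref{compact8}: $\partial_x\varphi=\chi^2(x)-\chi^2(x-x_0)$ with $\chi^2\in C_c^\infty(\omega)$, together with a partition of unity by translates $\chi_i^2(\cdot-x_0^i)$, which transports the estimate from $\omega$ to an arbitrary translate in a single sweep with no iteration. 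Both work; the paper's version buys a cleaner one-pass covering, while yours is the more flexible deformation argument familiar from wave-equation propagation. One point you gloss that applies equally to absorbing $(\partial_x\varphi)_-$: since $D^{\rho}$ is nonlocal, the bound $\iint\psi(\partial_x\varphi)_-|D^{\rho}w|^2\le C$ does not follow directly from $w\in L^2_{loc}H^{\rho}(\mathcal{O})$; one needs the standard cutoff commutation $\chi D^{\rho}w=D^{\rho}(\tilde\chi w)+\text{smoothing}$ (Lemma \ref{prop3} again), which is also what the paper's citation of Proposition 3.6 in \cite{14} hides in bounding the term \eqref{rglarty50} supported in $\omega$.
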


\begin{proof}
	Let $s=r+\rho.$
	Let $\Omega$ be a compact subset of the interval $(0,T),$
	and $\psi(t)\in C^{\infty}_{c}(0,T),$ such that
	$0\leq \psi(t)\leq 1$ and
	$\psi(t)=1$ in $\Omega.$
	Observe that
$${
\begin{split}
\left\|v\right\|^{2}_{L^{2}(\Omega,H^{s}(\mathbb{T}))}
&\leq\int_{0}^{T} \psi(t) \|v\|^{2}_{H^{s}(\mathbb{T})}\;dt\\
&\leq c_{s}\Big(\|v\|_{L^{2}(\mathbb{T}\times (0,T))}^{2}
+\int_{0}^{T}\underset{k \neq 0}{\sum_{k=-\infty}^{+\infty}}
|k|^{2s} \psi(t) |\widehat{v}(k,t)|^{2}\;dt\Big)\\
&= c_{s}\left(\|v\|_{L^{2}(\mathbb{T}\times (0,T))}^{2}
+\left(\psi(t) D^{2s-2}\partial_{x}^{2}v,\;v\right)
_{L^{2}(\mathbb{T}\times(0,T))}\right).
\end{split}}$$	
where the operator $D$ is defined in \eqref{difop}.
Thus, we only need to show that there exists a positive constant
$C$ such that
$$
\big|\left(\psi(t) D^{2s-2}\partial_{x}^{2}v,\;v\right)
_{L^{2}(\mathbb{T}\times(0,T))}\big|
\leq C.$$

Note that, with a similar argument as in the proof of Theorem \ref{compact2} (see \eqref{compact12}), there exist $x_{0}^{i}\in \mathbb{T},$  $i=1,\cdots,N,$ such that we can construct a partition of unity on $\mathbb{T}$ involving functions of the form $\chi^{2}_{i}(\cdotp -x_{0}^{i})$ with  $\chi^{2}_{i}(\cdotp) \in C^{\infty}_{c}(\omega).$
Therefore,
$$
 \big|\left( \psi(t) D^{2s-2}\partial_{x}^{2}v, v\right)
_{L^{2}(\mathbb{T}\times(0,T))}\big|
\leq \sum_{i=1}^{N} \big|\left( \psi(t)
D^{2s-2}\chi^{2}_{i}(x-x_{0}^{i})
\partial_{x}^{2}v, v\right)
_{L^{2}(\mathbb{T}\times(0,T))}\big|.
$$

So, it is sufficient to prove that for any $\chi^{2}(\cdotp)\in C_{c}^{\infty}(\omega)$ and  any $x_{0} \in
\mathbb{T},$ there exists a positive constant $C$ such that
\begin{equation}\label{rglarty10}
	\begin{split}
	\big|\left(
	\psi(t)\;
	D^{2s-2}\chi^{2}(x-x_{0})
	\partial_{x}^{2}v,\;v\right)
	_{L^{2}(\mathbb{T}\times(0,T))}\big|
	&\leq C.
	\end{split}
\end{equation}

In fact, from Lemma \ref{compact8} there exists
	$\varphi \in C^{\infty}(\mathbb{T})$ such that
	$\partial_{x}\varphi(x)=\chi^{2}(x)
	-\chi^{2}(x-x_{0})$ for all
	$x\in \mathbb{T}.$ Consequently,
\begin{equation}\label{rglarty11}
\begin{split}
\left|\left(\psi(t) D^{2s-2}\chi^{2}(x-x_{0})
\partial_{x}^{2}v,\;v\right)_{L^{2}(\mathbb{T}\times(0,T))}\right|
&\leq
\left|\left(\psi(t) D^{2s-2}\chi^{2}(x) \partial_{x}^{2}v, \;v\right)_{L^{2}(\mathbb{T}\times(0,T))}\right|\\
&\quad
+\left|\left(\psi(t) D^{2s-2}\partial_{x}\varphi(x)\;
\partial_{x}^{2}v,\;v\right)_{L^{2}(\mathbb{T}\times(0,T))}\right|.
\end{split}
\end{equation}	
	
Now, we move to bound the RHS of \eqref{rglarty11}. Define $\displaystyle{ v_{n}:=e^{\frac{1}{n}\partial_{x}^{2}}v=E_{n}v=
 \left(e^{-\frac{1}{n}k^{2}}\widehat{v}(k,t)\right)^{\vee}}$
and $f_{n}:=E_{n}f=E_{n}Lv,$ for $n=1,2,3,\cdots.$
Passing to the frequency space, it is easy to verify
 that $E_{n}$ commutes with $L,$ i.e.,
$$ f_{n}:=E_{n}f=E_{n}Lv=LE_{n}v=Lv_{n}.$$

From hypothesis and the definition of $E_{n},$ we obtain that there exists
$C>0$ independent on $n$ such that
\begin{equation}\label{rglarty15}
\begin{split}
\|v_{n}\|_{X_{r,b}^{T}}\leq C,\;\;\;\;\text{and}\;\;\;\;
\|f_{n}\|_{X_{r,-b}^{T}}\leq C, \;\;
\text{for all}\;\;n\geq 1.
\end{split}
\end{equation}	
	
Set $B=D^{2s-2}\varphi,$ and $A=\psi(t)B.$			
We infer from \eqref{mult1} and \eqref{mult2} that for any $r\in \mathbb{R},$ and $0\leq b\leq1,$  there exists
a positive constant $C$ (independent of T, if $T\leq 1$)
such that
\begin{equation}\label{rglarty17}
\begin{split}
	\|Av\|_{X_{r-2|b|-2s+2,b}^{T}}
	&\leq C\;\|v\|_{X_{r,b}^{T}}.
\end{split}
\end{equation}

	With similar calculations as in the proof of the Proposition \ref{compact2} (see
	\eqref{compact41}), we obtain
	\begin{equation}\label{rglarty18}
	\begin{split}
	\left(f_{n},\;A^{\ast}v_{n}
	\right)_{L^{2}(\mathbb{T}\times (0,T))}
	&+\left(Av_{n},\;f_{n}
	\right)_{L^{2}(\mathbb{T}\times (0,T))}\\
	&=
	- \left(\psi'(t)\;Bv_{n},\;v_{n}
	\right)_{L^{2}(\mathbb{T}\times (0,T))}
	+ \left([\alpha\mathcal{ H}\partial_{x}^{2},\;
	A]v_{n}
	,\;v_{n}
	\right)_{L^{2}(\mathbb{T}\times (0,T))}\\
	&\quad+ \left([A,\;
	-\partial_{x}^{3}
	+2\mu\partial_{x}]v_{n}
	,\;v_{n}
	\right)_{L^{2}(\mathbb{T}\times (0,T))}.
	\end{split}
	\end{equation}

 Using that $\rho\leq 1-b,$ \eqref{rglarty15} and \eqref{rglarty17}, we get that there exists $C>0$ independent on $n$ such that
 	\begin{equation}\label{rglarty19}
 	\begin{split}
 	\left|\left(Av_{n},\;f_{n}
 	\right)_{L^{2}(\mathbb{T}\times (0,T))}\right|
 	&\leq
 	\left\|Av_{n}
 	\right\|_{X_{-r,b}^{T}}
 	\left\|f_{n}\right\|_{X_{r,-b}^{T}}
  	\leq C\;
 \left\|v_{n}
 \right\|_{X_{r,b}^{T}}
 \leq C,	
 \end{split}
 \end{equation}
\begin{equation}\label{rglarty20}
\begin{split}
\left|\left(f_{n},\;A^{\ast}v_{n}
\right)_{L^{2}(\mathbb{T}\times (0,T))}\right|
&\leq
\left\|Af_{n}\right\|_{X_{-r,-b}^{T}}
\left\|v_{n}\right\|_{X_{r,b}^{T}}
\leq C\;
\left\|f_{n}
\right\|_{X_{r,-b}^{T}}
\leq C,	
\end{split}
\end{equation}	
and
\begin{equation}\label{rglarty21}
\begin{split}
\left|\left(\psi'(t)\;Bv_{n},v_{n}
\right)_{L^{2}(\mathbb{T}\times (0,T))}\right|
&\leq
\left\|\psi'(t)\;Bv_{n}
\right\|_{X_{-r,-b}^{T}}
\left\|v_{n}\right\|_{X_{r,b}^{T}}
\leq C\;
\left\|v_{n}
\right\|_{X_{r,b}^{T}}
\leq C.
\end{split}
\end{equation}	

Also, using the Leibniz's rule for derivatives and Lemma \ref{compact26}, a simple calculation yields
\begin{equation}\label{rglarty23}
\begin{split}
\mathcal{ H}\partial_{x}^{2}Av_{n}
-A\mathcal{ H}\partial_{x}^{2}v_{n}
&=-\psi(t) D^{2s-2}\left(
[D^{-1},\varphi]\partial_{x}^{3}v_{n}\right)
-\psi(t)D^{2s-3}
\left(\partial_{x}
\varphi\;\partial_{x}^{2}
v_{n}\right)\\
&\quad
+2\mathcal{ H}\psi(t)D^{2s-2}\left(
\partial_{x}\varphi\;\partial_{x}
v_{n}\right)
+\mathcal{ H}\psi(t)D^{2s-2}\left(
\partial_{x}^{2}\varphi\;
v_{n}\right).
\end{split}
\end{equation}

Now, using \eqref{rglarty23} we obtain
\begin{equation}\label{rglarty25}
\begin{split}
\left([\alpha\mathcal{ H}\partial_{x}^{2},\;
A]v_{n}
,\;v_{n}
\right)_{L^{2}(\mathbb{T}\times (0,T))}
&=\alpha\left(\mathcal{ H}\partial_{x}^{2}Av_{n}
-A\mathcal{ H}\partial_{x}^{2}v_{n}
,\;v_{n}
\right)_{L^{2}(\mathbb{T}\times (0,T))}\\
&=
-\alpha\left(
\psi(t)\;D^{2s-2}\left(
[D^{-1},\varphi]\partial_{x}^{3}v_{n}\right)
,\;v_{n}
\right)_{L^{2}(\mathbb{T}\times (0,T))}\\
&\quad
-\alpha\left(
\psi(t)D^{2s-3}
\left(\partial_{x}
\varphi\;\partial_{x}^{2}
v_{n}\right)
,\;v_{n}
\right)_{L^{2}(\mathbb{T}\times (0,T))}\\
&\quad
+2\alpha\left(
\mathcal{ H}\psi(t)D^{2s-2}\left(
\partial_{x}\varphi\;\partial_{x}
v_{n}\right)
,\;v_{n}
\right)_{L^{2}(\mathbb{T}\times (0,T))}\\
&\quad
+\alpha\left(
\mathcal{ H}\psi(t)D^{2s-2}\left(
\partial_{x}^{2}\varphi\;
v_{n}\right)
,\;v_{n}
\right)_{L^{2}(\mathbb{T}\times (0,T))}.
\end{split}
\end{equation}

Using $\rho\leq \frac{1}{2},$ \eqref{mult1}, \eqref{mult2}, Lemma \ref{prop3}, and that   $\mathcal{ H}$ is an
isometry in $H^{-r}_{p}(\mathbb{T}),$ we can obtain  $C>0$ independent on $n$, such that
\begin{equation}\label{rglarty26}
\begin{split}
\alpha\big|&\left( \psi(t) D^{2s-2}\left(
[D^{-1},\varphi]\partial_{x}^{3}v_{n}\right),\;v_{n}
\right)_{L^{2}(\mathbb{T}\times (0,T))}\big|\\
&\qquad\qquad\qquad\qquad\qquad\qquad\leq
\alpha \left\|\psi(t) D^{2s-2}\left(
[D^{-1},\varphi]\partial_{x}^{3}v_{n}\right)
\right\|_{X_{-r,0}^{T}} \left\|v_{n}\right\|_{X_{r,0}^{T}}\\
&\qquad\qquad\qquad\qquad\qquad\qquad\leq
C \left\|[D^{-1},\varphi]\partial_{x}^{3}v_{n}
\right\|_{L^{2}((0,T);H^{-r+2s-2}(\mathbb{T}))} \left\|v_{n}\right\|_{X_{r,b}^{T}}\\
&\qquad\qquad\qquad\qquad\qquad\qquad\leq
C\left\|\partial_{x}^{3}v_{n} \right\|_{L^{2}((0,T);H^{-r+2s-4}(\mathbb{T}))}\\
&\qquad\qquad\qquad\qquad\qquad\qquad\leq
C \left\|v_{n}\right\|_{X_{-r+2s-1,0}^{T}}\\
&\qquad\qquad\qquad\qquad\qquad\qquad\leq
C \left\|v_{n}\right\|_{X_{r,b}^{T}}
\leq
C,
\end{split}
\end{equation}
\begin{equation}\label{rglarty27}
\begin{split}
\alpha\big|&\left(\psi(t)D^{2s-3}\left(\partial_{x}
\varphi\;\partial_{x}^{2} v_{n}\right),\;v_{n}
\right)_{L^{2}(\mathbb{T}\times (0,T))}\big|\\
&\qquad\qquad\qquad\qquad\qquad\leq
\alpha \left\|\psi(t)D^{2s-3}\left(\partial_{x}
\varphi\;\partial_{x}^{2} v_{n}\right)
\right\|_{L^{2}((0,T);H^{-r}(\mathbb{T}))}
\left\|v_{n}\right\|_{L^{2}((0,T);H^{r}(\mathbb{T}))}\\
&\qquad\qquad\qquad\qquad\qquad\leq
C\left\|\partial_{x}\varphi\;\partial_{x}^{2}v_{n}
\right\|_{X_{-r+2s-3,0}^{T}} \left\|v_{n}\right\|_{X_{r,0}^{T}}\\
&\qquad\qquad\qquad\qquad\qquad\leq
C\left\|v_{n}\right\|_{X_{-r+2s-1,0}^{T}}\left\|v_{n}\right\|
_{X_{r,b}^{T}}\\
&\qquad\qquad\qquad\qquad\qquad\leq
C\left\|v_{n}\right\|_{X_{r,b}^{T}}
\leq
C.
\end{split}
\end{equation}
In similar manner, one can get
\begin{equation}\label{rglarty28}
2 \alpha \big|\left(\mathcal{ H}\psi(t)D^{2s-2}\left(
\partial_{x}\varphi\;\partial_{x}v_{n}\right),\;v_{n}
\right)_{L^{2}(\mathbb{T}\times (0,T))}\big|
\leq
C\left\|\partial_{x}\varphi\;\partial_{x}v_{n}
\right\|_{X_{-r+2s-2,0}^{T}}
\left\|v_{n}\right\|_{X_{r,0}^{T}}
\leq
C,
\end{equation}
and 
\begin{equation}\label{rglarty29}
\begin{split}
\alpha \left|\left(\mathcal{ H}\psi(t)D^{2s-2}\left(
\partial_{x}^{2}\varphi\;v_{n}\right),\;v_{n}
\right)_{L^{2}(\mathbb{T}\times (0,T))}\right|
&\leq
C \left\|v_{n}\right\|_{X_{-r+2s-2,0}^{T}}
\left\|v_{n}\right\|_{X_{r,b}^{T}}\\
&\leq
C \left\|v_{n}\right\|_{X_{r,b}^{T}}
\leq
C.
\end{split}
\end{equation}

From \eqref{rglarty25}, \eqref{rglarty26}, \eqref{rglarty27}, \eqref{rglarty28}, and
\eqref{rglarty29}, we infer that
\begin{equation}\label{rglarty30}
\begin{split}
\left|\left([\alpha\mathcal{ H}\partial_{x}^{2},\;
A]v_{n}
,\;v_{n}
\right)_{L^{2}(\mathbb{T}\times (0,T))}\right|
\leq C.
\end{split}
\end{equation}

It follows from \eqref{rglarty18}, \eqref{rglarty19}, \eqref{rglarty20},
\eqref{rglarty21},  and \eqref{rglarty30}  that
\begin{equation}\label{rglarty31}
\begin{split}
\left|\left([A,\;
-\partial_{x}^{3}
+2\mu\partial_{x}]v_{n}
,\;v_{n}
\right)_{L^{2}(\mathbb{T}\times (0,T))}
\right|
\leq C,
\end{split}
\end{equation}
where $C>0$ does not depend on $n.$

 Using the Leibniz's rule, we note that
\begin{equation}\label{rglarty33}
\begin{split}
\left([A,-\partial_{x}^{3}+2\mu\partial_{x}]v_{n},v_{n}
\right)_{L^{2}(\mathbb{T}\times (0,T))}
&= \left(3\psi(t) D^{2s-2}\partial_{x}
\varphi\;\partial_{x}^{2}v_{n},v_{n}
\right)_{L^{2}(\mathbb{T}\times (0,T))}\\
&\quad
+ \left(3\psi(t) D^{2s-2}\partial_{x}^{2}\varphi\;
\partial_{x}v_{n},v_{n}\right)
_{L^{2}(\mathbb{T}\times (0,T))}\\
&\quad-
\left(\psi(t) D^{2s-2}\left(-\partial_{x}^{3}\varphi
+2\mu \partial_{x}\varphi\right)v_{n},v_{n}
\right)_{L^{2}(\mathbb{T}\times (0,T))}.
\end{split}
\end{equation}

It follows from \eqref{rglarty31}, \eqref{rglarty33} and  similar estimates
as those in the proof of Proposition 3.6
in  \cite{14}
that there exists $C>0$ independent of $n$ such that
\begin{equation}\label{rglarty36}
\begin{split}
\left|\left(\psi(t)\;D^{2s-2}\partial_{x}
\varphi(x)\;\partial_{x}^{2}v_{n}
,\;v_{n}
\right)_{L^{2}(\mathbb{T}\times (0,T))}
\right|
&\leq C,\;\;\;\text{for any}\;n\geq 1.
\end{split}
\end{equation}
Therefore, letting $n\longrightarrow \infty$  we get that the second term on the right side of  \eqref{rglarty11} is bounded.

Finally, estimates similar to those in the proof of Proposition 3.6 in  \cite{14} shows that there exists $C>0$ independent of $n$ such that
\begin{equation}\label{rglarty50}
\begin{split}
\left|\left(\psi(t)\;D^{2s-2}\chi^{2}\;
\partial_{x}^{2}v_{n}
,\;D^{s}v_{n}
\right)_{L^{2}(\mathbb{T}\times (0,T))}\right|
&\leq
C.
\end{split}
\end{equation}

Letting $n\longrightarrow \infty$ we get that the first term on the right side of inequality \eqref{rglarty11} is bounded. Thus, \eqref{rglarty11}, \eqref{rglarty36}, and \eqref{rglarty50} imply \eqref{rglarty10} and completes the proof.
\end{proof}

\begin{cor}\label{rglarty51}
	Let $\mu\in \mathbb{R},$ $\alpha>0.$ 
	Let $v\in X^{T}_{0,\frac{1}{2}}$ be a solution
	of
	\begin{equation}\label{rglarty52}
	\begin{split}
\partial_{t}v
-\partial_{x}^{3}v
-\alpha\;\mathcal{H}\partial_{x}^{2}v
+2\mu\;\partial_{x}v
+2v \partial_{x}v=0,\;\;\text{on}\;\;
(0,T),
	\end{split}
	\end{equation}
	with $[u]=0.$
	Assume that $v\in C^{\infty}(\omega\times (0,T)),$
	where $\omega$ is a nonempty open set
	in $\mathbb{T}.$ Then
	$v\in C^{\infty}(\mathbb{T}\times (0,T)).$
	
\end{cor}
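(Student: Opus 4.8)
The plan is to run a bootstrap on the spatial regularity index, combining the propagation-of-regularity result (Proposition \ref{rglarty}) with the linear and bilinear estimates of Section \ref{section 3}. Since $[v(\cdot,t)]=0$, the bilinear estimate of Theorem \ref{BilinearEstimate} and its corollaries apply to $f:=-\partial_x(v^2)$, and the equation reads $Lv=f$ with $L$ the operator in \eqref{compact1}. I will prove by induction on $m\in\mathbb{N}$ that $v\in X_{m/2,\frac12}(J)$ for every compact subinterval $J\subset(0,T)$; this yields $v\in L^2_{loc}((0,T);H^s_p(\mathbb{T}))$ for all $s\ge0$, from which full smoothness will follow.

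The base case $m=0$ is the hypothesis $v\in X^T_{0,\frac12}$. For the inductive step, assume $v\in X_{\sigma,\frac12}(J)$ with $\sigma=m/2$. First, Corollary \ref{Biestimate2} applied with both factors equal to $v$ gives $f=-\partial_x(v^2)\in Z_{\sigma,-\frac12}(J)\subset X_{\sigma,-\frac12}(J)$, so the hypotheses of Proposition \ref{rglarty} hold with $r=\sigma$ and $b=\frac12$. Because $v\in C^\infty(\omega\times(0,T))$, we have $v\in L^2_{loc}((0,T);H^{\sigma+\rho}(\omega))$ with $\rho=\frac12\le\min\{1-b,\frac12\}=\frac12$. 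Proposition \ref{rglarty} then propagates this regularity from $\omega$ to the whole torus: $v\in L^2_{loc}((0,T);H^{\sigma+\frac12}(\mathbb{T}))$, i.e.\ the $\frac12$ gain in spatial regularity is now global in $x$.

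The crux is to convert this spatial gain back into a gain in the full Bourgain norm, so the induction can restart at level $\sigma+\frac12$. For a.e.\ $a$ in a compact subinterval one has $v(\cdot,a)\in H^{\sigma+\frac12}(\mathbb{T})$; on a short interval $I\ni a$ I will use the Duhamel representation $v(t)=V(t-a)v(\cdot,a)+\int_a^t V(t-\tau)f(\tau)\,d\tau$ together with Corollary \ref{linstimateZ} and Theorem \ref{intestimateZ3} to bound $\|v\|_{Z_{\sigma+\frac12,\frac12}(I)}\le C\|v(\cdot,a)\|_{H^{\sigma+\frac12}}+C\|f\|_{Z_{\sigma+\frac12,-\frac12}(I)}$. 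The nonlinear term is controlled by Corollary \ref{Biestimate3} (via Proposition \ref{Xestimate1}), which supplies a small factor $|I|^{\epsilon}$; absorbing it for $|I|$ small — after first regularizing $v$ by $E_n=e^{\frac1n\partial_x^2}$ to guarantee finiteness of the norms at the higher level and then passing to the limit $n\to\infty$ — gives $v\in Z_{\sigma+\frac12,\frac12}(I)\subset X_{\sigma+\frac12,\frac12}(I)$. Patching such intervals (the admissible starting times being dense) and using the time-reversibility of the Benjamin equation yields $v\in X_{\sigma+\frac12,\frac12}(J)$, closing the induction. I expect this reconversion to be the main obstacle, precisely because the bilinear estimate requires one factor at time-regularity $\frac12$ at the top spatial level — exactly the quantity being bootstrapped; the smoothing operator $E_n$ and the smallness factor $|I|^\epsilon$ are what keep the argument non-circular.

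Finally, having $v\in L^2_{loc}((0,T);H^s_p(\mathbb{T}))$ for all $s\ge0$, the embedding $Z_{s,\frac12}(I)\hookrightarrow C(I;H^s_p(\mathbb{T}))$ of Proposition \ref{contimbedded} gives $v\in C((0,T);H^s_p(\mathbb{T}))$ for every $s$, hence $v(\cdot,t)\in C^\infty(\mathbb{T})$ for each $t$, with norms locally bounded in $t$. The time regularity is then read off from the equation: $\partial_t v=\alpha\mathcal{H}\partial_x^2 v+\partial_x^3 v-2\mu\partial_x v-2v\partial_x v$ shows $\partial_t v\in C((0,T);H^s_p(\mathbb{T}))$ for all $s$, and differentiating repeatedly in $t$ yields $\partial_t^k v\in C((0,T);H^s_p(\mathbb{T}))$ for all $k,s$. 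Therefore $v\in C^\infty(\mathbb{T}\times(0,T))$.
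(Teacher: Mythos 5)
Your skeleton---Corollary \ref{Biestimate3} to place $f=-\partial_x(v^2)$ in $Z_{\sigma,-\frac12}$, Proposition \ref{rglarty} with $b=\frac12$, $\rho=\frac12$ to propagate the half-derivative gain from $\omega$ to all of $\mathbb{T}$, a half-step bootstrap, then time regularity read off from the equation---is exactly the paper's proof, which iterates Proposition \ref{rglarty} with $f=-2v\partial_x v$ following Corollary 3.7 of Laurent--Rosier--Zhang. The gap is in your reconversion step. You regularize by $E_n=e^{\frac1n\partial_x^2}$ ``to guarantee finiteness of the norms at the higher level'' and then absorb via the factor $|I|^{\epsilon}$. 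But $E_n$ does not commute with the quadratic nonlinearity: $E_n(v^2)\neq(E_n v)^2$, so the bilinear estimate gives no bound of the form $\|E_n\partial_x(v^2)\|_{Z_{\sigma+\frac12,-\frac12}(I)}\leq C|I|^{\epsilon}\|E_n v\|^2_{Z_{\sigma+\frac12,\frac12}(I)}$ uniformly in $n$, and no mollifier-uniform substitute exists: in a high--high-to-low interaction ($k_1=N$, $k-k_1\approx-N$, $|k|$ bounded) the output weight $|k|\langle k\rangle^{\sigma+\frac12}e^{-k^2/n}\approx 1$, while the mollified factor norms only see $e^{-N^2/n}\langle N\rangle^{\sigma+\frac12}\to 0$ as $N\to\infty$ for fixed $n$. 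The alternative, bounding $\|E_n\partial_x(v^2)\|$ by $\|\partial_x(v^2)\|_{Z_{\sigma+\frac12,-\frac12}(I)}$, is circular, since finiteness of that quantity is precisely what is being proved; an absorption argument requires the absorbed norm to be finite a priori, and at this stage it is not. (This is different from the paper's use of $E_n$ inside Proposition \ref{rglarty}, where $E_n$ is applied to the \emph{linear} equation and commutes with $L$, so no product ever has to pass through the mollifier.)

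The repair is standard and uses only tools already in the paper (it is also how Laurent--Rosier--Zhang close the loop). From $v\in L^2_{loc}((0,T);H^{\sigma+\frac12}(\mathbb{T}))$ choose, by Chebyshev, starting times $a$ with $\|v(\cdot,a)\|_{H^{\sigma+\frac12}}\leq M$ \emph{uniformly} and suitably dense in the compact subinterval---density alone, which is all you invoke, is not enough, because the local existence time of the contraction depends on the size of the data, so patching intervals of a uniform length requires this uniform bound. Then run the contraction mapping at level $s=\sigma+\frac12$ with data $v(\cdot,a)$, as in the proofs of Theorems \ref{Smalldatacontrol2} and \ref{LWP}: every Picard iterate is finite at the higher level by construction, so no mollification is needed, and one obtains $\tilde v\in Z_{\sigma+\frac12,\frac12}(I)$. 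Finally identify $\tilde v=v$ on $I$ by uniqueness at the lower level: $w=v-\tilde v$ satisfies a linear equation with coefficient $v+\tilde v$, and \eqref{eq-ll4} together with Corollary \ref{Biestimate2} lets you absorb $\|w\|_{Z_{0,\frac12}(I')}$ on a short interval, forcing $w=0$. With this substitution (and the time-reversibility you already invoke), your induction closes, and the remainder of your argument---continuity in time via Proposition \ref{contimbedded} and iterated differentiation of the equation in $t$---is correct.
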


\begin{proof}
This result is a direct consequence of
Corollary \ref{Biestimate3},  and an iterated application of Proposition \ref{rglarty}  with $f=-2v \partial_{x}v$ (see
Corollary 3.7 in \cite{14}).
\end{proof}

\subsection{Unique Continuation Property} In this subsection we prove the unique continuation property for the Benjamin equation. We start with a result proved in \cite{Linares Rosier}.
\begin{lem}[{\cite[Lemma 2.9]{Linares Rosier}}]\label{UCP}
	Let $s\in \mathbb{R}$ and let
$h(x)=\sum_{k\geq 0}\widehat{h}(k)e^{ikx}$
	be such that $h\in H^{s}(\mathbb{T})$
	and $h=0$ in $(a,b)\subset \mathbb{T}.$
	Then $h\equiv 0.$
\end{lem}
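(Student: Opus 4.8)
The plan is to exploit the fact that $h$ has spectrum contained in $\{k\geq 0\}$, so it is the boundary value of a function holomorphic in the unit disc, together with the principle that a holomorphic function cannot vanish on a boundary arc of positive measure unless it is identically zero. Concretely, I would associate to $h$ the power series $F(z):=\sum_{k\geq 0}\widehat{h}(k)z^{k}$. Since $h\in H^{s}(\mathbb{T})$ gives $\langle k\rangle^{2s}|\widehat{h}(k)|^{2}\leq C$, we get $|\widehat{h}(k)|\leq C\langle k\rangle^{-s}$, so the coefficients grow at most polynomially and the radius of convergence is at least $1$; thus $F$ is holomorphic on the disc $\mathbb{D}$, and $F(re^{i\cdot})\to h$ in $\mathcal{D}'(\mathbb{T})$ as $r\uparrow 1$ (Abel means). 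Proving $F\equiv 0$ then yields $\widehat{h}(k)=0$ for every $k$, hence $h\equiv 0$, which is the goal.

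In the model case $s\geq 0$ the function $h$ lies in $L^{2}(\mathbb{T})$, and the one-sided spectrum places $F$ in the Hardy space $H^{2}(\mathbb{D})$, whose nontangential boundary values exist almost everywhere and coincide with $h$. The classical boundary-uniqueness theorem for $H^{2}$ — if $F\not\equiv 0$ then $\log|F|$ is integrable on $\mathbb{T}$ by Jensen's inequality, so $F$ is nonzero almost everywhere — combined with the hypothesis that $h$ vanishes on the arc $\{e^{ix}:a<x<b\}$, a set of positive measure, forces $F\equiv 0$.

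For general $s$ I would reduce to this situation. First differentiate: $h'=\sum_{k\geq 1}ik\,\widehat{h}(k)e^{ikx}$ still has one-sided spectrum and still vanishes on $(a,b)$, and $h'\equiv 0$ would make $h$ constant, hence $\equiv 0$ on $\mathbb{T}$; so one may assume $\widehat{h}(0)=0$. Choosing an integer $M$ so large that $\sum_{k\geq 1}|\widehat{h}(k)|\,k^{-M}<\infty$ and setting $g:=\partial_{x}^{-M}h=\sum_{k\geq 1}\tfrac{\widehat{h}(k)}{(ik)^{M}}e^{ikx}$, the series converges absolutely, so $g\in C(\mathbb{T})$ is the boundary value of $G(z)=\sum_{k\geq1}\tfrac{\widehat{h}(k)}{(ik)^{M}}z^{k}$, holomorphic in $\mathbb{D}$ and continuous on $\overline{\mathbb{D}}$. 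Since $\partial_{x}^{M}g=h=0$ on $(a,b)$, there $g$ equals a polynomial $P$ of degree $\leq M-1$ in $x$; writing $x=-i\log z$ on a compact subarc $\Gamma$, $\widetilde{P}(z):=P(-i\log z)$ is holomorphic near $\Gamma$, and $G-\widetilde{P}$ is holomorphic inside, continuous up to $\Gamma$, and vanishes on $\Gamma$. By the Schwarz reflection principle $G-\widetilde{P}$ extends holomorphically across $\Gamma$ and, vanishing on a set with interior limit points, is identically zero near $\Gamma$, so $G=\widetilde{P}$ on a one-sided neighbourhood of $\Gamma$. Applying the operator $(iz\,d/dz)^{M}$, which corresponds to $\partial_{x}^{M}$ under $z=e^{ix}$ and annihilates $\widetilde{P}$ because $\deg P<M$, gives $F=(iz\,d/dz)^{M}G=0$ on a nonempty open subset of $\mathbb{D}$, whence $F\equiv 0$ by the identity theorem.

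I expect the delicate point to be exactly this last reduction. For negative $s$ one cannot remain in a space where boundary values are honest functions and simultaneously preserve the vanishing condition, since differentiation lowers regularity while integration replaces the zero on $(a,b)$ by a polynomial. Reconciling the two through Schwarz reflection and the operator $z\,d/dz$ (or, alternatively, through a distributional boundary-uniqueness theorem for holomorphic functions with one-sided spectrum) is the main obstacle; everything else is routine.
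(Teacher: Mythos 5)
The paper offers no proof of this lemma to compare against: it is imported verbatim from \cite{Linares Rosier} (their Lemma 2.9). Judged on its own merits, your proof is correct and complete, and it reconstructs the classical holomorphic-extension route that underlies the cited result. The individual steps all check out: $h\in H^{s}(\mathbb{T})$ gives $|\widehat{h}(k)|\leq C\langle k\rangle^{-s}$, so $F$ is holomorphic in $\mathbb{D}$; the reduction to $\widehat{h}(0)=0$ via $h'\in H^{s-1}(\mathbb{T})$ is sound (the mean-zero case applied to $h'$ forces $h$ to be constant, and the vanishing on $(a,b)$ kills the constant); choosing $M$ with $\sum_{k\geq 1}|\widehat{h}(k)|k^{-M}<\infty$ makes $G$ continuous on $\overline{\mathbb{D}}$; on $(a,b)$ the distributional identity $\partial_{x}^{M}g=h=0$ does force $g$ to coincide with a polynomial of degree at most $M-1$, since $g$ is continuous; the Schwarz reflection applies because vanishing boundary data are in particular real-valued, and the reflected extension of $G-\widetilde{P}$, vanishing on an arc with accumulation points, is identically zero near it; and the conjugation $\partial_{x}\leftrightarrow iz\,d/dz$, with $(iz\,d/dz)^{M}z^{k}=(ik)^{M}z^{k}$ and $(iz\,d/dz)^{M}\widetilde{P}=P^{(M)}=0$, correctly yields $F=0$ on a nonempty open subset of $\mathbb{D}$, hence $F\equiv 0$ by the identity theorem, so all $\widehat{h}(k)$ vanish. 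You correctly identified the delicate point (preserving both regularity and the vanishing condition when $s<0$) and resolved it.

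One cosmetic remark: the $s\geq 0$ Hardy-space paragraph is logically redundant, since your general argument covers that case as well; it is fine as motivation, but the $\log|F^{*}|\in L^{1}$ uniqueness theorem is never actually needed once the reflection argument is in place.
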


The following is the main result of this subsection. 

\begin{prop}\label{UCP2}
		Let $\mu\in \mathbb{R},$ $\alpha>0$, 
	$c(t)\in L^{2}(0,T)$
	and $v\in L^{2}((0,T);L^{2}_{0}(\mathbb{T}))$ be a solution
	of
	\begin{equation}\label{UCP3}	
	\left \{
	\begin{array}{l l}
	\partial_{t}v
	-\partial^{3}_{x}v
	-\alpha \mathcal{H}\partial^{2}_{x}v
	+2\mu\partial_{x} v
	+2v\partial_{x} v=0,&  t>0,\;\;\text{on}\; \mathbb{T}
	\times(0,T)\\
	v(x,t)=c(t), & \text{for almost every} \;\;(x,t)\in (a,b)\times(0,T),
	\end{array}
	\right.
	\end{equation}
	for some $T>0$ and
	$0\leq a< b\leq 2\pi.$
	Then $v(x,t)=0$ for almost every
	$(x,t)\in\mathbb{T}\times(0,T).$
\end{prop}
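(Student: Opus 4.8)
The plan is to first upgrade $v$ to a smooth solution and then exploit the nonlocal Hilbert-transform term, together with the Fourier-support Lemma~\ref{UCP}, to annihilate every nonzero Fourier mode.

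\emph{Step 1 (Smoothing).} Since $v$ solves the equation in $L^2((0,T);L^2_0(\mathbb{T}))$, the well-posedness theory places it in $X^T_{0,\frac12}$, and the bilinear estimate (Corollary~\ref{Biestimate3}) shows the source term $f=-\partial_x(v^2)$ lies in $Z^T_{0,-\frac12}$. Now on $(a,b)$ we have $v(x,t)=c(t)$, so $v$ is constant — hence $C^\infty$ — in the space variable on $\omega:=(a,b)$; in particular $v\in L^2_{loc}((0,T);H^{m}(\omega))$ for every $m$. Thus the local spatial-regularity hypothesis \eqref{rglarty2} of Proposition~\ref{rglarty} holds for every $\rho$, and iterating it exactly as in the proof of Corollary~\ref{rglarty51} yields $v\in L^2_{loc}((0,T);H^\infty(\mathbb{T}))$. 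Using the equation to trade time derivatives for space derivatives then gives $v\in C^\infty(\mathbb{T}\times(0,T))$; in particular $c(t)=v(x_0,t)$ (any $x_0\in(a,b)$) is smooth, and the identity $v=c(t)$ on $(a,b)$ holds pointwise for every $t$.

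\emph{Step 2 (Restricting the equation to the strip).} For $x\in(a,b)$ all space derivatives of $v$ vanish and the nonlinear term $2v\partial_x v$ vanishes, so evaluating \eqref{UCP3} on $(a,b)\times(0,T)$ leaves only
\[
c'(t)=\alpha\,\mathcal{H}\partial_x^2 v(x,t),\qquad x\in(a,b).
\]
Hence $\mathcal{H}\partial_x^2 v(\cdot,t)$ is independent of $x$ on $(a,b)$ and equals $c'(t)/\alpha$ there. This is the crucial gain: although $\mathcal{H}\partial_x^2 v$ is nonlocal, the equation forces it to be constant on the strip.

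\emph{Step 3 (Analytic projection and conclusion).} Fix $t\in(0,T)$ and set $w(\cdot,t):=\partial_x^2 v(\cdot,t)+i\,\mathcal{H}\partial_x^2 v(\cdot,t)$. A direct Fourier computation gives $\widehat{w}(k,t)=-k^2(1+\operatorname{sgn}k)\widehat{v}(k,t)$, so $\widehat{w}(k,t)=0$ for all $k\le 0$; that is, $w(\cdot,t)$ has Fourier support in $\{k\ge 1\}$. On the other hand, by Step~2, for $x\in(a,b)$ we have $\partial_x^2 v=0$ and $\mathcal{H}\partial_x^2 v=c'(t)/\alpha$, so $w(x,t)=i\,c'(t)/\alpha$ is constant there. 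Consequently the function $h(\cdot,t):=w(\cdot,t)-i\,c'(t)/\alpha$ has Fourier support contained in $\{k\ge 0\}$ and vanishes on $(a,b)$. Since $v$ is smooth, $h(\cdot,t)\in H^s(\mathbb{T})$, and Lemma~\ref{UCP} forces $h(\cdot,t)\equiv 0$. Reading off Fourier coefficients gives $c'(t)=0$ and $-2k^2\widehat{v}(k,t)=0$ for every $k\ge 1$, i.e. $\widehat{v}(k,t)=0$ for all $k\ge 1$. Because $v$ is real, $\widehat{v}(-k,t)=\overline{\widehat{v}(k,t)}=0$ as well, and $\widehat{v}(0,t)=0$ by the mean-zero assumption; therefore $v(\cdot,t)\equiv 0$. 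As $t\in(0,T)$ was arbitrary, $v\equiv 0$ on $\mathbb{T}\times(0,T)$.

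\emph{Main obstacle.} The genuine difficulty is the nonlocal term $\alpha\,\mathcal{H}\partial_x^2 v$: a naive localization to $(a,b)$ fails because the Hilbert transform does not respect supports. The two points that make the argument work are (i) the smoothing step, which legitimizes evaluating the equation pointwise on the strip and performing the Fourier manipulations, and (ii) the observation that $\partial_x^2 v$ is exactly the right order so that, after the analytic projection $f\mapsto f+i\mathcal{H}f$ (which selects nonnegative frequencies), its restriction to $(a,b)$ is forced to equal the constant $i\,c'(t)/\alpha$; subtracting this constant preserves one-sided Fourier support while producing a function vanishing on $(a,b)$, which is precisely the hypothesis of Lemma~\ref{UCP}.
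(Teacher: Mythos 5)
Your Steps 2--3 are correct and are essentially the paper's own argument in different packaging: the paper applies the analytic projection $h\mapsto ih-\mathcal{H}h$ to $h=\partial_{x}^{3}v(\cdot,t)$, for which both $h$ and $\mathcal{H}h=\partial_x(\mathcal{H}\partial_x^2 v)$ vanish on $(a,b)$, while you apply it one order lower, to $\partial_x^2 v$, and subtract the constant $i\,c'(t)/\alpha$ to restore one-sided Fourier support --- a slightly slicker endgame, since it yields $c'(t)=0$ and $\widehat{v}(k,t)=0$ for $k\neq 0$ in one stroke instead of integrating $\partial_x^3 v\equiv 0$ back up.

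The genuine gap is Step 1. The proposition assumes only that $v\in L^{2}((0,T);L^{2}_{0}(\mathbb{T}))$ is a (distributional) solution; your assertion that ``the well-posedness theory places it in $X^{T}_{0,\frac12}$'' is not derivable from this. Identifying an arbitrary $L^2_{t,x}$ distributional solution with the Bourgain-space solution emanating from its trace at some time would require uniqueness of solutions at $L^2_{t,x}$ regularity, which neither this paper nor the cited well-posedness literature provides; and Proposition \ref{rglarty} explicitly demands $v\in X^{T}_{r,b}$ as a hypothesis, so the propagation-of-regularity bootstrap (the route of Corollary \ref{rglarty51}) cannot even start. This is not a removable convenience in your argument: your second-order trick needs $c'(t)$ to be a pointwise-defined function of $t$ so that the constant $i\,c'(t)/\alpha$ can be subtracted at fixed $t$, whereas under the stated hypotheses $c\in L^2(0,T)$ gives $c'$ only as a distribution. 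This is precisely why the paper differentiates a third time: $\mathcal{H}\partial_x^3 v(\cdot,t)=\partial_x(\mathcal{H}\partial_x^2 v)(\cdot,t)$ kills the $x$-independent term $c'(t)$ outright, so the entire Fourier-support argument, including Lemma \ref{UCP}, runs in $H^{-3}(\mathbb{T})$ for a.e.\ fixed $t$ with no smoothing at all. Your proof is easily repaired along these lines --- drop Step 1, replace $w$ by the projection of $\partial_x^3 v(\cdot,t)\in H^{-3}(\mathbb{T})$, conclude $\partial_x^3 v(\cdot,t)\equiv 0$ hence $v(\cdot,t)$ is constant on $\mathbb{T}$ for a.e.\ $t$, then the equation forces $c'=0$ distributionally and the mean-zero condition gives $v\equiv 0$ --- but as written it proves the proposition only under the additional, unstated hypothesis $v\in X^{T}_{0,\frac12}$.
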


\begin{proof}
	Since 	$v(x,t)=c(t)$ for a.e $(x,t)\in (a,b)\times(0,T),$ we have that
	\begin{equation}\label{UCP4}
	\partial_{x}v(x,t)=
	\partial_{x}^{2}v(x,t)=
	(2v\partial_{x}v)(x,t)=0
	\;\;\text{for a.e}\;\; (x,t)\in (a,b)\times(0,T).
	\end{equation}
	
	Thus, from the first relation in \eqref{UCP3}, we infer that $v$ satisfies
$$\partial_{t}v-\alpha \mathcal{H}\partial^{2}_{x}v=0
	\;\;\text{in}\; (a,b)\times(0,T).$$
	
	Thus, the second relation in \eqref{UCP3} implies that
\begin{equation}\label{UCP6}
\alpha \mathcal{H}\partial^{2}_{x}v
=\partial_{t}v=c'(t)
\;\;\text{in}\; (a,b)\times(0,T).
\end{equation}

Using \eqref{UCP4} and \eqref{UCP6} we deduce that, for almost every $t\in (0,T),$ it holds that
\begin{equation}\label{UCP7}
\begin{split}
&\partial_{x}^{3}v(\cdot,t)\in H^{-3}(\mathbb{T})\\
&\partial_{x}^{3}v(\cdot,t)=0, \;\;\text{in}\; (a,b).\\
 &\mathcal{H}\partial^{3}_{x}v(\cdot,t)=
\partial_{x}\mathcal{H}\partial^{2}_{x}v(\cdot,t)	=0
, \;\;\text{in}\; (a,b).
\end{split}
\end{equation}
	
	Pick a time $t$ as above, and set
	$h(x)=\partial_{x}^{3}v(x,t),\;\;\text{for}\;\; x\in \mathbb{T}.$
	Decompose $h$ as
	$$h(x)=\sum_{k\in \mathbb{Z}}
	\widehat{h}(k)e^{ikx},$$
	where the convergence of the series being in
	$H^{-3}(\mathbb{T}).$ Observe  that
	\begin{equation}
	\begin{split}
	\left(ih-Hh\right)(x)&=\sum_{k\in \mathbb{Z}}(ih-Hh)^{\wedge}(k)e^{ikx}
	=i\sum_{k\in \mathbb{Z}}(1+\text{sgn}(k))\hat{h}(k)e^{ikx}
	=2i\sum_{k>0}\hat{h}(k)e^{ikx}.
	\end{split}
	\end{equation}
	
	From \eqref{UCP7}, we have
	$$0=ih(x)-\mathcal{ H}h(x)=2i\sum_{k> 0}
	\widehat{h}(k)e^{ikx},\;\;\text{for all}\;x\in(a,b).$$

Therefore,
	$\sum\limits_{k> 0}
	\widehat{h}(k)e^{ikx}=0, \;
	\text{in}\;(a,b).$
	From Lemma \ref{UCP}, we obtain that
	$\sum\limits_{k> 0}
	\widehat{h}(k)e^{ikx}=0,$
	in $\mathbb{T}.$
	Since $h$ is real-valued, we also have that
	$\widehat{h}(-k)=\overline{\widehat{h}(k)},$
	for all $k\in \mathbb{Z}.$ Thus,
		$$\sum_{k> 0}
	\widehat{h}(-k)e^{-ikx}=0, \;\;
	\text{in}\;\;\mathbb{T}.$$
	
	Consequently, for a.e. \,$t\in (0,T),$
	$\;\partial_{x}^{3}v(\cdot,t)=0\;\;
	\text{in}\;\mathbb{T}.$
	Then, for a.e. $t\in (0,T),$
$\;\partial_{x}^{2}v(\cdot,t)=c_{1}(t)
\;\;\text{in}\;\mathbb{T}$. But  from
\eqref{UCP4} we obtain that $c_{1}(t)=0,$ for a.e. $t\in (0,T).$ Thus,
for a.e. $t\in (0,T),$
$\;\partial_{x}^{2}v(\cdot,t)=0\;\;
\text{in}\;\mathbb{T}.$ Arguing in a similar way
we obtain that
for a.e. $t\in (0,T),$
$\;\partial_{x}v(\cdot,t)=0\;\;
\text{in}\;\mathbb{T}.$
Thus, for a.e.  $t\in (0,T),$
\begin{equation}\label{UCP8}
v(x,t)=c(t)\;\;
\text{in}\;\mathbb{T}.
\end{equation}

 Substituting \eqref{UCP8} in the first relation of \eqref{UCP3}, we
 obtain that $c'(t)=0$ for a.e. $t\in (0,T).$
 Therefore,
 $v(x,t)=c(t)=cte=:\beta$ a.e. in $\mathbb{T}\times(0,T).$

 Finally, using  $v\in L^{2}_{0}(\mathbb{T}),$
we have $[v]=0$. Consequently,  we obtain 
 $v(x,t)=\beta=0$ a.e. in $\mathbb{T}\times(0,T).$
\end{proof}

\begin{cor}\label{UCP10}
	Let $T>0,$ $\mu\in \mathbb{R},$ and  $\alpha>0$  be given.
	Assume that $\omega$ is a nonempty open set in $\mathbb{T}$
	and let $v\in X^{T}_{0,\frac{1}{2}}$ be a solution
	of
	\begin{equation}\label{UCP9}
		\left \{
		\begin{array}{l l}
		\partial_{t}v
		-\partial^{3}_{x}v
		-\alpha \mathcal{H}\partial^{2}_{x}v
		+2\mu\partial_{x} v
		+2v\partial_{x} v=0,&  t>0,\;\;\text{on}\; \mathbb{T}
		\times(0,T)\\
		v(x,t)=c, & \text{on} \;\;\omega
		\times(0,T),
		\end{array}
		\right.
	\end{equation}
	where $c\in \mathbb{R}$ and $[v]=0.$
	Then $v(x,t)=c=0$ on $\mathbb{T}\times(0,T).$
\end{cor}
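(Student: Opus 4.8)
The plan is to deduce this corollary directly from the unique continuation property already established in Proposition \ref{UCP2}, after verifying that the hypotheses of that proposition are satisfied in the present setting. In other words, the real work has been done in Proposition \ref{UCP2}, and here I only need to specialize it (a constant $c$ in place of the time-dependent $c(t)$, and an interval contained in $\omega$) and check the function-space bookkeeping.

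First I would reduce the open set $\omega$ to an interval. Since $\omega$ is a nonempty open subset of $\mathbb{T}$, it contains an open interval $(a,b)$ with $0\leq a<b\leq 2\pi$. The hypothesis $v(x,t)=c$ on $\omega\times(0,T)$ then gives, in particular, $v(x,t)=c$ for almost every $(x,t)\in(a,b)\times(0,T)$. Because $c$ is a real constant and $T<\infty$, the constant map $t\mapsto c$ belongs to $L^{2}(0,T)$, so it is an admissible choice of the function $c(t)$ appearing in Proposition \ref{UCP2}.

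Next I would check the regularity requirement. Applying Proposition \ref{prop1}(ii) with $s_{1}=s_{2}=0$ and $b_{1}=0\leq b_{2}=\tfrac{1}{2}$ yields the continuous embedding $X_{0,\frac{1}{2}}\hookrightarrow X_{0,0}=L^{2}(\mathbb{R}_{t};L^{2}(\mathbb{T}))$, and the same holds for the restriction spaces; hence $v\in X^{T}_{0,\frac{1}{2}}$ implies $v\in L^{2}((0,T);L^{2}(\mathbb{T}))$. Since $[v]=0$, for almost every $t\in(0,T)$ the slice $v(\cdot,t)$ has zero mean, so in fact $v\in L^{2}((0,T);L^{2}_{0}(\mathbb{T}))$. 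Thus $v$ fits exactly the framework in which Proposition \ref{UCP2} was proved.

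With these two verifications in place, Proposition \ref{UCP2} applies and gives $v(x,t)=0$ for almost every $(x,t)\in\mathbb{T}\times(0,T)$. Comparing this with $v=c$ on the positive-measure set $\omega\times(0,T)$ forces $c=0$, and therefore $v\equiv c=0$ on $\mathbb{T}\times(0,T)$. The only delicate point in the argument is the function-space reduction $X^{T}_{0,\frac{1}{2}}\hookrightarrow L^{2}((0,T);L^{2}_{0}(\mathbb{T}))$, which places the solution in the exact space where the unique continuation result holds; beyond this there is no genuine obstacle, since the statement is a direct specialization of Proposition \ref{UCP2}.
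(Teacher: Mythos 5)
Your proof is correct, but it takes a shorter route than the paper. The paper's own proof first invokes Corollary \ref{rglarty51} (the propagation-of-regularity consequence, fed by the bilinear estimate) to upgrade $v$ from $X^{T}_{0,\frac{1}{2}}$ to $C^{\infty}(\mathbb{T}\times(0,T))$ — using that $v\equiv c$ on $\omega\times(0,T)$ trivially gives $v\in C^{\infty}(\omega\times(0,T))$ — and only then applies Proposition \ref{UCP2}, so the identity $v=c=0$ holds everywhere in the classical sense. You instead apply Proposition \ref{UCP2} directly, which is legitimate exactly as you argue: the proposition as stated only asks for $v\in L^{2}((0,T);L^{2}_{0}(\mathbb{T}))$ and $c(t)\in L^{2}(0,T)$, both of which you verify via the embedding $X^{T}_{0,\frac{1}{2}}\hookrightarrow X^{T}_{0,0}=L^{2}((0,T);L^{2}(\mathbb{T}))$ from Proposition \ref{prop1}, the zero-mean hypothesis, and the fact that a constant lies in $L^{2}(0,T)$. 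This is also how the paper itself uses Proposition \ref{UCP2} later, in the proof of the observability inequality (Proposition \ref{Ob1}), so there is no hidden obstruction. What each approach buys: yours is more economical and makes clear that the smoothing step is not logically needed for the statement read in the a.e.\ sense natural to $X^{T}_{0,\frac{1}{2}}$ (whose elements are equivalence classes, $X_{s,\frac{1}{2}}$ not embedding into continuous-in-time functions); the paper's detour through Corollary \ref{rglarty51} yields the pointwise conclusion for a smooth representative and showcases the regularity machinery. One cosmetic divergence: you deduce $c=0$ by comparing $v=0$ a.e.\ with $v=c$ on the positive-measure set $\omega\times(0,T)$, while the paper gets it from $[v]=0$; these are equivalent here.
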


\begin{proof}
	Using Corollary \ref{rglarty51}, we infer that
	$v\in C^{\infty}(\mathbb{T}\times(0,T)).$
	It follows that $v(x,t)=c$ on $\mathbb{T}\times(0,T)$
	by Proposition \ref{UCP2} and from the fact that $[v]=0,$ we obtain $c=0.$
\end{proof}

\section{Local control for the  Benjamin equation}\label{section 4}
In this section we obtain the main results regarding  controllability of nonlinear Benjamin equation
\eqref{nonlinearintro3}
in $H_{0}^{s}(\mathbb{T})$ with $s\geq0.$ Observe that the result obtained in this section will imply Theorem \ref{Smalldatacontrol}. We proceed as in  \cite{10},
 by rewriting the system \eqref{nonlinearintro3} in its equivalent integral equation form,
$$u(t)=\displaystyle{U_{\mu}(t)u_{0}+
\int_{0}^{t}U_{\mu}(t-\tau)G(h)(\tau)\;d\tau -\int_{0}^{t}U_{\mu}(t-\tau)(2u\partial_{x}u)(\tau)\;d\tau},$$
where $U_{\mu}(t)$ is the semigroup 
defined in \eqref{semgru2}.
Let us choose $h=\Phi_{\mu}(u_{0},u_{1}+\omega(T,u)),$
where $\Phi_{\mu}$ is the bounded linear operator given in the Remark \ref{sol4}, and define
$$w(T,u):=\int_{0}^{T}U_{\mu}(T-\tau)(2u\partial_{x}u)(\tau)d\tau.
$$

According to Remark \ref{sol4},
the linear system \eqref{linearintro3} is exactly controllable in any positive time $T.$ Therefore, for given  $u_{0},\;u_{1}\in H^{s}_{0}(\mathbb{T}),$  we have
$${\normalsize
\begin{split}
u(t)&=U_{\mu}(t)u_{0}+\int_{0}^{t}U_{\mu} (t-\tau)(G(\Phi_{\mu}(u_{0},u_{1}+w(T,u))))(\tau) d\tau -\int_{0}^{t}U_{\mu}(t-\tau) (2u\partial_{x}u)(\tau) d\tau,
\end{split}}$$
and $u(0)=u_{0},\;\;u(T)=u_{1}.$
This suggests that we should consider the map
\begin{equation}\label{gaop1}
{
\begin{split}
\Gamma(u)&:=U_{\mu}(t)u_{0}+
	\int_{0}^{t}U_{\mu}(t-\tau)(G(\Phi_{\mu}(u_{0},u_{1}+\omega(T,u))))(\tau)\;d\tau -\int_{0}^{t}U_{\mu}(t-\tau)(2u\partial_{x}u)(\tau)\;d\tau,
\end{split}}
\end{equation}
and show that $\Gamma$ is a contraction in an appropriate space. The fixed point $u$ of $\Gamma$ is a mild solution
of IVP \eqref{nonlinearintro3} with $h=\Phi_{\mu}(u_{0},u_{1}+w(T,u))$ and satisfies $u(x,T)=u_{1}(x).$
To complete this argument, we use the Bourgain's space associated to the Benjamin equation and show that  $\Gamma$ is a contraction mapping. This is the content of the following result.

\begin{thm}[Small Data Control]\label{Smalldatacontrol2}
Let $T>0,$ $s\geq0,$ $\mu \in \mathbb{R},$ and $\alpha>0$   be given. Then there exists a $\delta>0$ such that
for any $u_{0},u_{1}\in H_{0}^{s}(\mathbb{T})$ with $[u_{0}]=[u_{1}]=0$  and
$$\|u_{0}\|_{H_{0}^{s}(\mathbb{T})}\leq \delta,\;\;\;\;\;\|u_{1}\|_{H_{0}^{s}(\mathbb{T})}\leq \delta,$$
one can find a control $h\in L^{2}([0,T];H_{0}^{s}(\mathbb{T}))$ such that the IVP
\eqref{nonlinearintro3}
has a unique solution $u\in C([0,T];H_{0}^{s}(0,2\pi))$ satisfying
$$u(x,0)=u_{0}(x), \;\;\;\;u_{1}(x,T)=u_{1}(x),\;\;\;\text{for all}\;x\in \mathbb{T}.$$
\end{thm}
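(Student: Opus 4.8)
The plan is to realize the solution as a fixed point of the map $\Gamma$ defined in \eqref{gaop1}, working in a closed ball of the Bourgain space $Z_{s,\frac{1}{2}}^{T}$. Set $B_{R}:=\{u\in Z_{s,\frac{1}{2}}^{T}:\|u\|_{Z_{s,\frac{1}{2}}^{T}}\leq R\}$ for a radius $R$ to be fixed later. The choice $h=\Phi_{\mu}(u_{0},u_{1}+w(T,u))$ is tailored so that, by the defining property of $\Phi_{\mu}$ in Remark \ref{sol4}, the free evolution plus the control Duhamel term evaluated at time $T$ equals $u_{1}+w(T,u)$; subtracting the nonlinear Duhamel term $w(T,u)$ then forces $\Gamma(u)(T)=u_{1}$. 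Hence any fixed point of $\Gamma$ in $B_{R}$ automatically satisfies both endpoint conditions $u(0)=u_{0}$ and $u(T)=u_{1}$, and by Proposition \ref{contimbedded} it belongs to $C([0,T];H_{0}^{s}(\mathbb{T}))$. Note that $2u\partial_{x}u=\partial_{x}(u^{2})$ and $w(T,u)$ have zero mean, so everything stays in the mean-zero spaces where the bilinear estimate applies.

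First I would bound $\|\Gamma(u)\|_{Z_{s,\frac{1}{2}}^{T}}$ term by term. The free term is controlled by Corollary \ref{linstimateZ}, giving $\|U_{\mu}(t)u_{0}\|_{Z_{s,\frac{1}{2}}^{T}}\leq C\,\|u_{0}\|_{H_{0}^{s}(\mathbb{T})}$, while both Duhamel integrals are handled by Theorem \ref{intestimateZ3}. The nonlinear term is estimated by Corollary \ref{Biestimate3}, $\|\partial_{x}(u^{2})\|_{Z_{s,-\frac{1}{2}}^{T}}\leq C_{\alpha,s}T^{\epsilon}\|u\|_{Z_{s,\frac{1}{2}}^{T}}^{2}$. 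For the control term I first estimate $h$: since $G$ is bounded on $L^{2}([0,T];H_{0}^{s}(\mathbb{T}))$, using the embedding $X_{s,0}^{T}\hookrightarrow Z_{s,-\frac{1}{2}}^{T}$ (from Proposition \ref{prop1} and Corollary \ref{Yestimate1}), the operator bound for $\Phi_{\mu}$ in Remark \ref{sol4}, and the bound $\|w(T,u)\|_{H_{0}^{s}(\mathbb{T})}\leq C\,\|\partial_{x}(u^{2})\|_{Z_{s,-\frac{1}{2}}^{T}}\leq C_{\alpha,s}T^{\epsilon}\|u\|_{Z_{s,\frac{1}{2}}^{T}}^{2}$ (obtained from Theorem \ref{intestimateZ3} and Proposition \ref{contimbedded}), I arrive at an estimate of the form
\begin{equation*}
\|\Gamma(u)\|_{Z_{s,\frac{1}{2}}^{T}}\leq C_{1}\big(\|u_{0}\|_{H_{0}^{s}(\mathbb{T})}+\|u_{1}\|_{H_{0}^{s}(\mathbb{T})}\big)+C_{2}\,\|u\|_{Z_{s,\frac{1}{2}}^{T}}^{2},
\end{equation*}
where $C_{1},C_{2}$ depend on $s,\alpha,\mu,g$ and $T$. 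An entirely analogous computation, using that $\Phi_{\mu}$ is linear so the control contributions to $\Gamma(u)-\Gamma(v)$ reduce to $\Phi_{\mu}(0,w(T,u)-w(T,v))$, and that $\partial_{x}(u^{2})-\partial_{x}(v^{2})=\partial_{x}\!\big((u-v)(u+v)\big)$, yields the Lipschitz bound $\|\Gamma(u)-\Gamma(v)\|_{Z_{s,\frac{1}{2}}^{T}}\leq C_{2}\big(\|u\|_{Z_{s,\frac{1}{2}}^{T}}+\|v\|_{Z_{s,\frac{1}{2}}^{T}}\big)\|u-v\|_{Z_{s,\frac{1}{2}}^{T}}$.

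To close the argument I choose $R:=4C_{1}\delta$ and require $\delta$ small enough that $8C_{1}C_{2}\delta<1$. Then the first estimate gives $\|\Gamma(u)\|_{Z_{s,\frac{1}{2}}^{T}}\leq 2C_{1}\delta+C_{2}R^{2}\leq R$ for $u\in B_{R}$, so $\Gamma$ maps $B_{R}$ into itself, while the Lipschitz bound gives a contraction constant $2C_{2}R=8C_{1}C_{2}\delta<1$. The Banach fixed point theorem then produces a unique $u\in B_{R}$ with $\Gamma(u)=u$; this $u$ is the desired mild solution, with control $h=\Phi_{\mu}(u_{0},u_{1}+w(T,u))\in L^{2}([0,T];H_{0}^{s}(\mathbb{T}))$, satisfying the two endpoint conditions as explained above.

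The genuinely hard analytic work has already been carried out in the bilinear estimate, Corollary \ref{Biestimate3}, whose factor $T^{\epsilon}$ with $\epsilon>0$ is what lets the nonlinear and control contributions be absorbed; once it is available the fixed-point scheme is routine. The point requiring care will be the bookkeeping of the mean-zero condition: one must check at each step that $\partial_{x}(u^{2})$, the auxiliary state $w(T,u)$, and the control output $G(h)$ all remain in the zero-mean spaces so that Remark \ref{sol4}, Corollary \ref{Biestimate3}, and the embedding $X_{s,0}^{T}\hookrightarrow Z_{s,-\frac{1}{2}}^{T}$ are applicable, and to verify the exact-endpoint identity $\Gamma(u)(T)=u_{1}$ directly from the construction of $h$ through $\Phi_{\mu}$.
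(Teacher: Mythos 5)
Your proposal is correct and follows essentially the same route as the paper: the same fixed-point map $\Gamma$ from \eqref{gaop1} on a small ball of $Z^{T}_{s,\frac{1}{2}}$, with the self-mapping and contraction bounds assembled from Corollary \ref{linstimateZ}, Theorem \ref{intestimateZ3}, Corollary \ref{Biestimate3}, the boundedness of $G$ and $\Phi_{\mu}$ (Remark \ref{sol4}), and the embedding $X^{T}_{s,0}\hookrightarrow Z^{T}_{s,-\frac{1}{2}}$ via Proposition \ref{prop1} and Corollary \ref{Yestimate1}. Your parameter choice $R=4C_{1}\delta$ with $8C_{1}C_{2}\delta<1$ is just a cosmetic variant of the paper's condition \eqref{eme}, and your verification of the endpoint identity $\Gamma(u)(T)=u_{1}$ and of the mean-zero bookkeeping matches the paper's argument.
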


\begin{proof} Let $T>0$ be given. For $s\geq0$ we will show that there exists $M>0$ such that $\Gamma$ defined by \eqref{gaop1} is a contraction on the ball
$B_{M}(0):=\Big\{u\in Z^{T}_{s,\frac{1}{2}}:\;[u]=0,\;\|u\|_{Z^{T}_{s,\frac{1}{2}}}\leq M\Big\}.$\\

In fact, using  Corollary \ref{linstimateZ}, Theorem \ref{intestimateZ3},
and  Corollary \ref{Biestimate3}, we obtain
\begin{equation}\label{princontrac1}
\begin{split}
\| \Gamma (u)\|_{Z^{T}_{s,\frac{1}{2}}}
&\leq c_{1}\displaystyle{ \left(\| u_{0} \|_{H^{s}_{0}(\mathbb{T})}+
\| G(\Phi_{\mu}(u_{0},u_{1}+w(T,u))) \|_{Z^{T}_{s,-\frac{1}{2}}}+
\| u \|^{2}_{Z^{T}_{s,\frac{1}{2}}}\right)},
\end{split}
\end{equation}
where $c_{1}$ is a positive constant depending on $s,T,$ and $\alpha.$  Using that  $G$ (see \eqref{EQ1}) is a bounded operator, the immersion $X^{T}_{s,0}\hookrightarrow X^{T}_{s,-\frac{1}{2}}$ and Remark \ref{sol4}, we get
\begin{equation}\label{princontrac3}
\begin{split}
\| G(\Phi_{\mu}(u_{0},u_{1}+w(T,u))) \|_{X^{T}_{s,-\frac{1}{2}}}
&\leq    c  \| G( \Phi_{\mu}(u_{0},u_{1}+\omega(T,u))) \|_{X^{T}_{s,0}}\\
&\leq    c  \|  \Phi_{\mu}(u_{0},u_{1}+\omega(T,u)) \|_{L^{2}([0,T], H^{s}_{0}(\mathbb{T}))}\\
&\leq c_{2}\left(\| u_{0} \|_{H_{0}^{s}(\mathbb{T})} +
\| u_{1} \|_{H_{0}^{s}(\mathbb{T})} + \| w(T,u) \|_{H_{0}^{s}(\mathbb{T})}\right),
\end{split}
\end{equation}
where $c_{2}>0$ depends on $s,T,$ and $g.$
Using   Proposition \ref{contimbedded}, Theorem \ref{intestimateZ3}
and Corollary \ref{Biestimate3}, we obtain
\begin{equation}\label{princontrac4}
\begin{split}
\| w(T,u)) \|_{H_{0}^{s}(\mathbb{T})}
& \leq \sup_{0\leq t \leq T} \left\|\displaystyle{ \int_{0}^{t}U_{\mu}(t-\tau)(2u\partial_{x}u)(\tau)d\tau} \right\|_{H_{0}^{s}(\mathbb{T})}\\
& \leq c \left\| \displaystyle{ \int_{0}^{t}U_{\mu}(t-\tau)(\partial_{x}(u^{2}))(\tau)d\tau} \right\|_{Z^{T}_{s,\frac{1}{2}}}\\
& \leq c_{3}\| u \|_{Z^{T}_{s,\frac{1}{2}}}^{2},
\end{split}
\end{equation}
where $c_{3}>0$ depends on $s,\,\alpha$ and $T.$
From \eqref{princontrac3} and \eqref{princontrac4},  we have
\begin{equation}\label{part1}
	\begin{split}
\| G(\Phi_{\mu}(u_{0},u_{1}+w(T,u))) \|_{X^{T}_{s,-\frac{1}{2}}}&\leq
c_{2}\left(\| u_{0} \|_{H_{0}^{s}(\mathbb{T})} +
\| u_{1} \|_{H_{0}^{s}(\mathbb{T})} + c_{3} \| u \|_{Z^{T}_{s,\frac{1}{2}}}^{2}\right).
\end{split}
\end{equation}

From Corollary \ref{Yestimate1} with $b=-1$ and $0<\epsilon\leq\frac{1}{2},$ we get
\begin{equation}\label{princontrac5}
\begin{split}
\| G(\Phi_{\mu}(u_{0},u_{1}+w(T,u))) \|_{Y^{T}_{s,-1}}
& \leq c(\epsilon)  \| G(\Phi_{\mu}(u_{0},u_{1}+w(T,u))) \|_{X^{T}_{s,-1+\frac{1}{2}+\epsilon}}\\
& \leq c(\epsilon)   \| G(\Phi_{\mu}(u_{0},u_{1}+w(T,u))) \|_{X^{T}_{s,0}}.
\end{split}
\end{equation}

From inequality \eqref{princontrac5} and the same calculations as above, we obtain
\begin{equation}\label{princontrac6}
\begin{split}
\| G(\Phi_{\mu}(u_{0},u_{1}+w(T,u))) \|_{Y^{T}_{s,-1}}
&\leq  c_{4}(\epsilon)\left(\| u_{0} \|_{H_{0}^{s}(\mathbb{T})} +
\| u_{1} \|_{H_{0}^{s}(\mathbb{T})} +c_{3}\;\| u \|_{Z^{T}_{s,\frac{1}{2}}}^{2} \right),
\end{split}
\end{equation}
where $c_{4}(\epsilon)>0$  depends
on $s,\,\alpha,\,T$ and $g.$ From \eqref{part1} and \eqref{princontrac6}, we infer that
\begin{equation}\label{princontrac7}
\begin{split}
\| G(\Phi_{\mu}(u_{0},u_{1}+w(T,u))) \|_{Z^{T}_{s,-\frac{1}{2}}}
&\leq (c_{2}+c_{4})\left(\| u_{0} \|_{H_{0}^{s}(\mathbb{T})} +
\| u_{1} \|_{H_{0}^{s}(\mathbb{T})}\right)
+ (c_{2}c_{3}+c_{4}c_{3})\| u \|_{Z^{T}_{s,\frac{1}{2}}}^{2}.
\end{split}
\end{equation}

Combining  \eqref{princontrac1} and \eqref{princontrac7}, we obtain that there exists $C=C_{s, \epsilon, \alpha, g, T}>0$ such that
	\begin{equation}\label{contra-9}
\| \Gamma (u)\|_{Z^{T}_{s,\frac{1}{2}}}
 \leq C (\| u_{0} \|_{H^{s}_{0}(\mathbb{T})} +\| u_{1} \|_{H^{s}_{0}(\mathbb{T})})
 + C \| u \|_{Z^{T}_{s,\frac{1}{2}}}^{2}.
 \end{equation}

Choosing $\delta > 0$ and $M > 0$ such that
\begin{equation}\label{eme}
	CM<\frac{1}{4}\;\;\;\;\text{and}
	\;\;\;\;2C\delta+CM^{2}\leq M,
\end{equation}
we obtain from \eqref{contra-9}  that $\| \Gamma (u)\|_{Z^{T}_{\frac{1}{2},s}}\leq M,$ for each
$u\in B_{M}(0),$ provided that $\| u_{0} \|_{H^{s}_{0}(\mathbb{T})}\leq \delta$ and
$\| u_{1} \|_{H^{s}_{0}(\mathbb{T})}\leq \delta.$

Furthermore,  for all $\; u,v \in B_{M}(0),$
 with a similar computations as above, we can obtain
$$
\| \Gamma (u)-\Gamma (v) \|_{Z^{T}_{s,\frac{1}{2}}}
\leq C \| u-v \|_{Z^{T}_{s,\frac{1}{2}}} \| u+v \|_{Z^{T}_{s,\frac{1}{2}}}
\leq \frac{1}{2} \| u-v \|_{Z^{T}_{s,\frac{1}{2}}}.
$$

Thus the map $\Gamma$ is a contraction on $B_{M}(0)$ provided that $\delta$ and $M$ are chosen according
to (\ref{eme}) with $\| u_{0} \|_{H^{s}_{0}(\mathbb{T})}\leq \delta,\;\;\text{and}\;\;
\| u_{1} \|_{H^{s}_{0}(\mathbb{T})}\leq \delta.$
\end{proof}

Note that Theorem \ref{Smalldatacontrol} is a direct
consequence of Theorem \ref{Smalldatacontrol2}.

\section{Stabilization of the Benjamin Equation}\label{section 5}
In this section we study the stabilization problem for the Benjamin equation in $H_{0}^{s}(\mathbb{T}),$ with $s\geq0.$  Consider the IVP,
\begin{equation}\label{atabilizationNL1}
\left \{
\begin{array}{l l}
\partial_{t}u-\partial^{3}_{x}u
-\alpha \mathcal{H}\partial^{2}_{x}u+2\mu\partial_{x} u+2u\partial_{x}u =-K_{\lambda}u,&  t>0,\;\;x\in \mathbb{T}\\
u(x,0)=u_{0}(x), & x\in \mathbb{T},
\end{array}
\right.
\end{equation}
 with $[u]=0,$ $\lambda\geq 0.$
 The feedback control law $K_{\lambda}$ is as defined in \eqref{feedback}. We first check that the system \eqref{atabilizationNL1} is globally well-posed in $H_{0}^{s}(\mathbb{T})$ for any $s\geq0.$
Let $U_{\mu}(t)$
be the group defined in \eqref{semgru2} that describes solution $u$ of the linear IVP associated to \eqref{atabilizationNL1}.
The following estimate is needed.
\begin{lem}\label{LWP7}
	For any $0<\epsilon<1$ there exists a positive constant
	$C(\epsilon)$ such that
	\begin{align*}
	\displaystyle{\left\|
		\int_{0}^{t}U_{\mu}(t-\tau)
		(K_{\lambda}v)(\tau)\;d\tau
		\right\|_{Z^{T}_{s,\frac{1}{2}}}}
&	\leq C(\epsilon)\;T^{1-\epsilon}
	\displaystyle{\|
		v\|_{Z^{T}_{s,\frac{1}{2}}}}.
	\end{align*}
\end{lem}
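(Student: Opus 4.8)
The plan is to reduce the estimate, via the Duhamel bound \eqref{eq-ll4} of Theorem \ref{intestimateZ3}, to a control of $\|K_\lambda v\|_{Z^T_{s,-\frac12}}$, and then to extract the power $T^{1-\epsilon}$ from the time-localization Proposition \ref{Xestimate1}. Recalling that $Z_{s,b}:=X_{s,b}\cap Y_{s,b-\frac12}$, applying \eqref{eq-ll4} with $f=K_\lambda v$ gives
\[
\Big\|\int_0^t U_\mu(t-\tau)(K_\lambda v)(\tau)\,d\tau\Big\|_{Z^T_{s,\frac12}}\le C_{\eta,T}\,\|K_\lambda v\|_{Z^T_{s,-\frac12}}=C_{\eta,T}\big(\|K_\lambda v\|_{X^T_{s,-\frac12}}+\|K_\lambda v\|_{Y^T_{s,-1}}\big),
\]
where $C_{\eta,T}$ may be taken independent of $T$ once $T\le 1$. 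Thus it suffices to bound the two norms on the right.

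The crucial point is that $K_\lambda$ must \emph{not} be estimated directly on a Bourgain space carrying a nonzero modulation weight $\langle\tau-\phi(k)\rangle$: being of the form $GG^{\ast}L_\lambda^{-1}$ (resp.\ $GG^{\ast}$), see \eqref{feedback}, it mixes the spatial frequencies $k$ through the multiplier $g(x)$ and so does not respect the weight $\langle\tau-\phi(k)\rangle$. It is, however, a time-independent operator which is bounded on $H_0^s(\mathbb{T})$ for every $s\ge 0$, by Lemma \ref{st9} and Remark \ref{st36} together with the boundedness of $G,G^{\ast}$. Consequently $K_\lambda$ is bounded on $X^T_{s,0}=L^2([0,T];H^s_p(\mathbb{T}))$: if $\tilde v$ extends $v$, then $K_\lambda\tilde v$ extends $K_\lambda v$ and $\|K_\lambda\tilde v\|_{X_{s,0}}\le\|K_\lambda\|\,\|\tilde v\|_{X_{s,0}}$, so taking the infimum over extensions yields $\|K_\lambda v\|_{X^T_{s,0}}\le \|K_\lambda\|\,\|v\|_{X^T_{s,0}}$. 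This is the only place where the structure of $K_\lambda$ enters.

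Next I would gain the power of $T$ by sandwiching $K_\lambda$ between two applications of Proposition \ref{Xestimate1}. Fix small $\delta_1,\delta_2>0$. For $v$, taking $b=\tfrac12-\delta_2$, $b'=0$ and using $X^T_{s,\frac12}\hookrightarrow X^T_{s,\frac12-\delta_2}$ (Proposition \ref{prop1}) gives $\|v\|_{X^T_{s,0}}\le C\,T^{\frac12-\delta_2}\|v\|_{X^T_{s,\frac12}}$. For $K_\lambda v$, taking $b=0$, $b'=-\tfrac12+\delta_1$ gives $\|K_\lambda v\|_{X^T_{s,-\frac12+\delta_1}}\le C\,T^{\frac12-\delta_1}\|K_\lambda v\|_{X^T_{s,0}}$; combining this with the embedding $X^T_{s,-\frac12+\delta_1}\hookrightarrow X^T_{s,-\frac12}$ and the boundedness of $K_\lambda$ on $X^T_{s,0}$ yields
\[
\|K_\lambda v\|_{X^T_{s,-\frac12}}\le C\,T^{\frac12-\delta_1}\|v\|_{X^T_{s,0}}\le C\,T^{1-\delta_1-\delta_2}\|v\|_{X^T_{s,\frac12}}.
\]
For the $Y$-norm I would first invoke Corollary \ref{Yestimate1} with its parameter $\epsilon'$ chosen so that the target index is $-1$, i.e. $\|K_\lambda v\|_{Y^T_{s,-1}}\le C_{\epsilon'}\|K_\lambda v\|_{X^T_{s,-\frac12+\epsilon'}}$, and then repeat the two-step argument above to obtain $\|K_\lambda v\|_{Y^T_{s,-1}}\le C\,T^{1-\epsilon'-\delta_2}\|v\|_{X^T_{s,\frac12}}$. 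Choosing $\delta_1,\delta_2,\epsilon'$ so that each exponent is $\ge 1-\epsilon$ and using $\|v\|_{X^T_{s,\frac12}}\le\|v\|_{Z^T_{s,\frac12}}$ completes the proof.

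The only delicate point, and hence the main obstacle, is the one isolated in the second paragraph: one must route through $X^T_{s,0}=L^2_tH^s_p$, where $K_\lambda$ acts at each fixed time as a bounded spatial operator, rather than estimating it on $X^T_{s,\frac12}$ where the frequency mixing would destroy the modulation structure. Everything else is bookkeeping of the exponents produced by the two uses of Proposition \ref{Xestimate1} — one lowering the modulation index of $K_\lambda v$ toward $-\tfrac12$, the other transferring $v$ from index $\tfrac12$ to $0$ — each contributing a factor close to $T^{\frac12}$ and together producing the claimed $T^{1-\epsilon}$.
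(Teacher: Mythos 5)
Your proof is correct and follows essentially the same route as the paper, which cites exactly the ingredients you use: the Duhamel bound of Theorem \ref{intestimateZ3} to reduce to $\|K_{\lambda}v\|_{Z^{T}_{s,-\frac{1}{2}}}$, Corollary \ref{Yestimate1} to convert the $Y_{s,-1}$-norm into an $X$-norm, two applications of Proposition \ref{Xestimate1} to harvest $T^{1-\epsilon}$, and the boundedness of $K_{\lambda}=GG^{\ast}L_{\lambda}^{-1}$ on $H_{0}^{s}(\mathbb{T})$ (Lemma \ref{st9}, Remark \ref{st36}) applied fiberwise in time on $X^{T}_{s,0}$ — the same scheme as Lemma 4.2 of \cite{14}. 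Your isolation of the key point — that $K_{\lambda}$ must be estimated on $X^{T}_{s,0}=L^{2}_{t}H^{s}_{p}$, where it acts as a time-independent bounded spatial operator, rather than on a space with nonzero modulation weight — is precisely the mechanism the paper's reference relies on.
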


\begin{proof} The proof follows from Theorem \ref{intestimateZ3},  Propositions \ref{Xestimate1},  \ref{st36}, Corollary \ref{Yestimate1},
with  similar arguments as in the proof of Lemma 4.2 in \cite{14}.
\end{proof}

\begin{thm}\label{LWP}
Let	$s\geq 0,$ $\lambda\geq 0,$  $\alpha>0$  and
$\mu \in \mathbb{R},$ be given.
For any $u_{0}\in H_{0}^{s}(\mathbb{T})$ there exists
a maximal time of existence $T^{\ast}>0$ and a unique
solution $u\in C([0,T^{\ast});H_{0}^{s}(\mathbb{T}))$ to the IVP \eqref{atabilizationNL1}
such that u satisfies the following properties:
\begin{itemize}
	\item [i)]  For every interval $[0,T]\subset [0,T^{\ast}),$
	$\displaystyle{u\in Z^{T}_{s,\frac{1}{2}}
	\cap C([0,T];L_{0}^{2}(\mathbb{T}))}.$
	
		\item [ii)] (Blow-up Alternative) If $T^{\ast}<+\infty,$ then $\displaystyle{\lim_{t\longrightarrow T^{\ast}}
	\|u(t)\|_{H_{0}^{s}(\mathbb{T})}=+\infty}.$
	
	\item [iii)] $u$ depends continuously on the
	initial data in the following sense: If
	$\lim\limits_{n \rightarrow \infty} u_{n,0}= u_{0}$ in $H_{0}^{s}(\mathbb{T})$
	and if $u_{n}$ is the corresponding maximal solution
	of the IVP \eqref{atabilizationNL1} with initial data
	$u_{n,0},$ then $\lim\limits_{n \rightarrow \infty} u_{n}= u$ in
		$Z^{T}_{s,\frac{1}{2}}$,
	for every interval $[0,T]\subset [0,T^{\ast})$. In particular,
$\lim\limits_{n \rightarrow \infty} u_{n}= u$ in
$C([0,T];H_{0}^{s}(\mathbb{T}))$,
for every interval $[0,T]\subset [0,T^{\ast})$.	
\end{itemize}
	Furthermore, denoting  $S(t)u_{0}$ the unique solution $u$
	of the IVP \eqref{atabilizationNL1} corresponding to
	the initial data $u_{0},$  the operator
    $S(t):H_{0}^{s}(\mathbb{T})\longrightarrow
	Z^{T}_{s,\frac{1}{2}},$ defined by
	\begin{equation}\label{LWP3}
     S(t)u_{0}=u
	\end{equation}
	is continuous on every interval $[0,T]
	\subset [0,T^{\ast}).$
\end{thm}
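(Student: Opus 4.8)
The plan is to establish all the assertions by a single contraction-mapping argument in the Bourgain space $Z^{T}_{s,\frac{1}{2}}$, exploiting the fact that every nonlinear and feedback contribution carries a \emph{positive} power of the time $T$. First I would recast \eqref{atabilizationNL1} in its equivalent Duhamel form and define, for $T>0$ to be fixed later, the map
$$\Psi(u)(t):=U_{\mu}(t)u_{0}-\int_{0}^{t}U_{\mu}(t-\tau)(2u\partial_{x}u)(\tau)\,d\tau-\int_{0}^{t}U_{\mu}(t-\tau)(K_{\lambda}u)(\tau)\,d\tau,$$
acting on the mean-zero ball $B_{M}:=\{u\in Z^{T}_{s,\frac{1}{2}}:[u]=0,\ \|u\|_{Z^{T}_{s,\frac{1}{2}}}\leq M\}$. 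A fixed point of $\Psi$ is precisely a mild solution of \eqref{atabilizationNL1} on $[0,T]$.

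Next I would derive the basic estimate. Using Corollary \ref{linstimateZ} for the free term, Theorem \ref{intestimateZ3} combined with Corollary \ref{Biestimate3} for the nonlinear term, and Lemma \ref{LWP7} for the feedback term, one obtains
$$\|\Psi(u)\|_{Z^{T}_{s,\frac{1}{2}}}\leq C_{1}T^{\frac{1}{2}}\|u_{0}\|_{H^{s}_{0}(\mathbb{T})}+C_{2}T^{\epsilon}\|u\|^{2}_{Z^{T}_{s,\frac{1}{2}}}+C_{3}T^{1-\epsilon}\|u\|_{Z^{T}_{s,\frac{1}{2}}}.$$
A parallel computation, handling $\partial_{x}(u^{2})-\partial_{x}(v^{2})=\partial_{x}\big((u-v)(u+v)\big)$ through the bilinear estimate of Corollary \ref{Biestimate2} and using the linearity of $K_{\lambda}$, gives
$$\|\Psi(u)-\Psi(v)\|_{Z^{T}_{s,\frac{1}{2}}}\leq\Big(C_{2}T^{\epsilon}\big(\|u\|_{Z^{T}_{s,\frac{1}{2}}}+\|v\|_{Z^{T}_{s,\frac{1}{2}}}\big)+C_{3}T^{1-\epsilon}\Big)\|u-v\|_{Z^{T}_{s,\frac{1}{2}}}.$$
Since $[u_{0}]=0$, and both $\partial_{x}(u^{2})$ and $K_{\lambda}u$ have zero spatial mean (by the structure of $G$ in \eqref{EQ1} and of $K_{\lambda}$ in \eqref{feedback}, together with the invariance of $H^{s}_{0}(\mathbb{T})$ under $L_{\lambda}^{-1}$ from Lemma \ref{st9}), the mean value is preserved, so $\Psi$ maps the mean-zero ball into itself. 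Fixing $M:=2C_{1}T^{\frac{1}{2}}\|u_{0}\|_{H^{s}_{0}(\mathbb{T})}$ and then choosing $T$ small enough that $C_{2}T^{\epsilon}M+C_{3}T^{1-\epsilon}\leq\tfrac12$, the two displays show that $\Psi:B_{M}\to B_{M}$ is a contraction. The Banach fixed point theorem then produces a unique $u\in Z^{T}_{s,\frac{1}{2}}$ with $[u]=0$; Proposition \ref{contimbedded} yields $u\in C([0,T];H^{s}_{p}(\mathbb{T}))$, and the mean-zero property places $u\in C([0,T];L^{2}_{0}(\mathbb{T}))$, establishing item (i).

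For the remaining assertions I would argue in the standard way. Because the local existence time produced above depends only on $\|u_{0}\|_{H^{s}_{0}(\mathbb{T})}$, one defines $T^{\ast}$ as the supremum of times admitting a solution in $C([0,T];H^{s}_{0}(\mathbb{T}))\cap Z^{T}_{s,\frac{1}{2}}$, and uniqueness on overlapping intervals glues these into a maximal $u\in C([0,T^{\ast});H^{s}_{0}(\mathbb{T}))$. If $T^{\ast}<\infty$ while $\limsup_{t\to T^{\ast}}\|u(t)\|_{H^{s}_{0}(\mathbb{T})}<\infty$, then restarting the local construction from a time near $T^{\ast}$ would extend the solution past $T^{\ast}$, contradicting maximality; this gives the blow-up alternative (ii). Continuous dependence (iii) follows by applying the difference estimate to solutions with data $u_{n,0}\to u_{0}$: on any fixed $[0,T]\subset[0,T^{\ast})$ the solutions eventually lie in a common ball, so the contraction estimate yields $\|u_{n}-u\|_{Z^{T}_{s,\frac{1}{2}}}\leq C\|u_{n,0}-u_{0}\|_{H^{s}_{0}(\mathbb{T})}\to0$, and Proposition \ref{contimbedded} transfers this to $C([0,T];H^{s}_{0}(\mathbb{T}))$; the stated continuity of $S(t)$ on $[0,T]\subset[0,T^{\ast})$ is exactly this. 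The main obstacle is the feedback term $K_{\lambda}u=GG^{\ast}L_{\lambda}^{-1}u$, which for $\lambda>0$ involves the inverse $L_{\lambda}^{-1}$ and is only linear (not spatially smoothing), so it cannot be absorbed in the nonlinear balance; the crucial fact, already packaged in Lemma \ref{LWP7}, is that its Duhamel integral gains a factor $T^{1-\epsilon}$, rendering it small for short times, while Lemma \ref{st9} and Remark \ref{st36} guarantee that $L_{\lambda}^{-1}$ keeps the iterates in the zero-mean space $H^{s}_{0}(\mathbb{T})$.
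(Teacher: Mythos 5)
Your proposal is correct and follows essentially the same route as the paper: the same Duhamel map, the same ingredients (Corollary \ref{linstimateZ}, Theorem \ref{intestimateZ3}, Corollaries \ref{Biestimate2}--\ref{Biestimate3}, Lemma \ref{LWP7}), a contraction on the mean-zero ball with existence time depending only on $\|u_{0}\|_{H_{0}^{s}(\mathbb{T})}$, followed by the standard restart argument for the blow-up alternative and the difference estimate for continuous dependence, exactly as in the paper's proof. The only adjustment needed is in the constants: with your choice $C_{2}T^{\epsilon}M+C_{3}T^{1-\epsilon}\leq\tfrac12$ the Lipschitz factor in your second display is only bounded by $1$, so you should impose the slightly stronger condition $2C_{2}T^{\epsilon}M+C_{3}T^{1-\epsilon}\leq\tfrac12$ (as the paper does in \eqref{LWP17}) to get a genuine contraction.
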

\begin{proof}
 We rewrite the IVP \eqref{atabilizationNL1}
	in its integral form and
	for given $u_{0}\in H_{0}^{s}(\mathbb{T}),$ $0<T<1,$  we define the map
$$\Gamma(v)=\displaystyle{U_{\mu}(t)u_{0}-
		\int_{0}^{t}U_{\mu}(t-\tau)
		(2v\partial_{x}v)(\tau)\;d\tau
		-\int_{0}^{t}U_{\mu}(t-\tau)
		(K_{\lambda}v)(\tau)\;d\tau}.$$
	
Observe that,
$${
\begin{split}
\Gamma(v_{1})-\Gamma(v_{2})
	&=\displaystyle{
	\int_{0}^{t}U_{\mu}(t-\tau)
				[\partial_{x}((v_{2}-v_{1})(v_{2}+v_{1}))](\tau)\;d\tau
				+\int_{0}^{t}U_{\mu}(t-\tau)
				[K_{\lambda}(v_{2}-v_{1})](\tau)\;d\tau}.
\end{split}}$$
	
	It follows then from Corollary \ref{linstimateZ}, Theorem \ref{intestimateZ3}, Corollary \ref{Biestimate3}, and  Lemma \ref{LWP7} that there exists
	some positive constants $C_{1},$ $C_{2},$ $C_{3},$ 
	$0<\theta<\frac{1}{6},$ and $0<\epsilon<1$ such that
\begin{equation}\label{LWP14}
\begin{split}
\|\Gamma(v)\|_{Z^{T}_{s,\frac{1}{2}}}
&\leq
\;\displaystyle{
	C_{1}\;\|u_{0}\|_{H^{s}_{0}(\mathbb{T})}
	+C_{2}\;T^{\theta}\;
	\left\|v\right\|^{2}_{Z^{T}_{s,\frac{1}{2}}}
	+
	C_{3}\;T^{1-\epsilon}\;
	\left\|v\right\|_{Z^{T}_{s,\frac{1}{2}}}},
\end{split}
\end{equation}
\begin{equation}\label{LWP16}
\begin{split}
\|\Gamma(v_{1})-\Gamma(v_{2})\|_{Z^{T}_{s,\frac{1}{2}}}
&\leq
\;\displaystyle{
	C_{2}\;T^{\theta}\;
	\left\|v_{2}-v_{1}\right\|_{Z^{T}_{s,\frac{1}{2}}}
	\left\|v_{2}+v_{1}\right\|_{Z^{T}_{s,\frac{1}{2}}}
	+
	C_{3}\;T^{1-\epsilon}\;
	\left\|v_{2}-v_{1}\right\|_{Z^{T}_{s,\frac{1}{2}}}},
\end{split}
\end{equation}
for any $v,v_{1},v_{2}\in
Z^{T}_{s,\frac{1}{2}}\cap L^{2}([0,T];L^{2}_{0}(\mathbb{T})).$
Pick $M=2\;C_{1}\;\|u_{0}\|_{H_{0}^{s}(\mathbf{T})},$ and
$T>0$ such that,
	\begin{equation}\label{LWP17}
	\begin{split}
	2\;C_{2}\;M\;T^{\theta}+
	C_{3}\;T^{1-\epsilon}&\leq\frac{1}{2}.
	\end{split}
	\end{equation}

Note that, if we choose $0<\epsilon<1$ such that
$0<\theta<1-\epsilon,$ then $T^{1-\epsilon}<T^{\theta}$
and the time $T>0$ can be taken as
\begin{equation}\label{LWP18}
\begin{split}
T=T(\|u_{0}\|_{H_{0}^{s}(\mathbb{T})})
&=\Big(\frac{1}{8C_{1}C_{2}
\|u_{0}\|_{H_{0}^{s}(\mathbb{T})}+2C_{3}}
\Big)^{\frac{1}{\theta}}.
\end{split}
\end{equation}

Therefore, from \eqref{LWP14} and \eqref{LWP16}, we infer that for any $v,v_{1},v_{2}\in B_{M}(0),$
$\displaystyle{\|\Gamma(v)\|_{Z^{T}_{s,\frac{1}{2}}}
	\leq M},$ and
$$\|\Gamma(v_{1})-\Gamma(v_{2})\|_{Z^{T}_{s,\frac{1}{2}}}
\leq
\;\displaystyle{
	\frac{1}{2}
	\left\|v_{2}-v_{1}\right\|_{Z^{T}_{s,\frac{1}{2}}}}.$$

Therefore, $\Gamma$ is a contraction map in the closed ball
$B_{M}(0)$ and its unique fixed point $u$ is the desired solution of \eqref{atabilizationNL1} in the space
 $Z^{T}_{s,\frac{1}{2}}\cap
L^{2}([0,T];L^{2}_{0}(\mathbb{T})).$ It follows from
the Proposition~\ref{contimbedded} that
$u\in C([0,T];H_{0}^{s}(\mathbb{T}))$ with
$${
	\begin{split}
\|u\|_{L^{\infty}([0,T];H_{0}^{s}(\mathbb{T}))}
&\leq C_{4}
\|u\|_{Z^{T}_{s,\frac{1}{2}}}
\leq 2C_{1}C_{4}
\|u_{0}\|_{H_{0}^{s}(\mathbb{T})},
\end{split}}$$
for some $C_{4}>0.$

Now we turn our attention to prove the blow-up alternative. We use the ideas given in \cite{Pavlovic and Tzirakis}.
Define
\begin{equation}\label{LWP22}
\begin{split}
T^{\ast}&:=\sup\big\{T>0\;:\;\exists\,!\; u\in Z^{T}_{s,\frac{1}{2}} \;\text{solution of  \eqref{atabilizationNL1} on [0,T]}\big\}.
\end{split}
\end{equation}

Assume $T^{\ast}<+\infty$ and
$\displaystyle{\lim_{t\longrightarrow T^{\ast}}
\|u(t)\|_{H_{0}^{s}(\mathbb{T})}<+\infty}.$
Then there exist a sequence $t_{j}\longrightarrow T^{\ast^{-}}$
and a positive constant $R$ such that $\|u(t_{j})\|_{H_{0}^{s}(\mathbb{T})}\leq R.$
 In particular, for
$k\in \mathbb{N}$ such that $t_{k}$ is close to $T^{\ast}$
we have that $\|u(t_{k})\|_{H_{0}^{s}(\mathbb{T})}\leq R.$
Now we solve the IVP \eqref{atabilizationNL1} with
inicial data $u(t_{k}).$
Thus from \eqref{LWP18} we obtain that
$T(\|u(t_{k})\|_{H_{0}^{s}(\mathbb{T})})\geq T(R).$ Therefore, we can extend our solution to the interval
$[t_{k},t_{k}+T(R)].$ If we pick  $k$ such that
$t_{k}+T(R)>T^{\ast},$ then it contradicts the definition of $T^{\ast}$ in \eqref{LWP22}.

Finally, we will prove the continuous dependence on the
initial data. Let $u_{0}\in H_{0}^{s}(\mathbb{T})$ and consider
a sequence $u_{n,0}$ in $H_{0}^{s}(\mathbb{T})$ such that
$\displaystyle{\lim_{n\longrightarrow \infty}
u_{n,0}=u_{0}},$
where the limit is taken in the $H_{0}^{s}(\mathbb{T})$ norm.
Let $u$ and $u_{n}$ be the maximal solutions of the IVP
\eqref{atabilizationNL1} in the spaces
$ C([0,T^{\ast});H_{0}^{s}(\mathbb{T}))$ and
$ C([0,T_{n}^{\ast});H_{0}^{s}(\mathbb{T}))$  with initial data $u_{0}$
 and $u_{n,0}$ respectively. For $n$ sufficiently large
 we have that $\|u_{n,0}\|_{H_{0}^{s}(\mathbb{T})}
 \leq 2 \|u_{0}\|_{H_{0}^{s}(\mathbb{T})}$. So, by the local
 theory there exists
$\displaystyle{T_{0}=T_{0}(\|u_{0}\|_{H_{0}^{s}(\mathbb{T})})
<T(\|u_{n,0}\|_{H_{0}^{s}(\mathbb{T})})},$
fulfilling \eqref{LWP17} such that $u$ and $u_{n}$ are defined in $[0,T_{0}]$ for any $n>N_{0}$. Observe that
$${
	\begin{split}
	\Gamma(u)-\Gamma(u_{n})
	&=\displaystyle{
		U_{\mu}(u_{0}-u_{n,0})+
		\int_{0}^{t}U_{\mu}(t-\tau)
		[\partial_{x}(u_{n}^{2}-u^{2})](\tau)\;d\tau}
		+\int_{0}^{t}U_{\mu}(t-\tau)
		[K_{\lambda}(u_{n}-u)](\tau)\;d\tau.
	\end{split}}$$

 With a similar procedure that led to
 \eqref{LWP16}, we get
 $${
 \begin{split}
 \|u-u_{n}\|_{Z^{T_{0}}_{s,\frac{1}{2}}}
 &\leq
 \;\displaystyle{C_{1}
 	\left\|u_{0}-u_{n,0}\right\|_{H_{0}^{s}(\mathbb{T})}}
 +
 \displaystyle{
 	\left(2 C_{2} T_{0}^{\theta} M 
 	+
 	C_{3} T_{0}^{1-\epsilon} \right)
 	\left\|u_{n}-u\right\|_{Z^{T_{0}}_{s,\frac{1}{2}}}}\\
 &\leq
 \displaystyle{C_{1}  
 	\left\|u_{0}-u_{n,0}\right\|_{H_{0}^{s}(\mathbb{T})}}
 +\displaystyle{
 	\frac{1}{2}
 	\left\|u-u_{n}\right\|_{Z^{T_{0}}_{s,\frac{1}{2}}}}.
 \end{split}}$$

  Thus
 \begin{equation}\label{LWP26}
 \|u-u_{n}\|_{Z^{T_{0}}_{s,\frac{1}{2}}}
 \leq
 \;\displaystyle{2C_{1}
 	\left\|u_{0}-u_{n,0}\right\|_{H_{0}^{s}(\mathbb{T})}}.
 \end{equation}

 From \eqref{LWP26} we infer
$$\lim_{n\longrightarrow \infty}
 \|u-u_{n}\|_{L^{\infty}([0,T_{0}],H_{0}^{s}(\mathbb{T}))}
 \leq \;C_{4}\;\lim_{n\longrightarrow \infty}
 \|u-u_{n}\|_{Z^{T_{0}}_{s,\frac{1}{2}}}=0.$$

 Iterating this property to cover the compact subset
 $[0,T]$ of $[0,T^{\ast})$ we finish the proof.
Also, the continuous dependence shows that the operator $S(t)$ defined in
\eqref{LWP3} is continuous. This completes the proof of the theorem.
\end{proof}

Next we study  the global existence of solutions to the IVP
\eqref{atabilizationNL1}.

\begin{thm}\label{GWP}
	Let	$s\geq 0,$ $\lambda\geq 0,$  $\alpha>0,$  and
	$\mu \in \mathbb{R}$ be given.
	For any $u_{0}\in H_{0}^{s}(\mathbb{T})$
 and for any $T>0$ there exists a unique
 solution $u\in Z^{T}_{s,\frac{1}{2}}\cap C([0,T];L_{0}^{2}(\mathbb{T}))$
  to the IVP \eqref{atabilizationNL1}. Furthermore, the following estimates hold
	\begin{equation}\label{GWP1}
	\begin{split}
	\|u\|_{Z^{T}_{s,\frac{1}{2}}}&
	\leq \beta_{T,s}(\|u_{0}\|_{L_{0}^{2}(\mathbb{T})})\;
	\|u_{0}\|_{H_{0}^{s}(\mathbb{T})},
	\end{split}
	\end{equation}
	where $\beta_{T,s}:\mathbb{R}^{+}\longrightarrow 
\mathbb{R}^{+}                                                                                                                                    $ is a nondecresing continuous function
	depending only on $T$ and $s.$ In particular,
	$u\in  C([0,T];H_{0}^{s}(\mathbb{T}))$ and
$\|u\|_{L^{\infty}([0,T];H_{0}^{s}(\mathbb{T}))}
	\leq C_{4} \beta_{T,s}(\|u_{0}\|_{L_{0}^{2}(\mathbb{T})})\;
	\|u_{0}\|_{H_{0}^{s}(\mathbb{T})},$
	where $C_{4}$ is a positive constant.
	Moreover, denoting $S(t)u_{0}$ the unique solution $u$
	of the IVP \eqref{atabilizationNL1} corresponding to
	the initial data $u_{0},$  the operator
	$S(t):H_{0}^{s}(\mathbb{T})\longrightarrow
 Z^{T}_{s,\frac{1}{2}},$ defined by \eqref{LWP3}
	is continuous in the interval $[0,T].$
\end{thm}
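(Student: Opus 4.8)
The plan is to promote the local theory of Theorem \ref{LWP} to a global one by combining an a priori $L^2$ bound with an iteration in $H_0^s(\mathbb{T})$, so that the blow-up alternative is ruled out on every finite interval.

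\textbf{Step 1 (a priori $L^2$ estimate).} First I would show that any solution $u$ of \eqref{atabilizationNL1} satisfies $\|u(\cdot,t)\|_{L_0^2(\mathbb{T})}\leq e^{\|K_\lambda\|t}\|u_0\|_{L_0^2(\mathbb{T})}$ on its interval of existence. Pairing the equation with $u$ in $L^2(\mathbb{T})$, the dispersive part contributes nothing because $L=\partial_t-\alpha\mathcal{H}\partial_x^2-\partial_x^3+2\mu\partial_x$ is skew-adjoint (Lemma \ref{compact24}, or the direct Fourier computation behind \eqref{cons-1}), and the nonlinear term drops since $\int_{\mathbb{T}}u\,(2u\partial_x u)\,dx=\tfrac{2}{3}\int_{\mathbb{T}}\partial_x(u^3)\,dx=0$. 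This leaves $\tfrac12\tfrac{d}{dt}\|u\|_{L_0^2}^2=-(u,K_\lambda u)_{L^2}$; since $K_\lambda$ is bounded on $L_0^2(\mathbb{T})$ (and for $\lambda=0$ it is nonnegative, giving monotone decay), Gronwall's inequality yields the bound, which controls $\|u(\cdot,t)\|_{L_0^2}$ on every finite interval.

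\textbf{Step 2 ($s=0$ global existence).} For $s=0$ the local existence time in \eqref{LWP18} depends only on $\|u_0\|_{L_0^2}$. Combining the blow-up alternative of Theorem \ref{LWP} with the uniform-in-time $L^2$ bound of Step 1 forces $T^\ast=+\infty$. Iterating the fixed-point construction on consecutive intervals whose length stays bounded below (because $\sup_{[0,T]}\|u\|_{L_0^2}$ is finite) covers any $[0,T]$ in finitely many steps and gives $u\in Z_{0,\frac12}^T$ with $\|u\|_{Z_{0,\frac12}^T}\leq\beta_{T,0}(\|u_0\|_{L_0^2})\|u_0\|_{L_0^2}$.

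\textbf{Step 3 (propagation of $H^s$ regularity, $s>0$).} Here I would show the $H_0^s$ norm of the global $L^2$ solution cannot blow up in finite time, with the quantitative bound \eqref{GWP1}. On a short interval $[t_j,t_{j+1}]$ of length $\delta$ depending \emph{only} on the globally bounded quantity $\sup\|u\|_{L_0^2}$, I run the $Z_{s,\frac12}^\delta$ estimate from the integral equation via Corollary \ref{linstimateZ}, Theorem \ref{intestimateZ3} and Lemma \ref{LWP7}. The decisive point is to make the quadratic term linear in the high norm: a variant of the bilinear estimate (Corollary \ref{Biestimate2}), obtained by the same proof but distributing the weight through $\langle k\rangle^s\leq C(\langle k_1\rangle^s+\langle k-k_1\rangle^s)$ valid for $s\geq0$, places the $s$-derivatives on the higher-frequency factor and leaves the other factor in an $L^2$-based Bourgain norm; together with Proposition \ref{Xestimate1} this yields $\|\partial_x(u^2)\|_{Z_{s,-\frac12}^\delta}\leq C\delta^{\epsilon}\|u\|_{Z_{s,\frac12}^\delta}\|u\|_{Z_{0,\frac12}^\delta}$. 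Since $\|u\|_{Z_{0,\frac12}^\delta}\leq C\sup\|u\|_{L_0^2}$ is bounded and $\delta$ is small, the resulting inequality closes to $\|u\|_{Z_{s,\frac12}^{[t_j,t_{j+1}]}}\leq A\,\|u(t_j)\|_{H_0^s}$ with a fixed factor $A=A(\sup\|u\|_{L_0^2})$. Chaining over the $\lceil T/\delta\rceil$ subintervals gives $\|u(t)\|_{H_0^s}\leq A^{\lceil T/\delta\rceil}\|u_0\|_{H_0^s}$, which produces the nondecreasing continuous function $\beta_{T,s}$ of \eqref{GWP1}.

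\textbf{Step 4 (conclusion and main obstacle).} The time continuity $u\in C([0,T];H_0^s(\mathbb{T}))$ and the stated $L^\infty$ bound follow from the embedding $Z_{s,\frac12}^T\hookrightarrow C([0,T];H_p^s(\mathbb{T}))$ of Proposition \ref{contimbedded}, while continuity of $S(t)$ on $[0,T]$ is obtained by patching the local continuous dependence of Theorem \ref{LWP} across the finitely many subintervals, using the global bound to control error propagation. The hard part will be Step 3: forcing the genuinely quadratic nonlinear estimate to be linear in the high-frequency norm, so that the $H^s$ bound does not require a data-dependent shrinking time step. This is exactly where the $L^2$-subcritical structure — one factor may be measured in the bounded $L^2$-based Bourgain norm — is essential.
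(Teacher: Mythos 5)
Your Steps 1 and 2 coincide with the paper's proof: the same energy identity $\tfrac12\tfrac{d}{dt}\|u\|^2_{L_0^2}=-(GG^{\ast}L_\lambda^{-1}u,u)_{L_0^2}$ (with the dispersive part vanishing by skew-adjointness and the nonlinearity dropping since $2u^2u_x=\tfrac23(u^3)_x$), Gronwall, and the blow-up alternative give the global $L^2$ theory. Where you genuinely diverge is the propagation to $s>0$. The paper does \emph{not} use a tame bilinear estimate: it differentiates the equation in time, sets $v=\partial_t u$, solves the resulting linear equation \eqref{atabilizationNL2} for $v$ at regularity $L^2_0$ by a fixed point using Corollary \ref{Biestimate2}, then recovers $\partial_x^3u$ algebraically from the equation via \eqref{GWP20}, absorbing the lower-order terms with Gagliardo--Nirenberg and $\epsilon$-Cauchy inequalities; this handles $s=3$ (and similarly $s\in3\mathbb{N}^{\ast}$), and all intermediate $s$ are then obtained by \emph{nonlinear} interpolation (Tartar, Bona--Scott), which is exactly why the paper needs the bound to be linear in $\|u_0\|_{H_0^3}$ with coefficient depending only on $\|u_0\|_{L_0^2}$ --- the same structural feature your scheme produces. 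Your route instead rests on a tame variant of Theorem \ref{BilinearEstimate}, $\|\partial_x(u^2)\|_{Z^{\delta}_{s,-\frac12}}\lesssim \delta^{\epsilon}\|u\|_{Z^{\delta}_{s,\frac12}}\|u\|_{Z^{\delta}_{0,\frac12}}$, which the paper never states; you assert rather than prove it, but the assertion is sound: in the paper's proof the $s$-weight enters only through $\langle k\rangle^{s}/(|k_1|^{s}|k-k_1|^{s})\leq C_s$ (from Lemma \ref{estimate1}), and replacing this by $\langle k\rangle^{s}\lesssim\langle k_1\rangle^{s}+\langle k-k_1\rangle^{s}$ leaves the modulation case analysis of Remark \ref{resonance property1} and the $L^4$ machinery of Theorem \ref{fundaimme} untouched, producing terms $\|u\|_{X_{s,\frac12}}\|u\|_{X_{0,\frac13}}+\|u\|_{X_{s,\frac13}}\|u\|_{X_{0,\frac12}}$ (including the $Y_{s,-1}$ component), after which Proposition \ref{Xestimate1} supplies the $\delta^{\epsilon}$. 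Granting this, your uniform-step iteration is correct --- with the one caveat that on each $[t_j,t_{j+1}]$ you must run the estimate inside the $H^s$ maximal existence interval and invoke the blow-up alternative of Theorem \ref{LWP} to extend, rather than assuming $u\in Z^{[t_j,t_{j+1}]}_{s,\frac12}$ outright. What each approach buys: the paper reuses its existing bilinear estimate verbatim but pays with the time-differentiation trick, the restriction to $s\in3\mathbb{N}^{\ast}$, and an appeal to nonlinear interpolation; your argument treats all $s\geq0$ uniformly and avoids interpolation entirely, at the cost of proving one new (routine but not free) tame estimate.
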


\begin{proof}
	First, we assume that $s=0$. Multiplying the equation
\eqref{atabilizationNL1} by $u$ and integrating in space we obtain
  \begin{equation}\label{GWP3}
\begin{split}
\frac{1}{2}\frac{d}{dt'}\left(
\|u(\cdot,t')\|^{2}_{L_{0}^{2}(\mathbb{T})}
\right)
&=-\left(GG^{\ast}L_{\lambda}^{-1}u(\cdot,t'),
u(\cdot,t')\right)_{L_{0}^{2}(\mathbb{T})},\;\;\text{for all}\;t'
\geq 0.
\end{split}
\end{equation}

Now integrating in the time variable in $(0,t),$  and using the properties of operators $G$ and $L_{\lambda}^{-1}$ we infer that
\begin{equation}\label{GWP4}
\begin{split}
\frac{1}{2}
\|u(\cdot,t)\|^{2}_{L_{0}^{2}(\mathbb{T})}
-\frac{1}{2}
\|u_{0}\|^{2}_{L_{0}^{2}(\mathbb{T})}
&=-\int_{0}^{t}
\left(GL_{\lambda}^{-1}u(\cdot,t'),
Gu(\cdot,t')\right)_{L_{0}^{2}(\mathbb{T})}
\;dt'\\
&\leq\int_{0}^{t}\|G\|^{2}\|L_{\lambda}^{-1}\|
\|u(\cdot,t')\|^{2}_{L_{0}^{2}(\mathbb{T})}
\;dt'\\
\end{split}
\end{equation}
for all $t\geq 0.$	 Gronwall's inequality implies that
\begin{equation}\label{GWP6}
\begin{split}
\|u(\cdot,t)\|_{L_{0}^{2}(\mathbb{T})}
&\leq
\|u_{0}\|_{L_{0}^{2}(\mathbb{T})}
e^{C\;t},\;\;
\text{for all}\; t\geq 0,\;\text{where}\; C=\|G\|^{2}\|L_{\lambda}^{-1}\|.
\end{split}
\end{equation}
 From the first line of \eqref{GWP4}, we note that
\begin{equation}\label{GWP7}
\begin{split}
\|u(\cdot,t)\|_{L_{0}^{2}(\mathbb{T})}
&\leq
\|u_{0}\|_{L_{0}^{2}(\mathbb{T})},\;\;
\text{when}\; \lambda=0\; \text{and}\; t\geq0.
\end{split}
\end{equation}

It follows that equation \eqref{atabilizationNL1} is globally well-posed in
$L_{0}^{2}(\mathbb{T})$ by the blow-up alternative.  An standard continuation argument shows the estimate \eqref{GWP1} with $s=0$.

 Next, we suppose $s=3$. 
 In fact, we will prove that for any $T>0$ and any $u_{0}\in H_{0}^{3}(\mathbb{T})\subset L_{0}^{2}(\mathbb{T})$ the solution of the IVP \eqref{atabilizationNL1}  belongs to the space
    $u\in Z^{T}_{3,\frac{1}{2}}\cap
  C([0,T];H_{0}^{3}(\mathbb{T})).$

  For this, let $T>0$ and $u_{0}\in H_{0}^{3}(\mathbb{T})\subset L_{0}^{2}(\mathbb{T}).$ Then, the local solution $u$ of the IVP
 \eqref{atabilizationNL1} belongs to the space
 $u\in Z^{T_{1}}_{0,\frac{1}{2}}\cap
  C([0,T_{1}];L_{0}^{2}(\mathbb{T})),$ where $T_{1}$ is the
  time of local existence given by relation \eqref{LWP18} in Theorem \ref{LWP}, with $s=0.$ Then $u$ satisfies
 \begin{equation}\label{GWP11}
  	\begin{split}
  	 \|u\|_{L^{\infty}([0,T_{1}];L_{0}^{2}(\mathbb{T}))}
  	 &\leq
  	C_{4}\|u\|_{Z^{T_{1}}_{0,\frac{1}{2}}}\leq
  	2C_{4}C_{1}	\|u_{0}\|_{L^{2}_{0}(\mathbb{T})}.		
  	\end{split}
 \end{equation}

   Define
  $v=\partial_{t}u$, so that $[v]=0$ and $v$ satisfies
  \begin{equation}\label{atabilizationNL2}
  \left \{
  \begin{array}{l l}
  \partial_{t}v-\partial^{3}_{x}v
  -\alpha \mathcal{H}\partial^{2}_{x}v+2\mu\partial_{x} v+2\;\partial_{x}(uv) =-K_{\lambda}v,&  0<t\leq T_{1},\;\;x\in \mathbb{T}\\
  v(x,0)=v_{0}=u'''_{0}
  +\alpha \mathcal{H}u''_{0}-2\mu\; u'_{0}
  -2u_{0}u'_{0}-K_{\lambda}u_{0}, & x\in \mathbb{T}.
  \end{array}
  \right.
  \end{equation}

 Note that, applying the Gagliardo-Niremberg's inequality (see the Theorem 3.70 in \cite{Aubin}), we obtain that there exists $c_{1}>0$ such that
 \begin{equation}\label{GWP9}
 	\begin{split}
 	2\|u_{0}\;u_{0}'\|_{L^{2}_{0}(\mathbb{T})}&\leq
 	2\|u_{0}\|_{L^{2}_{0}(\mathbb{T})}
 	\|u_{0}'\|_{L^{\infty}(\mathbb{T})}\\
 	&\leq	2 c_{1}\|u_{0}\|_{L^{2}_{0}(\mathbb{T})}\;
 	\|u'''_{0}\|^{\frac{1}{2}}_{L^{2}_{0}(\mathbb{T})}\;
 	\|u_{0}\|^{\frac{1}{2}}_{L^{2}_{0}(\mathbb{T})}\\
 	&\leq 2 c_{1}\|u_{0}\|_{L^{2}_{0}(\mathbb{T})}\;
 	\|u_{0}\|_{H^{3}_{0}(\mathbb{T})}.			
 	\end{split}
 \end{equation}

 Therefore,  $v_{0}\in L_{0}^{2}(\mathbb{T}),$ with
  \begin{equation}\label{GWP10}
 	\begin{split}
 	\|v_{0}\|_{L^{2}_{0}(\mathbb{T})}
 	&\leq \left(c_{2}+2c_{1}\|u_{0}\|_{L^{2}_{0}(\mathbb{T})}
 	\right) \|u_{0}\|_{H^{3}_{0}(\mathbb{T})},			
 	\end{split}
 \end{equation}
 where $c_{2}>0$ depends on $\alpha, \mu, \lambda, g$ and $\delta.$
  On the other hand,
 considering the map
$$\Gamma(w)=\displaystyle{U_{\mu}(t)v_{0}-
 	2\int_{0}^{t}U_{\mu}(t-\tau)
 	(\partial_{x}(u.w))(\tau)\;d\tau
 	-\int_{0}^{t}U_{\mu}(t-\tau)
 	(K_{\lambda}w)(\tau)\;d\tau},$$
 using Corollary \ref{Biestimate2}, and doing the same calculations as those conducing to \eqref{LWP14},  yield
 $${\normalsize
 	\begin{split}
 	\|\Gamma(w)\|_{Z^{T_{2}}_{0,\frac{1}{2}}}
 	&\leq
 	\;\displaystyle{
 		C_{1}\;\|v_{0}\|_{L^{2}_{0}(\mathbb{T})}
 		+2C_{2}\;T_{2}^{\theta}\;
 		\left\|u\right\|_{Z^{T_{2}}_{0,\frac{1}{2}}}
 		\left\|w\right\|_{Z^{T_{2}}_{0,\frac{1}{2}}}
 		+
 		C_{3}\;T_{2}^{1-\epsilon}\;
 		\left\|w\right\|_{Z^{T_{2}}_{0,\frac{1}{2}}}}\\
 	&\leq
 \;\displaystyle{
 	C_{1}\;\|v_{0}\|_{L^{2}_{0}(\mathbb{T})}
 	+\left(4C_{1}C_{2}\;T_{2}^{\theta}\;
 	\left\|u_{0}\right\|_{L^{2}_{0}(\mathbb{T})}
 	+
 	C_{3}\;T_{2}^{1-\epsilon}\right)
 	\left\|w\right\|_{Z^{T_{2}}_{0,\frac{1}{2}}}}.
 	\end{split}}$$

 Note that,
 $${
 	\begin{split}
 	\Gamma(w_{1})-\Gamma(w_{2})
 	&=\displaystyle{
 		-2\int_{0}^{t}U_{\mu}(t-\tau)
 		[\partial_{x}(u\cdot(w_{1}-w_{2}))](\tau)\;d\tau
 		-\int_{0}^{t}U_{\mu}(t-\tau)
 		[K_{\lambda}(w_{1}-w_{2})](\tau)\;d\tau}.
 	\end{split}}$$

 Thus,
 $${\normalsize
 	\begin{split}
 	\|\Gamma(w_{1}-w_{2})\|_{Z^{T_{2}}_{0,\frac{1}{2}}}
 	&\leq
 	\left(4C_{1}C_{2}\;T_{2}^{\theta}\;
 	\left\|u_{0}\right\|_{L^{2}_{0}(\mathbb{T})}
 	+
 	C_{3}\;T_{2}^{1-\epsilon}\right)
 	\left\|w_{1}-w_{2}\right\|_{Z^{T_{2}}_{0,\frac{1}{2}}},
 	\end{split}}$$
 for any $w,w_{1},w_{2}\in
 Z^{T_{2}}_{0,\frac{1}{2}}\cap L^{2}([0,T_{2}];L^{2}_{0}(\mathbb{T})).$ Therefore, taking
 $T_{2}=T_{1}(\|u_{0}\|_{L_{0}^{2}(\mathbb{T})})$ (note that $T_{2}$ can be taken bigger that $T_{1},$ but we take $T_{2}=T_{1}$ in order to guarantee the existence of solutions for systems \eqref{atabilizationNL1} and \eqref{atabilizationNL2} simultaneously), we obtain that the map $\Gamma$ is a contraction in a closed ball
 $\displaystyle{
 \tilde{B}_{M}(0)=\Big\{w\in Z^{T_{1}}_{0,\frac{1}{2}}:
 [w]=0,\;\|w\|_{Z^{T_{1}}_{0,\frac{1}{2}}}
 \leq M\Big\}}$ with $M= 2C_{1}
\|v_{0}\|_{L_{0}^{2}(\mathbb{T})}.$
  Its unique fixed point $v$ is the desired solution of \eqref{atabilizationNL2} in the space
 $Z^{T_{1}}_{0,\frac{1}{2}}\cap
 L^{2}([0,T_{1}];L^{2}_{0}(\mathbb{T})).$ Thus,
$\|v\|_{Z^{T_{1}}_{0,\frac{1}{2}}}
 \leq 2C_{1}\;
 \|v_{0}\|_{L_{0}^{2}(\mathbb{T})}.$
From Proposition \ref{contimbedded} we infer that
 $v\in C([0,T_{1}];L_{0}^{2}(\mathbb{T}))$ with
 \begin{equation}\label{GWP18}
 \begin{split}
 \|v\|_{L^{\infty}([0,T_{1}];L_{0}^{2}(\mathbb{T}))}&
 \leq C_{4}
 \|v\|_{Z^{T_{1}}_{0,\frac{1}{2}}}
 \leq 2C_{4}C_{1}
 \|v_{0}\|_{L_{0}^{2}(\mathbb{T})}.
 \end{split}
 \end{equation}

 From equation \eqref{atabilizationNL1}, we have\;\;
$\partial^{3}_{x}u=v
 -\alpha \mathcal{H}\partial^{2}_{x}u+2\mu\partial_{x} u+2\;u\;\partial_{x}u +K_{\lambda}u.$

 Consequently,
  \begin{equation}\label{GWP20}
  { \footnotesize
 \|\partial^{3}_{x}u\|_{L_{0}^{2}(\mathbb{T})}\leq\|v
 \|_{L_{0}^{2}(\mathbb{T})}
 +\alpha\| \mathcal{H}\partial^{2}_{x}u\|_{L_{0}^{2}(\mathbb{T})}
 +2|\mu|\|\partial_{x} u\|_{L_{0}^{2}(\mathbb{T})}
 +2\;\|u\;\partial_{x}u\|_{L_{0}^{2}(\mathbb{T})}
  +\|K_{\lambda}u\|_{L_{0}^{2}(\mathbb{T})}}.
 \end{equation}

 The analogous computations as those leading to
 (5.27) and (5.30) in \cite{Vielma and Panthee}, yield  for any $\epsilon>0,$
 	\begin{equation}\label{GWP21}
 	\begin{split}
 	2|\mu|\|\partial_{x}u(\cdot,t)\|_{L^{2}_{0}(\mathbb{T})}
 	&\leq	c_{\mu}
 	\epsilon \|u(\cdot,t)\|_{L^{2}_{0}(\mathbb{T})}
 	+\frac{c_{\mu}}{4\epsilon}\|\partial_{x}^{3}u(\cdot,t)\|
 	_{L^{2}_{0}(\mathbb{T})},			
 	\end{split}
 \end{equation}
 	\begin{equation}\label{GWP22}
 	\begin{split}
 	\alpha\;\|\mathcal{H}\partial^{2}_{x}u(\cdot,t)
 	\|_{L^{2}_{0}(\mathbb{T})}&\leq	
 	c_{\alpha}\;\epsilon^{\frac{3}{2}}
 	\|u(\cdot,t)\|_{L^{2}_{0}(\mathbb{T})}
 	+\frac{c_{\alpha}}{\epsilon^{\frac{1}{2}}}
 	\|\partial_{x}^{3}u(\cdot,t)\|
 	_{L^{2}_{0}(\mathbb{T})}.			
 	\end{split}
 	\end{equation}

  The similar computations as those leading to
 \eqref{GWP9}, but using Cauchy's inequality with $\epsilon>0,$ yield
  \begin{equation}\label{GWP23}
 	\begin{split}
 	2\|u(\cdot,t)\;\partial_{x}u(\cdot,t)
 	\|_{L^{2}_{0}(\mathbb{T})}
 	&\leq
    \;c_{3}\epsilon\|u(\cdot,t)\|^{3}_{L^{2}_{0}(\mathbb{T})}
 	+\frac{c_{3}}{4\epsilon}\|\partial_{x}^{3}u(\cdot,t)
 	\|_{L^{2}_{0}(\mathbb{T})}.
 	\end{split}
 \end{equation}

 We already know that
 \begin{equation}\label{GWP24}
 \begin{split}
 \|K_{\lambda}u(\cdot,t)
 \|_{L^{2}_{0}(\mathbb{T})}
 &\leq
 \;c_{4}\;\|u(\cdot,t)\|_{L^{2}_{0}(\mathbb{T})}.	
 \end{split}
 \end{equation}

From \eqref{GWP11},  \eqref{GWP10} and  \eqref{GWP18}-\eqref{GWP24}, we get that for $0<t\leq T_{1}$
$${
 	\begin{split}
 	\left(1-\frac{c_{\alpha}}{\epsilon^{\frac{1}{2}}}
 	-\frac{(c_{\mu}+c_{3})}{4\epsilon}
 	\right)
 	\|\partial^{3}_{x}u\|_{L_{0}^{2}(\mathbb{T})}
 	&\leq 2 C_{4} C_{1} \left(c_{2}+2c_{1}\|u_{0}\|_{L^{2}_{0}(\mathbb{T})} \right) \|u_{0}\|_{H^{3}_{0}(\mathbb{T})}\\
 	&\quad +\left(c_{\alpha}\epsilon^{\frac{3}{2}}+c_{\mu}\epsilon
 	+c_{4}\right)2C_{4}C_{1}\|u_{0}\|_{H_{0}^{3}(\mathbb{T})}\\
 	&\quad +c_{3}\epsilon 2C_{4}C_{1}\| u_{0}\|^{2}_{L_{0}^{2}(\mathbb{T})}
 	\| u_{0}\|_{H_{0}^{3}(\mathbb{T})}.
 	\end{split}}$$
Taking $\epsilon$ large enough, we can conclude that there exists $C>0$ such that
$${
 	\begin{split}
 	\|\partial^{3}_{x}u\|_{L_{0}^{2}(\mathbb{T})}
 	&\leq C
 	\left(1+\|u_{0}\|_{L_{0}^{2}(\mathbb{T})}
 	+\|u_{0}\|^{2}_{L_{0}^{2}(\mathbb{T})}\right)
 	\|u_{0}\|_{H_{0}^{3}(\mathbb{T})},
 	\end{split}}$$
 Consequently,
  	\begin{equation}\label{GWP27}
 \begin{split}
 \|u\|_{L^{\infty}([0,T_{1}];H_{0}^{3}(\mathbb{T}))}
 &\leq
 \beta_{T_{1},3}(\|u_{0}\|_{L_{0}^{2}(\mathbb{T})})
 \|u_{0}\|_{H_{0}^{3}(\mathbb{T})},
 \end{split}
 \end{equation}
 where $\beta_{T_{1},3}$ is a nondecresing continuous function
 depending only on $T_{1}.$
 
 Next, if we assume that the maximal time of existence
 $T^{\ast}>0$ of the solution $u$ of the IVP \eqref{atabilizationNL1} with initial data in $H_{0}^{3}(\mathbb{T})$ is finite, then
from \eqref{GWP6} we have that
$\displaystyle{\lim_{t\longrightarrow T^{\ast}}
 \|u(t)\|_{L_{0}^{2}(\mathbb{T})}<+\infty}.$
 Then there exist a sequence $t_{j}\longrightarrow T^{\ast^{-}}$
 and a positive constant $R$ such that $\|u(t_{j})\|_{L_{0}^{2}(\mathbb{T})}\leq R.$
 In particular for
 $k\in \mathbb{N}$ such that $t_{k}$ is close to $T^{\ast}$
 we have that $\|u(t_{k})\|_{H_{0}^{3}(\mathbb{T})}\leq R.$
 Now we solve the equation \eqref{atabilizationNL1} with
 initial data $u(t_{k})$ in $H_{0}^{3}(\mathbb{T}).$
 Then from \eqref{LWP18} we obtain that
 $T(\|u(t_{k})\|_{L_{0}^{2}(\mathbb{T})})\geq T(R).$ Therefore, applying  similar arguments as those leading to \eqref{GWP27}  we can extend our solution to the interval
 $[t_{k},t_{k}+T(R)].$ If we pick  $k$ such that
 $t_{k}+T(R)>T^{\ast},$ then it contradicts the definition of $T^{\ast}$ in \eqref{LWP22}.

 Consequently, we can iterate the procedure leading to \eqref{GWP27} in order  to cover the compact interval
 $[0,T],$ thus we obtain that $u\in
 C([0,T]; H_{0}^{3}(\mathbb{T}))$ and satisfies \eqref{GWP1} with $s=3.$
 This completes the  proof of case $s=3$.

 Finally,  observe that a similar result
 can be obtained for $s\in 3\mathbb{N}^{\ast}.$  The global well-posedness for other values of $s,$ follows by nonlinear interpolation (see \cite{Tartar, Bonna Scott}). This completes the proof.
\end{proof}

 Next, we prove a local exponential stability result when appliying the feedback law $f=-K_{\lambda}u.$ For this, we need to observe that the
 system \eqref{atabilizationNL1} can be rewritten as
$$\partial_{t}u=A_{\mu}u-2\;u\;\partial_{x}u-K_{\lambda}u, \;\; t>0,\;\;x\in \mathbb{T},$$
 where	$A_{\mu}=\alpha \mathcal{H}\partial_{x}^{2}+
 \partial_{x}^{3}-2\mu\partial_{x}.$ Let
 $T_{\lambda}(t)=e^{(\alpha \mathcal{H}\partial_{x}^{2}+
 	\partial_{x}^{3}-2\mu\partial_{x}-K_{\lambda})t}$ be the $C_{0}$-semigroup on $H_{0}^{s}(\mathbb{T})$ with infinitesimal generator $A_{\mu}-K_{\lambda}.$ The system
 \eqref{atabilizationNL1} can be rewritten in an equivalent integral form
 \begin{equation}\label{LEF4}
 \begin{split}
 u(t)&=\displaystyle{T_{\lambda}(t)u_{0}-
 	\int_{0}^{t}T_{\lambda}(t-\tau)
 	(2\;u\;\partial_{x}u)(\tau)\;d\tau}.
 \end{split}
 \end{equation}
 Now, we need to extend some estimates for the
 $C_{0}$-semigroup $\left\{T_{\lambda}(t)\right\}.$

 \begin{lem}\label{LEF5}
 	Let  $s\geq0,$ $\lambda\geq 0,$ and $T>0$ be given. Assume that
 	$\mu\in \mathbb{R},$ and $\alpha>0.$  Then, there exists a constant $C>0$, such that
 	\begin{equation} \label{est-lambda1}
 	\left\|
 			T_{\lambda}(t)\phi
 			\right\|_{Z^{T}_{s,\frac{1}{2}}}
 		\leq C\;
 	\|\phi\|_{H_{0}^{s}(\mathbb{T})},\; {\textit{for any}}\; \phi\in H_{0}^{s}(\mathbb{T}),
 		\end{equation}
\begin{equation} \label{est-lambda2}	
 	\Big\|
 			\int_{0}^{t}T_{\lambda}(t-\tau)
 			\partial_{x}(u\cdot v)(\tau)\;d\tau
 			\Big\|_{Z^{T}_{s,\frac{1}{2}}}
 		\leq C\;
 		\|u\|_{Z^{T}_{s,\frac{1}{2}}}
 			\|v\|_{Z^{T}_{s,\frac{1}{2}}},\; {\textit{for any}}\;u,v\in Z^{T}_{s,\frac{1}{2}},
 \end{equation}
 	where the constant $C$ does not depend on $T$
 	if $T\in[0,1].$
 \end{lem}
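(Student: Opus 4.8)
The plan is to exploit the fact that $T_{\lambda}(t)$ is generated by the bounded perturbation $-K_{\lambda}$ of the skew-adjoint generator $A_{\mu}$ of the unitary group $U_{\mu}(t)$ from \eqref{semgru2}, and thereby reduce both estimates to the linear and integral estimates already available for $U_{\mu}$. By Duhamel's formula, for $\phi\in H_{0}^{s}(\mathbb{T})$ the function $t\mapsto T_{\lambda}(t)\phi$ satisfies
\begin{equation*}
T_{\lambda}(t)\phi=U_{\mu}(t)\phi-\int_{0}^{t}U_{\mu}(t-\tau)\,K_{\lambda}T_{\lambda}(\tau)\phi\;d\tau .
\end{equation*}
(The fact that the right-hand side belongs to $Z^{T}_{s,\frac{1}{2}}$ for small $T$ follows by solving this fixed-point equation via contraction, exactly as in Theorem \ref{LWP}; this secures the a priori finiteness of $\|T_{\lambda}(\cdot)\phi\|_{Z^{T}_{s,\frac{1}{2}}}$ that is needed before any absorption step.) Taking the $Z^{T}_{s,\frac{1}{2}}$-norm, bounding the first term by Corollary \ref{linstimateZ} and the second by Lemma \ref{LWP7}, I obtain for any $0<\epsilon<1$
\begin{equation*}
\|T_{\lambda}(t)\phi\|_{Z^{T}_{s,\frac{1}{2}}}\leq C_{1}\,T^{\frac{1}{2}}\|\phi\|_{H_{0}^{s}(\mathbb{T})}+C(\epsilon)\,T^{1-\epsilon}\,\|T_{\lambda}(\cdot)\phi\|_{Z^{T}_{s,\frac{1}{2}}}.
\end{equation*}
Choosing $T_{0}>0$ with $C(\epsilon)T_{0}^{1-\epsilon}\leq\frac{1}{2}$, the last term is absorbed, yielding \eqref{est-lambda1} for $T\leq T_{0}$ with constant independent of $T$.

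For \eqref{est-lambda2} I set $w(t):=\int_{0}^{t}T_{\lambda}(t-\tau)\partial_{x}(u\cdot v)(\tau)\,d\tau$, which solves $\partial_{t}w=(A_{\mu}-K_{\lambda})w+\partial_{x}(uv)$ with $w(0)=0$, hence
\begin{equation*}
w(t)=\int_{0}^{t}U_{\mu}(t-\tau)\,\partial_{x}(uv)(\tau)\,d\tau-\int_{0}^{t}U_{\mu}(t-\tau)\,K_{\lambda}w(\tau)\,d\tau .
\end{equation*}
The first integral is estimated by Theorem \ref{intestimateZ3} by $C\|\partial_{x}(uv)\|_{Z^{T}_{s,-\frac{1}{2}}}$, and then by the bilinear estimate of Corollary \ref{Biestimate2} together with the embeddings $Z^{T}_{s,\frac{1}{2}}\hookrightarrow X^{T}_{s,\frac{1}{2}}\hookrightarrow X^{T}_{s,\frac{1}{3}}$ this is controlled by $C\|u\|_{Z^{T}_{s,\frac{1}{2}}}\|v\|_{Z^{T}_{s,\frac{1}{2}}}$. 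The second integral is again handled by Lemma \ref{LWP7}, giving a factor $C(\epsilon)T^{1-\epsilon}\|w\|_{Z^{T}_{s,\frac{1}{2}}}$, which is absorbed for $T\leq T_{0}$ as above. This establishes \eqref{est-lambda2} on small intervals with a $T$-independent constant.

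To pass from the small interval $[0,T_{0}]$ to an arbitrary interval $[0,T]$ with $T\in[0,1]$ while keeping the constant independent of $T$, I will partition $[0,T]$ into at most $\lceil 1/T_{0}\rceil$ subintervals of length $\leq T_{0}$ and iterate, using the semigroup identity $T_{\lambda}(t+s)=T_{\lambda}(t)T_{\lambda}(s)$ to chain the estimates across the gluing points; since the number of subintervals is a fixed integer depending only on $T_{0}$, the resulting constant remains independent of $T$ for $T\leq 1$, and for general $T>1$ one simply accumulates a $T$-dependent factor. I expect the main obstacle to be this gluing step: time-restriction Bourgain norms do not decompose additively across subintervals, so some care is required to control the $Z^{T}$-norm of the concatenated solution in terms of the pieces (a standard but somewhat technical point treated analogously in the proof of Lemma 4.2 of \cite{14}), and to verify that the absorbed a priori term is genuinely finite at each stage rather than merely formally bounded.
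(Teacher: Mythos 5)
Your proposal is correct and follows essentially the same route as the paper: Duhamel's formula $T_{\lambda}(t)\phi=U_{\mu}(t)\phi-\int_{0}^{t}U_{\mu}(t-\tau)K_{\lambda}T_{\lambda}(\tau)\phi\,d\tau$ plus Lemma \ref{LWP7} with absorption for small $T$, then iteration over subintervals via the semigroup property for general $T$, and for \eqref{est-lambda2} the same identity \eqref{LEF23} combined with Theorem \ref{intestimateZ3}, Corollary \ref{Biestimate2} and Lemma \ref{LWP7}. The only (cosmetic) difference is that you obtain the identity for $w(t)=\int_{0}^{t}T_{\lambda}(t-\tau)f(\tau)\,d\tau$ directly from variation of parameters, whereas the paper derives it by substituting \eqref{LEF12} into the integral and applying a change of variables and Fubini's theorem; your explicit remark securing a priori finiteness of the absorbed norm is a point the paper leaves implicit.
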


 \begin{proof}   By definition of $T_{\lambda},$
\begin{equation}\label{LEF9}
 	\begin{split}
 	u(t)&=T_{\lambda}(t)\phi
 	\end{split}
\end{equation}
 	is a solution to
 	\begin{equation}\label{LEF8}
 	\left \{
 	\begin{array}{l l}
 	\partial_{t}u-\partial^{3}_{x}u
 	-\alpha \mathcal{H}\partial^{2}_{x}u+2\mu\partial_{x} u +K_{\lambda}u =0,&  t>0,\;\;x\in \mathbb{T}\\
 	u(x,0)=\phi(x), & x\in \mathbb{T}.
 	\end{array}
 	\right.
 	\end{equation}

 	On the other hand, using the Duhamel's formula, we can write \eqref{LEF8} as
 	\begin{equation}\label{LEF11}
 	\begin{split}
 	u(t)&=\displaystyle{U_{\mu}(t)\phi-
 		\int_{0}^{t}U_{\mu}(t-\tau)
 		[K_{\lambda}u](\tau)\;d\tau}.
 	\end{split}
 	\end{equation}
 	
 	Then, from \eqref{LEF9} and \eqref{LEF11}, we infer that
 	\begin{equation}\label{LEF12}
 	\begin{split}
 	T_{\lambda}(t)\phi&=\displaystyle{U_{\mu}(t)\phi-
 		\int_{0}^{t}U_{\mu}(t-\tau)
 		(K_{\lambda}T_{\lambda}(\tau)\phi)\;d\tau}.
 	\end{split}
 	\end{equation}
 	
 	Therefore, using the Lemma \ref{LWP7}, we obtain
 $${\normalsize
 	\begin{split}
 	\|T_{\lambda}(t)\phi\|_{Z^{T}_{s,\frac{1}{2}}}
 	&\leq
 	\;\displaystyle{
 		C_{1}\;\|\phi\|_{H^{s}_{0}(\mathbb{T})}
 		+C(\epsilon)\;T^{1-\epsilon}\;
 		\left\|T_{\lambda}(\tau)\phi
 		\right\|_{Z^{T}_{s,\frac{1}{2}}}},
 	\end{split}}$$
 	for some $0<\epsilon<1.$
 	Thus, for $T_{0}$ sufficiently small such that $1-C(\epsilon)\;T_{0}^{1-\epsilon}>0$ we get that there exists
 	a positive constant $C=C(T_{0})$ such that
$$\|T_{\lambda}(t)\phi\|_{Z^{T_{0}}_{s,\frac{1}{2}}}
 	\leq
 	\;\displaystyle{
 		C(T_{0})\;\|\phi\|_{H^{s}_{0}(\mathbb{T})}}.$$
 	
 	For $T\geq T_{0},$ the result follows from an easy induction
 	and the fact that
 	\begin{equation}\label{LEF15}
 	\begin{split}
 	\|T_{\lambda}(t)\phi\|_{Z^{T}_{s,\frac{1}{2}}}
 	&\leq
 	\|T_{\lambda}(t)\phi\|_{Z^{[0,T_{0}]}_{s,\frac{1}{2}}}+
 	\|T_{\lambda}(t)\phi\|_{Z^{[T_{0},2T_{0}]}_{s,\frac{1}{2}}}+
 	\cdots+
 	\|T_{\lambda}(t)\phi\|_{Z^{[(k-1)T_{0},T]}_{s,\frac{1}{2}}},
 	\end{split}
 	\end{equation}	
 	for some $k \in \mathbb{Z}.$\\
 	
 	Now, we move to prove \eqref{est-lambda2}. Note that, from \eqref{LEF12} 
 $${
 		\begin{split}
 		\int_{0}^{t}T_{\lambda}(t-\tau)f(\tau)\;d\tau
 		&=\int_{0}^{t}U_{\mu}(t-\tau)f(\tau)\;d\tau
 		-\int_{0}^{t}\int_{0}^{t-\tau}U_{\mu}(t-\tau-s)
 		(K_{\lambda}T_{\lambda}(s)f(\tau))\;ds\;d\tau.
 		\end{split}}$$
 	
 	Performing a change of variable $s=-\tau+\theta$ and  changing the order of integration, we obtain
 	\begin{equation}\label{LEF21}
 		\begin{split}
 		\int_{0}^{t}T_{\lambda}(t-\tau)f(\tau)\;d\tau
 		&=\int_{0}^{t}U_{\mu}(t-\tau)f(\tau)\;d\tau
 		-\int_{0}^{t}U_{\mu}(t-\theta)\int_{0}^{\theta}
 		[K_{\lambda}T_{\lambda}(\theta-\tau)f(\tau)]\;d\tau\;d\theta.
 		\end{split}
 	\end{equation}
 	
 	From  Fubini's theorem, we infer
 	\begin{equation}\label{LEF22}
 	{\footnotesize
 		\begin{split}
 		\int_{0}^{\theta}
 		[K_{\lambda}T_{\lambda}(\theta-\tau)f(\tau)]d\tau
 		&=K_{\lambda}\Big(
 		\int_{0}^{\theta}[T_{\lambda}(\theta-\tau)f(\tau)]
 		d\tau\Big).
 		\end{split}}
 	\end{equation}
 	
 	It follows from \eqref{LEF21} and \eqref{LEF22} that
 	\begin{equation}\label{LEF23}
 		\begin{split}
 		\int_{0}^{t}T_{\lambda}(t-\tau)f(\tau)d\tau
 		&=\int_{0}^{t}U_{\mu}(t-\tau)f(\tau)d\tau
 		-\int_{0}^{t}U_{\mu}(t-\theta)K_{\lambda}\Big(
 		\int_{0}^{\theta}[T_{\lambda}(\theta-\tau)f(\tau)]
 		d\tau\Big)d\theta.
 		\end{split}
 	\end{equation}

 We conclude the proof  by using \eqref{LEF23} and  similar
 arguments as those in Lemma 4.4 \cite{14}.
 \end{proof}

\begin{thm}\label{LEF1}
	Let $0<\lambda'<\lambda$ and $s\geq0$ be given. Assume
	$\mu\in \mathbb{R}$ and $\alpha>0.$  Then there exists $\delta>0$ such that for any
	$u_{0}\in H_{0}^{s}(\mathbb{T})$ with
	$\|u_{0}\|_{H_{0}^{s}(\mathbb{T})}\leq \delta,$ the corresponding solution $u$ of the IVP
	\eqref{atabilizationNL1} satisfies
$$\|u(\cdot,t)
	\|_{H_{0}^{s}(\mathbb{T})}\leq
	C\;e^{-\lambda't}\|u_{0}
	\|_{H_{0}^{s}(\mathbb{T})},\;\;\;\text{for all}\;t\geq 0,$$
	where $C>0$ is a constant that does not depend on $u_{0}.$
\end{thm}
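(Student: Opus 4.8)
The plan is to establish local exponential stability with rate $\lambda'$ by a perturbation argument around the linear decay provided by Theorem~\ref{st37}. The linear semigroup $T_{\lambda}(t)=e^{(\alpha\mathcal{H}\partial_x^2+\partial_x^3-2\mu\partial_x-K_\lambda)t}$ already decays like $e^{-\lambda t}$ in $H_0^s(\mathbb{T})$ by Theorem~\ref{st37}, and since $\lambda'<\lambda$ there is room of size $e^{-(\lambda-\lambda')t}$ to absorb the nonlinear term $2u\partial_x u$ as long as the solution stays small. I would work with the integral (Duhamel) formulation \eqref{LEF4} and exploit the Bourgain-space estimates \eqref{est-lambda1} and \eqref{est-lambda2} from Lemma~\ref{LEF5}, together with the bilinear estimate of Corollary~\ref{Biestimate2}, to run the contraction/continuation on a fixed time step and then iterate.

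**Main steps.**
First I would fix a small time step $T_0>0$ (to be chosen). On $[0,T_0]$, Theorem~\ref{LWP} gives a unique solution $u\in Z^{T_0}_{s,\frac12}\cap C([0,T_0];H_0^s(\mathbb{T}))$, and from \eqref{LEF4}, \eqref{est-lambda1}, \eqref{est-lambda2} one gets
\begin{equation}\label{eq-LEFstep}
\|u\|_{Z^{T_0}_{s,\frac12}}\le C\|u_0\|_{H_0^s(\mathbb{T})}+C\|u\|_{Z^{T_0}_{s,\frac12}}^2.
\end{equation}
Provided $\|u_0\|_{H_0^s(\mathbb{T})}\le\delta$ is small enough, a standard fixed-point/continuity argument forces $\|u\|_{Z^{T_0}_{s,\frac12}}\le 2C\|u_0\|_{H_0^s(\mathbb{T})}$, hence by Proposition~\ref{contimbedded} $\sup_{[0,T_0]}\|u(t)\|_{H_0^s(\mathbb{T})}\le C'\delta$. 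Second, I would sharpen this into a genuine decay estimate over one step: applying $T_\lambda$ to the Duhamel formula and using Theorem~\ref{st37} for the linear flow together with the bilinear bound for the nonlinear integral, I would show
\[
\|u(T_0)\|_{H_0^s(\mathbb{T})}\le Me^{-\lambda T_0}\|u_0\|_{H_0^s(\mathbb{T})}+C\|u\|_{Z^{T_0}_{s,\frac12}}^2
\le\Big(Me^{-\lambda T_0}+CC''\delta\Big)\|u_0\|_{H_0^s(\mathbb{T})}.
\]
Third, I would choose $T_0$ large enough that $Me^{-\lambda T_0}\le\tfrac12 e^{-\lambda' T_0}$, and then $\delta$ small enough that $Me^{-\lambda T_0}+CC''\delta\le e^{-\lambda' T_0}$, yielding the one-step contraction $\|u(T_0)\|_{H_0^s(\mathbb{T})}\le e^{-\lambda' T_0}\|u_0\|_{H_0^s(\mathbb{T})}$. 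Since the new data $u(T_0)$ is then even smaller, the smallness hypothesis is preserved and I would iterate: $\|u(nT_0)\|_{H_0^s(\mathbb{T})}\le e^{-\lambda' nT_0}\|u_0\|_{H_0^s(\mathbb{T})}$. Interpolating over each subinterval $[nT_0,(n+1)T_0]$ using the step bound \eqref{eq-LEFstep} controls the intermediate times and gives the global-in-time estimate $\|u(\cdot,t)\|_{H_0^s(\mathbb{T})}\le Ce^{-\lambda' t}\|u_0\|_{H_0^s(\mathbb{T})}$ for all $t\ge0$, with $C$ independent of $u_0$.

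**Main obstacle.**
The delicate point is the interplay between the two scales: the step size $T_0$ must be taken \emph{large} so that the exponential gain $Me^{-\lambda T_0}$ beats the target rate $e^{-\lambda' T_0}$, yet the nonlinear estimates \eqref{est-lambda2} and Corollary~\ref{Biestimate3} carry constants that grow with $T_0$, so the smallness threshold $\delta$ shrinks as $T_0$ grows. I would need to verify that $T_0$ and $\delta$ can be chosen \emph{consistently}, i.e. that after fixing $T_0$ from the linear decay one can still find $\delta>0$ making the nonlinear contribution a genuine perturbation of the linear term on the whole interval $[0,T_0]$. The other technical care is that the decay hypothesis must be self-propagating: the constant $C''$ bounding $\|u\|_{Z^{T_0}_{s,\frac12}}$ in terms of $\|u_0\|_{H_0^s(\mathbb{T})}$ must be uniform across all iteration steps, which follows because the data norms are nonincreasing along the iteration, so each step reuses the same $T_0$ and the same $\delta$. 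This mirrors the argument of Lemma~4.4 and the local stabilization result in \cite{14}.
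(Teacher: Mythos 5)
Your proposal is correct and matches the paper's own proof, which simply invokes Theorem~\ref{st37} and Lemma~\ref{LEF5} and completes the argument ``as in \cite[Theorem 4.3]{14}'' --- i.e., exactly your scheme of one-step decay via the Duhamel formula for $T_{\lambda}$, absorption of the quadratic term using \eqref{est-lambda1}--\eqref{est-lambda2}, a large step $T_{0}$ beating $e^{-\lambda' T_{0}}$, and iteration. You also correctly identify and resolve the only delicate point (fixing $T_{0}$ first, then shrinking $\delta$, with constants uniform along the iteration since the data norms are nonincreasing).
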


\begin{proof}
Using Theorem \ref{st37} and Lemma \ref{LEF5} we can complete the proof as in \cite[Theorem 4.3]{14}, so we omit the details.
\end{proof}

 The stability result presented  in Theorem \ref{LEF1} is local.
 We will extend it to a global stability result. In order to do that, the following observability inequality is needed.

\begin{prop}\label{Ob1}
	Let  $s\geq0,$ $\lambda\geq 0,$ $\mu\in \mathbb{R},$  $\alpha>0,$ 
	 $T>0,$ and $R_{0}$   be given.
	 Then, there exists a constant $\beta>1$ such that
	 for any $u_{0}\in L^{2}(\mathbb{T})$ satisfying
	 \begin{equation}\label{Ob2}
	 \begin{split}
	 \|u_{0}\|_{L_{0}^{2}(\mathbb{T})}&\leq R_{0},
	 \end{split}
	 \end{equation}
	 the corresponding solution $u$ of the IVP
	 \eqref{atabilizationNL1} satisfies
	 \begin{equation}\label{Ob3}
	 \begin{split}
	 \|u_{0}\|^{2}_{L_{0}^{2}(\mathbb{T})}&\leq
	 \beta \int_{0}^{T}\|Gu\|^{2}_{L_{0}^{2}(\mathbb{T})}
	 (t)\;dt.
	 \end{split}
	 \end{equation}
\end{prop}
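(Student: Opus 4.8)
The plan is to establish \eqref{Ob3} by the classical compactness--uniqueness scheme, following the KdV treatment in \cite{14}. First I would argue by contradiction: if \eqref{Ob3} fails for every $\beta>1$, there is a sequence $\{u_{0}^{n}\}\subset L_{0}^{2}(\mathbb{T})$ with $\|u_{0}^{n}\|_{L_{0}^{2}(\mathbb{T})}\leq R_{0}$ such that, writing $u^{n}=S(t)u_{0}^{n}$ for the corresponding solution of \eqref{atabilizationNL1},
$$\int_{0}^{T}\|Gu^{n}\|^{2}_{L_{0}^{2}(\mathbb{T})}(t)\,dt<\tfrac{1}{n}\,\|u_{0}^{n}\|^{2}_{L_{0}^{2}(\mathbb{T})}.$$
Set $\alpha_{n}:=\|u_{0}^{n}\|_{L_{0}^{2}(\mathbb{T})}\in(0,R_{0}]$ (trivial indices are discarded) and pass to a subsequence with $\alpha_{n}\to\alpha\in[0,R_{0}]$. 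Two a priori estimates are then available for free: Theorem \ref{GWP} gives the uniform upper bound $\|u^{n}\|_{Z^{T}_{0,\frac{1}{2}}}\leq\beta_{T,0}(\alpha_{n})\,\alpha_{n}\leq C$, while the energy identity \eqref{GWP4} together with a Gr\"onwall argument yields the lower bound $\|u^{n}(\cdot,t)\|_{L_{0}^{2}(\mathbb{T})}\geq\alpha_{n}e^{-Ct}$, so that $\int_{0}^{T}\|u^{n}(\cdot,t)\|^{2}_{L_{0}^{2}(\mathbb{T})}\,dt\geq c_{0}\,\alpha_{n}^{2}$ for some $c_{0}>0$.

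I would then split into two cases. \textbf{Case $\alpha>0$:} the $u^{n}$ are bounded in $X^{T}_{0,\frac{1}{2}}$, so up to a subsequence $u^{n}\rightharpoonup u$ weakly there; applying the propagation of compactness (Proposition \ref{compact2}) to $v_{n}:=u^{n}-u$—whose residual in \eqref{compact3} tends to zero by the compact embeddings of Proposition \ref{prop1} and whose restriction to $\omega$ tends to zero in $L^{2}((0,T);L^{2}(\omega))$ because $\int_{0}^{T}\|Gu^{n}\|^{2}\to0$—upgrades this to strong convergence $u^{n}\to u$ in $L^{2}_{loc}((0,T);L^{2}(\mathbb{T}))$. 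Passing to the limit in the integral form of \eqref{atabilizationNL1}, using the bilinear estimate (Corollary \ref{Biestimate2}) to control $\partial_{x}((u^{n})^{2})$, shows that $u$ solves the closed-loop equation with $Gu\equiv0$ on $(0,T)$; for the natural feedback $\lambda=0$ used in Theorem \ref{GEF61} one has $K_{0}u=GG^{\ast}u=0$ (as $G=G^{\ast}$ and $Gu=0$), so $u$ solves the free Benjamin equation and equals $c(t)$ on $\omega$, whence $u\equiv0$ by Corollary \ref{UCP10}. This contradicts $\int_{0}^{T}\|u\|^{2}_{L_{0}^{2}(\mathbb{T})}\,dt=\lim_{n}\int_{0}^{T}\|u^{n}\|^{2}_{L_{0}^{2}(\mathbb{T})}\,dt\geq c_{0}\alpha^{2}>0$. \textbf{Case $\alpha=0$:} here I rescale $v^{n}:=u^{n}/\alpha_{n}$, so that $\|v_{0}^{n}\|_{L_{0}^{2}(\mathbb{T})}=1$, $\int_{0}^{T}\|Gv^{n}\|^{2}<1/n$, and $v^{n}$ solves \eqref{atabilizationNL1} with the nonlinear term replaced by $2\alpha_{n}v^{n}\partial_{x}v^{n}$. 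The same compactness argument produces a strong $L^{2}_{loc}$ limit $v$; since $\alpha_{n}\to0$ the nonlinear contribution vanishes in the limit, so $v$ solves the \emph{linear} free equation with $Gv\equiv0$, hence $v\equiv0$ by the unique continuation property (the argument of Proposition \ref{UCP2} applies verbatim, the nonlinear term being inert because $\partial_{x}v=0$ on $\omega$). Again this contradicts the lower bound $\int_{0}^{T}\|v\|^{2}\geq c_{0}>0$, completing the contradiction.

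The hard part will be the passage to the limit: the $L^{2}$-level a priori bound gives only boundedness in $X^{T}_{0,\frac{1}{2}}$ and hence only weak convergence, which is not enough to pass to the limit in the quadratic nonlinearity nor to preserve the lower bound. This is precisely why the propagation of compactness of Proposition \ref{compact2} is indispensable—it converts weak convergence, together with the observed smallness $\int_{0}^{T}\|Gu^{n}\|^{2}\to0$ on $\omega$, into strong $L^{2}_{loc}$ convergence on all of $\mathbb{T}$, after which the bilinear estimates identify the limit as a genuine solution and Corollary \ref{UCP10} delivers the uniqueness. The remaining technical points (verifying the hypotheses \eqref{compact4}--\eqref{compact6} for $v_{n}$ and controlling the feedback residual) are routine given the estimates of Sections \ref{section 3} and \ref{sec-UCP}.
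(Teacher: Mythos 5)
Your proposal is correct in outline and follows essentially the same compactness--uniqueness scheme as the paper's own proof: the contradiction sequence normalized by $\alpha_{n}=\|u_{0}^{n}\|_{L_{0}^{2}(\mathbb{T})}$, the dichotomy $\alpha>0$ versus $\alpha=0$, boundedness in $X^{T}_{0,\frac{1}{2}}$ plus the bilinear estimate, propagation of compactness (Proposition \ref{compact2} with $b=\frac{1}{2}$, $b'=0$) applied to $w_{n}=u^{n}-u$, identification of the limit as a solution of the nonlinear equation that is constant on $\omega$, unique continuation via Proposition \ref{UCP2}, and the rescaling $v^{n}=u^{n}/\alpha_{n}$ in the degenerate case. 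Your closing contradiction differs mildly but harmlessly: you use the Gr\"onwall lower bound $\|u^{n}(t)\|_{L_{0}^{2}(\mathbb{T})}\geq \alpha_{n}e^{-Ct}$ together with norm convergence on compact subintervals of $(0,T)$, whereas the paper picks $t_{0}$ with $u_{n}(t_{0})\to 0$ in $L^{2}(\mathbb{T})$ and propagates back to $t=0$ through the energy identity \eqref{GWP3}; both are valid. Likewise, your appeal to the compact embeddings of Proposition \ref{prop1} for the residual hides the paper's interpolation step: $\partial_{x}((u^{n})^{2})$ is bounded in $X^{T}_{0,-\frac{1}{2}}$ and in $X^{T}_{-1,0}$, and only after interpolating into $X^{T}_{-\theta,-\frac{1}{2}+\frac{\theta}{2}}$ does one have \emph{both} indices strictly above $(-1,-\frac{1}{2})$, as Proposition \ref{prop1} requires, to extract strong convergence in $X^{T}_{-1,-\frac{1}{2}}$.

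The one step you dismiss as routine is genuinely not: your assertion that $v_{n}|_{\omega}\to 0$ in $L^{2}((0,T);L^{2}(\omega))$ ``because $\int_{0}^{T}\|Gu^{n}\|^{2}\to 0$'' does not follow as stated. Since $Gh=g\,\bigl(h-\int_{0}^{2\pi}g\,h\,dy\bigr)$, the convergence $Gw_{n}\to 0$ in $L^{2}$ controls only $g\,(w_{n}-c_{n}(t))$ with $c_{n}(t):=\int_{\mathbb{T}}g\,w_{n}\,dy$, and the weak convergence $w_{n}\rightharpoonup 0$ gives only $c_{n}\rightharpoonup 0$ in $L^{2}(0,T)$, not strong convergence. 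The paper devotes Lemma \ref{Ob20} to precisely this point: using the equation satisfied by $w_{n}$ one bounds $\frac{d}{dt}c_{n}$ in $L^{2}(0,T)$ (see \eqref{Ob24}--\eqref{Ob26}), so $c_{n}$ is bounded in $H^{1}(0,T)$, and Rellich's theorem upgrades the convergence to $c_{n}\to 0$ strongly; only then does the expansion \eqref{Ob28} yield $w_{n}\to 0$ in $L^{2}((0,T);L^{2}(\widetilde{\omega}))$ on the smaller set $\widetilde{\omega}=\{x:g(x)>\|g\|_{L^{\infty}}/2\}$, which suffices for Proposition \ref{compact2}. Finally, note that your limit identification uses $Gu=0\Rightarrow K_{0}u=GG^{\ast}u=0$, so your argument really covers $\lambda=0$; for $\lambda>0$ the term $K_{\lambda}u=GG^{\ast}L_{\lambda}^{-1}u$ does not vanish where $Gu=0$. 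The paper's written proof has the same restriction (its $f_{n}$ is built with $K_{0}u_{n}$), so this is not a defect relative to the paper, but it is worth being aware that the stated generality $\lambda\geq 0$ is not fully addressed by either argument.
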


\begin{proof}
 We argue by contradiction, assuming that 
	\eqref{Ob3} is not true, then for any $n\geq 1,$ equation
	\eqref{atabilizationNL1} admits a solution
	$u_{n}$ satisfying
	\begin{equation}\label{Ob7}
	\begin{split}
	u_{n}\in Z^{T}_{0,\frac{1}{2}}\cap C([0,T];L^{2}_{0}(\mathbb{T}))&,
	\end{split}
	\end{equation}
	\begin{equation}\label{Ob4}
	\begin{split}
	\|u_{n}(0)\|_{L_{0}^{2}(\mathbb{T})}&\leq R_{0},
	\end{split}
	\end{equation}
	and
	 \begin{equation}\label{Ob5}
\begin{split}
 \int_{0}^{T}\|Gu_{n}\|^{2}_{L_{0}^{2}(\mathbb{T})}
(t)\;dt&<\frac{1}{n}
\|u_{0,n}\|^{2}_{L_{0}^{2}(\mathbb{T})},
\end{split}
\end{equation}	
	where $u_{0,n}=u_{n}(0).$ As
$\alpha_{n}:=\|u_{0,n}\|_{L_{0}^{2}(\mathbb{T})}\leq R_{0,}$
we can extract a subsequence of $\{\alpha_{n}\},$
still denoted by $\{\alpha_{n}\}$ such that
$\lim\limits_{n\rightarrow \infty}\alpha_{n}=\alpha.$
In what follows, we consider two cases $\alpha>0,$ and $\alpha=0$ separately.

	\noindent{\textbf{Case 1.  $\alpha>0:$}}
	 From \eqref{Ob7} and \eqref{Ob4} we obtain that the sequence
$\{\alpha_{n}\}$ is bounded in both spaces
$L^{\infty}([0,T];L^{2}(\mathbb{T}))$ and
 $X^{T}_{0,\frac{1}{2}}.$
Corollary \ref{Biestimate2} implies that the sequence
$\{\partial_{x}(u^{2}_{n})\}$ is bounded
in $X^{T}_{0,-\frac{1}{2}}.$
On the other hand, from Proposition \ref{prop1} we infer that
 $X^{T}_{0,\frac{1}{2}}\hookrightarrow X^{T}_{-1,0}$ is compact.
After, extracting a subsequence of $\{u_{n}\},$
still denoted by $\{u_{n}\},$ we may assume that
 \begin{equation}\label{Ob10}
\begin{split}
u_{n}\rightharpoonup u\;\; \text{in} \;X^{T}_{0,\frac{1}{2}},
&
\end{split}
\end{equation}
\begin{equation}\label{Ob11}
\begin{split}
u_{n}\longrightarrow u\;\; \text{in} \;X^{T}_{-1,0},&
\end{split}
\end{equation}
and
 \begin{equation}\label{Ob12}
\begin{split}
-\partial_{x}(u^{2}_{n})\rightharpoonup f\;\; \text{in} \;X^{T}_{0,-\frac{1}{2}},
&
\end{split}
\end{equation}
where $u\in X^{T}_{0,\frac{1}{2}}$ and
$f\in X^{T}_{0,-\frac{1}{2}}.$ Also, from Theorem \ref{fundaimme},
$X^{T}_{0,\frac{1}{2}}$ is continuously imbedded in
$L^{4}(\mathbb{T}\times [0,T])$ and
$$\|u_{n}^{2}\|_{L^{2}(\mathbb{T}\times [0,T])}
=\|u_{n}\|^{2}_{L^{4}(\mathbb{T}\times [0,T])}
\leq C\;\|u_{n}\|^{2}_{X^{T}_{0,\frac{1}{3}}}
\leq C\;\|u_{n}\|^{2}_{X^{T}_{0,\frac{1}{2}}}.$$

Thus, $u_{n}^{2}$ is bounded in $L^{2}(\mathbb{T}\times [0,T])$
and it follows that
$$\|\partial_{x}(u_{n}^{2})\|_{L^{2}( [0,T];
	H^{-1}(\mathbb{T}))}
=\|\partial_{x}(u_{n}^{2})\|_{X^{T}_{-1,0}}
\leq
\|u_{n}^{2}\|_{L^{2}(\mathbb{T}\times [0,T])}.$$
Therefore, $\partial_{x}(u_{n}^{2})$ is bounded in
$L^{2}( [0,T];H^{-1}(\mathbb{T}))=
X^{T}_{-1,0}.$
Applying interpolation between  $X^{T}_{0,-\frac{1}{2}},$
and $X^{T}_{-1,0}$ (see proof of
Theorem \ref{multi23}) we conclude that
$\partial_{x}(u_{n}^{2})$ is bounded in
$X^{T}_{-\theta,-\frac{(1-\theta)}{2}}=
X^{T}_{-\theta,-\frac{1}{2}+\frac{\theta}{2}},$ for $0<\theta<1.$
Since $X^{T}_{-\theta,-\frac{1}{2}+\frac{\theta}{2}}$ is compactly imbedded in $X^{T}_{-1,-\frac{1}{2}},$ for $0<\theta<1,$
one can extract a subsequence of  $\{u_{n}\},$
still denoted by $\{u_{n}\},$ such that
\begin{equation}\label{Ob15}
\begin{split}
-\partial_{x}(u^{2}_{n})\longrightarrow f\;\; \text{in} \;X^{T}_{-1,-\frac{1}{2}}.
&
\end{split}
\end{equation}	
	
It follows from \eqref{Ob5} that
\begin{equation}\label{Ob16}
\begin{split}
\int_{0}^{T}\|Gu_{n}\|^{2}_{L_{0}^{2}(\mathbb{T})}
(t)\;dt&\longrightarrow
\int_{0}^{T}\|Gu\|^{2}_{L_{0}^{2}(\mathbb{T})}
(t)\;dt	=0,
\end{split}
\end{equation}
	which implies
	$u(x,t)=c(t)=\int_{0}^{T}g(y)u(y,t)\;dy,$ on $\omega\times (0,T)$ (see \eqref{EQ1}).
Thus, passing to the limit in equation \eqref{atabilizationNL1}
	verified by $u_{n},$ we obtain
	\begin{equation}\label{Ob17}
	\left \{
	\begin{array}{l l}
	\partial_{t}u-\partial^{3}_{x}u
	-\alpha \mathcal{H}\partial^{2}_{x}u+2\mu\partial_{x} u=f,&  \text{on}\; \mathbb{T}\times (0,T)\\
	u(x,t)=c(t), & \text{on}\; \omega\times (0,T).
	\end{array}
	\right.
	\end{equation}
	
	Let
	\begin{equation}\label{Ob21}
	w_{n}:=u_{n}-u\;\;\;\text{and}\;\;
	f_{n}:=-\partial_{x}(u_{n}^{2})-f-K_{0}u_{n}.
	\end{equation}

	Note first that, \eqref{Ob16} implies
	\begin{equation}\label{Ob18}
	{\footnotesize
	\begin{split}
	\int_{0}^{T}\|Gw_{n}\|^{2}_{L_{0}^{2}(\mathbb{T})}
	(t)\;dt
	&=\int_{0}^{T}\|Gu_{n}\|^{2}_{L_{0}^{2}(\mathbb{T})}
	(t)\;dt
	+\int_{0}^{T}\|Gu\|^{2}_{L_{0}^{2}(\mathbb{T})}
	(t)\;dt\\
	&\quad-2\int_{0}^{T}\left(Gu_{n},Gu \right)_{L_{0}^{2}(\mathbb{T})}	(t)\;dt
 \longrightarrow 0,\;\;\text{as}\;n\longrightarrow \infty.
	\end{split}}
	\end{equation}
	
 From \eqref{Ob10}, we obtain that
$w_{n}\rightharpoonup 0\;\; \text{in} \;X^{T}_{0,\frac{1}{2}}.$
Furthermore, \eqref{atabilizationNL1}, and
  \eqref{Ob17}-\eqref{Ob21} imply that $w_{n}$
 satisfies
	\begin{equation}\label{Ob22}
\partial_{t}w_{n}-\partial^{3}_{x}w_{n}
-\alpha \mathcal{H}\partial^{2}_{x}w_{n}+2\mu\partial_{x}w_{n} =f_{n},\;\;  \text{on}\;\; \mathbb{T}\times (0,T).
\end{equation}
	
Observe that
	\begin{equation}\label{Ob28}
		\begin{split}
		\int_{0}^{T}\int_{\mathbb{T}}
		|Gw_{n}|^{2}\;dx\;dt
		&=
		\int_{0}^{T}\int_{\mathbb{T}}
g^{2}(x)\;w_{n}^{2}(x,t)\;dx\;dt\\
&\quad-2	\int_{0}^{T}\left(\int_{\mathbb{T}}
g(y)\;w_{n}(y,t)\;dy\right)
\left(\int_{\mathbb{T}}
g^{2}(x)\;w_{n}(x,t)\;dx\right)\;dt\\	
		&\quad +
		\int_{0}^{T}\left(\int_{\mathbb{T}}
		g(y)\;w_{n}(y,t)\;dy\right)^{2}
		\left(\int_{\mathbb{T}}
		g^{2}(x)\;dx\right)\;dt.
		\end{split}
	\end{equation}
	
At this point we need the following Lemma.

\begin{lem}\label{Ob20}
Let $\{w_{n}\}_{n\geq 1}$ be a sequence of solutions of equation \eqref{Ob22} and $g$ defined in \eqref{gcondition}.	If $w_{n}\rightharpoonup 0\;\; \text{in} \;X^{T}_{0,\frac{1}{2}},$ then there exists a subsequence of
	$\displaystyle{c_{n}(t):=
		\int_{\mathbb{T}}g(y) w_{n}(y,t) dy,}$ $t\in (0,T),$ still denoted by 	$\{c_{n}\}_{n\geq 1},$ such that
	$c_{n}\longrightarrow 0\;\; \text{in} \;L^{2}(0,T)$ as $n\longrightarrow \infty.$
\end{lem}

\begin{proof}
From hypotheses we infer that
	$w_{n}\rightharpoonup 0\;\; \text{in} \;X^{T}_{0,0}.$
	So, $\{w_{n}\}_{n\geq 1}$ is bounded in
	$X^{T}_{0,0}.$
	From \eqref{Ob22}, \eqref{Ob21} and integration by parts, we have
	\begin{equation}\label{Ob24}
		\begin{split}
		\frac{d}{dt}c_{n}(t)
		&=\int_{\mathbb{T}}g(y)
		\left(\partial^{3}_{y}w_{n}
		+\alpha \mathcal{H}\partial^{2}_{y}w_{n}-2\mu\partial_{y}w_{n} +f_{n}\right)(y)dy\\
		&=\int_{\mathbb{T}}w_{n}(y)
		\left(-\partial^{3}_{y}g
		-\alpha \mathcal{H}\partial^{2}_{y}g
		+2\mu\partial_{y}g\right)(y) +g(y)\left(-\partial_{y}(u_{n}^{2})-f\right)(y)
		-GGg(y)u_{n}(y)dy.
		\end{split}
	\end{equation}
	Integrating \eqref{Ob24} in (0,T) and  using Cauchy-Schwarz inequality on space variable, we obtain
	$${
		\begin{split}
		\left\|	\frac{d}{dt}c_{n}(t)
		\right\|_{L^{2}(0,T)}
		&\leq C\left(
		\int_{0}^{T}\left(
		\int_{\mathbb{T}}|w_{n}|^{2}dy\right)
		\left(
		\int_{\mathbb{T}}|
		-\partial^{3}_{y}g(y)
		-\alpha \mathcal{H}\partial^{2}_{y}g(y)
		+2\mu\partial_{y}g(y)|^{2}dy\right)\;dt
		\right)^{\frac{1}{2}}\\
		&\quad
		+\left|\int_{0}^{T}\int_{\mathbb{T}}\partial_{y}g(y) u_{n}^{2}(y,t)dy\;dt\right|
         +\left|\left(g(y), f(y,t)\right)_{L^{2}(\mathbb{T}\times (0,T))}\right| \\
		&\quad +C\left(
		\int_{0}^{T}\left(
		\int_{\mathbb{T}}|GGg(y)|^{2}dy\right)
		\left(
		\int_{\mathbb{T}}|
		u_{n}|^{2}dy\right)\;dt
		\right)^{\frac{1}{2}}.
		\end{split}}$$
	
	Further simplifying and using \eqref{Ob10} and \eqref{Ob12}, we obtain
	\begin{equation}\label{Ob26}
	{
		\begin{split}
		\left\|	\frac{d}{dt}c_{n}(t)
		\right\|_{L^{2}(0,T)}
		&\leq\;C_{\alpha,\mu}\|g\|_{H^{3}(\mathbb{T})}
	\left\|w_{n}\right\|_{X_{0,0}^{T}}+\|\partial_{y}g\|_{L^{\infty}(\mathbb{T})}
        \int_{0}^{T}\int_{\mathbb{T}}|u_{n}(y,t)|^{2}dy\;dt\\
		&\quad +C_{T,\alpha,\mu}\|g\|_{H^{\frac{3}{2}}(\mathbb{T})}
		\|f\|_{X_{0,-\frac{1}{2}}^{T}}
		+\|GGg\|_{L^{2}(\mathbb{T})}
		\|u_{n}\|_{X^{T}_{0,0}}\\
		&< +\infty.
		\end{split}}
	\end{equation}
	
	On the other hand,
	\begin{equation}\label{Ob27}
		\begin{split}
		\left\|	c_{n}(t)
		\right\|_{L^{2}(0,T)}
		&\leq\;
		\|g\|_{L^{2}(\mathbb{T})}\|w_{n}\|_{X^{T}_{0,0}}
		< +\infty.
		\end{split}
	\end{equation}
	
	From  \eqref{Ob26} and \eqref{Ob27}, we have that
	$c_{n}(t)\in H^{1}(0,T).$ The Rellich's Theorem, and the fact that
	$w_{n}\rightharpoonup 0\;\; \text{in} \;X^{T}_{0,0}$
	imply the desired conclusion.
\end{proof}

We continue with the proof of Proposition \ref{Ob1}. 	From \eqref{Ob18}, \eqref{Ob28} and Lemma \ref{Ob20}, we deduce that
$$\int_{0}^{T}\int_{\mathbb{T}}
		g^{2}(x)\;w_{n}^{2}(x,t)\;dx\;dt
		\longrightarrow 0.$$

Hence,
$$\|w_{n}\|_{L^{2}((0,T);L^{2}(\widetilde{\omega}))}
\leq
\frac{4}{\|g\|^{2}_{L^{\infty}(\mathbb{T})}}
\int_{0}^{T}\int_{\widetilde{\omega}}
g^{2}(y)\;w_{n}^{2}(x,t)\;dx\;dt\longrightarrow 0,\;\;\text{as}\;\;n\longrightarrow +\infty,$$
where $\widetilde{\omega}:=\left\{ x\in \mathbb{T}:
g(x)>\frac{\|g\|_{L^{\infty}(\mathbb{T})}}{2}\right\}.$
It follows from \eqref{Ob15} and \eqref{Ob16} that
$$\|f_{n}\|_{X^{T}_{-1,-\frac{1}{2}}}
\leq \|-\partial_{x}(u_{n}^{2})-f\|_{X^{T}_{-1,-\frac{1}{2}}}
+ C\|Gu_{n}\|_{X_{0,0}^{T}}\longrightarrow 0,\;\;\text{as}\;\;n\longrightarrow +\infty.$$
	
	Applying the propagation of compactness property  (see Proposition \ref{compact2} with
	$b=\frac{1}{2},$ and $b'=0$), we obtain that
	\begin{equation}\label{Ob32}
	\begin{split}
	\|w_{n}\|_{L^{2}_{loc}((0,T);L^{2}(\mathbb{T}))}&
\longrightarrow 0,\;\;\text{as}\;\;n\longrightarrow +\infty.
	\end{split}
	\end{equation}
	
Hence, $u_{n}^{2}\longrightarrow u^{2} \;\; \text{in} \;
	L^{1}_{loc}((0,T);L^{1}(\mathbb{T})).$
	Consequently,
$\partial_{x}(u_{n}^{2})\longrightarrow
	\partial_{x}( u^{2})$ in the distributional sense.
Thus, $f=-\partial_{x}(u^{2})$ and
	$u\in X^{T}_{0,\frac{1}{2}}$ satisfy
		\begin{equation}\label{Ob37}
	\left \{
	\begin{array}{l l}
	\partial_{t}u-\partial^{3}_{x}u
	-\alpha \mathcal{H}\partial^{2}_{x}u+2\mu\partial_{x} u
	+\partial_{x}(u^{2})
	=0,&  \text{on}\; \mathbb{T}\times (0,T)\\
	u(x,t)=c(t), & \text{on}\; \omega\times (0,T).
	\end{array}
	\right.
	\end{equation}

	From the unique continuation property  (see Proposition \ref{UCP2}) we get that $u=0.$
	Now \eqref{Ob32} implies that
	 $u_{n}\longrightarrow 0$ in
	$L^{2}_{loc}((0,T);L^{2}(\mathbb{T})).$
	Hence, there exists a time $t_{0}\in[0,T]$ such that
	$u_{n}(t_{0})\longrightarrow 0$ in
	$L^{2}(\mathbb{T}).$ From \eqref{GWP3} with $\lambda=0,$ we get
$$\|u_{n}(0)\|^{2}_{L_{0}^{2}(\mathbb{T})}
	=
	\|u_{n}(t_{0})\|^{2}_{L_{0}^{2}(\mathbb{T})}
	+\int_{0}^{t_{0}}
	\|Gu_{n}\|_{L_{0}^{2}(\mathbb{T})}
	\;dt'\longrightarrow 0\;\;\text{as}\;\;n\longrightarrow
	+\infty.$$
	which contradicts the assumption that $\alpha>0,$

\noindent{\textbf{Case 2.  $\alpha=0:$}}
Using the unique continuation property for the linearized Benjamin equation which can be proved in a similar way as
Proposition \ref{UCP2} and a similar argument as those in
Proposition 4.6 \cite{14} we arrive at a contradiction.
This completes the proof.
\end{proof}

\begin{thm}\label{GEF1}
Let $\lambda=0$ in \eqref{atabilizationNL1}. Assume
$\mu\in \mathbb{R}$ and $\alpha>0.$  Then there exists $k>0$ such that for any $R_{0}> 0,$ there exists a constant $C>0$
independent of $u_{0},$ such that
for any $u_{0}\in L_{0}^{2}(\mathbb{T})$ with
$\|u_{0}\|_{L_{0}^{2}(\mathbb{T})}\leq R_{0},$
 the corresponding solution $u$ of the IVP
\eqref{atabilizationNL1} (with $\lambda=0$) satisfies
\begin{equation}\label{GEF2}
\begin{split}
\|u(\cdot,t)
\|_{L_{0}^{2}(\mathbb{T})}&\leq
C\;e^{-kt}\|u_{0}
\|_{L_{0}^{2}(\mathbb{T})},\;\;\;\text{for all}\;t\geq 0.
\end{split}
\end{equation}
\end{thm}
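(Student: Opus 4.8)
The plan is to combine the dissipation built into the feedback $-GG^{\ast}$ with the observability inequality of Proposition \ref{Ob1}, and then to iterate over time windows of length $T$ using the autonomy of \eqref{atabilizationNL1}.

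First I would record the energy identity. Taking the $L^2(\mathbb{T})$ inner product of \eqref{atabilizationNL1} (with $\lambda=0$, so that $K_{0}=GG^{\ast}$) with $u$, the skew-adjointness of $L=\partial_t-\alpha\mathcal{H}\partial_x^2-\partial_x^3+2\mu\partial_x$ from Lemma \ref{compact24} annihilates the linear dispersive contribution, while the nonlinear term integrates to zero because $\int_{\mathbb{T}}2u\,\partial_x u\,u\,dx=\tfrac{2}{3}\int_{\mathbb{T}}\partial_x(u^3)\,dx=0$. Using that $G$ is self-adjoint, this reproduces \eqref{GWP3} in the form $\frac{1}{2}\frac{d}{dt}\|u(\cdot,t)\|_{L_0^2(\mathbb{T})}^2=-\|Gu(\cdot,t)\|_{L_0^2(\mathbb{T})}^2\le 0$. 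In particular $t\mapsto\|u(\cdot,t)\|_{L_0^2(\mathbb{T})}$ is non-increasing, so $\|u(\cdot,t)\|_{L_0^2(\mathbb{T})}\le\|u_0\|_{L_0^2(\mathbb{T})}\le R_0$ for all $t\ge0$, and integrating over $[0,T]$ gives the balance $\|u(\cdot,T)\|_{L_0^2(\mathbb{T})}^2=\|u_0\|_{L_0^2(\mathbb{T})}^2-2\int_0^T\|Gu\|_{L_0^2(\mathbb{T})}^2\,dt$.

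Next I would insert the observability inequality. By Proposition \ref{Ob1} there is $\beta>1$, depending only on $T$ and $R_0$, with $\|u_0\|_{L_0^2(\mathbb{T})}^2\le\beta\int_0^T\|Gu\|_{L_0^2(\mathbb{T})}^2\,dt$; combined with the balance above this forces $\int_0^T\|Gu\|_{L_0^2(\mathbb{T})}^2\,dt\le\tfrac12\|u_0\|_{L_0^2(\mathbb{T})}^2$, so necessarily $\beta\ge2$ (and we may enlarge $\beta$ to guarantee $\beta\ge2$, since the inequality is preserved under increasing $\beta$). Hence $\|u(\cdot,T)\|_{L_0^2(\mathbb{T})}^2\le(1-\tfrac{2}{\beta})\|u_0\|_{L_0^2(\mathbb{T})}^2=:\rho^2\|u_0\|_{L_0^2(\mathbb{T})}^2$ with $\rho\in[0,1)$. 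Because \eqref{atabilizationNL1} is autonomous and the norm is non-increasing, the data $u(\cdot,mT)$ again lie in the ball of radius $R_0$, so the same $\beta$, hence the same $\rho$, applies on each window $[mT,(m+1)T]$; an induction then gives $\|u(\cdot,mT)\|_{L_0^2(\mathbb{T})}\le\rho^m\|u_0\|_{L_0^2(\mathbb{T})}$. Choosing $k(R_0)>0$ with $e^{-k(R_0)T}\ge\rho$, writing $t=mT+r$ with $0\le r<T$, and using monotonicity once more yields $\|u(\cdot,t)\|_{L_0^2(\mathbb{T})}\le C(R_0)e^{-k(R_0)t}\|u_0\|_{L_0^2(\mathbb{T})}$.

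Finally, to obtain a decay rate $k$ independent of $R_0$ as the statement requires, I would fix a reference radius, say $1$, and set $k:=k(1)$. Given any $R_0>0$ and $u_0$ with $\|u_0\|_{L_0^2(\mathbb{T})}\le R_0$, the previous step provides a finite time $T_1=T_1(R_0)\ge0$ with $\|u(\cdot,T_1)\|_{L_0^2(\mathbb{T})}\le1$ (with $T_1=0$ when $R_0\le1$). On $[0,T_1]$ the crude bound $\|u(\cdot,t)\|_{L_0^2(\mathbb{T})}\le\|u_0\|_{L_0^2(\mathbb{T})}\le e^{kT_1}e^{-kt}\|u_0\|_{L_0^2(\mathbb{T})}$ holds by monotonicity, and on $[T_1,\infty)$ I restart from $u(\cdot,T_1)$, which lies in the unit ball, apply the rate-$k$ estimate there, and use $\|u(\cdot,T_1)\|_{L_0^2(\mathbb{T})}\le\|u_0\|_{L_0^2(\mathbb{T})}$; gluing the two regimes produces \eqref{GEF2} with the uniform rate $k$ and a constant $C=C(R_0)$. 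The main obstacle is entirely concentrated in Proposition \ref{Ob1}: once the observability inequality is in hand, the argument is the standard dissipation-plus-iteration mechanism, the only delicate point being the bookkeeping that upgrades the $R_0$-dependent rate $k(R_0)$ to a single uniform $k$.
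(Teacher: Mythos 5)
Your proposal is correct and follows essentially the same route as the paper: the paper's proof of Theorem \ref{GEF1} is a one-line appeal to the observability inequality of Proposition \ref{Ob1} together with the argument of \cite[Theorem 4.5]{14}, and what you spell out — the energy identity \eqref{GWP3} with $\lambda=0$ (valid since $G$ is self-adjoint and the nonlinearity integrates to zero), the per-window contraction $\|u(\cdot,T)\|_{L_0^2(\mathbb{T})}^2\le(1-2/\beta)\,\|u_0\|_{L_0^2(\mathbb{T})}^2$, and the iteration over windows $[mT,(m+1)T]$ using autonomy and monotone decay of the norm — is precisely that standard dissipation-plus-observability mechanism. Your final bookkeeping step, fixing a reference ball of radius $1$ and absorbing the $R_0$-dependent transient into the constant $C$ to obtain a rate $k$ independent of $R_0$, is exactly the content of the paper's closing remark that $k$ does not depend on $R_0$.
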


\begin{proof}
	This theorem is a direct consequence of the observability inequality \eqref{Ob3}
(see \cite[Theorem 4.5]{14}). Observe that the constant $k$ is independent
of $R_{0}.$
\end{proof}

Now, we prove that the solution $u$ of
\eqref{atabilizationNL1} (with $\lambda=0$) decays
exponentially in any space $H_{0}^{s}(\mathbb{T}).$
For this, we need an exponential stability result
for the linearized system
\begin{equation}\label{GEF13}
\left \{
\begin{array}{l l}
\partial_{t}w-\partial^{3}_{x}w
-\alpha \mathcal{H}\partial^{2}_{x}w+2\mu\partial_{x} w+2\;\partial_{x}(aw) =-K_{0}w,&  x\in \mathbb{T}\;\;t>0,\\
w(x,0)=w_{0}(x), & x\in \mathbb{T},
\end{array}
\right.
\end{equation}
where $a\in Z_{s,\frac{1}{2}}^{T}\cap
L^{2}([0,T];L_{0}^{2}(\mathbb{T}))$ is a given function.
This is done in the following two Lemmas.

\begin{lem}\label{GEF14}
	Let	$s\geq 0,$   $\alpha>0,$  and
	$\mu \in \mathbb{R}$ be given.
	Assume $a\in Z_{s,\frac{1}{2}}^{T}\cap
	L^{2}([0,T];L_{0}^{2}(\mathbb{T}))$ for all
	$T>0,$ and that there exists $T'>0$ such that
	\begin{equation}\label{GEF16}
	\begin{split}
	\sup_{n\geq 1}\|a\|_{Z^{[nT',(n+1)T']}_{s,\frac{1}{2}}}&
	\leq \beta.
	\end{split}
	\end{equation}
	Then for any $w_{0}\in H_{0}^{s}(\mathbb{T})$
	and  any $T>0$ there exists a unique
	solution $w\in Z^{T}_{s,\frac{1}{2}}\cap C([0,T];H_{0}^{s}(\mathbb{T}))$
	of the IVP \eqref{GEF13}. Furthermore, the following estimate holds
	\begin{equation}\label{GEF15}
	\begin{split}
	\|w\|_{Z^{T}_{s,\frac{1}{2}}}&
	\leq \upsilon(\|a\|_{Z_{s,\frac{1}{2}}^{T}})\;
	\|w_{0}\|_{H_{0}^{s}(\mathbb{T})},
	\end{split}
	\end{equation}
	where $\upsilon:\mathbb{R}^{+}\longrightarrow
	\mathbb{R}^{+}$ is a nondecreasing continuous function.
	
	Moreover, denote by $S(t)w_{0}$ the unique solution $u$
	of equation \eqref{GEF13} corresponding to
	the initial data $w_{0}.$ Then  the operator
	$S(t):H_{0}^{s}(\mathbb{T})\longrightarrow
	Z^{T}_{s,\frac{1}{2}},$ defined by
$S(t)w_{0}=w$
	is continuous in the interval $[0,T].$
\end{lem}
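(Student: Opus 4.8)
The plan is to set up \eqref{GEF13} in integral form and run a fixed-point argument in the Bourgain space $Z^{T}_{s,\frac{1}{2}}$, in the spirit of the proofs of Theorems \ref{Smalldatacontrol2} and \ref{LWP}. Since \eqref{GEF13} is linear in $w$ (the coefficient $a$ being prescribed), I would first rewrite it via Duhamel's formula using the group $U_{\mu}$ defined in \eqref{semgru2},
\begin{equation*}
w(t)=U_{\mu}(t)w_{0}-2\int_{0}^{t}U_{\mu}(t-\tau)\partial_{x}(aw)(\tau)\,d\tau-\int_{0}^{t}U_{\mu}(t-\tau)(K_{0}w)(\tau)\,d\tau,
\end{equation*}
and take $\Gamma(w)$ to be the right-hand side. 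The mean-zero property is preserved because each of the three terms has zero spatial mean (the integrand of the second term is an $x$-derivative, and $K_{0}=GG^{\ast}$ maps into $L_{0}^{2}(\mathbb{T})$ by \eqref{EQ1}).

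The relevant estimates are Corollary \ref{linstimateZ} for the free evolution, Lemma \ref{LWP7} for the damping term, and the bilinear estimate of Corollary \ref{Biestimate2} for $\partial_{x}(aw)$ combined with Theorem \ref{intestimateZ3}. The decisive point is to extract smallness in $T$ from the bilinear contribution: applying Proposition \ref{Xestimate1} to pass from the $X^{T}_{s,\frac{1}{3}}$ norm to the $X^{T}_{s,\frac{1}{2}}$ norm (as in the proof of Corollary \ref{Biestimate3}) yields a gain $T^{\epsilon}$ with $0<\epsilon<\frac{1}{6}$, so that
\begin{equation*}
\|\partial_{x}(aw)\|_{Z^{T}_{s,-\frac{1}{2}}}\leq C_{\alpha,s}\,T^{\epsilon}\,\|a\|_{Z^{T}_{s,\frac{1}{2}}}\,\|w\|_{Z^{T}_{s,\frac{1}{2}}}.
\end{equation*}
Combining these estimates, I would obtain, for some $0<\epsilon'<1$,
\begin{equation*}
\|\Gamma(w)\|_{Z^{T}_{s,\frac{1}{2}}}\leq C_{1}\|w_{0}\|_{H_{0}^{s}(\mathbb{T})}+C_{2}T^{\epsilon}\|a\|_{Z^{T}_{s,\frac{1}{2}}}\|w\|_{Z^{T}_{s,\frac{1}{2}}}+C_{3}T^{1-\epsilon'}\|w\|_{Z^{T}_{s,\frac{1}{2}}},
\end{equation*}
together with the corresponding Lipschitz estimate for $\Gamma(w_{1})-\Gamma(w_{2})$. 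Choosing a local time $T_{0}>0$ small enough that $C_{2}T_{0}^{\epsilon}\|a\|_{Z^{T}_{s,\frac{1}{2}}}+C_{3}T_{0}^{1-\epsilon'}\leq\frac{1}{2}$ makes $\Gamma$ a contraction on a ball of $Z^{T_{0}}_{s,\frac{1}{2}}$, producing a unique local solution with $\|w\|_{Z^{T_{0}}_{s,\frac{1}{2}}}\leq 2C_{1}\|w_{0}\|_{H_{0}^{s}(\mathbb{T})}$, which lies in $C([0,T_{0}];H_{0}^{s}(\mathbb{T}))$ by Proposition \ref{contimbedded}.

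Because the equation is linear in $w$, no blow-up can occur, and I would pass from local to global existence on $[0,T]$ by iterating this construction on consecutive subintervals of length $T_{0}$. At each step the initial datum is $w$ evaluated at the previous endpoint, which by Proposition \ref{contimbedded} satisfies $\|w(jT_{0})\|_{H_{0}^{s}(\mathbb{T})}\leq C_{4}\|w\|_{Z^{[(j-1)T_{0},jT_{0}]}_{s,\frac{1}{2}}}\leq 2C_{1}C_{4}\|w((j-1)T_{0})\|_{H_{0}^{s}(\mathbb{T})}$; summing the resulting geometric growth over the $N\sim T/T_{0}$ steps yields \eqref{GEF15}, the function $\upsilon$ being determined by the finite step count $N=N(\|a\|_{Z^{T}_{s,\frac{1}{2}}})$. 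The main obstacle, and the reason the $T^{\epsilon}$ gain is essential, is precisely that $a$ may have arbitrarily large norm: one cannot render the bilinear term small by shrinking $T$ through the factor $\|a\|_{Z^{T_{0}}_{s,\frac{1}{2}}}$, since restriction norms need not vanish as the interval shrinks, so the smallness must come from the explicit power of $T$. Since the forced local time $T_{0}(r)$ decreases and the step count $N(r)$ increases as $r=\|a\|_{Z^{T}_{s,\frac{1}{2}}}$ grows, $\upsilon$ is nondecreasing and continuous. Continuity of $S(t)$ on $[0,T]$ then follows from the Lipschitz estimate for $\Gamma$ together with the same iteration, exactly as in the continuous-dependence part of Theorem \ref{LWP}.
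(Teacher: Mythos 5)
Your proposal is correct and follows essentially the same route as the paper: Duhamel's formula with $U_{\mu}$, a contraction estimate in $Z^{T}_{s,\frac{1}{2}}$ assembled from Corollary \ref{linstimateZ}, Theorem \ref{intestimateZ3}, Corollary \ref{Biestimate2} with the $T^{\theta}$ gain from Proposition \ref{Xestimate1}, and Lemma \ref{LWP7} for the damping term, followed by iteration in time with $\upsilon$ determined by the step count, exactly as in the paper's proof. The only cosmetic difference is that the paper fixes the local step size through the uniform bound $\beta$ of \eqref{GEF16} and phrases globality via the maximal-time/blow-up alternative of Theorem \ref{LWP}, whereas you fix the step via the full-interval norm $\|a\|_{Z^{T}_{s,\frac{1}{2}}}$ (finite for each fixed $T$) and iterate directly over $[0,T]$ — an immaterial variation for the statement as given.
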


\begin{proof}
	We  first establish the existence and uniqueness of a solution $w\in 	Z^{T}_{s,\frac{1}{2}}\cap L^{2}([0,T];L^{2}_{0}(\mathbb{T}))$ of \eqref{GEF13} for $0<T\leq 1$ small enough and then show that $T$ can be taken arbitrarily large. Let us rewrite system \eqref{GEF13} in its integral form
	and for given initial datas $w_{0}, w_{1} \in H_{0}^{s}(\mathbb{T})$
	we	define the map
$$\Gamma(v_{j})=\displaystyle{U_{\mu}(t)w_{j}-
		\int_{0}^{t}U_{\mu}(t-\tau)
		(2\partial_{x}(a\;v_{j}))(\tau)\;d\tau
		-\int_{0}^{t}U_{\mu}(t-\tau)
		(K_{0}v_{j})(\tau)\;d\tau},$$
	where $j=0,1$ and $U_{\mu}(t)=e^{(\partial_{x}^{3}
		+\alpha\mathcal{H}\partial_{x}^{2}-\mu \partial_{x})t}.$ Assume
	$0< T\leq T'.$
	Then, calculations similar to those in Theorem
	\ref{LWP} yield
		\begin{equation}\label{GEF22}
		{
		\begin{split}
		\|\Gamma(v_{1})-\Gamma(v_{2})\|_{Z^{T}_{s,\frac{1}{2}}}
		&\leq
		C_{1}\|w_{0}-w_{1}\|_{H_{0}^{s}}+
		\;\displaystyle{
			2C_{2}\;T^{\theta}\;
			\left\|a\right\|_{Z^{T}_{s,\frac{1}{2}}}			
			\left\|v_{1}-v_{2}\right\|_{Z^{T}_{s,\frac{1}{2}}}
			+
			C_{3}\;T^{1-\epsilon}\;
			\left\|v_{2}-v_{1}\right\|_{Z^{T}_{s,\frac{1}{2}}}},
		\end{split}}
		\end{equation}
	for any $a,v_{1},v_{2}\in
	Z^{T}_{s,\frac{1}{2}}\cap L^{2}([0,T];L^{2}_{0}(\mathbb{T})).$
	Choosing $w_{1}=0,$\; $M=2 C_{1} \|w_{0}\|_{H_{0}^{s}(\mathbb{T})},$ and
	$T>0$ such that \;$\displaystyle{2 C_{2} T^{\theta} \beta
	+C_{3} T^{1-\epsilon}\leq\frac{1}{2}},$
	we obtain that the map $\Gamma$ is a contraction in a closed ball
	$B_{M}(0)$ with $M=2C_{1}\|w_{0}\|_{H_{0}^{s}(\mathbb{T})}.$
	Its unique fixed point $w$ is the desired solution of \eqref{GEF13} in
	$Z^{T}_{s,\frac{1}{2}}\cap
	L^{2}([0,T];L^{2}_{0}(\mathbb{T})).$
	Note that, the time of existence, can be taken as
	\begin{equation}\label{GEF24}
	\begin{split}
	T=\min\left\{\frac{1}{2}, \;T',\;
	\left(\frac{1}{2C_{2}\;\beta+C_{3}}
	\right)^{\frac{1}{\theta}}\right\}.
	\end{split}
	\end{equation}	
	Furthermore, \eqref{GEF22} shows that the solution depends
	continuously on the initial data and satisfies \eqref{GEF15}.
	
	Now, we prove the global existence of the solution.
	Let $T^{\ast}$ be the maximal time of existence of the solution $w$ of the IVP
	\eqref{GEF13} satisfying \eqref{GEF15} with initial data
	$w_{0}\in H_{0}^{s}(\mathbb{T}).$
	If $T^{\ast}<\infty,$ then  from \eqref{GEF16}, we have 
$${
	\begin{split}
	\lim_{r\longrightarrow T^{\ast^{-}}}
	\|w(r)\|_{H_{0}^{s}(\mathbb{T})}
	&\leq
	\lim_{r\longrightarrow T^{\ast^{-}}}
	 \upsilon(\|a\|_{Z_{s,\frac{1}{2}}^{[0,r]}})
	\|w_{0}\|_{H_{0}^{s}(\mathbb{T})}
	\leq
	\upsilon((n_{0}+1)\beta)
	\|w_{0}\|_{H_{0}^{s}(\mathbb{T})}
	<+\infty,
	\end{split}}$$
	for some $n_{0} \in \mathbb{Z}.$
	Following a similar argument as in the proof of the blow-up alternative in Theorem \ref{LWP} we finish the proof.
\end{proof}

\begin{lem}\label{GEF18}
	Let	$s\geq 0,$   $\alpha>0,$  and
	$\mu \in \mathbb{R}$ be given.
	Assume $a\in Z_{s,\frac{1}{2}}^{T}\cap
	L^{2}([0,T];L_{0}^{2}(\mathbb{T}))$ for all
	$T>0.$ Then for any $k'\in (0,k)$ there exists $T>0,$ and
	$\beta>0$ such that if
	\begin{equation}\label{GEF19}
	\begin{split}
	\sup_{n\geq 1}\|a\|_{Z^{[nT,(n+1)T]}_{s,\frac{1}{2}}}&
	\leq \beta,
	\end{split}
	\end{equation}
	the solution 	of the IVP \eqref{GEF13} satisfies
	\begin{equation}\label{GEF20}
	\begin{split}
	\|w(\cdot,t)\|_{H^{s}_{p}(\mathbb{T})}&
	\leq C\;e^{-k't}
	\|w_{0}\|_{H_{p}^{s}(\mathbb{T})},\;\;\;
	\text{for all}\;\;t\geq 0,
	\end{split}
	\end{equation}
	where $C>0$ is a constant that does not depend on $w_{0}.$
\end{lem}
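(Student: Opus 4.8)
The plan is to reduce the claim to a contraction estimate over a single period and then iterate. Writing $A_{\mu}=\alpha\mathcal{H}\partial_{x}^{2}+\partial_{x}^{3}-2\mu\partial_{x}$ and letting $\{T_{0}(t)\}$ be the $C_{0}$-group generated by $A_{\mu}-K_{0}$ (with $K_{0}=GG^{\ast}$), Duhamel's formula recasts \eqref{GEF13} on any interval $[nT,(n+1)T]$ as
\begin{equation*}
w((n+1)T)=T_{0}(T)\,w(nT)-2\int_{nT}^{(n+1)T}T_{0}((n+1)T-\tau)\,\partial_{x}(a\,w)(\tau)\,d\tau .
\end{equation*}
First I would record the two ingredients that control the right-hand side in $H_{0}^{s}(\mathbb{T})$: the exponential decay of the damped group, $\|T_{0}(t)\phi\|_{H_{0}^{s}(\mathbb{T})}\leq Me^{-\gamma t}\|\phi\|_{H_{0}^{s}(\mathbb{T})}$ furnished by Theorem \ref{st35}, and the bilinear smoothing bound \eqref{est-lambda2} of Lemma \ref{LEF5} together with the well-posedness estimate \eqref{GEF15} of Lemma \ref{GEF14}, which give
\begin{equation*}
\Big\|\int_{nT}^{(n+1)T}T_{0}((n+1)T-\tau)\,\partial_{x}(a\,w)\,d\tau\Big\|_{H_{0}^{s}(\mathbb{T})}\leq C\,\|a\|_{Z^{[nT,(n+1)T]}_{s,\frac12}}\,\|w\|_{Z^{[nT,(n+1)T]}_{s,\frac12}}\leq C\,\beta\,\upsilon(\beta)\,\|w(nT)\|_{H_{0}^{s}(\mathbb{T})},
\end{equation*}
where I have used the embedding $Z^{T}_{s,\frac12}\hookrightarrow C([0,T];H_{0}^{s}(\mathbb{T}))$ (Proposition \ref{contimbedded}) and the uniform hypothesis \eqref{GEF19}.

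Combining these two bounds yields the per-period estimate
\begin{equation*}
\|w((n+1)T)\|_{H_{0}^{s}(\mathbb{T})}\leq\big(Me^{-\gamma T}+C\,\beta\,\upsilon(\beta)\big)\,\|w(nT)\|_{H_{0}^{s}(\mathbb{T})}.
\end{equation*}
Given $k'\in(0,k)$, I would then fix the period $T$ large enough that $Me^{-\gamma T}\leq\tfrac12 e^{-k'T}$, and afterwards choose $\beta>0$ small enough that $C\beta\upsilon(\beta)\leq\tfrac12 e^{-k'T}$; this is where the smallness condition \eqref{GEF19} is spent. With these choices the contraction factor is at most $e^{-k'T}$, so iterating over the intervals $[nT,(n+1)T]$ (the step $w_{0}\mapsto w(T)$ being merely bounded via \eqref{GEF15}, since \eqref{GEF19} controls only $n\geq1$) gives $\|w(nT)\|_{H_{0}^{s}(\mathbb{T})}\leq C e^{-k'nT}\|w_{0}\|_{H_{0}^{s}(\mathbb{T})}$, and controlling $w$ on each period by its endpoint value through \eqref{GEF15} fills the gaps to produce the continuous-in-time decay \eqref{GEF20}, absorbing the per-period growth constant into $C$.

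The delicate point is the compatibility of the two parameter choices, and behind it the identification of the admissible rate. Since the constant $C$ in the smoothing estimate \eqref{est-lambda2} and the function $\upsilon$ in \eqref{GEF15} depend on the period length $T$, the parameters $T$ and $\beta$ cannot be chosen independently: one must first fix $T$, which freezes $C=C(T)$ and $\upsilon=\upsilon_{T}$, and only then shrink $\beta$. The genuinely structural obstacle, however, is guaranteeing that the decay rate of the damped group $T_{0}$ exceeds the prescribed $k'$ for \emph{every} $k'<k$; this forces one to anchor the rate at the level of the $L_{0}^{2}(\mathbb{T})$ exponential stability governed by the observability inequality of Proposition \ref{Ob1} (equivalently Theorem \ref{GEF1}), exactly as in the proof of the corresponding lemma in \cite{14}. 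Once the rate is tied to the observability threshold $k$ and the balancing above is carried out, the iteration closes.
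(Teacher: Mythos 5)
Your proposal is correct and follows essentially the same route as the paper: Duhamel's formula over each period $[nT,(n+1)T]$, the per-period bound $\|w((n+1)T)\|_{H_{0}^{s}(\mathbb{T})}\leq\big(C_{1}e^{-kT}+C_{2}(T)\,\beta\,\upsilon(\beta)\big)\|w(nT)\|_{H_{0}^{s}(\mathbb{T})}$ assembled from the damped-group decay of Theorem \ref{st35}, the smoothing estimate of Lemma \ref{LEF5} and the well-posedness bound \eqref{GEF15}, then fixing $T$ first (freezing the $T$-dependent constants) and shrinking $\beta$ afterwards, exactly as in the paper's choice $C_{1}e^{-kT}+C_{2}\beta\,\upsilon(\beta)=e^{-k'T}$, with \eqref{GEF15} filling in the decay between sample times. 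Your closing attribution of the admissible rate to the observability inequality of Proposition \ref{Ob1} (Theorem \ref{GEF1}) is a harmless gloss rather than part of the logic, since within this linear lemma the relevant rate is the decay of the semigroup $T_{0}(t)$ from Theorem \ref{st35}, which is precisely what you invoke in the body of the argument.
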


\begin{proof}
	From Lemma \ref{GEF14} we have that for any $T>0$
	the IVP \eqref{GEF13} admits a unique solution
	$w\in Z^{T}_{s,\frac{1}{2}}\cap C([0,T];H_{0}^{s}(\mathbb{T}))$
	and
	\begin{equation}\label{GEF30}
	\begin{split}
	\|w\|_{Z^{T}_{s,\frac{1}{2}}}&
	\leq \upsilon(\|a\|_{Z_{s,\frac{1}{2}}^{T}})\;
	\|w_{0}\|_{H_{0}^{s}(\mathbb{T})},
	\end{split}
	\end{equation}
	where $\upsilon:\mathbb{R}^{+}\longrightarrow
	\mathbb{R}^{+}$ is a nondecreasing continuous function.
	Rewrite \eqref{GEF13} in its integral form
$$w(t)=\displaystyle{T_{0}(t)w_{0}-
		\int_{0}^{t}T_{0}(t-\tau)
		(2\;\partial_{x}(a\cdot w))(\tau)\;d\tau},$$
	where
	$T_{0}(t)=e^{(\alpha \mathcal{H}\partial_{x}^{2}+
		\partial_{x}^{3}-2\mu\partial_{x}-K_{0})t}$ is the $C_{0}$-semigroup on $H_{0}^{s}(\mathbb{T})$ with infinitesimal generator $A_{\mu}-K_{0}.$ For any $T>0,$ we infer from Corollary \ref{st35},
	Lemma \ref{LEF5}, and \eqref{GEF30} that
	\begin{equation}\label{GEF32}
	\begin{split}
	\|w(\cdot,T)\|_{H^{s}_{0}(\mathbb{T})}
	&\leq
	C_{1}e^{-kT}	\|w_{0}\|_{H^{s}_{0}(\mathbb{T})}
	+C_{2}\;\left\|a
	\right\|_{Z^{T}_{s,\frac{1}{2}}}
	\upsilon(\|a\|_{Z_{s,\frac{1}{2}}^{T}})\;
	\|w_{0}\|_{H_{0}^{s}(\mathbb{T})},
	\end{split}
	\end{equation}
	where $C_{1}>0$ is independent of $T$ and $C_{2}>0$ may depend
	on $T.$ Let
$$	y_{n}:=w(\cdot,nT),\;\;\;\text{for}\;\;n=1,2,3,...$$
	
	Using the semigroup property, we have
$${
		\begin{split}
		y_{n+1}&=w(\cdot,nT+T)\\
		&=\displaystyle{T_{0}(T)\left[T_{0}(nT)w_{0}
			-\int_{0}^{nT}T_{0}(nT-\tau)
			(2 \partial_{x}(a\cdot w))(\tau) d\tau\right]}\\
			&\qquad
		-T_{0}(nT)\int_{0}^{T}T_{0}(T-(\theta+nT))
		(2 \partial_{x}(a\cdot w))(\theta+nT) d\theta.
		\end{split}}$$
	
	Defining\;
$\displaystyle{I_{2}:=
	-T_{0}(nT)\int_{0}^{T}T_{0}(T-(\theta+nT))
	(2\;\partial_{x}(a\cdot w))(\theta+nT)\;d\theta},$
	we observe that
$${
	\begin{split}
	\|I_{2}\|_{H^{s}_{0}(\mathbb{T})}
	&\leq\displaystyle{
		c\;\left\|\int_{0}^{t}
		T_{0}(t-(\theta+nT))
		(2\;\partial_{x}(a\cdot w))(\theta+nT)\;d\theta
		\right\|_{Z^{T}_{s,\frac{1}{2}}}}\\
	&\leq\displaystyle{
		C_{2}\;\left\|a(\theta+nT)
		\right\|_{Z^{T}_{s,\frac{1}{2}}}
		\left\|w(\theta+nT)
		\right\|_{Z^{T}_{s,\frac{1}{2}}}  }\\
	&\leq\displaystyle{
		C_{2}\;\left\|a
		\right\|_{Z^{[nT,(n+1)T]}_{s,\frac{1}{2}}}
		\left\|w
		\right\|_{Z^{[nT,(n+1)T]}_{s,\frac{1}{2}}}  }\\
	&\leq\displaystyle{
		C_{2}\;\left\|a\right\|_{Z^{[nT,(n+1)T]}_{s,\frac{1}{2}}}\;
		\upsilon\left(\|a\|_{Z_{s,\frac{1}{2}}^{[nT,(n+1)T]}}\right)\;
		\|w(\cdot,nT)\|_{H_{0}^{s}(\mathbb{T})}  }\\
	&\leq\displaystyle{
		C_{2}\beta\;\upsilon(\beta)\;
		\|y_{n}\|_{H_{0}^{s}(\mathbb{T})}	}.
	\end{split}}$$
	
	Therefore,
	\begin{equation}\label{GEF36}
	\begin{split}
	\|y_{n+1}\|_{H_{0}^{s}(\mathbb{T})}
	&\leq\displaystyle{
		\|T_{0}(T)y_{n}\|_{H_{0}^{s}(\mathbb{T})}
		+C_{2}\beta\;\upsilon(\beta)\;
		\|y_{n}\|_{H_{0}^{s}(\mathbb{T})}	}\\
	&\leq\displaystyle{
		\left(C_{1}e^{-kT}
		+C_{2}\beta\;\upsilon(\beta)\right)
		\|y_{n}\|_{H_{0}^{s}(\mathbb{T})}	},\;\;\;
	\text{for}\;\;n\geq 1.
	\end{split}
	\end{equation}
	
	Choosing $T>0$ sufficiently large and $\beta$ small enough
	so that
	\begin{equation}\label{GEF37}
	\begin{split}
	C_{1}e^{-kT}
	+C_{2}\beta\;\upsilon(\beta)
	&=e^{-k't},
	\end{split}
	\end{equation}
	 we get from \eqref{GEF36} that\;
$\|y_{n+1}\|_{H_{0}^{s}(\mathbb{T})}
	\leq\displaystyle{
		e^{-k'T}
		\|y_{n}\|_{H_{0}^{s}(\mathbb{T})}	},\;
	\text{for}\;\;n\geq 1,$
	as long as \eqref{GEF19} holds. Thus, $w$ satisfies
	\eqref{GEF20} 
	and the proof is complete.
\end{proof}

\begin{thm}\label{GEF6}
	Let $\lambda=0$ in \eqref{atabilizationNL1}. Assume
	$\mu\in \mathbb{R}$, $\alpha>0$ 
	 and $k_{0}>0$ be the infimum of the numbers
	$\gamma, \;k$ given respectively in  Theorem \ref{st35} and  Theorem \ref{GEF1}. Let $s\geq 0$ and let
$k'\in (0,k_{0}]$ be given. Then	there exists a nondecreasing
continuous function
$\alpha_{s,k'}:\mathbb{R}^{+}\longrightarrow
\mathbb{R}^{+}$
such that for any $u_{0}\in H_{0}^{s}(\mathbb{T}),$
	the corresponding solution $u$ of the IVP
	\eqref{atabilizationNL1} (with $\lambda=0$) satisfies
	\begin{equation}\label{GEF7}
	\begin{split}
	\|u(\cdot,t)
	\|_{H_{0}^{s}(\mathbb{T})}&\leq
	\alpha_{s,k'}(\|u_{0}\|_{L_{0}^{2}(\mathbb{T})})\;
	e^{-k't}\|u_{0}
	\|_{H_{0}^{s}(\mathbb{T})},\;\;\;\text{for all}\;\;t\geq 0.
	\end{split}
	\end{equation}
\end{thm}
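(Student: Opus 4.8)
The plan is to bootstrap the $H_{0}^{s}(\mathbb{T})$ decay out of the $L_{0}^{2}(\mathbb{T})$ decay already supplied by Theorem \ref{GEF1}, by regarding the nonlinear flow as a \emph{linear} flow with a self-generated coefficient and then iterating the one-step decay estimate of Lemma \ref{GEF18} over successive time windows. First I would fix $R_{0}\geq \|u_{0}\|_{L_{0}^{2}(\mathbb{T})}$ and recall from Theorem \ref{GEF1} that $\|u(\cdot,t)\|_{L_{0}^{2}(\mathbb{T})}\leq C\,e^{-kt}\|u_{0}\|_{L_{0}^{2}(\mathbb{T})}$ for all $t\geq 0$, with $k$ independent of $R_{0}$. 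The key observation is that if $u$ solves \eqref{atabilizationNL1} with $\lambda=0$, then setting $a:=u/2$ one has $2u\partial_{x}u=2\partial_{x}(au)$, so that $w=u$ is precisely the solution of the linear problem \eqref{GEF13} with coefficient $a$ and data $w_{0}=u_{0}$ (recall $K_{0}=GG^{\ast}$). This identification is what makes the linear machinery of Lemmas \ref{GEF14} and \ref{GEF18} applicable to the damped nonlinear equation.

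Next I would set $y_{n}:=\|u(\cdot,nT)\|_{H_{0}^{s}(\mathbb{T})}$ for a window length $T$ to be chosen, and reproduce the recursion underlying the proof of Lemma \ref{GEF18}. On each interval $[nT,(n+1)T]$ the global well-posedness bound \eqref{GWP1} applied with datum $u(\cdot,nT)$ gives $\|u\|_{Z^{[nT,(n+1)T]}_{s,\frac12}}\leq \beta_{T,s}(\|u(\cdot,nT)\|_{L_{0}^{2}(\mathbb{T})})\,y_{n}\leq \beta_{T,s}(R_{0})\,y_{n}$, since $\beta_{T,s}$ is nondecreasing and $\|u(\cdot,nT)\|_{L_{0}^{2}(\mathbb{T})}\leq R_{0}$; hence $\|a\|_{Z^{[nT,(n+1)T]}_{s,\frac12}}\leq \tfrac12\beta_{T,s}(R_{0})\,y_{n}$. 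Combining the linear semigroup decay from Theorem \ref{st35} (at rate $\gamma\geq k_{0}\geq k'$) with Corollary \ref{Biestimate2} and Lemma \ref{LEF5} exactly as in \eqref{GEF32} yields a one-step inequality $y_{n+1}\leq \big(C_{1}e^{-\gamma T}+C_{2}\,\|a\|_{Z^{[nT,(n+1)T]}_{s,\frac12}}\,\upsilon(\|a\|_{Z^{[nT,(n+1)T]}_{s,\frac12}})\big)\,y_{n}$, with $C_{1}$ independent of $T$.

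The decisive point, and the step I expect to be the main obstacle, is to force the bracket below $e^{-k'T}$ uniformly in $n$, which is exactly the smallness hypothesis \eqref{GEF19} for the self-generated coefficient $a=u/2$. Since \eqref{GEF19} is measured in the $Z_{s,\frac12}$-norm and is therefore tied to the $H^{s}$ rather than the $L^{2}$ size of $u$, the $L_{0}^{2}$ decay alone does not close the loop: one must first establish a uniform-in-time bound $\sup_{n}y_{n}\leq M(R_{0},\|u_{0}\|_{H^{s}_0})<\infty$, obtained by running the higher-regularity estimate of Theorem \ref{GWP} (the $H^{3}$ argument in which $\partial_{t}u$ solves a linear equation, then nonlinear interpolation) \emph{together} with the $L_{0}^{2}$ decay, so that the prefactor $\beta_{T,s}(R_{0})$ stays controlled while Lemma \ref{GEF14} guarantees well-posedness on every window. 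With this uniform control in hand, choosing $T$ large (which makes $C_{1}e^{-\gamma T}<e^{-k'T}$, possible because $k'\leq k_{0}\leq\gamma$) and using that the coefficient contribution is driven down along the $L_{0}^{2}$-decaying orbit triggers the contraction from some index $n_{0}$ onward, giving $y_{n+1}\leq e^{-k'T}y_{n}$. This is the technical heart of the argument and follows the scheme of \cite[Theorem 4.7]{14}.

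Finally, once the geometric decay $y_{n}\leq \alpha_{s,k'}(R_{0})\,e^{-k'nT}\|u_{0}\|_{H_{0}^{s}(\mathbb{T})}$ is secured along the grid $\{nT\}$, I would interpolate across each window by means of the well-posedness estimate \eqref{GWP1} and the embedding of Proposition \ref{contimbedded} to pass from this discrete decay to the continuous-in-time bound \eqref{GEF7}, absorbing the transient behaviour on $[0,n_{0}T]$ and the factor $e^{k'T}$ into the nondecreasing continuous function $\alpha_{s,k'}$. The statement of Theorem \ref{GEF61} is then recovered as the special case of \eqref{GEF7} written for $u_{0}\in H_{p}^{s}(\mathbb{T})$ after the mean-value reduction $\widetilde{u}=u-\mu$.
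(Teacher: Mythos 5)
Your overall strategy---viewing the damped nonlinear flow as the linear system \eqref{GEF13} with a self-generated coefficient and iterating the one-step decay of Lemma \ref{GEF18} over time windows---is the right philosophy, and you correctly isolated the obstruction: the hypothesis \eqref{GEF19} is a \emph{smallness} condition on the coefficient in the $Z_{s,\frac12}$-window norm. But your proposed way around it does not close. Taking $a=u/2$, $w=u$ forces you to apply Lemma \ref{GEF18} at regularity level $s$, so you need $\sup_{n\geq n_0}\|a\|_{Z^{[nT,(n+1)T]}_{s,\frac12}}\leq \beta$ with $\beta$ small. A uniform-in-time bound $\sup_n y_n\leq M$ gives only $\|a\|_{Z^{[nT,(n+1)T]}_{s,\frac12}}\leq C\,\beta_{T,s}(R_0)\,M$, which is bounded, not small: the $L^2_0$ decay drives $\beta_{T,s}(\|u(\cdot,nT)\|_{L_0^2(\mathbb{T})})$ down to $\beta_{T,s}(0)>0$, not to zero, so the coefficient's $Z_{s,\frac12}$ size shrinks only if $y_n$ itself shrinks---which is precisely the conclusion being proved; the loop is circular. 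Moreover, the intermediate claim $\sup_n y_n\leq M(R_0,\|u_0\|_{H_0^s})<\infty$ is itself unsupported: iterating \eqref{GWP1} window by window yields $y_{n+1}\leq C_4\,\beta_{T,s}(R_0)\,y_n$, i.e.\ possibly geometric \emph{growth}, and Theorem \ref{GWP} contains no mechanism preventing it; in the paper the uniform $H^s$ control is an output of the decay argument, not an input.

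The paper's proof avoids level-$s$ smallness entirely by differentiating in time. For $s=3$ one sets $v=\partial_t u$, which solves the linearized system \eqref{GEF8} with coefficient $a=u$ and datum $v_0\in L_0^2(\mathbb{T})$; Lemma \ref{GEF18} is then applied with $s=0$, where the required window smallness \eqref{GEF12} of $\|u\|_{Z^{[t,t+T]}_{0,\frac12}}$ \emph{does} follow from Theorem \ref{GEF1} combined with \eqref{GWP1}, for $t\geq t^{\ast}$ large. The resulting $L_0^2$ decay of $v$ is converted into $H_0^3$ decay of $u$ through the identity \eqref{GEF44}, using estimates of the type \eqref{GWP21}--\eqref{GWP23}; the range $0<s<3$ is then handled by a Lipschitz stability estimate for differences $u_1-u_2$ (linearized equation with coefficient $a=u_1+u_2$) together with nonlinear interpolation, and $s\in 3\mathbb{N}^{\ast}$ similarly. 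To salvage your scheme you would have to replace the step ``the coefficient contribution is driven down along the $L_0^2$-decaying orbit'' by this time-differentiation device, or some equivalent reduction of the smallness requirement to the $Z_{0,\frac12}$ level, where the $L^2$ theory actually provides it.
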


\begin{proof}
	Note that, in Theorem \ref{GEF1} we already established  \eqref{GEF7}
	for $s=0$ (with $k'=k$). Now, consider the case $s=3.$ Let
	$R_{0}>0$ be any number  and
	$u_{0}\in H_{0}^{3}(\mathbb{T})$ with
	$\|u_{0}\|_{L_{0}^{2}(\mathbb{T})}\leq
R_{0}.$
	Let $u$ be the solution of  \eqref{atabilizationNL1} with $\lambda=0$ and initial data $u_{0},$ 
	and define $v=\partial_{t}u.$ Then
	$v$ solves
	 \begin{equation}\label{GEF8}
	\left \{
	\begin{array}{l l}
	\partial_{t}v-\partial^{3}_{x}v
	-\alpha \mathcal{H}\partial^{2}_{x}v+2\mu\partial_{x} v+2\;\partial_{x}(uv) =-K_{0}v,&  x\in \mathbb{T}\;\;t>0,\\
	v(x,0)=v_{0}=u'''_{0}
	+\alpha \mathcal{H}u''_{0}-2\mu\; u'_{0}
	-2u_{0}u'_{0}-K_{0}u_{0}\in L_{0}^{2}(\mathbb{T}), & x\in \mathbb{T}.
	\end{array}
	\right.
	\end{equation}
	
	From  \eqref{GWP1} and
	 \eqref{GEF2} we infer that for any
	$T>0$ there exists aconstant $C>0$  that depends  only on $R_{0}$
	and $T$ such that
$${\normalsize
	\begin{split}
	\|u
	\|_{Z_{0,\frac{1}{2}}^{[t,t+T]}}&\leq
	C_{R_{0},T}\;e^{-kt}\|u_{0}
	\|_{L_{0}^{2}(\mathbb{T})},\;\;\;\text{for all}\;t\geq 0.
	\end{split}}$$

Therefore, for any $\epsilon>0,$ there exists $t^{\ast}>0$
such that if $t\geq t^{\ast},$ we get
\begin{equation}\label{GEF12}
\begin{split}
\|u\|_{Z_{0,\frac{1}{2}}^{[t,t+T]}}&\leq
\epsilon.
\end{split}
\end{equation}	

One can choose $\epsilon<\beta$ in \eqref{GEF12}, where $\beta$ is given by \eqref{GEF37}, and 
use the exponential stability result (Lemma \ref{GEF18})
for the  linearized system
\begin{equation}\label{GEF131}
\left \{
\begin{array}{l l}
\partial_{t}w-\partial^{3}_{x}w
-\alpha \mathcal{H}\partial^{2}_{x}w+2\mu\partial_{x} w+2\;\partial_{x}(uw) =-K_{0}w,&  x\in \mathbb{T}\;\;t>t^{\ast},\\
w(x,0)=w_{0}(x)=v(t^{\ast}), & x\in \mathbb{T},
\end{array}
\right.
\end{equation}
where $u\in Z_{s,\frac{1}{2}}^{T}\cap
L^{2}([0,T];L_{0}^{2}(\mathbb{T}))$ is a given function and $w=v(t-t^{\ast})$, to infer that
$${\normalsize
\begin{split}
\|v(\cdot, t-t^{\ast})\|_{L_{0}^{2}(\mathbb{T})}
&\leq\displaystyle{
	C\;e^{-k'(t-t^{\ast})}
	\|v(\cdot,t^{\ast})\|_{L_{0}^{2}(\mathbb{T})}	},\;\;\;
\text{for all}\;\;t\geq t^{\ast}.
\end{split}}$$
This means
$${\normalsize
\begin{split}
\|v(\cdot, t)\|_{L_{0}^{2}(\mathbb{T})}
&\leq\displaystyle{
	C\;e^{-k't}
	\|v_{0}\|_{L_{0}^{2}(\mathbb{T})}	},\;\;\;
\text{for any}\;\;t\geq 0,
\end{split}}$$
where $C>0$ depends only on $R_{0}$. It follows from Theorem
\ref{GEF1} and the equation
 \begin{equation}\label{GEF44}
\partial^{3}_{x}u=v
-\alpha \mathcal{H}\partial^{2}_{x}u+2\mu\partial_{x} u+2\;u\;\partial_{x}u +K_{0}u
\end{equation}
that
$$\|u(\cdot, t)\|_{H_{0}^{3}(\mathbb{T})}
\leq\displaystyle{
	C\;e^{-k't}
	\|u_{0}\|_{H_{0}^{3}(\mathbb{T})}	},\;\;\;
\text{for any}\;\;t\geq 0,$$
where $C>0$ depends only on $R_{0}.$

Now, we move to prove theorem for $0<s<3$. 
Applying  a similar argument as  above to $u_{1}-u_{2}$
 and $a=u_{1}+u_{2}$, where 
 $u_{1}$ and $u_{2}$ are two different solutions, we obtain the following Lipschitz stability estimate which is useful in the interpolation argument
$${\normalsize
 \begin{split}
 \|(u_{1}-u_{2})(\cdot, t)\|_{L_{0}^{2}(\mathbb{T})}
 &\leq\displaystyle{
 	C\;e^{-k't}
 	\|(u_{1}-u_{2})(\cdot,0)\|_{L_{0}^{2}(\mathbb{T})}	},\;\;\;
 \text{for any}\;\;t\geq 0.
 \end{split}}$$

The case $0<s<3$ follows by an interpolation argument similar to the one applied in Theorem \ref{GWP}. One case use similar argument for other values of $s$.
\end{proof}

Note that Theorem \ref{GEF61} is a direct consequence of Theorem
\ref{GEF6}.

\section{Time-varying feedback law}\label{section 6}
In this section we construct a smooth time-varying feedback law such that a semiglobal stabilization holds with an arbitrary large decay rate.

Let $\lambda>0,$ $\mu \in \mathbb{R},$ $\alpha>0$   and $s\geq 0$ be given. Theorem
\ref{GEF6} implies that there exists $\kappa >0$ and a nondecreasing continuous function
$\alpha_{s}:\mathbb{R}^{+}\longrightarrow
\mathbb{R}^{+}$
such that for any $u_{0}\in H_{0}^{s}(\mathbb{T}),$
	the corresponding solution $u$ of the IVP
\begin{equation}\label{TVFL2}
\left \{
\begin{array}{l l}
\partial_{t}u-\partial^{3}_{x}u
-\alpha \mathcal{H}\partial^{2}_{x}u+2\mu\partial_{x} u+2u\partial_{x}u =-GG^{\ast}u,&  \; t>t_{0},\;\;x\in \mathbb{T},\\
u(x,t_{0})=u_{0}(x),  & \; x\in \mathbb{T},
\end{array}
\right.
\end{equation}	
  satisfies
	\begin{equation}\label{TVFL1}
	\begin{split}
	\|u(\cdot,t)
	\|_{H_{0}^{s}(\mathbb{T})}&\leq
	\alpha_{s}(\|u_{0}\|_{L_{0}^{2}(\mathbb{T})})\;
	e^{-\kappa (t-t_{0})}\|u_{0}
	\|_{H_{0}^{s}(\mathbb{T})},\;\;\;\text{for all}\;t\geq t_{0}.
	\end{split}
	\end{equation}

Also,  for any fixed $\lambda' \in (0,\lambda)$
and any $u_{0}\in H_{0}^{s}(\mathbb{T})$, the Theorem \ref{LEF1} asserts that
the solution of the IVP
\begin{equation}\label{TVFL3}
\left \{
\begin{array}{l l}
\partial_{t}u-\partial^{3}_{x}u
-\alpha \mathcal{H}\partial^{2}_{x}u+2\mu\partial_{x} u+2u\partial_{x}u =-K_{\lambda}u,&  \; t>t_{0},\;\;x\in \mathbb{T},\\
u(x,t_{0})=u_{0}(x),  & \; x\in \mathbb{T},
\end{array}
\right.
\end{equation}
satisfies
	 \begin{equation}\label{TVFL4}
	\begin{split}
	\|u(\cdot,t)
	\|_{H_{0}^{s}(\mathbb{T})}&\leq
	C_{s}\;e^{-\lambda'(t-t_{0})}\|u_{0}
	\|_{H_{0}^{s}(\mathbb{T})},\;\;\;\text{for all}\;t\geq t_{0},
	\end{split}
	\end{equation}
for some $C_{s}>0,$ provided that $\|u_{0}\|_{s}\leq r_{0}$ for some  $r_{0}\in (0,1).$
Define $\rho \in C^{\infty}(\mathbb{R}^{+}; [0,1])$ a function 
such that
\begin{equation}\label{TVFL5}
\rho(r)=1,\;\text{for}\;r\leq r_{0},\;\;\;\;\;\;\;\;
\rho(r)=0,\;\text{for} \; r\geq 1.
\end{equation}

Also, consider any function $\theta \in C^{\infty}(\mathbb{R};[0,1])$
with the following properties:
\begin{equation}\label{TVFL6}
\left\{
\begin{array}{lcl}
 \theta(t+2)=\theta(t) & \mbox{for all} &  t\in \mathbb{R},\\
\theta(t)=1& \mbox{for} &  \delta\leq 1\leq 1-\delta,\\
\theta(t)=0& \mbox{for} &1\leq t\leq 2,
\end{array}
\right.
\end{equation}
for some $\delta \in (0, \frac{1}{10}).$ 
Let $T>0$ be given. We define the following time-varying feedback law
\begin{equation}\label{TVFL7}
\begin{split}
K(u,t)&:=\rho(\|u\|^{2}_{H_{0}^{s}(\mathbb{T})})
\left[ \theta(\frac{t}{T})K_{\lambda}u+\theta(\frac{t}{T}-T)GG^{\ast}u\right]
+(1-\rho(\|u\|^{2}_{H_{0}^{s}(\mathbb{T})}))GG^{\ast}u\\
&=GG^{\ast}\left\{\rho(\|u\|^{2}_{H_{0}^{s}(\mathbb{T})})
\left[ \theta(\frac{t}{T})L^{-1}_{\lambda}u+\theta(\frac{t}{T}-T)u\right]
+(1-\rho(\|u\|^{2}_{H_{0}^{s}(\mathbb{T})}))u\right\}.
\end{split}
\end{equation}

Observe that $K$  has the following behaviour of the trajectories. In a first
time, when $\|u\|_{H^{s}_{0}(\mathbb{T})}$
 is large, we choose $K=GG^{\ast}$ to guarantee the decay of the solution.
Then, after a transient period, we have $\|u\|_{H_{0}^{s}(\mathbb{T})}\leq r_{0}$
 and we get into an oscillatory regime. During each period of length $2T,$ we have three steps:
\begin{itemize}
\item A period of time for which the damping $K_{\lambda}$ is active, leading to a decay like $e^{-\lambda'(t-t_{0})};$
\item A short transition time of order $\delta$ where a deviation from the origin may occur;
\item A period of time for which the damping $GG^{\ast}$ is active, leading to a decay like $e^{-\kappa(t-t_{0})}.$
\end{itemize}
The expected decay is a \textquotedblleft mean value" of the two decays above.
We consider the system
\begin{equation}\label{TVFL8}
\left \{
\begin{array}{l l}
\partial_{t}u-\partial^{3}_{x}u
-\alpha \mathcal{H}\partial^{2}_{x}u+2\mu\partial_{x} u+2u\partial_{x}u =-K(u,t),&  t>t_{0},\;\;x\in \mathbb{T}\\
u(x,t_{0})=u_{0}(x), & x\in \mathbb{T}.
\end{array}
\right.
\end{equation}

Finally, we establish the following semiglobal stabilization result with an arbitrary
decay rate.

\begin{thm}\label{TVFL24}
Let	$s\geq 0,$ $\lambda>0,$  $\alpha>0,$  and
$\mu \in \mathbb{R}$ be given. Consider any $\lambda' \in (0, \lambda)$ and any
$\lambda''\in \left(\frac{\lambda'}{2},\frac{\lambda'+\kappa}{2}\right)$
where $\kappa$ is given in \eqref{TVFL1}. Then there exists a time
$T_{0}>0$ such that for any $T>T_{0},$ $t_{0}\in \mathbb{R}$ and
$u_{0}\in H_{0}^{s}(\mathbb{T}),$ the unique solution of the closed-loop
system \eqref{TVFL8} satisfies
$$\|u(\cdot,t)
	\|_{H_{0}^{s}(\mathbb{T})}\leq
	\gamma_{s}(\|u_{0}\|_{H_{0}^{s}(\mathbb{T})})\;
	e^{-\lambda'' (t-t_{0})}\|u_{0}
	\|_{H_{0}^{s}(\mathbb{T})},\;\;\;\text{for all}\;\;t\geq t_{0},
	$$
where $\gamma_{s}$ is a nondecreasing continuous function.
\end{thm}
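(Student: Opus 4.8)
The plan is to follow the time-varying feedback strategy of Coron and Rosier \cite{Coron and Rosier} and Rosier and Zhang \cite{14}, combining the two decay mechanisms already at our disposal: the global decay at rate $\kappa$ produced by the damping $GG^{\ast}$ (Theorem \ref{GEF6}), and the local decay at the arbitrarily fast rate $\lambda'$ produced by $K_{\lambda}$ (Theorem \ref{LEF1}). First I would establish that the closed-loop system \eqref{TVFL8} is globally well-posed in $H_{0}^{s}(\mathbb{T})$. Since $K(u,t)$ in \eqref{TVFL7} is built from the bounded operators $GG^{\ast}$ and $K_{\lambda}=GG^{\ast}L_{\lambda}^{-1}$ multiplied by the smooth, globally Lipschitz cut-offs $\rho$ and $\theta$, the map $u\mapsto K(u,t)$ is a locally Lipschitz bounded perturbation of the damping term treated in Theorems \ref{LWP} and \ref{GWP}; a fixed-point argument in $Z^{T}_{s,\frac12}$ identical to those proofs yields local existence, and the $L^{2}$ energy identity (the dispersive part is skew-adjoint and $\int_{\mathbb{T}}u\,\partial_{x}(u^{2})\,dx=0$) gives a global a priori bound ruling out blow-up. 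In particular, in the region where $\rho\equiv 0$ the feedback reduces to $GG^{\ast}$ and \eqref{GWP7} shows that $\|u(\cdot,t)\|_{L_{0}^{2}(\mathbb{T})}$ is non-increasing, so $R_{0}:=\|u_{0}\|_{L_{0}^{2}(\mathbb{T})}$ controls all the constants below.

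Next I would reduce to small data. For $\|u_{0}\|_{H^{s}_0}$ large we have $\rho\equiv0$ and \eqref{TVFL8} coincides with \eqref{atabilizationNL1} for $\lambda=0$; by Theorem \ref{GEF6} the solution decays at rate $\kappa$, so there is a finite time $t_{1}=t_{1}(\|u_{0}\|_{H^{s}_0})$ after which $\|u(\cdot,t_{1})\|_{H_{0}^{s}(\mathbb{T})}\le r_{0}$, i.e. the trajectory enters and (as I will show) stays in the region $\rho\equiv1$. The transient on $[t_{0},t_{1}]$ is harmless: it only inflates the prefactor and will be absorbed into the nondecreasing function $\gamma_{s}$.

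The core of the argument is the decay estimate over one period $2T$ once the trajectory lies in $\{\rho\equiv1\}$. Fix the entry time $t_{1}$ and set $y_{n}:=\|u(\cdot,t_{1}+2nT)\|_{H_{0}^{s}(\mathbb{T})}$, arguing by induction that $y_{n}\le r_{0}$. On each interval $[t_{1}+2nT,t_{1}+2(n+1)T]$ the choice of $\theta$ in \eqref{TVFL6} produces a window of length $\approx(1-2\delta)T$ on which $K(u,\cdot)=K_{\lambda}u$ and Theorem \ref{LEF1} gives a decay factor $\approx e^{-\lambda'(1-2\delta)T}$, a second window of comparable length on which $K(u,\cdot)=GG^{\ast}u$ and Theorem \ref{GEF6} gives a factor $\approx e^{-\kappa(1-2\delta)T}$, separated by two short transition windows of length $O(\delta T)$. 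On the transitions I would use the well-posedness estimate of the first paragraph to bound the growth by $e^{C\delta T}$ with $C$ independent of $T$ and $n$. Multiplying, the solution contracts over one period by a factor
$$\Lambda\le C_{0}\,\exp\!\Big(-\big[(\lambda'+\kappa)(1-2\delta)-2C\delta\big]T\Big),$$
with $C_{0}$ collecting the (uniform in $n$) constants $C_{s}$ and $\alpha_{s}(R_{0})$ from the two decay theorems. Choosing first $\delta\in(0,\tfrac1{10})$ small enough that the bracket stays close to $\lambda'+\kappa$, and then $T>T_{0}$ large, makes $\tfrac{-\log\Lambda}{2T}$ exceed any prescribed $\lambda''<\tfrac{\lambda'+\kappa}{2}$ while keeping $\Lambda<1$ and preserving the invariant $y_{n+1}\le r_{0}$; this closes the induction and yields $y_{n}\le\Lambda^{n}y_{0}$, i.e. geometric decay. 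Interpolating across each period and reinstating the initial transient gives the stated bound $\|u(\cdot,t)\|_{H_{0}^{s}(\mathbb{T})}\le\gamma_{s}(\|u_{0}\|_{H_{0}^{s}(\mathbb{T})})\,e^{-\lambda''(t-t_{0})}$.

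I expect the main obstacle to be precisely this per-period analysis: keeping the trajectory inside the small-data ball $\{\|u\|_{H^{s}_0}\le r_{0}\}$ for all periods (a bootstrap that relies on $r_{0}<1$ and on the transition growth $e^{C\delta T}$ being dominated by the genuine decay), so that the \emph{local} Theorem \ref{LEF1} remains applicable indefinitely, and simultaneously balancing the two competing requirements on the parameters --- $\delta$ small to suppress the transition losses versus $T$ large to realize the target mean rate $\lambda''$. The constraint $\lambda''\in(\lambda'/2,(\lambda'+\kappa)/2)$ is exactly what guarantees that a compatible choice of $(\delta,T,r_{0})$ exists.
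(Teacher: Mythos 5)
Your proposal is correct and follows exactly the strategy the paper itself invokes: the paper's proof of Theorem \ref{TVFL24} is a one-line reference to the time-varying feedback argument of Laurent, Rosier and Zhang \cite[Theorem 5.1]{14} (after \cite{Coron and Rosier}), which is precisely the alternating-damping, per-period contraction scheme you reconstruct --- global decay at rate $\kappa$ via $GG^{\ast}$ from Theorem \ref{GEF6}, local decay at rate $\lambda'$ via $K_{\lambda}$ from Theorem \ref{LEF1}, and transition losses of order $e^{C\delta T}$ absorbed by taking $\delta$ small and then $T$ large, yielding the mean rate $\lambda''<\frac{\lambda'+\kappa}{2}$. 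The bootstrap you flag, keeping the trajectory inside the ball $\{\|u\|_{H_{0}^{s}(\mathbb{T})}\le r_{0}\}$ so that the local result remains applicable on every period, is indeed the delicate point of the cited argument, and your balancing of $(\delta,T,r_{0})$ resolves it in the same way.
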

\begin{proof}
The proof follows as  in \cite[Theorem 5.1]{14}.
\end{proof}

\begin{appendix} 
\section{}
\begin{lem}\label{compact8}
	A function $\phi\in C^{\infty}(\mathbb{T})$
	can be written in the form $\partial_{x}\varphi$ for some
	function $\varphi\in C^{\infty}(\mathbb{T}),$
	if and only if,
$$\int_{\mathbb{T}}\phi(x)\;dx
	=0.$$
\end{lem}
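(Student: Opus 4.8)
The plan is to prove the two implications separately, with the mean-zero condition serving as the exact obstruction to periodicity of an antiderivative. For the necessity direction, I would assume $\phi=\partial_{x}\varphi$ with $\varphi\in C^{\infty}(\mathbb{T})$ and simply integrate: by the Fundamental Theorem of Calculus and the $2\pi$-periodicity of $\varphi$,
\[
\int_{0}^{2\pi}\phi(x)\,dx=\int_{0}^{2\pi}\partial_{x}\varphi(x)\,dx=\varphi(2\pi)-\varphi(0)=0.
\]
This direction is immediate and requires no further work.

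For the sufficiency direction, I would assume $\int_{\mathbb{T}}\phi(x)\,dx=0$ and define the antiderivative
\[
\varphi(x):=\int_{0}^{x}\phi(y)\,dy,\qquad x\in\mathbb{R}.
\]
First I would verify that $\varphi$ descends to a function on $\mathbb{T}$, i.e. is $2\pi$-periodic: for any $x$,
\[
\varphi(x+2\pi)-\varphi(x)=\int_{x}^{x+2\pi}\phi(y)\,dy=\int_{0}^{2\pi}\phi(y)\,dy=0,
\]
where the middle equality uses the periodicity of $\phi$ and the last uses the hypothesis. Smoothness of $\varphi$ follows because $\partial_{x}\varphi=\phi\in C^{\infty}(\mathbb{T})$, so that all higher derivatives of $\varphi$ coincide with derivatives of $\phi$; hence $\varphi\in C^{\infty}(\mathbb{T})$ and $\partial_{x}\varphi=\phi$ by construction. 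This closing-up step is the crux of the argument: the vanishing mean is precisely what forces the naive antiderivative to be periodic, and it is the only place where the hypothesis is genuinely used.

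Alternatively, I could carry out the same reasoning on the Fourier side, which makes the role of the mean value especially transparent. Writing $\phi=\sum_{k}\widehat{\phi}(k)e^{ikx}$, the hypothesis is exactly $\widehat{\phi}(0)=0$. I would then define $\varphi$ through its Fourier coefficients by $\widehat{\varphi}(k)=\widehat{\phi}(k)/(ik)$ for $k\neq0$ and $\widehat{\varphi}(0)=0$, the division by zero at $k=0$ being avoided precisely by the hypothesis. Since $\phi$ is smooth, $\widehat{\phi}(k)$ decays faster than any polynomial, so the same holds for $\widehat{\varphi}(k)$, giving $\varphi\in C^{\infty}(\mathbb{T})$; a direct computation then shows $(\partial_{x}\varphi)^{\wedge}(k)=ik\,\widehat{\varphi}(k)=\widehat{\phi}(k)$ for all $k$, whence $\partial_{x}\varphi=\phi$. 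I expect no serious obstacle in either approach; the only point requiring any care is the periodicity step isolated above, where the integral condition is essential.
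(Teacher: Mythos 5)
Your proof is correct and complete: the paper states this lemma in its appendix without proof, treating it as a standard fact, and both of your arguments (the periodic antiderivative $\varphi(x)=\int_{0}^{x}\phi(y)\,dy$, whose periodicity is exactly the zero-mean condition, and the Fourier-side construction $\widehat{\varphi}(k)=\widehat{\phi}(k)/(ik)$ for $k\neq 0$) are the canonical ways to establish it. Nothing is missing; the one point requiring care, the closing-up of the antiderivative, is precisely the step you isolated and justified.
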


\begin{lem}\label{prop3}
	Let $s,r \in \mathbb{R},$
	and $f$ denotes the operator of multiplication by
	$f\in C^{\infty}(\mathbb{T}).$ Then, $[D^{r},f]:=D^{r}\;f-f\;D^{r}$ maps any $H^{s}(\mathbb{T})$
	into $H^{s-r+1}(\mathbb{T}),$ i.e., there exists a constant $c=c_{f}$
	depending on $f$ such that
$$\left\|[D^{r},f]\phi\right\|_{H^{s-r+1}(\mathbb{T})}
		\leq c_{f}\;\left\|\phi\right\|_{H^{s}(\mathbb{T})}.$$
\end{lem}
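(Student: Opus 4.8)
The plan is to pass to Fourier coefficients and realize the commutator as a matrix operator on $\ell^2$ whose kernel is controlled by the rapid decay of $\widehat f$ together with a mean value estimate for the symbol of $D^r$.

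First I would record the kernel. Writing $m(k)=|k|^r$ for $k\neq 0$ and $m(0)=1$, so that $\widehat{D^r v}(k)=m(k)\widehat v(k)$ by \eqref{difop}, and using $\widehat{f\psi}(k)=\sum_j\widehat f(k-j)\widehat\psi(j)$, one gets for every $k\in\mathbb Z$
\[
\widehat{[D^r,f]\phi}(k)=\sum_{j\in\mathbb Z}\widehat f(k-j)\,\big(m(k)-m(j)\big)\,\widehat\phi(j).
\]
Setting $h(j)=\langle j\rangle^s\widehat\phi(j)$, so that $\|h\|_{\ell^2}=\|\phi\|_{H^s(\mathbb T)}$ up to a fixed constant, the asserted bound is equivalent to the $\ell^2$-boundedness of the matrix
\[
\widetilde K(k,j):=\langle k\rangle^{s-r+1}\,\widehat f(k-j)\,\big(m(k)-m(j)\big)\,\langle j\rangle^{-s}.
\]

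The core of the argument is the pointwise estimate $|\widetilde K(k,j)|\le c_f\,\langle k-j\rangle^{-M}$ for an $M$ as large as we wish. Since $f\in C^\infty(\mathbb T)$, its coefficients satisfy $|\widehat f(k-j)|\le C_N\langle k-j\rangle^{-N}$ for every $N$, so it suffices to bound the remaining factors by a fixed power of $\langle k-j\rangle$ and then take $N$ large. I would split into two regimes. If $|k-j|\ge\frac12\max(|k|,|j|)$, then $\langle k\rangle$ and $\langle j\rangle$ are both $\le C\langle k-j\rangle$ and $|m(k)-m(j)|\le C\langle k-j\rangle^{|r|}$, so every weight is absorbed into a single fixed power of $\langle k-j\rangle$. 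If instead $|k-j|<\frac12\max(|k|,|j|)$, then $k$ and $j$ are nonzero and of the same sign, $\langle k\rangle\sim\langle j\rangle$, and the mean value theorem applied to $t\mapsto|t|^r$ (smooth away from the origin) gives $|m(k)-m(j)|=\big||k|^r-|j|^r\big|\le C\langle k\rangle^{r-1}|k-j|$. This is the decisive gain of one derivative: it converts $\langle k\rangle^{s-r+1}\langle k\rangle^{r-1}=\langle k\rangle^{s}$, which together with $\langle j\rangle^{-s}\sim\langle k\rangle^{-s}$ leaves only the harmless factor $|k-j|\le\langle k-j\rangle$.

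Once $|\widetilde K(k,j)|\le c_f\langle k-j\rangle^{-M}$ with $M>1$ is in hand, the operator $h\mapsto\big(\sum_j\widetilde K(k,j)h(j)\big)_k$ is dominated by convolution with the $\ell^1$ sequence $c_f\langle\cdot\rangle^{-M}$, so Young's inequality yields $\|\widetilde K h\|_{\ell^2}\le c_f\,\|\langle\cdot\rangle^{-M}\|_{\ell^1}\,\|h\|_{\ell^2}$, which is exactly the claimed inequality. The finitely many low-frequency terms with $k=0$ or $j=0$ (where $m$ takes the value $1$ rather than $|k|^r$) fit into the same convolution bound using the decay of $\widehat f$. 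The step I expect to be most delicate is the symbol estimate in the comparable-frequency regime: one must verify that the constraint $|k-j|<\frac12\max(|k|,|j|)$ indeed forces $k$ and $j$ to share a sign, so that the mean value theorem for $|t|^r$ legitimately applies and produces the gain $\langle k\rangle^{r-1}$ instead of the naive loss $\langle k\rangle^{r}$.
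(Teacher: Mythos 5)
Your proposal is correct. Note, however, that the paper does not actually prove Lemma \ref{prop3}: it simply cites Lemma A.1 of Laurent \cite{Laurent}, so your argument supplies the proof the paper outsources, and it is essentially the standard one for that cited result. The kernel identity $\widehat{[D^{r},f]\phi}(k)=\sum_{j}\widehat f(k-j)\bigl(m(k)-m(j)\bigr)\widehat\phi(j)$ is right, and the decisive point is exactly where you located it: in the comparable-frequency regime $|k-j|<\tfrac12\max(|k|,|j|)$ one checks that $k$ and $j$ are nonzero of the same sign (if the signs differed one would have $|k-j|=|k|+|j|\geq\max(|k|,|j|)$, a contradiction, and $j=0$ would force $|k-j|=|k|$), so the mean value theorem for $t\mapsto t^{r}$ on $[\min(|k|,|j|),\max(|k|,|j|)]$, an interval bounded away from the origin with endpoints comparable to $\langle k\rangle$, gives $|m(k)-m(j)|\leq C_{r}\langle k\rangle^{r-1}|k-j|$; this yields the gain of one derivative that makes the weight $\langle k\rangle^{s-r+1}\langle j\rangle^{-s}$ harmless. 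The complementary regime is handled by crude polynomial bounds swallowed by the rapid decay of $\widehat f$, and Young's inequality for $\ell^{1}*\ell^{2}$ closes the estimate. One small imprecision: the exceptional entries with $k=0$ or $j=0$ (where $m$ equals $1$) are not ``finitely many terms'' --- they form two infinite rows/columns of the matrix --- but this is harmless, since there $|k-j|=\max(|k|,|j|)$, so those entries automatically fall into your first regime and obey the same bound $c_{f}\langle k-j\rangle^{-M}$. With that remark your argument is complete, and it has the merit of being self-contained where the paper relies on an external reference.
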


\begin{proof}
	This result is proved in \cite{Laurent}
	(see Lemma A.1).
\end{proof}

\begin{lem}\label{prop4}
	If $f \in C^{\infty}(\mathbb{T}),$
	then for every
	$s\in \mathbb{R}$ there exists positive constants $C$ and $C_{s}$ such that the following estimate holds
$$	\left\|f\;v\right\|_{H^{s}(\mathbb{T})}
		\leq C\;\left\|v\right\|_{H^{s}(\mathbb{T})}
		+C_{s}\;\left\|v\right\|_{H^{s-1}(\mathbb{T})}.$$
\end{lem}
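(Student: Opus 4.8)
The plan is to pass to the Fourier side and to exploit that, because $f\in C^{\infty}(\mathbb{T})$, its Fourier coefficients $\widehat{f}(j)$ decay faster than any power of $\langle j\rangle$. Writing the product as a convolution, $\widehat{fv}(k)=\sum_{j\in\mathbb{Z}}\widehat{f}(j)\,\widehat{v}(k-j)$, I would estimate $\|fv\|_{H^{s}(\mathbb{T})}^{2}=\sum_{k}\langle k\rangle^{2s}|\widehat{fv}(k)|^{2}$ by splitting the convolution according to the size of the frequency $j$ carried by $f$ relative to the total frequency $k$: a low region $L_{k}=\{j:2|j|\le|k|\}$ and a high region $H_{k}=\{j:2|j|>|k|\}$. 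The low region produces the main term, governed by the $H^{s}$ norm of $v$, while the high region, where $f$ is forced to oscillate at least as fast as the output, produces the lower-order term governed by $\|v\|_{H^{s-1}}$.

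First I would treat the high region. If $2|j|>|k|$ then $|k-j|\le|k|+|j|<3|j|$, so both $\langle k\rangle$ and $\langle k-j\rangle$ are bounded by a fixed multiple of $\langle j\rangle$. Hence there is an exponent $N=N(s)$ with $\langle k\rangle^{s}\langle k-j\rangle^{1-s}\le C_{s}\langle j\rangle^{N}$ on $H_{k}$, and therefore $\langle k\rangle^{s}\sum_{j\in H_{k}}|\widehat f(j)||\widehat v(k-j)|$ is dominated by the convolution of the sequence $b_{j}:=\langle j\rangle^{N}|\widehat f(j)|$ with $\langle\cdot\rangle^{s-1}|\widehat v|$. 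Since $f$ is smooth, $b\in\ell^{1}(\mathbb{Z})$, so Young's inequality $\ell^{1}\ast\ell^{2}\to\ell^{2}$ gives a bound by $C_{s}\|b\|_{\ell^{1}}\|v\|_{H^{s-1}}$, which is the asserted lower-order term (and, choosing $N$ larger, one even gains $\|v\|_{H^{s-M}}$ for any $M$). In the low region, $2|j|\le|k|$ forces $|k-j|\ge|k|/2$, so $\langle k\rangle\le C\langle k-j\rangle$; hence $\langle k\rangle^{s}|\widehat v(k-j)|\le C\langle k-j\rangle^{s}|\widehat v(k-j)|$, and the same Young estimate against $\sum_{j}|\widehat f(j)|<\infty$ bounds this contribution by $C\|v\|_{H^{s}}$. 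Together these give the estimate with the stated two-term structure, valid for every real $s$.

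The delicate point — and the one I expect to require more than the crude split above — is to obtain the leading constant $C$ independent of $s$, as the notation $C$ versus $C_{s}$ suggests (the natural value being a fixed multiple of $\|f\|_{L^{\infty}}$). The low-region bound above carries an implicit factor $C^{s}$ coming from $\langle k\rangle^{s}\le C^{s}\langle k-j\rangle^{s}$, which is wasteful precisely when $j$ is small and $\langle k\rangle/\langle k-j\rangle\to1$. To remove it I would reorganize the low region by a Littlewood–Paley paraproduct: decompose $v=\sum_{q}\Delta_{q}v$ and write the low-frequency-$f$ part of $fv$ as $\sum_{q}(S_{q-1}f)\,\Delta_{q}v$, each summand being frequency-localized near $2^{q}$ with multiplier bounded by $\|S_{q-1}f\|_{L^{\infty}}\le\|f\|_{L^{\infty}}$; almost-orthogonality then yields a bound $C\|f\|_{L^{\infty}}\|v\|_{H^{s}}$ with $C$ depending only on the Littlewood–Paley constants. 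The remaining paraproduct and resonant pieces involve $\Delta_{p}f$ with $\|\Delta_{p}f\|_{L^{\infty}}\le C_{M}2^{-pM}$, so they fall into the $C_{s}\|v\|_{H^{s-1}}$ term exactly as in the high region. Finally, negative values of $s$ are already covered by the direct estimates above, but if one prefers they can be deduced by duality, using that $(fv,w)_{L^{2}}=(v,\overline{f}w)_{L^{2}}$ with $\overline{f}\in C^{\infty}(\mathbb{T})$ and $H^{-s}=(H^{s})^{\ast}$, reducing the case $s<0$ to the already-established case $-s>0$.
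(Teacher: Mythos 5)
Your argument is correct, but it takes a genuinely different route from the paper. The paper's proof is a two-line reduction: write $D^{s}(fv)=f\,D^{s}v+[D^{s},f]v$, bound the first term by $\|f\|_{L^{\infty}}\|D^{s}v\|_{L^{2}}\sim C\|v\|_{H^{s}}$, and invoke the commutator estimate of Lemma \ref{prop3} (quoted from Laurent's appendix), which says $[D^{r},f]$ maps $H^{\sigma}$ into $H^{\sigma-r+1}$; taking $r=s$ and $\sigma=s-1$ gives $\|[D^{s},f]v\|_{L^{2}}\leq c_{f}\|v\|_{H^{s-1}}$, which is exactly the lower-order term. Your proof instead works directly on the Fourier side, splitting the convolution $\widehat{fv}(k)=\sum_{j}\widehat{f}(j)\widehat{v}(k-j)$ into the regions $2|j|\le |k|$ and $2|j|>|k|$ and applying Young's inequality $\ell^{1}\ast\ell^{2}\to\ell^{2}$ with the rapid decay of $\widehat{f}$; both regional estimates check out (in the low region $\langle k\rangle\sim\langle k-j\rangle$ in both directions, which covers negative $s$ as well). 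What your approach buys is self-containedness --- you re-derive, in effect, the content of the commutator lemma rather than citing it --- and a stronger lower-order term: your high-region bound gains $\|v\|_{H^{s-M}}$ for any $M$, not just $H^{s-1}$. What the paper's approach buys is brevity and reuse of an already-stated lemma. One caveat: your final paraproduct paragraph is unnecessary for the statement as written. The quantifier is ``for every $s$ there exist constants $C$ and $C_{s}$,'' so both constants are permitted to depend on $s$; indeed the paper's own applications (e.g.\ in Theorem \ref{multi23}) use $s$-dependent constants, and even the paper's proof carries mild $s$-dependence through the equivalence of $\|D^{s}\cdot\|_{L^{2}}$ with $\|\cdot\|_{H^{s}}$. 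Moreover your claim that the paraproduct constant depends ``only on the Littlewood--Paley constants'' is itself slightly optimistic, since the equivalence $\langle\xi\rangle^{s}\sim 2^{qs}$ on an annulus carries a factor exponential in $|s|$; fortunately nothing in the lemma requires this uniformity, so your direct two-region argument already constitutes a complete proof.
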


\begin{proof}
	This result follows by writing $$D^{s}(fv)=
	fD^{s}v+[D^{s},f]v,$$ and using Lemma \ref{prop4}  (see
	Corollary A.2 in  \cite{Laurent}).
\end{proof}

\begin{lem}\label{prop6}
	Let $f \in C^{\infty}(\mathbb{T})$ and
	$\rho_{\epsilon}=
	e^{\epsilon^{2}\partial_{x}^{2}}$ with
	$0\leq \epsilon\leq 1.$
	Then $[\rho_{\epsilon},f]$ is uniformly
	bounded as an operator
	from $H^{s}$ into $H^{s+1}$
	and
$$\left\|[\rho_{\epsilon},f]\phi\right
		\|_{H^{s+1}(\mathbb{T})}
		\leq c_{s}\;\left\|\phi\right\|_{H^{s}(\mathbb{T})},
		\;\;\text{for all}\;\phi \in H^{s}(\mathbb{T}).$$
\end{lem}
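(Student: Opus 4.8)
The plan is to pass to Fourier coefficients and treat $[\rho_\epsilon,f]$ as an operator given by an explicit kernel on $\ell^2(\mathbb{Z})$. Writing $\sigma_\epsilon(k)=e^{-\epsilon^2k^2}$ for the symbol of $\rho_\epsilon$, so that $\widehat{\rho_\epsilon v}(k)=\sigma_\epsilon(k)\widehat v(k)$, and using that multiplication by $f$ becomes convolution with $\widehat f$, a direct computation gives, for each $k\in\mathbb{Z}$,
\[
\widehat{[\rho_\epsilon,f]\phi}(k)=\sum_{j\in\mathbb{Z}}\widehat{f}(k-j)\,\bigl(\sigma_\epsilon(k)-\sigma_\epsilon(j)\bigr)\,\widehat{\phi}(j).
\]
Thus the claimed estimate $\|[\rho_\epsilon,f]\phi\|_{H^{s+1}(\mathbb{T})}\le c_s\|\phi\|_{H^{s}(\mathbb{T})}$ is equivalent to the $\ell^2$-boundedness, uniformly in $\epsilon\in[0,1]$, of the matrix with entries $\langle k\rangle^{s+1}\widehat{f}(k-j)\bigl(\sigma_\epsilon(k)-\sigma_\epsilon(j)\bigr)\langle j\rangle^{-s}$.

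The heart of the matter --- and the step I expect to be the main obstacle --- is a pointwise bound on the symbol difference that simultaneously exhibits a factor $|k-j|$ (to be absorbed by the rapid decay of $\widehat f$) and a gain of one negative power of $\langle k\rangle$ (which produces the extra derivative). Concretely, I would prove that there is $C>0$, independent of $\epsilon\in[0,1]$, with
\[
\bigl|\sigma_\epsilon(k)-\sigma_\epsilon(j)\bigr|\le C\,\frac{|k-j|}{\langle k\rangle},\qquad k,j\in\mathbb{Z}.
\]
This rests on the uniform estimate $\sup_{\xi\in\mathbb{R}}\langle\xi\rangle\,|\sigma_\epsilon'(\xi)|\le C$. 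Writing $\sigma_\epsilon'(\xi)=-2\epsilon\,\psi(\epsilon\xi)$ with $\psi(x):=xe^{-x^2}$ bounded, the substitution $u=\epsilon\xi$ shows both that $\sigma_\epsilon$ is uniformly Lipschitz and that the expected $\langle\xi\rangle^{-1}$ decay of $\sigma_\epsilon'$ holds uniformly in $\epsilon$; integrating $\sigma_\epsilon'$ along the segment from $j$ to $k$ then yields the displayed bound, the delicate case being $k$ large while the segment passes near the origin, which is handled by the explicit primitive of $2\epsilon^2\xi e^{-\epsilon^2\xi^2}$. Note that the factor $\epsilon$ in $\sigma_\epsilon'$ forces the commutator to degenerate to $0$ as $\epsilon\to0$, consistent with $\rho_0=\mathrm{Id}$.

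With this bound in hand, the conclusion is routine. Peetre's inequality $\langle k\rangle^{s}\le C\langle k-j\rangle^{|s|}\langle j\rangle^{s}$ lets me move the weights onto the frequency gap, giving
\[
\langle k\rangle^{s+1}\langle j\rangle^{-s}\,\bigl|\sigma_\epsilon(k)-\sigma_\epsilon(j)\bigr|\,|\widehat f(k-j)|\le C\,\langle k-j\rangle^{|s|+1}\,|\widehat f(k-j)|.
\]
Since $f\in C^{\infty}(\mathbb{T})$, the sequence $m\mapsto \langle m\rangle^{|s|+1}|\widehat f(m)|$ is rapidly decreasing, hence belongs to $\ell^1(\mathbb{Z})$. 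The right-hand side therefore depends only on $k-j$ and is summable, so Young's inequality for the convolution $\ell^1\ast\ell^2\to\ell^2$ (equivalently, Schur's test, whose two marginal sums are each controlled by $\|\langle\cdot\rangle^{|s|+1}\widehat f\|_{\ell^1}$) yields the desired $\ell^2$ bound, uniform in $\epsilon\in[0,1]$, with $c_s=C\,\|\langle\cdot\rangle^{|s|+1}\widehat f\|_{\ell^1}$. This completes the plan; the only genuinely non-formal input is the uniform symbol-difference estimate of the second paragraph.
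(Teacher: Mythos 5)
Your proof is correct. Note that the paper itself gives no argument for this lemma --- its ``proof'' is a one-line citation to Lemma A.3 of \cite{Laurent} --- and your Fourier-side argument (the explicit commutator kernel $\widehat{f}(k-j)\bigl(\sigma_\epsilon(k)-\sigma_\epsilon(j)\bigr)$, the uniform symbol-difference bound $|\sigma_\epsilon(k)-\sigma_\epsilon(j)|\leq C\,|k-j|/\langle k\rangle$ obtained from $\sup_{\xi}\langle\xi\rangle|\sigma_\epsilon'(\xi)|\leq C$ together with the trivial bound $|\sigma_\epsilon|\leq 1$ when $|k-j|\gtrsim\langle k\rangle$, then Peetre plus Schur/Young) is essentially the standard proof underlying that reference, so there is nothing substantive to compare.
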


\begin{proof}
	This result is proved in \cite{Laurent}
	(see Lemma A.3).
\end{proof}

\begin{ex}\label{examplemulti}
For $j\geq 1,$ consider the function
$v_{j}(x,t):=\psi(t)\;e^{ijx}e^{i\phi(j)t},$
where $\psi \in C^{\infty}_{c}(\mathbb{R})$
takes the value 1 on $[-1,1].$
Note that,
$$\widehat{v_{j}}(k,t)
=\psi(t)\;e^{i\phi(j)t}\;
\widehat{e^{ijx}}(k)
=\psi(t)\;e^{i\phi(j)t}\;
\delta_{kj},$$
where $\delta_{kj}$ is the Kronecker delta function.
Then,
$$\widehat{v_{j}}(k,\tau)
=\delta_{kj}\;
\left(\psi(t)\;e^{i\phi(j)t}\right)^{\wedge}(\tau)
=\delta_{kj}\;
\widehat{\psi}(\tau-\phi(j)).$$
Therefore,
$$\|v_{j}\|^{2}_{X_{0,b}}
=\displaystyle{
	\int_{\mathbb{R}}
	\langle \tau -\phi(j) \rangle^{2b}
	|\widehat{\psi}(\tau-\phi(j))|^{2}\;d\tau}
\leq c_{b}\|\psi\|^{2}_{H^{b}_{t}(\mathbb{R})}.$$

Thus, the sequence
$\{v_{j}\}$ is uniformly bounded in the space $X_{0,b},$
for every $b\geq 0.$

However, multiplying $v_{j}$ by $\varphi(x)=e^{ix},$ we observe
$$\|e^{ix} v_{j}\|^{2}_{X_{0,b}}
=\displaystyle{
	\int_{\mathbb{R}}
	\langle \tau -\phi(1+j) \rangle^{2b}
	|	\widehat{\psi}(\tau-\phi(j))|^{2}\;d\tau}.$$

Using that $\tau-\phi(1+j)= \tau-\phi(j)+P(j)$ with $P(j)=3j^{2}+(3-2\alpha)j+1+2\mu-\alpha,$ we have
$$\|e^{ix} v_{j}\|^{2}_{X_{0,b}}
\sim \int_{\mathbb{R}}
	\left(1+ |\tau +P(j)| \right)^{2b}
	|	\widehat{\psi}(\tau)|^{2}\;d\tau
\approx j^{4b},$$
for $j$ large enough.
\end{ex}

\end{appendix}






\subsection*{Acknowledgements}
Francisco Vielma is supported by Grant $\sharp$ 2015/06131-5, Sao Paulo Research Foundation (FAPESP) Brazil. The authors would like to thank Prof. Felipe Linares, Prof. Lionel Rosier and Prof. Ademir Pastor for many helpful discussions and suggestions to complete this work.


\begin{thebibliography}{99}
\small

\bibitem{15} Albert J. P., Bona J. L., Restrepo J. M., {\em Solitary waves solutions of the Benjamin equation}, SIAM J. Appl. Math. {\bf 59}  6 (1999) 2139--2161.


\bibitem{16} Angulo J., {\em Existence and stability of solitary wave solutions of the Benjamin equation}, J.  Differential Equations {\bf 152}  1 (1999)  136--159.

\bibitem{Aubin} Aubin T., {\em Nonlinear Analysis on Manifolds
Monge-Ampere Equations}, Grundlehren mathematischen Wissenschaften 252, Springer Verlag,  New York Inc., Heidelberg Berlin  (1982).


\bibitem{Bergh and Lofstrom} Bergh J., Lofstrom J., {\em Interpolation spaces an introduction}, A series of comprehensive studies in Mathematics, First Edition,  Springer-Verlag Berlin Heidelberg New York (1976).

\bibitem{5} Benjamin B. {\em A new kind of solitary waves}, J. Fluid Mech. \textbf{245} (1992) 401--411.


\bibitem{Bonna Scott} Bona J. and Ridgway S., {\em Solutions of the Korteweg-de Vries Equation in Fractional Order Sobolev Spaces}, Duke Mathematical Journal, Vol. 43, No. {\bf 1} (1976). 87--99.

\bibitem{13} Bourgain J., {\em Fourier Transform Restriction Phenomena for Certain Lattice Subsets and Applications to Nonlinear Evolution Equations, Part II: The KdV-Equation}, Geometric and Functional Analysis  {\bf 3} (1993)
209--262.


\bibitem{18} Chen H., Bona J. L., \textit{Existence and asymptotic properties of solitary-wave solutions of Benjamin-type equations}, Adv. Diff. Eqns. \textbf{3} 1 (1998) 51--84.

\bibitem{Chen and Xiao} Chen W. and Xiao Jie, \textit{A sharp bilinear estimate for the Bourgain-type space with application to the Benjamin equation}. Comm. in Part. Diff. Eqns.,   {\bf 35}  10 (2010) 1739--1762.

\bibitem{20} Chen W., Guo Z., Xiao J., \textit{Sharp well-posedness for the Benjamin equation}, Nonlinear Anal. {\bf 74} 17 (2011) 6209--6230.



\bibitem{Colliander Keel Staffilani Takaoka and Tao} Colliander J., Keel M., Staffilani G., Takaoka H., and Tao T., {\em Sharp global well-posedness for KdV and modified KdV on R and $\mathbb{T}$}, J. Amer. Math. Soc. {\bf 16} (2003), 705--749.

\bibitem{Colliander Keel Staffilani Takaoka and Tao 2} Colliander J., Keel M., Staffilani G., Takaoka H., and Tao T., {\em Multilinear estimates for periodic KdV equations, and applications}, J. of Funct. Anal.  {\bf 211} (2004), 173--218.

\bibitem{Coron}  Coron J.-M., \textit{Control and Nonlinearity}, Amer. Math. Soc., Math. Surveys and Monographs {\bf 136}  (2007).

\bibitem{Coron Crepau}  Coron J. -M., E. Cr\'epeau, {\em Exact boundary controllability of a nonlinear KdV equation
with a critical length}, J. Eur. Math. Soc. {\bf 6} (2004) 367--398.

\bibitem{Coron and Rosier} Coron J. -M., Rosier L., {\em Relation Between Continuous
Time-Varying and Discontinuous Feedback Stabilization}, J. of Math. System.
Estm. and Cont., {\bf 4} (1994) 67--84.


\bibitem{Dehman Lebeau and Zuazua} Dehman B., Lebeau G., and Zuazua E., \textit{Stabilization and Control for the Subcritical Semilinear Wave Equation}, Ann. Scient. Ec. Norm. Sup., $4^{e}$
S\'erie, {\bf 36} (2003) 525--551.


\bibitem{Dehman Gerard and Lebeau} Dehman B., G\'erard P., and Lebeau G., \textit{Stabilization and Control for the Nonlinear Schr\"odinger  Equation on a Compact Surface}. Math., Z 254,  (2006) 729--749.


\bibitem{12} Erdogan B. and Tzirakis N., {\em Dispersive Partial Differential Equations Well-Posedness and Applications}, Cambridge University Press, (2016).




\bibitem{Kappeler and Topalov} Kappeler T. and Topalov P.,
{\em Global well-posedness of KdV in $H^{-1}(T,R)$}, Duke
Math. J. {\bf 135} 2 (2006), 327--360.


\bibitem{Kenig Ponce and Vega} Kenig C. E., Ponce G., and Vega L., {\em A bilinear estimate with applications to the
KdV equation}, J. Amer. Math. Soc. {\bf 9} 2 (1996) 573--603.



\bibitem{19} Kozono H., Ogawa T., Tanisaka H., \textit{Well-posedness for the Benjamin equations}, J. Korean Math. Soc. {\bf 38} 6 (2001) 1205--1234.


\bibitem{Laurent}  Laurent C., {\em Global controllability and stabilization for the nonlinear Schr\"odinger equation on an interval}, ESAIM Control Optim. Cal. Var. {\bf 16} 2 (2010) 356--379.

\bibitem{Laurent C1}  Laurent C., {\em Global controllability and stabilization for the nonlinear
Schr\"odinger equation on some compact manifolds of dimension 3},
 SIAM J. Math. Anal. {\bf 42} 2 (2010) 785--832.


\bibitem{14} Laurent C., Rosier L., Zhang B., {\em Control and Stabilization of the Korteweg-de Vries Equation on a Periodic Domain}, Comm. Partial Differential Equations,  {\bf 35} 4 (2010) 707--744.

\bibitem{Laurent Linares and Rosier} Laurent C., Linares F.,  Rosier L., {\em Control and Stabilization of
the Benjamin-Ono Equation in $L^{2}(\mathbb{T})$}, Arch. Rational Mech. Anal. {\bf 218} 3 (2015) 1531--1575.



\bibitem{1} Linares F., Ortega J. H., {\em On the controllability and stabilization of the linearized Benjamin-Ono equation}, ESAIM: Cont. Opt. Cal. Var. {\bf 11} 2 (2005), 204--218.

\bibitem{Lions} J.-L. Lions, {\em Exact Controllability, Stabilization and Perturbations for Distributed Systems},  \emph{SIAM}, Vol {\bf 30}  1 (1988) 1--68.

\bibitem{23} Linares F., \textit{$L^{2}$-global well-posedness of the initial value problem associated to the Benjamin equation}, J. Differential equations {\bf 152} 2 (1999) 377--393.

\bibitem{Linares Rosier} Linares F., Rosier L., \textit{Control and Stabilization of the Benjamin-Ono Equation on a Periodic Domain}, Trans.  Amer. Math. Soc., {\bf 367} 7 (2015) 4595--4626.

\bibitem{Linares Scialom} Linares F., Scialom M., \textit{On Generalized Benjamin Type Equations}, Discrete Contin. Dyn. syst., {\bf 12} 1 (2005) 161--174.


\bibitem{Menzala Vasconcellos Zuazua} Menzala G. P., Vasconcellos C. F.,  Zuazua E., {\em Stabilization of the Korteweg-de Vries
equation with localized damping}, Quart. Appl. Math., {\bf 60} 1 (2002) 111--129.


\bibitem{Vielma and Panthee} Panthee M. and Vielma F., {\em On the controllability and stabilization of the linearized Benjamin equation on a periodic domain}, 	arXiv:1903.05209.

\bibitem{Pavlovic and Tzirakis} Pavlovic N., Tzirakis N.,{\em On Local and Global Theory for Nonlinear Shr\"odinger equation},  \emph{MSRI summer school Dispersive Partial Equations}, (2014).


\bibitem{Rosier 1}  Rosier L., {\em Exact boundary controllability for the Korteweg-de Vries equation on a bounded
domain}, ESAIM: Cont. Optim. Calc. Var., {\bf 2} (1997) 33--55.

\bibitem{Rosier  Zhang} Rosier L., and Zhang B.-Y.,{\em Exact  controllability
and estabilization of the nonlinear Schr\"odinger equation on a bounded interval},
  SIAM J. Control Optim., {\bf 48} 2 (2009), 972--992.

\bibitem{Rosier and Zhang 2} Rosier L., Zhang B. -Y.,
 {\em Global stabilization of the generalized Korteweg-de Vries equation},
  SIAM J. Cont. Optim., {\bf 45} 3 (2006) 927--956.

\bibitem{Rosier  Zhang 2} Rosier L., and Zhang B.-Y.,
{\em   Control and stabilization of the nonlinear Schr\"odinger equation on rectangles},
\emph{Mathematical Models and Methods in Applied Sciences}  {\bf 20} 12 (2010), 2293--2347.


 \bibitem{Russell} Russell D. L., {\em Controllability and Stabilization Theory for Linear Partial Differential Equations: Recent Progress and Open Questions},
 SIAM Review, {\bf 20} 4 (1978) 639--739.


\bibitem{Russell and Zhang} Russell D. L., Zhang B., {\em  Controllability and Stabilizability of the Thrid-Order Linear Dispersion Equation on a Periodic Domain}, SIAM J. Cont. and Optm,  {\bf 31} 3 (1993) 659--676.

\bibitem{10} Russell D. L., Zhang B., {\em Exact Controllability and Stabilizability of the Korteweg-de Vries Equation}, Trans. Amer. Math. Soc.,  {\bf 348} 9 (1996) 3643--3672.


 \bibitem{22} Shi S.  and Junfeng L., \textit{Local well-posedness for periodic Benjamin equation with small data}. Boundary value Problems a Springer Open Journal (2015) 2015:60.

\bibitem{Terence Tao} Tao T., \textit{Multilinear weighted convolution of $L^{2}$ functions, and applications to nonlinear dispersive equations}. Americans Journal of Mathematics,   {\bf 123} 5 (2001) 839--908.

\bibitem{11} Tao T., {\em Nonlinear Dispersive Equations Local and Global Analysis},
Conference Board of the Mathematical Sciences, (2005).

\bibitem{Tartar} Tartar L., \textit{Interpolation Non Lineaire
et Regularite}, Journal of Functional Analysis,  {\bf 9} (1972) 469--489.


\bibitem{Zhang 1} Zhang B. -Y., {\em Exact boundary controllability of the Korteweg-de Vries equation}, SIAM J. Cont.
Optim., {\bf 37} 2 (1999), 543--565.
\end{thebibliography}

\end{document}